\DeclareRobustCommand{\butterfly}{\textnormal{\reflectbox{B}}\!\!\textnormal{B}}
\newcommand*\circled[1]{\tikz[baseline=(char.base)]{\node[shape=circle,draw,inner sep=0.5pt] (char) {#1};}}
\title[Rigidity]{Rigidity of J-rotational rational maps and critical quasicircle maps}
\author{Willie Rush Lim}
\address{Dept. of Mathematics, Brown University, Providence, Rhode Island 02912}
\email{willie\_rush\_lim@brown.edu}
\subjclass[2020]{37E20, 37F10, 37F25, 37F50}
\date{}
\begin{document}

\begin{abstract}
We present a number of rigidity results concerning holomorphic dynamical systems admitting rotation quasicircles. Firstly, we show the absence of line fields on the Julia set of any rational map that is geometrically finite away from a number of rotation quasicircles with bounded type rotation number. As an application, we prove combinatorial rigidity associated to the problem of degeneration of Herman rings of the simplest configuration. Secondly, we extend a result of de Faria and de Melo on the $C^{1+\alpha}$ rigidity of critical circle maps with bounded type rotation number to a larger class of dynamical objects, namely \emph{critical quasicircle maps}. Unlike critical circle maps, critical quasicircle maps may have imbalanced inner and outer criticalities. As a consequence, we prove dynamical universality and exponential convergence of renormalization towards a horseshoe attractor.
\end{abstract}

\maketitle

\setcounter{tocdepth}{1}
\tableofcontents

\section{Introduction}
\label{sec:intro}

\subsection{J-rotational rational maps}

The Fatou set $F(f)$ of a rational map $f: \RS \to \RS$ of degree at least two is defined as the set of points in the Riemann sphere $\RS$ around which the set of iterates of $f$ is equicontinuous, whereas the Julia set $J(f)$ is the complement $\RS \backslash F(f)$. A \emph{rotation domain} $U$ of a rational map $f$ is a periodic component of the Fatou set on which an iterate $f^k$ of $f$ is conjugate to an irrational rotation. It can be either simply connected, in which case $U$ is called a \emph{Siegel disk}, or doubly connected, in which case $U$ is called a \emph{Herman ring}. 

Siegel disks are always centered around a unique periodic point. Examples of Siegel disks arise from the study of analytic linearization near a neutral fixed point, which has essentially received a complete treatment by the works of Brjuno, Herman, Yoccoz, and Perez-Marco. On the other hand, the construction of the first examples of Herman rings was based on the study of linearizability of analytic circle diffeomorphisms by Arnol'd and Herman. A more general construction was later established by Shishikura \cite{S87} via quasiconformal surgery, which allows us to construct Herman rings out of two Siegel disks, and to convert Herman rings into Siegel disks. 

The main focus of this paper is the occurrence of rotational dynamics supported on a single curve. Recall that a Jordan curve in $\RS$ is called a quasicircle if it is the image of a Euclidean circle under a quasiconformal map on $\RS$.

\begin{definition}
    We say that a Jordan curve/quasicircle $X \subset \RS$ is a \emph{rotation curve/quasicircle} of a rational map $f$ if $X$ is invariant under some iterate $f^k$ and $f^k|_X$ is conjugate to an irrational rotation. Additionally, we say that $X$ is a \emph{Herman curve/quasicircle} $X$ of $f$ if it is not contained in the closure of a rotation domain.
\end{definition}

Naturally, rotation domains are foliated by analytic rotation curves. Recall that every irrational number $\theta \in (0,1)$ admits a unique continued fraction expansion
\[
\theta = [0;a_1,a_2,a_3,\ldots] := \cfrac{1}{a_1 + \frac{1}{a_2 + \frac{1}{a_3 + \ldots}}}.
\]
The number $\theta$ is \emph{of bounded type} if there is a well-defined maximum 
\[
\beta(\theta) := \max_{n\geq 1} a_n < \infty.
\]
When the rotation number is of bounded type, Zhang \cite{Z11} proved that the boundary of any Siegel disk of a rational map is a rotation quasicircle containing a critical point. An analogous result holds for Herman rings via a straightforward application of Shishikura's surgery. More recently, examples of Herman quasicircles of arbitrary combinatorics are constructed in \cite{Lim23}. 

In the first part of this paper, we will study a class of rational maps whose dynamics is dominated by rotation quasicircles.

\begin{definition}
    We call a rational map $f: \RS \to \RS$ \emph{J-rotational} if $f$ has a rotation curve and every critical point in the Julia set is either preperiodic or lies in the grand orbit of a rotation quasicircle. Given a J-rotational rational map $f$, we also say that $f$ is \emph{of bounded type} if all of its rotation curves have bounded type rotation number.
\end{definition}

The \emph{postcritical set} $P(f)$ of a rational map $f$ is the closure of the forward orbit of critical points:
\[
    P(f) := \overline{  \bigcup_{f'(c)=0} \bigcup_{n=1}^\infty f^n(c) }.
\]
A rational map is called \emph{geometrically finite} if every critical point on the Julia set has finite orbit, or equivalently, the intersection $P(f) \cap J(f)$ is finite. The definition of bounded type J-rotationality is equivalent to the condition that $f$ is geometrically finite away from a non-empty union of bounded type rotation quasicircles.

A rational map $f: \RS \to \RS$ admits an \emph{invariant line field} on its Julia set $J(f)$ if there is a measurable Beltrami differential $\mu(z) \frac{d \bar{z}}{dz}$ such that $f^*\mu=\mu$ almost everywhere, $|\mu|=1$ on a positive measure subset of $J(f)$, and $\mu=0$ elsewhere. The absence of line fields implies the lack of non-trivial deformation space supported on the Julia set. In \cite[\S3]{McM94} and \cite[\S9]{McS98}, McMullen and Sullivan conjectured that \emph{flexible Latt\'es maps} are the only rational maps that admit invariant line fields. This conjecture implies the \emph{density of hyperbolicity} conjecture, one of the central problems in one dimensional dynamics. 

When $f$ is hyperbolic, i.e. when $P(f) \cap J(f)$ is empty, Sullivan \cite{S83} proved that the Hausdorff dimension of the Julia set of $f$ is less than two and consequently $f$ does not admit invariant line fields on its Julia set. The same result was proven by McMullen \cite{McM00} to hold for a larger class of rational maps, namely geometrically finite ones. In this paper, we follow the ideas behind McMullen's study of the critical behavior of quadratic Siegel disks in \cite{McM98} and prove the following.

\begin{thmx}
\label{thm:NILF-main}
    Every rational map $f$ that is J-rotational of bounded type does not carry any invariant line field on its Julia set $J(f)$. Additionally, if $f$ admits no Herman curves, then $J(f)$ has zero Lebesgue measure.
\end{thmx}

The second statement applies to rational maps whose dynamics are dominated by bounded type Siegel disks and Herman rings. When only a single rotation domain is present, Theorem \ref{thm:NILF-main} is not new. Using puzzle techniques, Petersen \cite{Pe96} originally proved that the Julia set of a quadratic map that has a single bounded type Siegel disk has zero Lebesgue measure. McMullen \cite{McM98} strengthened this result by proving porosity of the Julia set (see Definition \ref{def:porosity}). Zhang \cite{Z08} and Wang \cite{W12} also showed that the Julia set of rational maps that are postcritically finite outside of a single bounded type rotation domain has zero Lebesgue measure. All of these approaches hinge on the property that locally, the Julia set lies only on one side of any boundary component of a rotation domain, which is also utilized in the proof of Theorem \ref{thm:NILF-main}. However, this property does not hold in the setting of Herman curves.

Aside from the aforementioned previous works, the absence of line fields has also been proven for other classes of rational maps. These include
\begin{itemize}
    \item non-recurrent and Collet-Eckmann rational maps \cite{PU01}, 
    \item weakly hyperbolic rational maps \cite{Ha01}, 
    \item some real rational maps \cite{LvS00,She03}, 
    \item rational maps satisfying summability conditions \cite{Ma05,GS09},
    \item rational maps with Cantor Julia sets \cite{YZ10}, 
    \item some classes of infinitely renormalizable unicritical polynomials \cite{McM94, Ya95, Lyu97, K06, KL08, KL09a, Ch10, CS15, Ad16, DL23a, DL23b}, 
    \item only finitely renormalizable unicritical polynomials \cite{Hub93,K98,KvS09,AKLS} and Newton maps \cite{DS22,RYZ}, etc.
\end{itemize}

\subsection{Herman curves of the simplest configuration}

Denote by $\rat_{d}$ the space of degree $d$ rational maps, equipped with the topology of uniform convergence on compact subsets. We will discuss the implication of Theorem \ref{thm:NILF-main} on the following particular class of J-rotational rational maps.

\begin{definition}
\label{main-definition}
    For a pair of integers $d_0, d_\infty \geq 2$ and an irrational number $\theta \in (0,1)$, we define $\HQspace_{d_0,d_\infty,\theta}$ to be the space of rational maps $f \in \rat_{d_0+d_\infty-1}$ such that 
\begin{enumerate}[label=\textnormal{(\Roman*)}]
    \item\label{defn1} the only non-repelling periodic points of $f$ are superattracting fixed points $0$ and $\infty$ at which $f$ has local degrees $d_0$ and $d_\infty$ respectively;
    \item\label{defn2} the map $f$ has an invariant Herman quasicircle $\Hq$ with rotation number $\theta$;
    \item\label{defn3} $\Hq$ separates $0$ and $\infty$;
    \item\label{defn4} every critical point of $f$ aside from $0$ and $\infty$ is contained in $\Hq$.
\end{enumerate} 
\end{definition}

Unless otherwise stated, we assume that $\theta$ is of bounded type. The space $\HQspace_{d_0,d_\infty,\theta}$ consists of rational maps admitting Herman quasicircles of the simplest configuration. Every map in $\HQspace_{d_0,d_\infty,\theta}$ is a conformal welding of two polynomials of degrees $d_0$ and $d_\infty$ admitting invariant Siegel disks satisfying a condition similar to \ref{defn4}. Refer to Figure \ref{fig:cqc-comparison} for some examples.

The combinatorics of a map $f$ in $\HQspace_{d_0,d_\infty,\theta}$ is encoded by the relative position of critical points along the Herman quasicircle of $f$ and their inner and outer local degrees. All admissible combinatorial data can be identified with points in the space $\mathcal{C}_{d_0,d_\infty}$, a compact connected real orbifold of dimension $d_0+d_\infty-3$. (See Definition \ref{combi}.) 

% Anonymous version
Consider the space $\Herspace_{d_0,d_\infty,\theta}$ of degree $d_0+d_\infty-1$ rational maps $f$ satisfying the same conditions \ref{defn1}-\ref{defn4} except that $\Hq$ is replaced with the closure of a genuine Herman ring. In \cite{Lim23}, \emph{a priori bounds} for Herman rings in $\Herspace_{d_0,d_\infty,\theta}$ was proven via an elaborate analysis of near-degenerate surfaces. Such \emph{a priori bounds} shed light on the limits of degenerating Herman rings and give examples of rational maps in $\HQspace_{d_0,d_\infty,\theta}$ of arbitrary combinatorics.

% Real Version
% Consider the space $\Herspace_{d_0,d_\infty,\theta}$ of degree $d_0+d_\infty-1$ rational maps $f$ satisfying the same conditions \ref{defn1}-\ref{defn4} except that $\Hq$ is replaced with the closure of a genuine Herman ring. In \cite{Lim23}, we obtained \emph{a priori bounds} for Herman rings in $\Herspace_{d_0,d_\infty,\theta}$ via an elaborate analysis of near-degenerate surfaces. Such \emph{a priori bounds} allows us to understand the limits of degenerating Herman rings and consequently construct examples of rational maps in $\HQspace_{d_0,d_\infty,\theta}$ of arbitrary combinatorics.

\begin{theorem}[{\cite[Theorem C]{Lim23}}]
\label{APB02}
    The accumulation space 
    \[
    \Herspace_{d_0,d_\infty,\theta}^\partial := \overline{\Herspace_{d_0,d_\infty,\theta}} \backslash \Herspace_{d_0,d_\infty,\theta}
    \]
    is contained in $\HQspace_{d_0,d_\infty,\theta}$ and the map sending $f \in \Herspace^\partial_{d_0,d_\infty,\theta}$ to its associated combinatorics $\comb(f) \in \mathcal{C}_{d_0,d_\infty}$ is surjective. 
\end{theorem}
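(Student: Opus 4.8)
The plan is to regard $\Herspace_{d_0,d_\infty,\theta}$ as a finite-dimensional real-analytic parameter space carrying a continuous \emph{modulus function} $m$ that records the conformal modulus of the invariant Herman ring, and to describe $\overline{\Herspace_{d_0,d_\infty,\theta}} \subset \rat_{d_0+d_\infty-1}$ (always up to the normalization fixing the superattracting points at $0$ and $\infty$) through the behaviour of $m$ along sequences. The first reduction is that $\Herspace^\partial_{d_0,d_\infty,\theta}$ is exactly the set of subsequential limits of sequences $f_n$ with $m(f_n)\to 0$: if instead $m(f_n)\to m_\infty\in(0,\infty)$ then the limit still has an invariant Herman ring of modulus $m_\infty$ and no newly created non-repelling cycle, so it lies back in $\Herspace_{d_0,d_\infty,\theta}$ and is not a boundary point, while $m(f_n)\to\infty$ is impossible since conditions \ref{defn1} and \ref{defn3} force two disjoint superattracting basins on opposite sides of the ring, bounding its modulus from above.

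The technical core is an \emph{a priori bounds} statement for near-degenerate Herman rings. Given $f_n$ with $m(f_n)\to 0$, cut $\RS$ along the core geodesic $\gamma_n$ of the Herman ring into two topological disks $D^0_n\ni 0$ and $D^\infty_n\ni\infty$ and analyze the restrictions of $f_n$ to the two resulting near-degenerate surfaces. Combining the quasi-invariance of moduli of annuli under degree-$d$ coverings (the covering lemma) and the Gr\"otzsch inequality with the \emph{real} a priori bounds furnished by the bounded-type hypothesis on $\theta$ — which control the geometry of the induced circle dynamics uniformly, in the spirit of Herman, Yoccoz, and de Faria--de Melo — one shows that the marked critical points keep definite cross-ratio distances from one another and from $\{0,\infty\}$, that the complementary disks $D^0_n,D^\infty_n$ retain uniformly bounded shape, and that no parabolic implosion occurs. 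This yields precompactness of $\Herspace_{d_0,d_\infty,\theta}$ and uniform geometric control of all subsequential limits. This step is, without question, the main obstacle: one must simultaneously rule out collisions of critical points, collapse of the shapes of $D^0_n$ and $D^\infty_n$, and the birth of parabolic or other non-repelling periodic orbits in the limit, and the estimates doing so genuinely intertwine the complex geometry of the pinching annulus with the Diophantine arithmetic of $\theta$; this is the ``elaborate analysis of near-degenerate surfaces'' referred to above and occupies the bulk of \cite{Lim23}.

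With the bounds available, identifying the limits is comparatively soft. For $f_n\to f$ with $m(f_n)\to 0$, the core curves $\gamma_n$ converge in the Hausdorff topology to a Jordan curve $\Hq$; the uniform quasisymmetry estimates upgrade $\Hq$ to a quasicircle on which $f$ is conjugate to the rotation by $\theta$, and the collapse $m(f_n)\to 0$ prevents $\Hq$ from bounding a rotation domain, so $\Hq$ is a genuine Herman quasicircle separating $0$ from $\infty$. Semicontinuity of multipliers together with the no-implosion estimate excludes extra non-repelling cycles, the superattracting behaviour at $0$ and $\infty$ with local degrees $d_0,d_\infty$ persists, and every off-$\{0,\infty\}$ critical point accumulates on $\Hq$ by the bounds. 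Hence $f\in\HQspace_{d_0,d_\infty,\theta}$, which is the asserted inclusion $\Herspace^\partial_{d_0,d_\infty,\theta}\subset\HQspace_{d_0,d_\infty,\theta}$; moreover $\Herspace^\partial_{d_0,d_\infty,\theta}$ is compact.

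For surjectivity of $\comb$, note that along the open dense locus of $\Herspace_{d_0,d_\infty,\theta}$ where the off-$\{0,\infty\}$ critical points lie in the \emph{interior} of the Herman ring the combinatorics $\comb$ is locally constant, so it extends continuously across $\Herspace^\partial_{d_0,d_\infty,\theta}$. Given $c\in\mathcal C_{d_0,d_\infty}$, realize it first on some $f\in\Herspace_{d_0,d_\infty,\theta}$ by Shishikura-welding two polynomials of degrees $d_0$ and $d_\infty$ with invariant Siegel disks of rotation number $\theta$ whose critical configurations on the Siegel boundaries combine to $c$ — such polynomials exist for every $c$ by Blaschke-product surgery and Zhang's theorem on bounded-type Siegel boundaries — and then follow a path on which the gluing is pinched so that $m\to 0$ while $\comb\equiv c$. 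By the previous paragraph its limit lies in $\Herspace^\partial_{d_0,d_\infty,\theta}$, and local constancy passes the value $c$ of $\comb$ to the limit, proving surjectivity.
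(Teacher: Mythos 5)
The paper itself does not prove this statement: Theorem \ref{APB02} is imported wholesale from \cite{Lim23} (Theorem C there), alongside the precompactness result later quoted as Theorem \ref{precompactness}, so there is no in-paper argument to compare against. Your proposal is accordingly a reconstruction of the strategy of \cite{Lim23} rather than a proof: its entire technical core --- the a priori bounds along degenerating Herman rings, which is what rules out critical collisions, loss of quasisymmetric control, and the birth of non-repelling cycles as the ring pinches --- is only described, not established, and you acknowledge as much. That alone makes the proposal a plan rather than a proof; but, in addition, two of the steps you do spell out are incorrect.

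First, your surjectivity argument rests on ``the open dense locus of $\Herspace_{d_0,d_\infty,\theta}$ where the off-$\{0,\infty\}$ critical points lie in the interior of the Herman ring.'' That locus is empty: a Herman ring is a rotation domain, on which the map is conjugate to an irrational rotation and hence has no critical points, and by the defining condition \ref{defn4} (with $\Hq$ replaced by the closure of the ring) the free critical points of a map in $\Herspace_{d_0,d_\infty,\theta}$ lie on the boundary components of the ring. Consequently the claimed local constancy of $\comb$ and the ensuing ``continuous extension across $\Herspace^\partial_{d_0,d_\infty,\theta}$'' is a non-argument; the persistence of the combinatorics under pinching is precisely one of the things the a priori bounds of \cite{Lim23} are needed for. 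Second, your reduction of $\Herspace^\partial_{d_0,d_\infty,\theta}$ to limits with $m(f_n)\to 0$ dismisses $m(f_n)\to\infty$ on the grounds that the two superattracting basins bound the modulus from above; this is false. An annulus separating $0$ from $\infty$ can have arbitrarily large modulus, and indeed the antipode-preserving cubic family discussed in Section \S\ref{ss:antipode} lies in $\Herspace_{2,2,\theta}$ and, by the Hair Theorem quoted there, contains Herman rings of every modulus $m\in(0,\infty)$. What one actually needs (and must prove) is that sequences with unbounded moduli do not converge in $\rat_{d_0+d_\infty-1}$, and likewise the assertion that limits with moduli bounded away from $0$ remain in $\Herspace_{d_0,d_\infty,\theta}$ is not automatic. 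In short, the proposal defers its hard part to \cite{Lim23}, and the soft parts it does supply contain genuine errors.
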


We denote by $f\sim g$ when two rational maps $f$ and $g$ are conjugate by some linear map $z \mapsto \lambda z$, and by $[f]$ the linear conjugacy class of $f$. If $f \in \HQspace_{d_0,d_\infty,\theta}$, let $\rot(f) = \theta$ denote the rotation number of $f$ along its Herman quasicircle. Both $\comb(f)$ and $\rot(f)$ are invariant under linear conjugation. 

Denote by $\Theta_N$ the set of bounded type irrational numbers $\theta \in (0,1)$ with bound $\beta(\theta)\leq N$; this is a singleton consisting of the golden mean if $N=1$, and a Cantor set if $N \geq 2$. One application of Theorem \ref{thm:NILF-main} is the following theorem.

\begin{thmx}[Combinatorial rigidity]
\label{thm:combinatorial-rigidity}
    If two maps in $\HQspace_{d_0,d_\infty,\theta}$ have the same combinatorics, then they are conformally conjugate. Moreover, for all $N \geq 1$, the map \[
    \bigcup_{\theta \in \Theta_N} \HQspace_{d_0,d_\infty,\theta}/_\sim \to \mathcal{C}_{d_0,d_\infty} \times \Theta_N, \quad [f] \mapsto (\comb(f),\rot(f))
    \]
    is a homeomorphism.
\end{thmx}

In particular, each $\HQspace_{d_0,d_\infty,\theta}/_\sim$ is a compact connected topological orbifold of dimension $d_0+d_\infty-3$. By combining Theorem \ref{APB02} and Theorem \ref{thm:combinatorial-rigidity}, we strengthen the relation between $\HQspace_{d_0, d_\infty,\theta}$ and $\Herspace_{d_0, d_\infty,\theta}$.

\begin{corx}
\label{main-corollary}
    Every rational map in $\HQspace_{d_0, d_\infty,\theta}$ arises as a limit of degenerating Herman rings. More precisely, $
    \HQspace_{d_0, d_\infty,\theta} = \Herspace_{d_0,d_\infty,\theta}^\partial$.
\end{corx}

We also discuss further consequences of rigidity on rational maps admitting trivial Herman curves (those which are Euclidean circles) in Section \S\ref{ss:blaschke}, as well as antipode-preserving cubic rational maps in Section \S\ref{ss:antipode}.

Given a degree $d\geq 2$ rational map $f$ containing an invariant bounded type Herman quasicircle $\Hq$, one can perform Douady-Ghys surgery \cite{G84, D87} to both sides of $\Hq$ and obtain a pair of rational maps $g_+$ and $g_-$ having invariant Siegel disks $Z_+$ and $Z_-$ of complementary rotation numbers. Applying Shishikura's surgery \cite{S87} to $g_+$ and $g_-$, we obtain a family of degree $d$ rational maps $F_t$ admitting an invariant Herman ring $\He_t$ of modulus $t>0$ and of the same rotation number and combinatorics as $f|_\Hq$. The dynamics of $F_t$ on each component of $\RS \backslash \overline{\He_t}$ is quasiconformally conjugate to the dynamics of $f$ on a component of $\RS \backslash \Hq$. In light of Corollary \ref{main-corollary}, we believe in the following conjecture.

\begin{conx}
    Every Herman quasicircle with bounded type rotation number arises as a limit of degenerating Herman rings. More precisely, given $f$ and $F_t$ above, $[F_t] \to [f]$ as $t \to 0$ in the moduli space $\rat_d/\textnormal{PSL}_2(\C)$.
\end{conx} 

Note that the bounded type assumption is essential in the realization and rigidity of maps in $\HQspace_{d_0,d_\infty,\theta}$. Recently, Yang \cite{Y22} proved the existence of a cubic rational map whose Julia set has positive Lebesgue measure and contains a smooth Herman curve of high type Brjuno rotation number. Such a rational map is also constructed as a limit of degenerating Herman rings, but the problem of realization and rigidity for general irrational rotation number $\theta$ and degrees $d_0,d_\infty$ remains open.

In light of Theorem \ref{thm:NILF-main}, it is also reasonable to ask the following question.

\begin{quex}
\label{qn:leb-meas}
    Given a map $f \in \HQspace_{d_0,d_\infty,\theta}$ where $\theta$ is of bounded type, does $J(f)$ have zero Lebesgue measure? Is the Hausdorff dimension of $J(f)$ less than $2$? 
\end{quex}

We believe that Question \ref{qn:leb-meas} is a much more difficult problem. Points along the Herman quasicircle of $f$ are deep points of its Julia set (cf. Theorem \ref{deep-point-theorem}). This hints at the similarity in complexity to Feigenbaum Julia sets (see \cite{McM96}). One possible direction towards this problem is an adaptation of the methods developed by Avila and Lyubich \cite{AL08} in studying the Lebesgue measure of Feigenbaum Julia sets. In particular, it may be possible to formulate a criterion for zero or positive area in terms of escape probabilities and, similar to \cite{DS20,AL22,DL23a}, apply either rigorous computer estimates or various renormalization schemes to obtain a conclusive answer.

\subsection{Critical quasicircle maps}

The last part of this paper goes beyond the realm of rational dynamics and studies rotation quasicircles of general holomorphic maps. 

\begin{definition}
    A (\emph{holomorphic}) \emph{quasicircle map} is a homeomorphism $f: \Hq \to \Hq$ of a quasicircle admitting a holomorphic extension on a neighborhood of $\Hq$. Additionally, we say that $f: \Hq \to \Hq$ is a (\emph{uni}-)\emph{critical} quasicircle map if $\Hq$ contains exactly one critical point of $f$.
\end{definition}

The behaviour at the unique critical point on $\Hq$ can be encoded by two positive integers, namely the inner criticality $d_0$ and the outer criticality $d_\infty$. The total local degree of $f$ at the critical point is $d_0+d_\infty-1$ and it must be at least $2$. When the criticalities are specified, we call $f: \Hq \to \Hq$ a \emph{$(d_0,d_\infty)$-critical} quasicircle map. Unicritical rational maps in $\HQspace_{d_0,d_\infty,\theta}$ serve as model examples of $(d_0,d_\infty)$-critical quasicircle maps. See Figure \ref{fig:cqc-comparison} for some examples.

We say that a critical quasicircle map $f: \Hq \to \Hq$ is \emph{critical circle map} if $\Hq$ is trivially a Euclidean circle and so the inner and outer criticalities coincide. Critical circle maps provide one of the two classical examples of one-dimensional dynamical systems exhibiting remarkable universality phenomena, the other one being Feigenbaum universality observed in unimodal maps. 

\begin{figure}
    \centering
    \includegraphics[width=\linewidth]{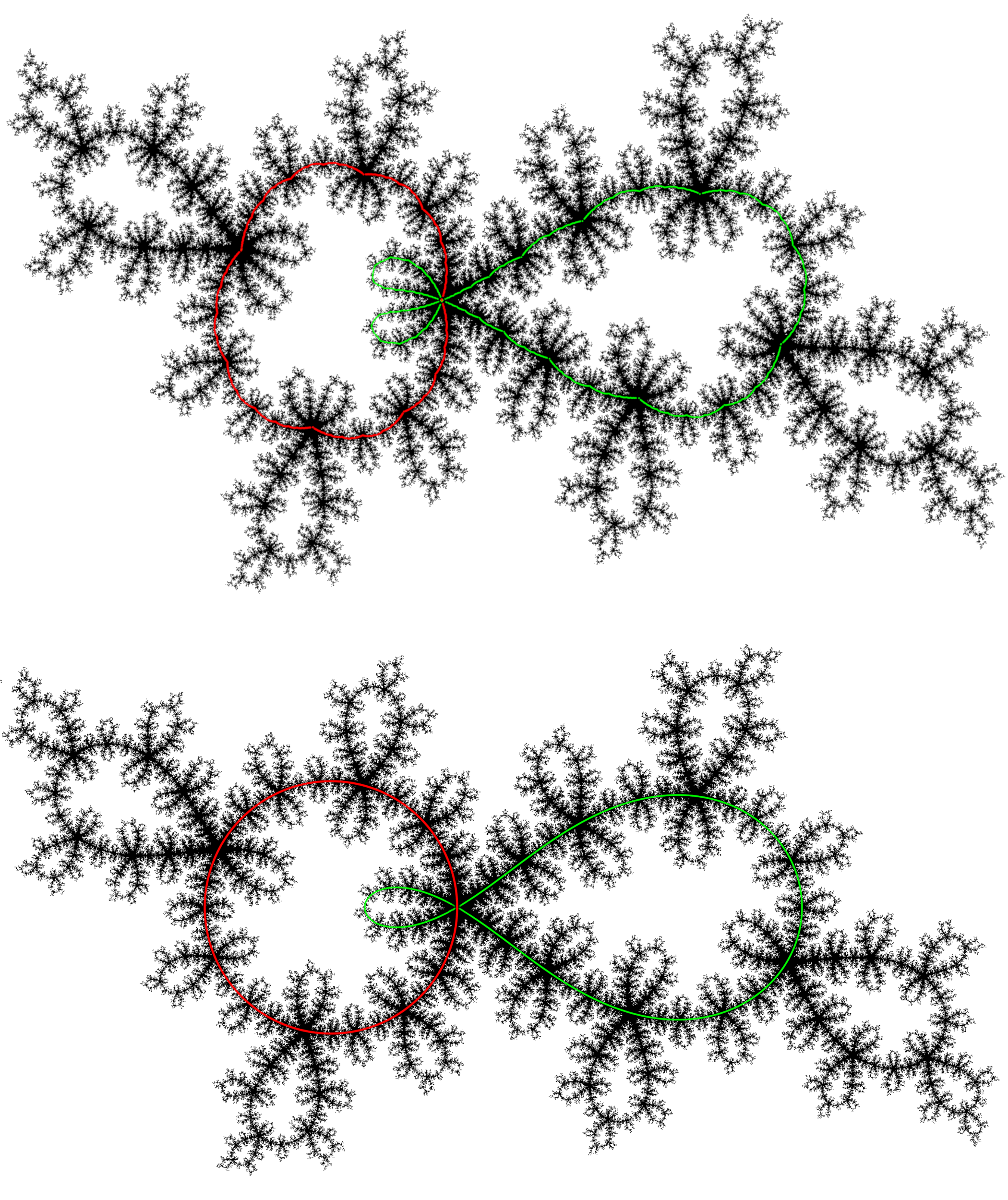}
    \caption{The Julia sets of
    \leavevmode\\
    \vspace{-0.1in}
    \\
    \begin{minipage}{\linewidth} 
    \begin{align*}
        f_{3,2}(z) = bz^3\cfrac{4-z}{1-4z+6z^2} \quad \text{and} \quad f_{2,2}(z) = cz^2 \cfrac{z-3}{1-3z}.
    \end{align*}
    The critical values $b\approx -1.144208-0.964454i$ and $c \approx -0.755700-0.654917i$ are picked such that $f_{3,2}:\Hq \to \Hq$ is an imbalanced critical quasicircle map for some quasicircle $\Hq\subset \C$, $f_{2,2}: \T \to \T$ is a critical circle map, and both have the golden mean rotation number $\theta$. The rational map $f_{3,2}$ is in $\HQspace_{3,2,\theta}$, whereas $f_{2,2}$ is in $\HQspace_{2,2,\theta}$. Both $\Hq$ and $\T$ are colored red, and their preimages are colored green. Refer to Proposition \ref{prop:prototype-example} for details.
  \end{minipage}
    }
    \label{fig:cqc-comparison}
\end{figure}

Like critical circle maps, critical quasicircle maps also exhibit quasisymmetric rigidity. Petersen \cite{Pe04} proved a generalization of Herman-\'Swiatek theorem, which states that a critical quasicircle map is quasisymmetrically conjugate to irrational rotation if and only if its rotation number is of bounded type. Here, we will further extend known rigidity results for critical circle maps to critical quasicircle maps.

Given a constant $\alpha>0$, we say that a map $\phi$ is \emph{uniformly $C^{1+\alpha}$-conformal} on a set $S \subset \C$ if there are constants $C,\varepsilon>0$ such that for every point $z$ in $S$, the complex derivative $\phi'(z)$ at $z$ exists and for $|t|<\varepsilon$,
    \begin{equation}
        \left| \frac{\phi(z+t)-\phi(z)}{t} - \phi'(z) \right| \leq C |t|^\alpha.
    \end{equation}

\begin{thmx}[$C^{1+\alpha}$ rigidity]
\label{thm:c1plusalpharigidity}
    Any two $(d_0,d_\infty)$-critical quasicircle maps $f_1: \Hq_1 \to \Hq_1$ and $f_2 : \Hq_2 \to \Hq_2$ with the same bounded type rotation number are quasiconformally conjugate on the neighborhood of $\Hq_1$ and $\Hq_2$. Moreover, there is some $\alpha>0$ such that the conjugacy is uniformly $C^{1+\alpha}$-conformal on $\Hq_1$.
\end{thmx}

The main strategy in the proof of Theorem \ref{thm:c1plusalpharigidity} consists of complex bounds and an adaptation of McMullen's recipe \cite{McM96}, namely uniform twisting and deep points, which was originally applied in the context of Feigenbaum Julia sets. This strategy has been successfully applied in the study of rigidity of critical circle maps \cite{dFdM2} as well as multicritical circle maps \cite{GY21}. Let us list a number of important applications.

The presence of critical points generally destroys the smoothness of rotation curves. One consequence of Theorem \ref{thm:c1plusalpharigidity} is that the corresponding quasicircle cannot be smooth except when it admits symmetric combinatorics, or equivalently, when the map is quasiconformally conjugate to a critical circle map.

\begin{corx}[Smoothness]
\label{cor:smoothness}
    Given a $(d_0,d_\infty)$-critical quasicircle map $f: \Hq \to \Hq$ with bounded type rotation number, the following are equivalent.
    \begin{itemize}
        \item $\Hq$ is $C^1$ smooth;
        \item the Hausdorff dimension of $\Hq$ is one;
        \item $d_0=d_\infty$.
    \end{itemize}
\end{corx}

See Figure \ref{fig:smoothness} for an example of a $C^1$ smooth Herman curve that is not a round circle. Under combinatorial asymmetry, the dimension is also universal.

\begin{corx}[Universality of dimension]
\label{cor:dimension}
    If two $(d_0,d_\infty)$-critical quasicircle maps $f_1 : \Hq_1 \to \Hq_1$ and $f_2: \Hq_2 \to \Hq_2$ have the same bounded type rotation number, then $\Hq_1$ and $\Hq_2$ have the same Hausdorff dimension, lower box dimension, and upper box dimension.
\end{corx}

Consider a critical quasicircle map $f: \Hq \to \Hq$ with bounded type rotation number $\theta$. Denote by $c$ the critical point of $f$ and by $\{p_n/q_n\}_{n\in\N}$ the best rational approximations of $\theta$. We define the \emph{$n^{\text{th}}$ scaling ratio} of $f$ by
\begin{equation}
\label{eqn:scaling-ratio}
s_n(f) := \frac{f^{q_{n+1}}(c)-c}{f^{q_n}(c)-c}.
\end{equation}

\begin{corx}[Universality of scaling ratios]
\label{cor:universality}
    If two $(d_0,d_\infty)$-critical quasicircle maps $f_1 : \Hq_1 \to \Hq_1$ and $f_2: \Hq_2 \to \Hq_2$ have the same bounded type rotation number, then asymptotically they have the same scaling ratios:
    \[
    \frac{s_n(f_2)}{s_n(f_1)}  \xrightarrow{}  1 \text{ exponentially fast as } n \to \infty.
    \]
\end{corx}

Recall that $\theta = [0;a_1,a_2,\ldots]$ is a \emph{quadratic irrational} if there exist integers $m\geq 1$ and $p\geq 1$ such that $a_{n+p} = a_n$ for all $n \geq m$. If $m=1$, then $\theta$ is called \emph{periodic}.
 
\begin{thmx}[Self-similarity]
\label{thm:self-similarity}
    Consider a $(d_0,d_\infty)$-critical quasicircle map $f: \Hq \to \Hq$ with a quadratic irrational rotation number $\theta$. Then, $\Hq$ is asymptotically self-similar about the critical point. The self-similarity factor is universal depending only on $d_0$, $d_\infty$, and $\theta$.
\end{thmx}

For every $n \geq 1$, denote by $I_{n}$ the shortest interval in $\Hq$ connecting $c$ and $f^{q_n}(c)$. The \emph{$n$\textsuperscript{th} pre-renormalization} of $f$ is the commuting pair 
\[
(f^{q_n}|_{I_{n-1}}, f^{q_{n-1}}|_{I_n}),
\]
and the \emph{$n$\textsuperscript{th} renormalization} $\renorm^n f$ of $f$ is the normalized commuting pair obtained by rescaling the $n$\textsuperscript{th} pre-renormalization to unit size by either an affine map if $n$ is even, or an anti-affine map if $n$ is odd. We also define renormalizations of arbitrary commuting pairs in a way that is compatible with renormalizations of critical quasicircle maps.

\begin{thmx}[Renormalization horseshoe]
\label{horseshoe}
    Let $N$ be a positive integer. There is a renormalization-invariant compact set $\mathcal{A}_N$ inside the space $\mathcal{CP}_N$ of normalized commuting pairs of fixed criticality and of rotation number in $\Theta_N$ with the following properties.
        \begin{enumerate}[label=\textnormal{(\arabic*)}]
            \item The renormalization operator $\renorm: \mathcal{A}_N \to \mathcal{A}_N$ is topologically conjugate to the shift operator on the bi-infinite shift space of $N$ symbols.
            \item For any $\zeta$ in $\mathcal{CP}_N$, the distance between $\renorm^n \zeta$ and $\mathcal{A}_N$ tends to $0$ exponentially fast as $n \to \infty$.
        \end{enumerate}
\end{thmx}

A precise version of the two theorems above can be found in Theorems \ref{thm:exponential-convergence}, \ref{thm:horseshoe}, and \ref{thm:A-invariant-quasicircle}. 

The renormalization theory of critical circle maps has been extensively studied in the last 30 years. De Faria \cite{dF99} introduced the notion of \emph{holomorphic commuting pairs} and proved the universality of scaling ratios and the existence of renormalization horseshoe for critical circle maps. $C^{1+\alpha}$ rigidity was established by de Faria and de Melo \cite{dFdM2} for bounded type rotation number, and later by Khmelev and Yampolsky \cite{KY06} for arbitrary irrational rotation number by studying parabolic bifurcations. Moreover, Yampolsky extended the horseshoe for all irrational rotation numbers in \cite{Y01}, and established global hyperbolicity of renormalization \cite{Y02,Y03}.

In the bounded type regime, if we assume that either $d_0$ or $d_\infty$ is one, the quasicircle $\Hq$ will be the boundary of a rotation domain and the results above have also been studied by various authors. By Douady-Ghys surgery \cite{D87,G84}, $\Hq$ can be assumed to be the boundary of a Siegel disk. The problem of smoothness and Hausdorff dimension of quasiconformal Siegel disks was solved by Graczyk and Jones \cite{GJ02}. Stirnemann \cite{St94} first gave a computer-assisted proof of the existence of a renormalization fixed point with a golden-mean Siegel disk. McMullen \cite{McM98} applied a measurable deep point argument to prove self-similarity of Siegel disks of quadratic polynomials as well as the existence of renormalization horseshoe. In \cite[\S4]{AL22}, Avila and Lyubich proved quasiconformal rigidity of bounded type quadratic Siegel disks. Note that the measurable deep point argument gives an automatic upgrade to $C^{1+\alpha}$ rigidity. McMullen and Avila-Lyubich's methods extend to bounded type unicritical Siegel disks of arbitrary degree. Therefore, our results are only new when $\Hq$ is a \emph{Herman quasicircle}, that is, when $\Hq$ is not contained in the closure of a rotation domain.
 
Gaidashev and Yampolsky \cite{GY22} gave a computer-assisted proof of the golden mean hyperbolicity of renormalization of Siegel disks using the formalism of \emph{almost commuting pairs}. In \cite{DLS}, Dudko, Lyubich, and Selinger constructed a compact analytic operator, called \emph{Pacman renormalization operator}, with a hyperbolic fixed point whose stable manifold has codimension one and consists of maps with a Siegel disk of a fixed rotation number of periodic type. On the other hand, Inou and Shishikura \cite{IS08} gave a computer-assisted proof of the existence and hyperbolicity of quadratic Siegel renormalization fixed points of sufficiently high type rotation number via the near-parabolic regime. This was later extended by Ch\'eritat \cite{Che22}, who presented a computer-free proof of hyperbolicity in high type in the general unicritical case.

% Anonymous Version
In an upcoming sequel of this paper \cite{Lim24}, we will develop an adaptation of Pacman renormalization for critical quasicircle maps. More precisely, we will construct a compact analytic renormalization operator with a hyperbolic fixed point whose stable manifold has codimension one and consists of $(d_0,d_\infty)$-critical quasicircle maps with periodic type rotation number. The existence of the fixed point as well as exponential convergence along the stable manifold will be immediate consequences of Theorem \ref{horseshoe}. Moreover, Theorem \ref{thm:combinatorial-rigidity} will serve as a vital ingredient towards showing the existence of its unstable manifold.

% Real Version
% In an upcoming sequel \cite{Lim24} of this paper, we will develop an adaptation of Pacman renormalization for critical quasicircle maps. More precisely, we will construct a compact analytic renormalization operator with a hyperbolic fixed point whose stable manifold has codimension one and consists of $(d_0,d_\infty)$-critical quasicircle maps with periodic type rotation number. The existence of the fixed point as well as exponential convergence along the stable manifold will be immediate consequences of Theorem \ref{horseshoe}. Moreover, Theorem \ref{thm:combinatorial-rigidity} will serve as a vital ingredient towards showing the existence of its unstable manifold.

\subsection{Outline}

In the first part of this paper, we study bounded type rotation quasicircles of rational maps. In Section \S\ref{sec:rot-curves}, we study the general theory of rotation curves. We work under the bounded type assumption, in which, by Proposition \ref{trichotomy}, we have a trichotomy of rotation curves $\Hq$ based on its inner/outer criticality and on whether or not $\Hq$ is a Herman curve. Assuming further that $\Hq$ is a quasicircle containing a critical point, we prove in Section \S\ref{ss:approx-rot} that on a neighborhood of $\Hq$, the map $f$ always acts as an \emph{approximate rotation} until it gets close to a critical point.

In Section \S\ref{sec:rat-map}, we study rational maps $f$ that are J-rotational of bounded type and prove Theorem \ref{thm:NILF-main}. The key ingredient is Theorem \ref{critical}, which states that for every \emph{nice} point $z$ in $J(f)$ and every scale $r>0$, we can always find a disk of size comparable to $r$ and of distance at most $r$ from $z$ that is mapped univalently by some iterate of $f$ to a disk containing a critical point of $f$. Theorem \ref{critical} is indeed an extension of \cite[Theorem 3.2]{McM94}. We generalize McMullen's proof in the context of many rotation quasicircles and many critical points by applying the approximate rotation mechanism in Section \S\ref{ss:approx-rot}. 

Section \S\ref{sec:applications} is dedicated to a number of consequences of Theorem \ref{thm:NILF-main}. We adopt the classical pullback argument to prove combinatorial rigidity of rational maps in $\HQspace_{d_0,d_\infty,\theta}$, and then apply combinatorial rigidity to deduce the continuity of combinatorics and rotation number. We then explore two immediate applications of Theorem \ref{thm:combinatorial-rigidity}. Firstly, in Section \S\ref{ss:blaschke}, we deduce that Douady-Ghys surgery provides a homeomorphism between the space of trivial Herman curves in $\bigcup_{\theta \in \Theta_N} \HQspace_{d,d,\theta}/_\sim$ and the moduli space of polynomial Siegel disks satisfying a condition similar to \ref{defn4}. Secondly, in Section \S\ref{ss:antipode}, we answer an open question regarding the landing of \emph{hairs} inside the moduli space of antipode-preserving cubic rational maps, which were originally studied in \cite{BBM18,BBM23}. See Figure \ref{fig:antipodal} for an illustration.

In Section \S\ref{sec:deep-points}, we prove under certain assumptions that if a rational map $f$ admits a bounded type rotation quasicircle $\Hq$, then every point on $\Hq$ is deep point of the \emph{local filled Julia set} of $f|_\Hq$ (roughly, the closure of iterated preimages of $\Hq$ near $\Hq$). This, when applied to maps in $\HQspace_{d_0,d_\infty,\theta}$, will act as one of the two main ingredients in upgrading quasiconformal rigidity to $C^{1+\alpha}$ rigidity.

The second part of this paper is concerned with the study of critical quasicircle maps $f: \Hq \to \Hq$. As previously mentioned, it suffices to assume that $\Hq$ is a Herman quasicircle and so the inner and outer criticalities of the critical point are at least two. In Section \S\ref{ss:uni-herman-qc}, we argue that $f$ is a conformal welding of a pair of \emph{quasicritical circle maps}, which are a quasiregular analog of critical circle maps. Based on the work of Avila and Lyubich \cite[\S3]{AL22}, we know that most results on critical circle maps, such as complex bounds and quasiconformal rigidity, hold for quasicritical circle maps of bounded type. In Section \S\ref{ss:complex-bounds}, we introduce the concept of \emph{butterflies}, an analog of holomorphic commuting pairs, and transfer complex bounds for quasicritical circle maps to complex bounds in our setting (Theorems \ref{thm:complex-bounds} and \ref{thm:complex-bounds-butterflies}). By means of the pullback argument, we then show that complex bounds imply quasiconformal rigidity. 

To show that our quasiconformal conjugacy is $C^{1+\alpha}$-conformal, we apply McMullen's Dynamic Inflexibility Theorem \cite[Theorem 9.15]{McM96}, which was originally applied to infinitely renormalizable quadratic-like maps. In our context, this relies on our deep point result in Section \S\ref{sec:deep-points}, together with the \emph{uniform twisting} property, which has been obtained in the course of the proof of the first part of Theorem \ref{thm:NILF-main}. While similar in spirit, our proof avoids de Faria and de Melo's elaborate use of \emph{good geometric control} of holomorphic commuting pairs in \cite[\S4--7]{dF99}.

In the last section, we prove a number of applications of $C^{1+\alpha}$ rigidity. Our universality results are immediate consequences of Theorem \ref{thm:c1plusalpharigidity}. The proof of Corollary \ref{cor:smoothness} uses an additional tool, which is Peter Jones' beta numbers (see Proposition \ref{prop:HD>1}). The construction of renormalization horseshoe is a standard tower rigidity argument. Lastly, Theorem \ref{thm:self-similarity} follows from self-similarity of the invariant quasicircle of each of the renormalization periodic points in the horseshoe.

\subsection{Acknowledgements}

I would like to thank my advisor Dzmitry Dudko for his support and advice, and Michael Yampolsky for helpful initial suggestions without which this work might not have existed. I would also like to thank Araceli Bonifant and John Milnor for illuminating discussion on their work in \cite{BBM23} with Xavier Buff. This project has been partially supported by the NSF grant DMS 2055532 and by Simons Foundation International, LTD. 

\subsection{Notation}
\label{ss:notation}

We will often use the following asymptotic notations.
\begin{itemize}
    \item $g = O(h)$ when $h >0$ and $|g| \leq \alpha h$ for some implicit constant $\alpha>0$;
    \item $g \succ h$ when $g,h >0$ and $g \geq \alpha h$ for some implicit constant $\alpha > 0$;
    \item $g \asymp h$ when $g \succ h$ and $h \succ g$.
\end{itemize}

For any pair of disjoint sets $I$ and $J$ in $\RS$, we say that $I$ and $J$ are \emph{well separated} if there exists an annulus $A$ of modulus $\modu(A) \asymp 1$ separating $I$ and $J$. For any set $I$ contained in a domain $D$, we say that $I$ is \emph{well contained} in $D$ if $I$ and $\partial D$ are well separated.

Equip the Riemann sphere $\RS$ with the spherical metric. On compact subsets of $\C$, the spherical metric is equivalent to the Euclidean metric and so we often make no distinction between either of them. Denote the spherical distance between two points $x$ and $y$ by $|x-y|$ and between two subsets $A$ and $B$ of $\RS$ by $\dist(A, B)$. Denote by $\D(x,\varepsilon)$ the open disk of radius $\varepsilon>0$ centered at $x \in \RS$. Given a pointed topological disk $(U, x)$, we define the following.
\begin{itemize}
    \item $\rin(U,x) := \dist(x,\partial U)$, the inner radius of $U$ about $x$;
    \item $\rout(U,x) := \inf \{ \varepsilon > 0 \: : \: U \subset \D(x,\varepsilon)\}$, the outer radius of $U$ about $x$.
\end{itemize}
For any $C\geq1$, we say that $(U,x)$ has \emph{$C$-bounded shape} if $\rout(U,x) \leq C \rin(U,x)$. We say that $(U,x)$ has \emph{bounded shape} if it has $C$-bounded shape for some implicit constant $C>1$.

Given a hyperbolic Riemann surface $\Omega$, denote by $d_\Omega$ the corresponding hyperbolic metric of $\Omega$. The hyperbolic distance between subsets of $\Omega$ will be denoted by $\dist_\Omega(\cdot, \cdot)$. Denote by $\D_\Omega(x,\varepsilon)$ the hyperbolic open ball of radius $\varepsilon>0$ centered at $x \in \Omega$. Given a pointed topological disk $(U, x)$ in $\Omega$, we denote by $r_{\text{in}, \Omega}(U,x) := \dist_\Omega(x,\partial U)$ the hyperbolic inner radius of $U$ about $x$.

\section{Rotation curves}
\label{sec:rot-curves}

\begin{definition}
    Let $f: U \to V$ be a holomorphic map. An open domain $W$ is a \emph{rotation domain} of $f$ if 
    \begin{enumerate}[label=\textnormal{(\roman*)}]
        \item $W \subset U \cap V$ and $f(W)=W$,
        \item $f|_W$ is analytically conjugate to an irrational rotation, and
        \item every other domain satisfying (i) and (ii) is either contained in $W$ or disjoint from $W$.
    \end{enumerate}
    A Jordan curve/quasicircle $\Hq$ is a \emph{rotation curve/quasicircle} of $f$ if 
    \begin{enumerate}[label=\textnormal{(\roman*)}]
    \setcounter{enumi}{3}
        \item $\Hq \subset U \cap V$ and $f(\Hq)=\Hq$, and
        \item $f|_\Hq$ is conjugate to an irrational rotation.
    \end{enumerate} 
    We say that $\Hq$ is a \emph{Herman curve/quasicircle} of $f$ if it is a rotation curve/quasicircle that is not contained in the closure of a rotation domain.
\end{definition}

Let us fix a holomorphic map $f: U \to V$, where $U,V \subset \C$ are domains in $\RS$ and assume that it admits a rotation curve $\Hq \subset U$. Let $\phi: \Hq \to \T$ be a conjugacy between $f|_\Hq$ and the irrational rotation $R_\theta|_\T$. Note that $\phi$ is unique up to post-composition with any rotation.

\subsection{Irrational rotations}
\label{ss:irrational}

Let us identify $\T$ with the quotient $\R / \Z$, in which $R_\theta$ can be written as $R_\theta(x)=x+\theta$. Equip $\Hq$ with the pushforward under $\phi^{-1}$ of the Euclidean metric on $\T$, called the \emph{combinatorial metric}; this is the unique invariant normalized metric on $\Hq$. For any interval $I \subset \Hq$, denote its combinatorial length by $|I|$. For any pair of distinct points $x,y \in \Hq$, we denote by $[x,y]$ the shortest closed interval in $\Hq$ having endpoints $x$ and $y$. 

Let $\{ p_n/q_n \}_{n \in \N}$ be the sequence of best rational approximations of $\theta$. This sequence is determined by the recurrence relation 
\[
p_{n} = a_{n} p_{n-1} + p_{n-2} \quad \text{and} \quad q_{n} = a_{n} q_{n-1} + q_{n-2}
\]
where $p_0= q_{-1}=0$, $q_0 = p_{-1} = 1$, and $[0;a_1,a_2,\ldots]$ is the continued fraction expansion of $\theta$. The $q_n$'s are precisely the first return times for $R_\theta$ (and thus for $f|_\Hq$ too) which alternate in the following fashion. For any $x \in \T$,
\[
R^{q_1}_\theta(x) < R^{q_3}_\theta(x) < R^{q_5}_\theta(x) < \ldots < x < \ldots < R^{q_6}_\theta(x) < R^{q_4}_\theta(x) < R^{q_2}_\theta(x).
\]

\begin{definition}
    An interval $I \subset \Hq$ is a \emph{level $n$ combinatorial interval} if it is of the form $[x, f^{q_n}(x)]$ for some $x \in \Hq$.
\end{definition} 

Every level $n$ combinatorial interval has the same combinatorial length equal to
\begin{equation}
    \label{eqn:comb-length}
    l_n: = |p_n - q_n\theta|.
\end{equation}

\begin{proposition}
\label{renormalization-tiling}
    For any $c \in \Hq$ and $n \in \N$, the collection of combinatorial intervals
        \[
            \mathcal{P}_{n}(c) := \left\{ [f^i(c), f^{q_n+i} (c)] \right\}_{i=0}^{q_{n+1}-1}  \cup \{ [f^{q_{n+1}+j}(c), f^j(c)] \}_{j=0}^{q_n-1}
        \]
    forms a tiling of $\Hq$, that is, they have pairwise disjoint interiors and their union is $\Hq$. Moreover, $\mathcal{P}_{n+1}(c)$ is a refinement of $\mathcal{P}_{n}(c)$.
\end{proposition}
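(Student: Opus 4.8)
\emph{Proof proposal.} The plan is to push the whole statement onto the model rotation $R_\theta$ on $\T=\R/\Z$ and then induct on $n$ using the arithmetic of the continued fraction of $\theta$. Via the conjugacy $\phi:\Hq\to\T$, which is a homeomorphism carrying each level $n$ combinatorial interval to a Euclidean arc of length $l_n$ and conjugating $f|_\Hq$ to $R_\theta$, the tiling (resp.\ refinement) assertion for $\mathcal P_n(c)$ is equivalent to the same assertion for the image family $\phi(\mathcal P_n(c))$, based at $x_0:=\phi(c)$. Since $R_\theta$ commutes with translations, $\phi(\mathcal P_n(c))$ depends on $x_0$ only up to a rigid rotation, so I may normalize $x_0=0$. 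Writing $\Delta_m:=[0,R_\theta^{q_m}(0)]$ for the base level $m$ arc (of length $l_m$), and using that $[x,y]$ is symmetric in its endpoints, $\phi(\mathcal P_n(c))$ consists of the arcs $R_\theta^i(\Delta_n)$ for $0\le i<q_{n+1}$ together with the arcs $R_\theta^j(\Delta_{n+1})$ for $0\le j<q_n$.

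Before the induction I would record two elementary consequences of the recurrences $p_m=a_mp_{m-1}+p_{m-2}$, $q_m=a_mq_{m-1}+q_{m-2}$ and the sign rule $l_m=(-1)^m(q_m\theta-p_m)$ (which encodes the alternating position of the points $R_\theta^{q_m}(0)$ recorded just before the proposition): first, the total length identity $q_{n+1}l_n+q_nl_{n+1}=1$, coming from $p_{m-1}q_m-p_mq_{m-1}=(-1)^m$, so the arcs of $\phi(\mathcal P_n(c))$ have total length $|\T|=1$; and second, the subdivision identity $l_n=a_{n+2}\,l_{n+1}+l_{n+2}$. The base case of the induction (the smallest admissible $n$) is a direct verification: the arcs are exactly the gaps cut out of $\T$ by the first few points of the orbit of $0$. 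For the inductive step, assuming $\phi(\mathcal P_n(c))$ is a tiling, I would prove that each of its arcs is a finite union, with pairwise disjoint interiors, of arcs of $\phi(\mathcal P_{n+1}(c))$, and that across all parent arcs these subdivisions produce every arc of $\phi(\mathcal P_{n+1}(c))$ exactly once. Two sub-claims accomplish this: (i) each short arc $R_\theta^j(\Delta_{n+1})$ with $0\le j<q_n<q_{n+2}$ is \emph{verbatim} a long arc of $\phi(\mathcal P_{n+1}(c))$, so it is not refined; (ii) each long arc $R_\theta^i(\Delta_n)$, $0\le i<q_{n+1}$, splits — matching $l_n=a_{n+2}l_{n+1}+l_{n+2}$ — as $R_\theta^i(\Delta_{n+2})$ (a short arc of $\phi(\mathcal P_{n+1}(c))$ since $i<q_{n+1}$) together with the further arcs $R_\theta^{\,i+q_n+kq_{n+1}}(\Delta_{n+1})$, $0\le k<a_{n+2}$, which are long arcs of $\phi(\mathcal P_{n+1}(c))$ because $i+q_n+kq_{n+1}<q_{n+2}$; these pieces have pairwise disjoint interiors and union $R_\theta^i(\Delta_n)$ by the length identity. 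Granting (i)--(ii), a division-algorithm count finishes it: the long indices produced are $\{0,\dots,q_n-1\}$ from (i) plus, from (ii), $\{i+q_n+kq_{n+1}:0\le i<q_{n+1},\ 0\le k<a_{n+2}\}=\{q_n,\dots,q_n+a_{n+2}q_{n+1}-1\}=\{q_n,\dots,q_{n+2}-1\}$, so together exactly the $q_{n+2}$ long arcs, each once; while the short arcs produced in (ii) are exactly the $q_{n+1}$ arcs $R_\theta^i(\Delta_{n+2})$, $0\le i<q_{n+1}$, again each once. Since the parent arcs tile $\T$ with pairwise disjoint interiors (induction hypothesis) and each child arc has nonempty interior contained in exactly one parent, $\phi(\mathcal P_{n+1}(c))$ tiles $\T$ and refines $\phi(\mathcal P_n(c))$; pulling back by $\phi$ completes the induction.

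The main obstacle is sub-claim (ii): pinning down \emph{which} arcs of $\phi(\mathcal P_{n+1}(c))$ appear when a level $n$ long arc is subdivided — i.e.\ checking that the candidate pieces really belong to $\phi(\mathcal P_{n+1}(c))$ (their indices lie in the prescribed ranges $[0,q_{n+2})$), that they fit together with disjoint interiors and lengths summing to $l_n$, and that over all parent arcs they exhaust $\phi(\mathcal P_{n+1}(c))$ without repetition. This is precisely where the continued-fraction identities and the index bookkeeping are needed; one must also keep track of the parity of $n$, which merely flips the orientation of the base arcs $\Delta_m$ about $0$ and leaves the combinatorics unchanged. Everything else — the reduction to $R_\theta$, the total-length identity, and the final ``pairwise disjoint interiors plus full length implies tiling'' step — is routine.
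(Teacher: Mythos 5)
The paper offers no proof of this proposition at all: it is quoted as the classical dynamical--partition (``three-distance'') fact for irrational rotations, standard in the renormalization theory of circle maps. Your proposal is a correct, self-contained proof of exactly that classical statement, and it is essentially the standard argument: transport everything to $R_\theta$ via $\phi$, observe that the partition involves only the orbit of $\phi(c)$, and induct using $q_{n+1}l_n+q_nl_{n+1}=1$ and $l_n=a_{n+2}l_{n+1}+l_{n+2}$. Your sub-claims (i)--(ii) identify the right decomposition, and the index bookkeeping is correct: the indices $i+q_n+kq_{n+1}$ with $0\le i<q_{n+1}$, $0\le k<a_{n+2}$ run exactly once over $[q_n,q_{n+2})$ by the division algorithm, and the short arcs produced are exactly $R_\theta^i(\Delta_{n+2})$, $0\le i<q_{n+1}$. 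The one step you defer in (ii) -- that the candidate pieces actually abut inside the parent arc -- is a two-line sign computation rather than a genuine obstacle: writing $l_m=(-1)^m(q_m\theta-p_m)$, one has $R_\theta^{q_n}(0)\equiv(-1)^nl_n$ and $R_\theta^{q_{n+1}}(0)\equiv(-1)^{n+1}l_{n+1}\pmod 1$, so (say for $n$ even) $R_\theta^{i+q_n+kq_{n+1}}(\Delta_{n+1})=R_\theta^i\bigl([\,l_n-(k+1)l_{n+1},\,l_n-kl_{n+1}\,]\bigr)$, and these $a_{n+2}$ consecutive arcs together with $R_\theta^i([0,l_{n+2}])$ tile $R_\theta^i([0,l_n])=R_\theta^i(\Delta_n)$ precisely because $l_n-a_{n+2}l_{n+1}=l_{n+2}$; odd $n$ is the mirror image. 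The only caveat, which afflicts the paper's conventions as much as your proof, is the degenerate case $n=0$ with $a_1=1$: there $l_0=\theta>\tfrac12$, so the ``shortest interval'' $[x,f(x)]$ has length $1-\theta\neq l_0$; either start the induction at $n=1$ (where $l_n<\tfrac12$ always and no ambiguity arises) or read $[x,f^{q_n}(x)]$ as the arc containing no other point of the relevant finite orbit.
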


Recall that $\theta$ is of \emph{bounded type} if there is a uniform bound on the terms $a_n$ in its continued fraction expansion $[0;a_1,a_2,\ldots]$. If so, we denote the optimal bound by 
\[
\beta(\theta) := \max_{i \geq 1} a_i.
\]
For any positive integer $N$, we define the set $\Theta_N$ to be the set of bounded type irrationals $\theta \in (0,1)$ satisfying $\beta(\theta)\leq N$. The bounded type assumption controls the rate of decrease of the lengths in (\ref{eqn:comb-length}).

\begin{proposition}
\label{bounded-type}
    If $\theta$ is in $\Theta_N$, there exists a pair of constants $\tilde{C}, C>1$ depending only on $N$ such that for every positive integer $n$, 
    \[
    \tilde{C} l_{n+1} \leq l_n \leq C l_{n+1}.
    \]
\end{proposition}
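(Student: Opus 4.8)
The plan is to reduce the estimate to the classical recursion satisfied by the combinatorial lengths $l_n = |p_n - q_n\theta|$, and then to read off the two inequalities from the sole hypothesis that $a_i \le N$ for all $i$.

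\emph{Step 1: the recursion.} From the recurrences $p_{n+1} = a_{n+1}p_n + p_{n-1}$ and $q_{n+1} = a_{n+1}q_n + q_{n-1}$ one gets
\[
p_{n+1} - q_{n+1}\theta = a_{n+1}\,(p_n - q_n\theta) + (p_{n-1} - q_{n-1}\theta), \qquad n \ge 1.
\]
The alternation of the return iterates recalled above, $R_\theta^{q_1}(x) < R_\theta^{q_3}(x) < \cdots < x < \cdots < R_\theta^{q_2}(x)$, says precisely that $p_n - q_n\theta$ has sign $(-1)^{n+1}$; hence taking absolute values of the displayed identity collapses the three terms into
\[
l_{n-1} = a_{n+1}\, l_n + l_{n+1}, \qquad n \ge 1.
\]
In particular, since $a_{n+1} \ge 1$ and $l_{n+1} > 0$, the sequence $(l_n)$ is strictly decreasing.

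\emph{Step 2: the two bounds.} Applying the recursion at the next index, so that $l_n = a_{n+2}\,l_{n+1} + l_{n+2}$, and combining with the monotonicity $l_{n+2} < l_{n+1}$ and with $a_{n+2} \le N$ gives the upper bound
\[
l_n < (a_{n+2}+1)\,l_{n+1} \le (N+1)\,l_{n+1}.
\]
Feeding this same inequality in one step further yields $l_{n+2} > \tfrac{1}{N+1}\,l_{n+1}$, so using $a_{n+2} \ge 1$,
\[
l_n = a_{n+2}\,l_{n+1} + l_{n+2} > \Big(1 + \tfrac{1}{N+1}\Big)\,l_{n+1} = \frac{N+2}{N+1}\,l_{n+1}.
\]
Hence $\tilde C := \tfrac{N+2}{N+1}$ and $C := N+1$ do the job: both are strictly larger than $1$ and depend only on $N$.

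As for the main obstacle, there really is none of substance. The only points demanding care are the sign bookkeeping in Step 1 — which is exactly the content of the stated alternation of the return times $q_n$ — and keeping the index ranges consistent so that the recursion and the monotonicity are invoked only where valid; everything else is elementary.
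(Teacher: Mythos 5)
Your proof is correct. The paper states Proposition \ref{bounded-type} without proof, treating it as a classical fact about continued fractions, and your argument is exactly the standard one being invoked: the recursion $l_{n-1}=a_{n+1}l_n+l_{n+1}$ obtained from the recurrences for $p_n,q_n$ together with the sign alternation of $p_n-q_n\theta$, followed by the elementary bounds $\tfrac{N+2}{N+1}\,l_{n+1} < l_n < (N+1)\,l_{n+1}$ using $1\le a_i\le N$ and monotonicity. The sign bookkeeping and index ranges are handled correctly, so there is nothing to add.
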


To control the orbit of critical values of $f$ along $\Hq$, we will use the following lemma several times in this paper.

\begin{lemma}
\label{counting}
    Suppose $\theta$ is in $\Theta_N$ and $S$ is a finite subset of $\Hq$. There is some constant $\varepsilon>0$ depending only on $N$ and $|S|$ such that for all $n\in \N$, every combinatorial interval $I\subset \Hq$ of level $n$ contains a subinterval $J \subset I$ of length $|J| \geq \varepsilon l_n$ that is disjoint from $\bigcup_{i=0}^{q_{n+2}-1} f^i(S)$.
\end{lemma}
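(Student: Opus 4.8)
The plan is to exploit the combinatorial structure provided by Proposition \ref{renormalization-tiling} together with the bounded geometry from Proposition \ref{bounded-type}. Fix $n$ and a level $n$ combinatorial interval $I = [x, f^{q_n}(x)]$. The set $T := \bigcup_{i=0}^{q_{n+2}-1} f^i(S)$ has at most $q_{n+2}\,|S|$ points, so a naive count is far too crude; the key observation is that the combinatorial dynamics is an irrational rotation, so for a single point $s \in S$, the orbit $\{f^i(s)\}_{i=0}^{q_{n+2}-1}$ is a rotation orbit of $q_{n+2}$ points on $\T$. By the three-distance (Steinhaus) theorem, or more directly by iterating the tiling $\mathcal{P}_{n+1}$, these $q_{n+2}$ points partition $\Hq$ into intervals each of combinatorial length at least $\min(l_{n+1}, l_{n+2}) \succ l_{n+1}$, where the implicit constant depends only on $N$ via Proposition \ref{bounded-type}. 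In particular, $I$ (of length $l_n \asymp l_{n+1}$) meets only a bounded number — at most $O(1)$, depending on $N$ — of the points of $\{f^i(s)\}_{i=0}^{q_{n+2}-1}$.

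I would make this precise as follows. First reduce to the combinatorial model: pushing forward by $\phi$, we work on $\T = \R/\Z$ with $R_\theta$, and $I$ becomes an interval of length $l_n$. For each $s \in S$, the points $R_\theta^i(\phi(s))$, $0 \le i < q_{n+2}$, together with the endpoints of the level $n+1$ tiling $\mathcal{P}_{n+1}$ refine $\Hq$ into intervals of length $\succ l_{n+1}$; here one uses that $q_{n+2} = a_{n+2} q_{n+1} + q_n \le (N+1) q_{n+1}$ so that $q_{n+2}$ iterates of the level $n+1$ first-return map still produce intervals of length bounded below by $l_{n+1}$ up to a factor depending on $N$. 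Hence the number of points of $\{R_\theta^i(\phi(s))\}_{i=0}^{q_{n+2}-1}$ lying in $I$ is at most $l_n / (c\, l_{n+1}) + 1 \le C' $ for a constant $C' = C'(N)$. Summing over $s \in S$, the set $T$ meets $I$ in at most $C'|S|$ points. These points cut $I$ into at most $C'|S|+1$ subintervals, so at least one of them, call it $J$, has combinatorial length $|J| \ge l_n / (C'|S|+1) =: \varepsilon l_n$, with $\varepsilon$ depending only on $N$ and $|S|$. This $J$ is disjoint from $T$ (its interior contains no point of $T$, and we may shrink slightly to make it disjoint from the finitely many cut points if closedness is desired), which is the desired conclusion.

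The main obstacle — really the only non-formal point — is the uniform lower bound on the gaps of a rotation orbit of exactly $q_{n+2}$ points. The cleanest route is to note that $\{R_\theta^i(p)\}_{i=0}^{q_{n+1}-1}$ already has all gaps $\ge l_n \succ l_{n+1}$ (this is essentially Proposition \ref{renormalization-tiling} at level $n$ applied with base point $p$), and then adding the remaining $q_{n+2}-q_{n+1} = (a_{n+2}-1)q_{n+1}+q_n$ points only subdivides existing gaps; each original gap of length $l_n$ or $l_{n-1}$ receives at most $a_{n+2}+1 \le N+1$ new points when we pass to the level $n+1$ tiling structure, so no gap shrinks below a definite fraction of $l_{n+1}$ depending only on $N$. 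Alternatively one can invoke the three-distance theorem directly: for $q_{n+2}$ rotation points the gaps take at most three values, all comparable to $l_{n+1}$ by Proposition \ref{bounded-type}. Either way the estimate is elementary once the bounded type hypothesis is in force, and the rest of the argument is just counting.
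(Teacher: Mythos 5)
Your proposal is correct and follows essentially the same route as the paper: use the gap structure of the rotation orbit of $q_{n+2}$ points (via Proposition \ref{renormalization-tiling}/the three-distance phenomenon) together with Proposition \ref{bounded-type} to bound by a constant $K(N,|S|)$ the number of points of $\bigcup_{s\in S}\{f^i(s)\}_{i<q_{n+2}}$ falling in $I$, then pigeonhole to extract a gap of length $\succ l_n$. The extra detail you supply on the uniform lower bound for the gaps is exactly what the paper compresses into its citation of Proposition \ref{renormalization-tiling}.
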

    
\begin{proof}
    By Proposition \ref{renormalization-tiling}, for every $c \in S$, the finite orbit $\mathcal{O}_c = \{ f^i(c) \}_{i=0,\ldots q_{n+2}-1}$ partitions $\Hq$ into intervals of lengths between $l_{n+2}$ and $l_{n}$. By Proposition \ref{bounded-type}, the number of points in $\bigcup_{c \in S} \mathcal{O}_c$ that lie within $I$ is at most some constant $K$ depending only on $N$ and $|S|$. Therefore, there is a subinterval of $I$ of length at least $l_n/K$ that satisfies the desired property.
\end{proof}

In later sections, we will assume that the conjugacy $\phi$ is quasisymmetric, which allows us to transfer what is known in the combinatorial metric back to $\Hq$ as a subset of $\RS$, equipped with the spherical metric.

\begin{lemma}
\label{lem:qc-control}
    Suppose $\theta$ is in $\Theta_N$ and the conjugacy $\phi: \Hq \to \T$ is $K$-quasisymmetric. For every point $c$ on $\Hq$,
    \begin{enumerate}[label=\textnormal{(\arabic*)}]
        \item the tilings $\mathcal{P}_n(c)$ have bounded geometry, that is, the diameters of any two adjacent tiles of the same level, or any two consecutive nested tiles, are comparable with a constant depending only on $N$ and $K$;
        \item there are positive constants $C, \varepsilon_1, \varepsilon_2$ depending only on $N$ and $K$ such that $\varepsilon_1<\varepsilon_2<1<C$ and for every $n \geq 2$,
        \[
            C^{-1} \varepsilon_1^n \leq \frac{|f^{q_n}(c)-c|}{\diam(\Hq)} \leq C \varepsilon_2^n.
        \]
    \end{enumerate}
\end{lemma}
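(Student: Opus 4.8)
The plan is to reduce both claims to the \emph{combinatorial} bounded geometry of the tilings $\mathcal{P}_n(c)$, which is immediate from Propositions \ref{renormalization-tiling} and \ref{bounded-type}, and then to transfer that information to the spherical metric through the conjugacy $\phi$. The only analytic input is a standard power-distortion estimate for quasisymmetric maps. Since $\phi$ is $K$-quasisymmetric, so is $\phi^{-1}$ (with constant depending only on $K$); as $\phi^{-1}$ maps the round circle $\T$ quasisymmetrically onto $\Hq$, the curve $\Hq$ has bounded turning with a constant $M=M(K)$, i.e. $\diam(I)\asymp|x-y|$ (implicit constant depending only on $K$) for every subarc $I\subseteq\Hq$ with endpoints $x,y$, and there are $\alpha=\alpha(K)\in(0,1]$ and $C_0=C_0(K)\geq1$ with
\begin{equation}\label{eq:holder-qc}
    C_0^{-1}\left(\frac{|I|}{|J|}\right)^{1/\alpha} \;\leq\; \frac{\diam(I)}{\diam(J)} \;\leq\; C_0\left(\frac{|I|}{|J|}\right)^{\alpha}
\end{equation}
for every pair of nested subarcs $I\subseteq J\subseteq\Hq$, where $|\,\cdot\,|$ is combinatorial length. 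Exhausting $\Hq$ by arcs and taking a limit yields (\ref{eq:holder-qc}) also with $J$ replaced by $\Hq$ and $|J|$ by $1$.

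\textbf{Claim (1).} By Propositions \ref{renormalization-tiling} and \ref{bounded-type}, every tile of $\mathcal{P}_n(c)$ has combinatorial length $l_n$ or $l_{n+1}$, with $1\geq l_{n+1}/l_n\geq 1/C$; hence any two tiles of $\mathcal{P}_n(c)$, and any tile of $\mathcal{P}_{n+1}(c)$ relative to the tile of $\mathcal{P}_n(c)$ containing it, have comparable combinatorial lengths with a constant depending only on $N$. If $T_1=[a,p]$ and $T_2=[p,b]$ are adjacent tiles of the same level, then $\phi(a),\phi(p),\phi(b)$ are three points of $\T$ with $|\phi(p)-\phi(a)|/|\phi(p)-\phi(b)|=|T_1|/|T_2|\asymp 1$; quasisymmetry of $\phi^{-1}$ gives $|p-a|\asymp|p-b|$, and bounded turning upgrades this to $\diam(T_1)\asymp|p-a|\asymp|p-b|\asymp\diam(T_2)$, all with constants depending only on $N$ and $K$. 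If instead $T'\subseteq T$ are consecutive nested tiles, then $\diam(T')\leq\diam(T)$ trivially, while the left inequality of (\ref{eq:holder-qc}) applied to the pair $T'\subseteq T$, together with $|T'|/|T|\succ1$, gives $\diam(T')\succ\diam(T)$, again with a constant depending only on $N$ and $K$. This proves (1).

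\textbf{Claim (2).} Iterating Proposition \ref{bounded-type} from $l_0=\theta\in(\tfrac1{N+1},1)$ produces constants $c_1=c_1(N)\geq1$ and $0<\mu<\nu<1$ depending only on $N$ with $c_1^{-1}\mu^n\leq l_n\leq c_1\nu^n$ for all $n$ (enlarging $C$ in Proposition \ref{bounded-type} if needed so that $\mu=1/C<1/\tilde C=\nu$). The interval $I_n:=[c,f^{q_n}(c)]$ is a level-$n$ combinatorial interval, so $|I_n|=l_n$, and (\ref{eq:holder-qc}) with $J=\Hq$ gives
\begin{equation*}
    C_0^{-1}\bigl(c_1^{-1}\mu^n\bigr)^{1/\alpha} \;\leq\; \frac{\diam(I_n)}{\diam(\Hq)} \;\leq\; C_0\bigl(c_1\nu^n\bigr)^{\alpha}.
\end{equation*}
Since $|f^{q_n}(c)-c|\leq\diam(I_n)\leq M\,|f^{q_n}(c)-c|$ by bounded turning, setting $\varepsilon_1:=\mu^{1/\alpha}$ and $\varepsilon_2:=\nu^{\alpha}$ (so that $\varepsilon_1\leq\mu<\nu\leq\varepsilon_2<1$) and choosing $C$ large enough to absorb $C_0$, $c_1^{1/\alpha}$, $M$, $(N+1)^{1/\alpha}$, and the finitely many small $n$, one obtains the stated two-sided bound for all $n\geq2$.

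\textbf{Main obstacle.} The argument is soft and I do not anticipate a genuine difficulty; the only points needing care are quantitative bookkeeping — that the bounded-turning constant of $\Hq$ and the Hölder data in (\ref{eq:holder-qc}) depend only on $K$, and the combinatorial ratio bounds only on $N$ — and the consistent handling of the shortest-arc conventions in the definitions of combinatorial intervals and of $\mathcal{P}_n(c)$, which is why claim (2) is restricted to $n\geq2$. The passage from (\ref{eq:holder-qc}) for proper arcs to the case $J=\Hq$ also requires the short limiting argument indicated above.
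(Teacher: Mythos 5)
The paper states Lemma \ref{lem:qc-control} without proof, treating it as a standard consequence of quasisymmetry and bounded type, so there is no in-paper argument to compare against; your proof supplies exactly the expected argument and it is correct. Reducing both claims to the combinatorial comparability of $l_n$, $l_{n+1}$, $l_{n+2}$ (Propositions \ref{renormalization-tiling} and \ref{bounded-type}) and then transferring it by a two-sided power-distortion estimate for the $K$-quasisymmetric map $\phi^{-1}$ is the right mechanism, and your tracking of which constants depend on $N$ and which on $K$ is sound. One small imprecision: the bounded-turning statement $\diam(I)\asymp|x-y|$ is false for \emph{every} subarc $I\subseteq\Hq$ with endpoints $x,y$ (take the long complementary arc of a short one); it holds for the arc of smaller diameter, or equivalently, by applying the same diameter-distortion estimate to the pair $\{\phi(x),\phi(y)\}\subseteq\phi(I)\subseteq\T$, for any arc of combinatorial length at most $1/2$. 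Since every tile of $\mathcal{P}_n(c)$ and every interval $[c,f^{q_n}(c)]$ with $n\geq1$ is combinatorially shorter than $1/2$, this restricted form is all you actually use, so the argument goes through unchanged; just state the bounded-turning input with that proviso.
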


\subsection{A trichotomy}
\label{ss:combinatorics}

Let us label the two connected components of $\RS \backslash \Hq$ by $Y^0$ and $Y^\infty$. Unless otherwise stated, throughout this paper, we will assume without loss of generality that $\Hq$ separates $0$ and $\infty$, $Y^0$ contains $0$, and $Y^\infty$ contains $\infty$.

There is a pair of functions
\[
d_0: \Hq \to \N \quad \text{and} \quad d_\infty:\Hq \to \N
\]
where for any point $c$ on $\Hq$, any $\bullet \in \{0,\infty\}$, and any point $z \in Y^\bullet$ sufficiently close to $f(c)$, the number of preimages of $z$ near $c$ in $Y^\bullet$ is equal to $d_\bullet(c)$. The local degree of $f$ at $c$ is $d_0(c)+d_\infty(c)-1$, and clearly $c$ is a critical point if and only if either $d_0(c) \geq 2$ or $d_\infty(c) \geq 2$.

\begin{definition}
    We say that a point $c$ on $\Hq$ is an \emph{inner} critical point if $d_0(c)\geq2$, and an \emph{outer} critical point if $d_\infty(c)\geq2$.
\end{definition} 

By the compactness of $\Hq$, the numbers
\[
D_0 := \sum_{c \in \Hq} d_0(c)-1 \qquad \text{ and } \qquad D_\infty := \sum_{c \in \Hq} d_\infty(c)-1
\]
are finite. The sum $D_0+D_\infty$ is precisely the number of critical points of $f$ on $\Hq$ counting multiplicity. Without loss of generality, we make the convention that
\[
D_\infty\geq D_0. 
\]

There are three distinct cases.
\begin{enumerate}[label=\protect\circled{\textnormal{\Alph*}}]
    \item\label{case-A} $D_\infty=0$ and $\Hq$ contains no critical points of $f$.
    \item\label{case-B} $D_\infty>D_0=0$ and $\Hq$ has only outer critical points.\\
    In this case, denote by $\left(c^\infty_1,\ldots,c^\infty_{D_\infty}\right)$ the tuple of outer critical points of $f$ along $\Hq$ counting outer multiplicity, i.e. every $c \in \Hq$ appears in the tuple precisely $d_\infty(c)-1$ times.
    \setcounter{enumi}{7}
    \item\label{case-H} $D_0 \geq 1$ and $\Hq$ has both inner and outer critical points.\\
    In this case, denote by $\left(c^\infty_1,\ldots,c^\infty_{D_\infty}\right)$ and $\left(c^0_1,\ldots,c^0_{D_0}\right)$ the tuples of outer and inner critical points counting outer and inner multiplicity respectively.
\end{enumerate}

Let us formally define the combinatorics of rotation curves as follows. For any $n \in \N$, the $n^{\text{th}}$ symmetric product $\SP^n(\T)$ of the unit circle $\T$ is the quotient of the $n$-dimensional torus $\T^n$ under the symmetric group $S_n$ acting by permutation. Elements of $\SP^n(\T)$ are precisely unordered $n$-tuples of elements of $\T$. 

\begin{definition}
\label{combspace}
     Define $\mathcal{S}_{d}$ to be the quotient space of $\SP^{d-1}(\T)$ modulo the action of $\T$ by rigid rotation. Define $\mathcal{C}_{m,n}$ to be the quotient space of $\SP^{m-1}(\T) \times \SP^{n-1}(\T)$ modulo the action of $\T$ by rigid rotation. %Both $\mathcal{S}_d$ and $\mathcal{C}_{m,n}$ are endowed with the quotient topology.
\end{definition}

Both $\mathcal{S}_{d}$ and $\mathcal{C}_{m,n}$ are compact orbifolds of real dimension $d-2$ and $m+n-3$ respectively.

\begin{definition}
\label{combi}
    In case \ref{case-B}, the \emph{combinatorics} of $f|_\Hq$ is the element $\comb(f)$ of $\mathcal{S}_{D_\infty}$ induced by the tuple $\left(\phi(c^\infty_1),\ldots,\phi(c^\infty_{D_\infty})\right)$. 
    In case \ref{case-H}, the \emph{combinatorics} of $f|_\Hq$ is the element $\comb(f)$ of $\mathcal{C}_{D_0,D_\infty}$ induced by the pairs of tuples $\left(\phi(c^0_1),\ldots, \phi(c^0_{D_0})\right)$ and $\left(\phi(c^\infty_1), \ldots, \phi(c^\infty_{D_\infty})\right)$.
\end{definition}

Note that $\comb(f)$ is well-defined because the conjugacy $\phi|_\Hq$ is unique up to post-composition with rigid rotation. One may also compare our definition with the notion of \emph{signature} of a multicritical circle map. See \cite[\S6]{EdF}.

Consider the set $\mathscr{H} \subset \R \backslash \Q$ of \emph{Herman numbers}. The set $\mathscr{H}$ has full Lebesgue measure and is characterized by a rather complicated arithmetic condition that was devised by Herman and Yoccoz \cite{H79, Yo02} as the optimal condition for an analytic circle diffeomorphism to be analytically linearizable. In this paper, we only need the property that $\mathscr{H}$ contains the set of bounded type irrationals.

\begin{proposition}[Trichotomy of rotation curves]
\label{trichotomy}
    Suppose $\theta \in \mathscr{H}$. Exactly one of the following holds.
    \begin{enumerate}[label=\protect\circled{\textnormal{\Alph*}}]
        \item $\Hq$ is an analytic curve contained in a rotation domain of $f$.
        \item $\Hq$ is a boundary component of a rotation domain of $f$ and contains an outer critical point but no inner critical points.
        \setcounter{enumi}{7}
        \item $\Hq$ is a Herman curve containing an inner critical point and an outer critical point.
    \end{enumerate}
\end{proposition}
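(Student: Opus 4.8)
The plan is to analyze the rotation curve $\Hq$ according to which of the three combinatorial cases \ref{case-A}, \ref{case-B}, \ref{case-H} it falls into, and in each case to determine whether $\Hq$ sits inside a rotation domain, bounds one, or is a Herman curve. The starting observation is that the three cases \ref{case-A}--\ref{case-H} are defined by the values of $D_0$ and $D_\infty$ (recall the convention $D_\infty \geq D_0$), so they are mutually exclusive and exhaustive; the content of the proposition is to match each with the corresponding dynamical picture \ref{case-A}--\ref{case-H}, and in particular to rule out, under the hypothesis $\theta \in \mathscr{H}$, the a priori possibilities that a critical-point-free rotation curve fails to be analytic, or that a rotation curve with critical points on only one side is genuinely Herman, or that a rotation curve with inner \emph{and} outer critical points bounds a rotation domain.

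In case \ref{case-A}, $\Hq$ carries no critical points, so $f$ is a local biholomorphism in a neighborhood of $\Hq$. Here I would invoke the linearization theory behind the Herman-condition hypothesis: since $f|_\Hq$ is conjugate to $R_\theta$ with $\theta \in \mathscr{H}$ and $f$ extends holomorphically across $\Hq$, a theorem of Herman--Yoccoz type (the $\mathscr{H}$ condition is precisely designed so that an analytic circle diffeomorphism with rotation number in $\mathscr{H}$ is analytically linearizable) shows the conjugacy can be taken analytic near $\Hq$; pulling back the foliation by round circles in the linearizing coordinate produces an $f$-invariant neighborhood of $\Hq$ on which $f$ is analytically conjugate to a rotation, so $\Hq$ lies in a rotation domain and is analytic. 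In case \ref{case-B}, $\Hq$ has outer critical points but no inner ones, so $f$ restricted to a one-sided neighborhood $Y^0 \cap V$ of $\Hq$ is a local biholomorphism near $\Hq$ (with $\Hq$ in its boundary), and the appropriate one-sided version of the linearization argument — together with the fact that a Siegel disk boundary must contain a critical point (so the rotation domain cannot lie on the $Y^\infty$ side, where criticality is concentrated) — shows $\Hq$ is the boundary component of a rotation domain filling part of $Y^0$. That this domain actually abuts $\Hq$ along all of $\Hq$, rather than being a proper subdomain, follows because its boundary is invariant and $f|_\Hq$ is minimal. In case \ref{case-H}, $\Hq$ has both an inner and an outer critical point; I claim $\Hq$ cannot bound a rotation domain on either side, because a rotation domain adjacent to $\Hq$ on the $Y^0$ side would force $f$ to be injective on a one-sided neighborhood, contradicting the existence of an inner critical point, and symmetrically on the $Y^\infty$ side. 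Hence $\Hq$ is not contained in the closure of any rotation domain, i.e. it is a Herman curve.

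The main obstacle I anticipate is case \ref{case-A}, specifically making rigorous the passage from "analytically linearizable on $\Hq$" to "$\Hq$ is contained in an honest rotation domain" — i.e. producing a genuinely $f$-invariant open neighborhood on which the dynamics is a rotation, so that by the definition in Section \S\ref{sec:rot-curves} it qualifies as a rotation domain. The subtlety is that $f: U \to V$ is only a partially defined holomorphic map, so one must ensure the linearizing coordinate extends to a full annular neighborhood contained in $U \cap V$ and that the invariant curves one pulls back are genuine Jordan curves; this requires a Denjoy--Wolff / Koenigs-type argument localized near $\Hq$, using that the quasisymmetric (hence, under $\theta \in \mathscr{H}$, analytic) conjugacy extends. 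I would handle this by first establishing the analytic conjugacy on $\Hq$ via the Herman--Yoccoz theorem, then using the maximal domain of analyticity of the linearizer together with the reflection/extension principle to build the invariant annular neighborhood, and finally checking maximality against condition (iii) in the definition of rotation domain. Cases \ref{case-B} and \ref{case-H} should then follow by the one-sided variants plus the classical fact that Siegel-disk and Herman-ring boundaries are critical, with the bookkeeping of $D_0$ versus $D_\infty$ dictating which side the domain (if any) lies on.
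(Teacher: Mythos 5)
Your overall scheme (split according to $D_0$ and $D_\infty$, then match each case with the dynamical picture) is the same as the paper's, and your treatment of case \ref{case-H} is in line with the paper, which simply observes that a critical point rules out $\Hq$ lying inside a rotation domain and dismisses the case as clear. However, in cases \ref{case-A} and \ref{case-B} there is a genuine gap, and it sits precisely at the step you yourself flag as the main obstacle. You propose to apply a Herman--Yoccoz linearization theorem directly to $f|_\Hq$, asserting that the quasisymmetric conjugacy is ``hence, under $\theta\in\mathscr{H}$, analytic.'' That theorem applies to \emph{analytic circle diffeomorphisms}; here $\Hq$ is a priori only a Jordan curve (quasicircle), so $f|_\Hq$ carries no analytic circle structure to which the theorem could be applied, and the arithmetic hypothesis $\theta\in\mathscr{H}$ does not by itself upgrade a topological or quasisymmetric conjugacy to an analytic one. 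As written, your plan for case \ref{case-A} (``establish the analytic conjugacy on $\Hq$ via the Herman--Yoccoz theorem, then extend'') is circular: producing an analytic model for $f$ near $\Hq$ is exactly what has to be proven.

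The missing idea, which is the crux of the paper's proof, is to manufacture an honest analytic circle diffeomorphism out of the critical-point-free side. Since $D_0=0$, the map $f$ is univalent on a one-sided annulus $W\subset Y^0$ attached to $\Hq$, and $f(W)$ is again such an annulus; conjugating by a Riemann map $\psi\colon Y^0\to\D$ gives a univalent map near $\T$ inside $\D$, which by Schwarz reflection extends to an analytic circle diffeomorphism $F$ of $\T$ with rotation number $\theta$. Herman--Yoccoz now applies to $F$, and pulling the linearization back through $\psi$ yields an $f$-invariant annulus $A^0\subset Y^0$ with boundary component $\Hq$ on which $f$ is analytically conjugate to $R_\theta$. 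From this, case \ref{case-B} follows because an outer critical point forces $\Hq$ to be a boundary component (not an interior curve) of the rotation domain containing $A^0$, and case \ref{case-A} follows by running the same construction on the other side as well, so that $A^0\cup\Hq\cup A^\infty$ lies in a rotation domain and $\Hq$ is an invariant analytic curve. Without this uniformization-plus-reflection step your argument for \ref{case-A} and \ref{case-B} does not close. A secondary point: your appeal in case \ref{case-B} to ``a Siegel disk boundary must contain a critical point'' is not a fact available at this level of generality (for $\theta\in\mathscr{H}$ it is essentially a corollary of the construction just described), and it is not needed once the invariant annulus is built on the critical-point-free side.
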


\begin{proof}
    Recall the numbers $D_0$ and $D_\infty$ in the discussion above. We are assuming that $D_\infty \geq D_0$. It is clear that if $D_0 \geq 1$, then $\Hq$ must be a Herman curve.
    
    Suppose $D_0=0$, so that $\Hq$ contains no inner critical points. There is an annulus $W \subset Y^0$ such that $\Hq$ is one of the boundary components of $W$ and $f$ is univalent on $W$. Since $f|_{\Hq}$ is an orientation-preserving self-homeomorphism of $\Hq$, the image $Z:= f(W)$ is again an annulus contained in $Y^0$ with $\Hq$ being one of its boundary components. Pick a conformal isomorphism $\psi: Y^0 \to \D$ and define the univalent map $F:= \psi \circ f \circ \psi^{-1}$ from $\psi(W)$ to $\psi(Z)$. By Schwarz reflection, $F$ extends continuously to a univalent map $F:W' \to Z'$ where $W'$ and $Z'$ are the smallest $\T$-symmetric annuli containing $\psi(W)$ and $\psi(Z)$ respectively. 
    
    The map $F$ restricts to an analytic circle diffeomorphism with rotation number $\theta$. Since $\theta$ is a Herman number, $F|_\T$ must be analytically linearizable, so $F$ admits a $\T$-symmetric Herman ring. By pulling back this Herman ring via $\psi$, we obtain an invariant annulus $A^0 \subset W$ such that $\Hq$ is a boundary component of $A^0$ and $f|_{A^0}$ is analytically conjugate to $R_\theta$. Denote by $A$ the rotation domain of $f$ containing $A^0$.

    If $\Hq$ contains an outer critical point, then $f|_\Hq$ cannot be analytically conjugate to $R_\theta$ and $\Hq$ has to be a boundary component of $A$. Otherwise, $D_\infty=0$ and by the same argument, there is an annulus $A^\infty \subset Y^\infty$ such that $\Hq$ is a boundary component of $A^\infty$ and $f|_{A^\infty}$ is analytically conjugate to $R_\theta$. Hence, $A^0 \cup \Hq \cup A^\infty$ lies in $A$ and $\Hq$ is an invariant analytic curve.
\end{proof}

The trichotomy breaks when $\theta \not\in \mathscr{H}$. For instance, there exist cubic rational maps admitting Herman curves of arbitrary non-Herman irrational rotation number containing no critical points. See \cite[Proposition 6.6]{BF14} and \cite{Y22}. Nonetheless, when $f$ is a rational map, the following weaker property still holds.

\begin{proposition}
\label{herman-in-pf}
    Every rotation curve of a rational map is contained in either its Fatou set or its postcritical set.
\end{proposition}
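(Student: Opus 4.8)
The plan is to show that if a rotation curve $\Hq$ of a rational map $f$ is not contained in $F(f)$, then $\Hq \subseteq P(f)$. Suppose $\Hq$ meets the Julia set $J(f)$. Since $\Hq$ is invariant under the iterate $g := f^k$ and $g|_\Hq$ is conjugate to an irrational rotation, the set $\Hq \cap J(f)$ is closed and invariant under $g|_\Hq$; because every orbit of an irrational rotation is dense, this forces $\Hq \cap J(f) = \Hq$, i.e.\ the whole curve lies in the Julia set. So it suffices to prove: if $\Hq \subseteq J(f)$, then $\Hq \subseteq P(f)$.

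The main tool I would use is the classical fact that the Julia set has no interior equicontinuity: more precisely, for any point $z \in J(f)$ and any neighborhood $V$ of $z$, the union $\bigcup_{n \ge 0} f^n(V)$ omits at most two points of $\RS$, and in particular meets $J(f)$ densely; combined with the fact that $P(f) \cup \{\text{exceptional points}\}$ is exactly the obstruction set, the standard argument is that if $z \in J(f)$ has a neighborhood $V$ on which all inverse branches of all $f^n$ are well-defined and univalent, then those branches form a normal family (they avoid $P(f)$, which contains at least $3$ points once $\deg f \ge 2$), contradicting $z \in J(f)$. Concretely, I would argue by contradiction: suppose some $z_0 \in \Hq \subseteq J(f)$ satisfies $z_0 \notin P(f)$. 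Since $P(f)$ is closed, there is a round disk $D = \D(z_0, r)$ with $\overline{D} \cap P(f) = \emptyset$. Then every branch of $f^{-n}$ is defined and univalent on $D$ for all $n \ge 0$, so $\{f^{-n}|_D\}$ avoids the three-point set $P(f)$ (using $\deg f \ge 2$), hence is a normal family by Montel; a Zalcman-type / standard argument then shows the forward iterates $\{f^n|_D\}$ are also normal, contradicting $z_0 \in J(f)$.

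The remaining point is to upgrade "some point of $\Hq$ lies in $P(f)$" to "all of $\Hq$ lies in $P(f)$''. For this, observe that $P(f)$ is forward invariant ($f(P(f)) \subseteq P(f)$); the intersection $\Hq \cap P(f)$ is therefore a nonempty closed subset of $\Hq$ that is forward invariant under $g|_\Hq = f^k|_\Hq$. But $g|_\Hq$ is conjugate to an irrational rotation, so every nonempty closed forward-invariant (equivalently, invariant) subset of $\Hq$ is either empty or all of $\Hq$ — because the forward orbit of any point under an irrational rotation is dense in the circle. Since $\Hq \cap P(f) \neq \emptyset$, we conclude $\Hq \cap P(f) = \Hq$, i.e.\ $\Hq \subseteq P(f)$, which is the desired alternative.

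I expect the only subtle point to be the normality argument in the middle paragraph: one must be careful that $P(f)$ genuinely contains at least three points (true since $\deg f \ge 2$, as $J(f)$ is infinite and $P(f) \supseteq J(f)$ is false in general but $P(f)$ still has $\ge 3$ points — more robustly, enlarge the omitted set to $P(f) \cup \{0,1,\infty\}$ after a Möbius normalization, or invoke the standard lemma that a point of $J(f)$ cannot have a neighborhood disjoint from $P(f)$), and that normality of the inverse branches does yield normality of the forward iterates at $z_0$. All of this is classical (see e.g.\ Milnor's book or the standard Fatou--Julia theory), so the proof is short once the two-step structure — "Julia $\Rightarrow$ whole curve, not in $P(f)$ $\Rightarrow$ contradiction'' together with "meets $P(f)$ $\Rightarrow$ contained in $P(f)$'' — is in place.
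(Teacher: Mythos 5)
Your opening and closing reductions are fine (minimality of the irrational rotation does give both "$\Hq$ meets $J(f)$ implies $\Hq\subset J(f)$" and "$\Hq$ meets $P(f)$ implies $\Hq\subset P(f)$"), but the middle step is a genuine error. The "standard lemma" you invoke --- that a point of $J(f)$ cannot have a neighborhood disjoint from $P(f)$ --- is false: for any hyperbolic rational map (e.g.\ $f(z)=z^2$, where $J(f)=\T$ and $P(f)=\{0,\infty\}$) the Julia set is disjoint from the postcritical set. Correspondingly, normality of the inverse branches $\{f^{-n}|_D\}$ on a disk $D$ avoiding $P(f)$ never contradicts $z_0\in J(f)$: those branches avoid a large set and are always normal in this situation, and there is no "Zalcman-type" upgrade to normality of the forward iterates --- in the hyperbolic example the inverse branches are normal (they contract exponentially) while the forward iterates are wildly non-normal on $J(f)$. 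Since your contradiction step uses only "$z_0\in J(f)$ and $z_0\notin P(f)$" and never the rotation dynamics, it would prove $J(f)\subset P(f)$ for every rational map, which is false; so the argument cannot be repaired at that point without bringing in the invariant curve.

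The paper's proof shows where the rotation structure must enter. From one disk $D\subset\Omega:=\RS\setminus P(f)$ meeting $\Hq$, it pulls back along the rotation (using $f^{-1}(\Omega)\subset\Omega$ and density of backward orbits of the arc $D\cap\Hq$) to conclude that the \emph{whole} curve $\Hq$ is a compact subset of $\Omega$ --- this much is parallel to your minimality step. The contradiction is then extracted by comparing the component $A$ of $\Omega$ containing $\Hq$ with the component $B$ of $f^{-1}(\Omega)$ containing $\Hq$. If $B=A$, then $f$ maps $A$ into itself, the iterates $\{f^n|_A\}$ are normal, and $\Hq$ would lie in the Fatou set, contradicting $\Hq\subset J(f)$. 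If $B\subsetneq A$, then $f:B\to A$ is an unbranched covering while the inclusion $B\hookrightarrow A$ strictly contracts hyperbolic metrics, so $f$ uniformly expands the hyperbolic metric of $A$ along the invariant compact set $\Hq$; but a compact metric space admits no expanding self-homeomorphism. It is precisely this dichotomy (normality versus expansion on a compact invariant curve), absent from your proposal, that makes the statement true for rotation curves and false for arbitrary Julia points.
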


\begin{proof}
    Let $\Hq$ be a rotation curve of a rational map $f$ contained in its Julia set $J(f)$. Suppose for a contradiction that there is a point $x \in \Hq$ lying outside of the postcritical set $P(f)$. Since $P(f)$ is compact, there is an open disk neighborhood $D$ of $x$ that lies within $\Omega := \RS \backslash P(f)$. By taking iterated preimages of $D$, we deduce that $\Hq$ must a compact subset of $\Omega$ because $f^{-1}(\Omega) \subset \Omega$. 
    
    Denote by $A$ the connected component of $\Omega$ containing $\Hq$, and by $B$ the connected component of $f^{-1}(\Omega)$ containing $\Hq$. Since $f^{-1}(\Omega) \subset \Omega$, then $B$ is a subset of $A$.
    
    Suppose $B=A$. Then, $\{f^n|_A\}_{n\in\N}$ forms a normal family and $\Hq$ is a subset of the Fatou set. This contradicts the assumption that $\Hq$ is a subset of $J(f)$.

    Suppose $B$ is a proper subset of $A$. In this case, $f|_B$ expands the hyperbolic metric of $A$. In particular, with respect to the hyperbolic metric of $A$, $f|_{\Hq}$ is a uniformly expanding homeomorphism. This contradicts the fact that compact metric spaces do not admit expanding self homeomorphisms. 
\end{proof}

\subsection{Approximate rotation}
\label{ss:approx-rot}

Let us now assume that $\Hq$ is a quasicircle containing a critical point of $f$. Further regularity of the conjugacy $\phi$ can be achieved under the bounded type assumption.

\begin{theorem}[\cite{Pe04}]
\label{petersen}
    The rotation number $\theta$ of $\Hq$ is of bounded type if and only if there is a quasiconformal map $\phi: \RS \to \RS$ which restricts to a conjugacy between $f|_\Hq$ and $R_\theta|_\T$.
\end{theorem}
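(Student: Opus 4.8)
The plan is to reduce the equivalence to the classical Herman--\'Swiatek theorem for analytic critical circle maps (see \cite{dF99, H79}), by straightening one side of $f$ to a genuine critical circle map and invoking standard facts about quasicircles. By the convention $D_\infty \ge D_0$ and the hypothesis that $\Hq$ carries a critical point, we have $D_\infty \ge 1$, so some critical point of $f$ lies on $\partial Y^\infty$. Fix a Riemann map $\psi: Y^\infty \to \D$. Since $\Hq = \partial Y^\infty$ is a quasicircle, $\psi$ extends to a homeomorphism of closures (Carath\'eodory) and to a quasiconformal homeomorphism $\Psi$ of $\RS$ (Ahlfors' characterization of quasicircles), so that $\psi|_\Hq: \Hq \to \T$ is quasisymmetric.

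Next I would produce the critical circle map. Because $\Hq$ is invariant, $f$ is an orientation-preserving holomorphic branched cover near $\Hq$, and $f|_\Hq$ preserves the $\partial Y^\infty$-orientation, $f$ maps the $Y^\infty$-side of a small neighborhood of each point of $\Hq$ into $Y^\infty$; since moreover the critical points of $f$ near $\Hq$ all lie on $\Hq$, a sufficiently thin one-sided collar $W$ of $\Hq$ inside $Y^\infty$ satisfies $f(W) \subset Y^\infty$ with $f|_W$ conformal. Hence $F := \psi \circ f \circ \psi^{-1}$ is conformal on the one-sided collar $\psi(W)$ of $\T$ in $\D$, extends continuously to $\T$ with $F(\T) = \T$, and therefore by Schwarz reflection extends holomorphically to a full neighborhood of $\T$; thus $F|_\T$ is an analytic circle homeomorphism. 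Comparing the number of $Y^\infty$-side preimages near a point $c \in \Hq$ with the local degree of $F$ at $\psi(c)$ shows that the critical points of $F|_\T$ are precisely the points $\psi(c)$ with $c$ an outer critical point of $f$, of local degrees $d_\infty(c)$; as $D_\infty \ge 1$, the map $F|_\T$ is an analytic critical circle map. Being topologically conjugate to $f|_\Hq$ via $\psi|_\Hq$, it has rotation number $\theta$ (or $1-\theta$, depending on the orientation convention on $\Hq$), which is of bounded type if and only if $\theta$ is.

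The statement now follows by assembling three standard facts: $\psi|_\Hq$ is a quasisymmetric conjugacy between $f|_\Hq$ and $F|_\T$; every quasiconformal homeomorphism of $\RS$ is quasisymmetric, hence restricts to a quasisymmetric map on the quasicircle $\Hq$; and any quasisymmetric homeomorphism between the quasicircles $\Hq$ and $\T$ extends to a quasiconformal homeomorphism of $\RS$ (compose with a quasiconformal straightening of $\Hq$ and apply the Ahlfors--Beurling extension). Chaining these equivalences, the existence of a global quasiconformal $\phi$ as in the statement is equivalent to $f|_\Hq$ being quasisymmetrically conjugate to $R_\theta|_\T$, which is equivalent to $F|_\T$ being quasisymmetrically conjugate to $R_\theta|_\T$, which by the classical Herman--\'Swiatek theorem holds if and only if $\theta$ is of bounded type; in the reverse direction one builds $\phi$ by extending $h \circ \psi|_\Hq$, where $h$ is the quasisymmetric linearization of $F|_\T$. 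I expect the only non-routine step to be the construction of $F$: that the Riemann-map conjugate extends holomorphically across $\T$ is exactly where the quasicircle hypothesis is used (it supplies the boundary correspondence $F(\T)=\T$ and holomorphy up to $\T$ needed for Schwarz reflection), and that criticality on $\Hq$ passes to genuine critical points of $F|_\T$ of the correct local degree relies on $\psi$ being conformal on the open component $Y^\infty$. The full statement in this generality is due to Petersen \cite{Pe04}.
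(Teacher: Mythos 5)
There is a genuine gap at the very first step of your reduction, and it is precisely the difficulty that makes this theorem non-trivial. Your claim that a sufficiently thin one-sided collar $W\subset Y^\infty$ of $\Hq$ satisfies $f(W)\subset Y^\infty$ is false as soon as $\Hq$ carries an outer critical point $c$ --- which is exactly the case the theorem is about (by the convention $D_\infty\geq D_0\geq 0$ and the standing assumption that $\Hq$ contains a critical point). Indeed, as in Lemma \ref{lem:preim}, near $c$ the preimage of $\Hq$ consists of the arc of $\Hq$ itself together with $2d_\infty(c)-2\geq 2$ quasiarcs in $Y^\infty$ attached to $c$, and the sectors between consecutive preimage arcs are mapped by $f$ alternately to the two sides of $\Hq$. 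Only the two sectors adjacent to $\Hq$ map into $Y^\infty$; the remaining sectors, which contain points of $W$ arbitrarily close to $c$, map into $Y^0$. Consequently $F=\psi\circ f\circ\psi^{-1}$ is not even defined (let alone holomorphic) on a full one-sided annular neighborhood of $\T$: its natural domain is pinched at $\psi(c)$, Schwarz reflection only extends it across $\T\setminus\{\psi(c)\}$, and no removable-singularity argument applies because the extended domain does not contain a punctured neighborhood of $\psi(c)$. In general $F|_\T$ is \emph{not} an analytic critical circle map; the one-sided uniformization of a critical quasicircle map is only a quasiregular object --- a quasicritical circle map in the sense of \ref{QCC1}--\ref{QCC5} --- because the boundary map at $\psi(c)$ remembers both criticalities $d_0(c)$ and $d_\infty(c)$ through the harmonic-measure distortion of the sectors. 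So the classical Herman--\'Swiatek theorem for analytic critical circle maps is simply not available after your straightening, and both directions of the equivalence (bounded type $\Rightarrow$ quasisymmetric linearization, and the converse, which also uses the critical point via cross-ratio inequalities) collapse.

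For comparison: this paper does not reprove the statement but quotes it from \cite{Pe04}, and the surrounding machinery shows what a correct argument requires. Proposition \ref{welding} records that conjugating by the two Riemann maps produces a welding of two \emph{quasicritical} circle maps $g_0,g_\infty$, and Theorem \ref{cqc-real-bounds} (Avila--Lyubich, following Petersen) supplies the real a priori bounds and quasisymmetric rigidity for that quasiregular class; it is this quasicritical version of Herman--\'Swiatek, not the analytic one, that yields the global quasiconformal linearization. Note also that in the paper the ``analytic circle map on one side'' trick is used only in the proof of Proposition \ref{trichotomy}, and there only on the side \emph{without} critical points, where $f$ genuinely preserves a one-sided collar --- the opposite of the side you chose. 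Your outer chain of equivalences (quasisymmetry of $\psi|_\Hq$, the Ahlfors--Beurling extension, etc.) is fine, but the core reduction needs to be replaced by the cross-ratio/complex-bounds analysis of quasicritical circle maps.
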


In what follows, let us assume that the rotation number $\theta$ is of bounded type and fix a linearizing quasiconformal map $\phi: \RS \to \RS$ guaranteed by this theorem. Consider the function 
\begin{equation}
\label{eqn:escape-function}
    L(z) := \log\left(\dist(\phi(z),\T)\right).
\end{equation}
Given a point $z$ near $\Hq$, we will measure the rate of escape of iterates of $z$ using the function $L$.

Let us fix $\kappa>0$ such that $f$ extends to a holomorphic map on the open annulus 
\[
    A := \{ - \infty \leq L(z) < -\kappa \}
\]
 on which the only critical points of $f$ inside of $A$ lie on $\Hq$. Split $A$ into two annuli
\[
    A^0 := A \cap Y^0 \qquad \text{and} \qquad A^\infty := A \cap Y^\infty.
\]
We call $A$, $A^0$, and $A^\infty$ a \emph{collar}, \emph{inner collar}, and \emph{outer collar} of $f|_\Hq$.

Let us define the quasi-rotation 
\[
    F(z) := \phi^{-1} (e^{2\pi i\theta} \phi(z)).
\]
Clearly, $F$ coincides with $f$ on $\Hq$. In case \ref{case-B}, we can modify $\phi$ such that it also provides a conjugacy on the inner collar $A^0$, and thus $F \equiv f$ on $\overline{A^0}$.

Let us equip $\nohq: = \RS \backslash \Hq$ with the hyperbolic metric. The following definition is inspired by \cite[\S3]{McM98}.

\begin{definition}
\label{approx-rotation}
    Suppose an iterate $f^i: U \to V$ is well defined for some $i \geq 0$ and some pair of topological disks $U,V \subset A$. Given a constant $C>0$, we say that $f^i :U \to V$ is an $C$-\emph{approximate rotation} if it is a univalent function of bounded distortion such that 
    \[
    d_{\nohq}(f^i(x), F^i(x))\leq C \qquad \text{ for all } x \in U \backslash \Hq.
    \]
    We will also call $f^i :U \to V$ an \emph{approximate rotation} if it is a $C$-approximate rotation for some implicit constant $C>0$ independent of $i$, $U$, and $V$.
\end{definition}

Given a Jordan domain $U$ and a pair of disjoint arcs $I$ and $J$ on the boundary of $U$, we denote by $\mathcal{L}_U(I,J)$ the extremal length of the family of proper curves in $U$ connecting $I$ and $J$. The domain $U$ is conformally equivalent to a Euclidean rectangle where $I$ and $J$ correspond to the vertical sides of unit length and $\mathcal{L}_U(I,J)$ is equal to the length of the horizontal side.

\begin{lemma}[One-sided approximate rotation]
\label{approx-rot-01}
There exists a constant $C>0$ such that the following holds. For any point $z \in \Hq$ and sufficiently small scale $r>0$, there exists a $C$-approximate rotation 
\[
    f^i : (U,y) \to (V,c)
\]
such that 
\begin{enumerate}[label=\textnormal{(\arabic*)}]
    \item\label{approx-1} the point $y$ lies in $\partial U \cap \Hq$ and its image $c$ is an outer critical point of $f$,
    \item\label{approx-2} both $U$ and $V$ are contained in the outer collar $A^\infty$, and 
    \item\label{approx-3} the boundary $\partial U$ of $U$ contains the interval $[y,z] \subset \Hq$ satisfying
    \[
    \mathcal{L}_{U}([y,z], \partial U \backslash \Hq) \succ 1 \qquad \text{and} \qquad \dist(z,\partial U \backslash \Hq) \asymp r.
    \]
\end{enumerate}
\end{lemma}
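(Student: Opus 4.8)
The plan is to construct $U$ as a univalent pullback, along a carefully chosen finite orbit, of a standard half‑disk ``bubble'' glued to $\Hq$ at an outer critical point. Since $\Hq$ is a quasicircle through a critical point and $\theta$ is of bounded type (hence a Herman number), Proposition~\ref{trichotomy} puts us in case \ref{case-B} or \ref{case-H}, so $f$ has an outer critical point; fix one and call it $c$. By Theorem~\ref{petersen} the linearizing map $\phi$ is quasiconformal, hence quasisymmetric along $\Hq$, so Lemma~\ref{lem:qc-control} applies: I would first choose the level $n=n(z,r)$ so that every level‑$n$ combinatorial interval has spherical diameter $\asymp r$, noting $n\to\infty$ as $r\to0$ (so for small $r$ all bubbles of size $\asymp r$ constructed below lie deep inside the prescribed collar). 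The backward dynamics $f^{-1}|_\Hq$ is conjugate to $R_{1-\theta}$, again of bounded type, so Propositions~\ref{renormalization-tiling},~\ref{bounded-type} and Lemmas~\ref{counting},~\ref{lem:qc-control} apply to it as well; using the three‑gap structure of the first $K\asymp q_{n+1}$ preimages of $c$ (which tile $\Hq$ into intervals of diameter $\asymp r$) together with Lemma~\ref{counting} for the backward dynamics applied to the finite set of outer critical points, I would select $i<K$, an endpoint $y:=(f|_\Hq)^{-i}(c)$ with $|y-z|\asymp r$, and a point $w$ with $z$ in the interior of $[y,w]$ and $|[y,z]|\asymp|[z,w]|\asymp r$, in such a way that the first $q_{n+1}$ forward images of the open interval $(y,w)$ are disjoint from \emph{every} outer critical point of $f$ (so that no $f^j([y,w])$, $0\le j<i$, straddles a point where $f$ has outer local degree $\ge 2$).

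Next, in the $\phi$‑coordinate I would take $V^{(i)}$ to be the half‑disk of radius $\asymp l_n$ attached to $\T$ at $\phi(c)$ on the outer side with $\phi([c,f^i(w)])$ on its flat edge, and set $V:=\phi^{-1}(V^{(i)})$; quasisymmetry of $\phi$ makes this a bounded‑shape topological disk of diameter $\asymp r$ inside $A^\infty$. Then I would pull $V$ back step by step along $c=f^i(y),\,f^{i-1}(y),\,\dots,\,y$: set $U^{(i)}:=V$ and let $U^{(j)}$ be the component of $f^{-1}(U^{(j+1)})$ whose closure meets $\Hq$ along $f^j([y,w])$, and put $U:=U^{(0)}$. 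Since each $U^{(j)}$ is disjoint from $\Hq$ off $f^j([y,w])$, it contains no critical point of $f$ (all critical points in the collar lie on $\Hq$), and together with injectivity of $f^{i-j}$ on $f^j([y,w])$ this makes each $f\colon U^{(j)}\to U^{(j+1)}$ proper of degree one, so $f^i\colon U\to V$ is univalent with $f^i(y)=c$. Because $F$ preserves $A^\infty$, agrees with $f$ on $\Hq$, and the bubbles stay deep, an induction shows $U^{(j)}\subset A^\infty$ of bounded shape and diameter $\asymp r$ for all $j$; this gives $U,V\subset A^\infty$, the arc $[y,w]=\partial U\cap\Hq$ contains $[y,z]$ with $y\mapsto c$ an outer critical point, and the bounded shape of $U$ together with the fact that $[y,z]$ has length $\asymp r$ and lies at distance $\asymp r$ from the curved arc $\partial U\setminus\Hq$ yields $\mathcal{L}_U([y,z],\partial U\setminus\Hq)\succ1$, while $\dist(z,\partial U\setminus\Hq)\asymp r$ by construction.

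It remains to check the distortion and shadowing bounds that make $f^i\colon U\to V$ an approximate rotation in the sense of Definition~\ref{approx-rotation}, and this is where the main obstacle lies: these must be obtained \emph{uniformly} over the $i\asymp q_{n+1}\to\infty$ iterates composing $f^i$, so a naive step‑by‑step quasi‑isometry estimate (which compounds to an exponential error) will not do. For the distortion I would enlarge each $U^{(j)}$ concentrically; bounded geometry of the renormalization tilings keeps all but $O(1)$ of the intervals $f^j([y,w])$ at distance $\succ l_n$ from the critical set on $\Hq$, so Koebe gives $O(1)$ distortion at those steps, while at the remaining $O(1)$ steps $f$ is still univalent on $U^{(j)}$ (the nearby critical point being purely inner, by the routing in the first paragraph) with distortion bounded by a universal constant, so compounding is harmless. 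For the shadowing estimate I would use that $f^i$ and $F^i$ agree on $[y,w]$: then $F^i\circ(f^i)^{-1}\colon V\to F^i(U)$ is a $K$‑quasiconformal map ($K$ the dilatation of $\phi$) between bounded‑shape bubbles sharing a boundary arc on $\Hq$ and fixing it, hence by the standard estimate it displaces interior points of a bounded‑shape region by $O(1)$ in the hyperbolic metric $d_{\nohq}$, giving $d_{\nohq}(f^i(x),F^i(x))=O(1)$ for $x\in U\setminus\Hq$. Thus the hard part is really twofold and intertwined: routing the bubble's forward orbit clear of all outer critical points --- this is where the bounded‑type hypothesis enters, through the counting lemma and the bounded geometry of the tilings --- and then leveraging the global structure that $f^i$ is a univalent map of bounded‑shape bubbles agreeing with the genuine rotation $F^i$ on $\Hq$, rather than iterating single‑step bounds.
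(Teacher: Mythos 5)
Your overall strategy is the paper's: select a combinatorial interval at scale $l_n\asymp r$, pull back a half-disk attached to $\Hq$ ending at an outer critical point, and obtain uniformity from the fact that $F^{j}\circ (f^{j})^{-1}$ is quasiconformal with dilatation bounded independently of $j$ and fixes the base arc (this is exactly how the paper uses Lemma \ref{lem:poincare-nbh}). However, two design choices break the argument. First, you attach the bubble along $[y,w]$ with the critical preimage $y$ as an \emph{endpoint} of the flat side. Then $[y,z]$ and the curved arc $\partial U\setminus\Hq$ meet at the corner $y$, and the extremal distance between two boundary arcs of a Jordan domain that abut at a corner is $0$ (in the logarithmic coordinate at the corner one sees curves running across an infinite strip), so your claim that $[y,z]$ ``lies at distance $\asymp r$ from the curved arc'' and hence $\mathcal{L}_U([y,z],\partial U\setminus\Hq)\succ 1$ is false for your $U$. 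You cannot simply extend the flat side past $y$, because the backward orbit of the extension is not controlled by your routing and may meet outer critical points, destroying univalence of the pullbacks. Second, your routing itself is too strong to be realizable: you require the first $q_{n+1}$ images of an interval of definite length $\asymp r$ around the \emph{given} point $z$ to miss \emph{every} outer critical point, with one endpoint in the backward orbit of a prescribed outer critical point. When $\Hq$ carries several outer critical points (the lemma is applied in exactly this multicritical setting in Section \ref{sec:rat-map}), the union of their backward orbits up to time $q_{n+1}$ can have gaps much shorter than $l_n$ near $z$, and even with a single outer critical point $z$ may sit so close to the backward orbit on the $w$-side that $\dist(z,\partial U\setminus\Hq)\asymp r$ fails. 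The paper's construction resolves both issues at once: by Lemma \ref{counting} it chooses an interval $I$ around the orbit of $z$ consisting of a middle piece $J$ flanked by buffers of length $\asymp l_n$ that contain no critical values of $f^{q_{n+2}}$, and it defines $i$ as the \emph{first} time $J_i$ contains an outer critical point (so $i<q_n+q_{n+1}$ by Proposition \ref{renormalization-tiling}); then $y$ lies in the middle piece, well separated from where the curved boundary meets $\Hq$, and no avoidance of a fixed long time window is ever needed.

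Beyond the selection step, the two analytic claims you flag as ``the hard part'' are not actually carried out. The induction ``$U^{(j)}\subset A^\infty$ of bounded shape and diameter $\asymp r$ for all $j$'' is precisely what must be proved: to define the next unbranched pullback you need the current bubble to stay inside a definite collar $A_{\kappa'}$ where $f$ is defined and has no critical points off $\Hq$, and this requires applying the bounded-displacement estimate (your $K$-quasiconformal comparison with $F$, i.e.\ Lemma \ref{lem:poincare-nbh}) at \emph{every} step $j$, with the angle/height parameters chosen to beat the uniform constant — not only once at the end to deduce the shadowing bound, which presupposes the pullbacks already exist and stay put. Finally, the distortion argument is invalid as written: multiplying per-step Koebe constants over $\asymp q_{n+1}$ steps is exactly the exponential compounding you said you would avoid (a per-step bound of the form $1+O(l_n)$ together with $q_{n+1}l_n=O(1)$ would be needed, and you do not invoke it), and at a step where the attachment interval meets a purely inner critical point $c'$ the map is univalent on the outer bubble but has no universal distortion bound, since $f'$ vanishes at the boundary point $c'$ of $U^{(j)}$. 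The paper instead obtains bounded distortion in one stroke, by running the whole construction on a slightly larger interval and shrinking $U$ and $V$ at the end.
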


In Case \ref{case-H}, this lemma also works for inner critical points. 

For any $0<a<\pi$ and any small interval $I \subset \T$, we define $P_a(I) \subset \D$ to be the Jordan disk enclosed by the interval $I$ together with the unique circular arc in $\D$ that has the same endpoints as $I$ and meets the circular arc $\T\backslash I$ at an angle of $a$. To prove Lemma \ref{approx-rot-01}, we will use the following tool. 

\begin{lemma}[{\cite[ Lemma 3.3]{McM98}}]
    \label{lem:poincare-nbh}
    Consider a $K$-quasiconformal map $g: P_\alpha(I) \to \D$ which extends continuously to the identity on some interval $I \subset \T$. For any $\beta\in (0,\alpha)$, there is some constant $C=C(\alpha,\beta,K)>0$ such that
    \[
        d_\D(g(x),x) \leq C \quad \text{ for all } x \in P_\beta(I).
    \]
\end{lemma}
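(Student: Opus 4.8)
The plan is a zooming-in and compactness argument that reduces everything to one elementary fact: a $K$-quasiconformal self-homeomorphism $h$ of a half-plane $\mathbb{H}$ fixing $\partial\mathbb{H}$ pointwise satisfies $d_{\mathbb{H}}(h(z),z)\le C_0(K)$ for every $z\in\mathbb{H}$. I would establish this base case first. The family $\mathcal{G}_K$ of such maps is invariant under conjugation by the real affine maps $z\mapsto\lambda z+\mu$ ($\lambda>0$, $\mu\in\R$) and is compact in the local uniform topology (a limit of $K$-quasiconformal maps fixing $\partial\mathbb{H}$ is again $K$-quasiconformal, and nonconstant). Given $h\in\mathcal{G}_K$ and $z_0=x_0+iy_0$, the rescaled map $h'\colon z\mapsto(h(y_0 z+x_0)-x_0)/y_0$ again lies in $\mathcal{G}_K$ and carries $i$ to $(h(z_0)-x_0)/y_0$; since $z\mapsto(z-x_0)/y_0$ is a $d_{\mathbb{H}}$-isometry, $d_{\mathbb{H}}(h(z_0),z_0)=d_{\mathbb{H}}(h'(i),i)$, which is bounded over the compact family $\mathcal{G}_K$. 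The analogous statement for $\D$ (maps fixing $\T$ pointwise) follows by conformal transport.

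Next I would normalize the setup. A M\"obius automorphism $\psi$ of $\D$ carries $H_a(I)$ onto $H_a(\psi(I))$ (it preserves circles and angles), preserves $K$-quasiconformality, and is a $d_\D$-isometry; conjugating $g$ by a suitable $\psi$, I may assume $I$ is one fixed arc, so the constant to be produced will depend only on $\alpha,\beta,K$. The family $\mathcal{F}$ of $K$-quasiconformal homeomorphisms $g\colon H_\alpha(I)\to\D$ restricting to the identity on $I$ is then compact and nondegenerate (being the identity on $I$ fixes three boundary points, so a limit is again a $K$-quasiconformal homeomorphism onto $\D$ that is the identity on $I$); hence $(g,x)\mapsto d_\D(g(x),x)$ is bounded on $\mathcal{F}\times E$ for every compact $E\subset\D$. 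Since $\overline{H_\beta(I)}$ meets $\T$ only along $\overline{I}$, this already bounds $d_\D(g(x),x)$ uniformly over $g\in\mathcal{F}$ and $x\in H_\beta(I)$ lying at a definite Euclidean distance from $\overline{I}$.

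The remaining, and principal, case is $x\in H_\beta(I)$ approaching $\overline{I}$, which I would treat by contradiction and zooming. Suppose there are $g_n\in\mathcal{F}$ and $x_n\in H_\beta(I)$ with $\dist(x_n,\overline{I})\to0$ but $d_\D(g_n(x_n),x_n)\to\infty$; after passing to a subsequence $x_n\to p\in\overline{I}$. If $p$ lies in the interior of $I$, I rescale around the nearest point $p_n\in I$ at the rate $t_n:=\dist(x_n,\overline{I})$: the rescaled domains $(H_\alpha(I)-p_n)/t_n$ and $(\D-p_n)/t_n$ both converge, in the Carath\'eodory sense, to the half-plane tangent to $\T$ at $p$; the rescaled maps $w\mapsto(g_n(p_n+t_n w)-p_n)/t_n$ are the identity on the rescaled copy of $I$, which exhausts that boundary line; so a subsequential limit lies in $\mathcal{G}_K$, and since rescaled hyperbolic metrics converge, $d_\D(g_n(x_n),x_n)$ converges to a quantity $\le C_0(K)$, a contradiction. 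If instead $p$ is an endpoint of $I$, rescaling around $p$ produces in the limit a $K$-quasiconformal map from an infinite sector---the corner of the lens $H_\alpha(I)$ at $p$---into the tangent half-plane, fixing the sector edge that lies on the half-plane's boundary. This ``sector problem'' is scale-invariant, so I would solve it by the same recipe: compactness of the corresponding family bounds displacements for points of the sub-sector (coming from $H_\beta(I)$) that stay away from the fixed edge, the vertex being handled by scale invariance, while near the fixed edge the map is locally a half-plane map fixing the boundary and the base case applies. This again contradicts $d_\D(g_n(x_n),x_n)\to\infty$, and together with the previous paragraph it bounds $d_\D(g(x),x)$ for all $x\in H_\beta(I)$.

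I expect the endpoint case to be the genuine obstacle. Near the corners of $I$ the lenses $H_\beta(I)$ and $H_\alpha(I)$ share a vertex but have distinct opening angles, so $H_\beta(I)$ is not well contained in $H_\alpha(I)$ there and no direct compactness is available; one must extract the scale-invariant sector model and reduce it, in turn, to the half-plane base case. Everything else---Carath\'eodory convergence of the rescaled domains together with convergence of their hyperbolic metrics, and compactness of suitably normalized families of $K$-quasiconformal maps---is standard.
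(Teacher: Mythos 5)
The paper does not actually prove this lemma: it is quoted verbatim from \cite[Lemma 3.3]{McM98}, so there is no in-paper argument to compare with. Measured against the standard (McMullen-style) proof, your strategy is the right one: Möbius normalization, compactness of $K$-quasiconformal families, and blow-ups along $I$ reducing everything to the displacement bound for $K$-quasiconformal maps of a half-plane fixing its boundary pointwise. Two steps, however, need more than you give them. First, in each blow-up you conclude that a subsequential limit ``lies in $\mathcal{G}_K$'' because the rescaled maps are the identity on arcs exhausting the limiting boundary line; but locally uniform convergence on the open domain does not by itself produce a limit that extends continuously to the identity on that line, nor does ``the identity on $I$ fixes three boundary points'' give the nondegeneracy and compactness of $\mathcal{F}$. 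The standard fix is to extend $g$ at the outset by Schwarz reflection across $\T$ through the open arc $I$ (legitimate since $g$ is the identity there, maps $H_\alpha(I)\subset\D$ into $\D$, and analytic arcs are quasiconformally removable); after reflection the identity arc is interior to the domain, all your limit statements become immediate, and the base case is applied locally exactly as you intend. Second, the endpoint case is left as a recipe. The delicate sub-regime is $x_n$ tending to a corner of $I$ while hugging $I$: there the blown-up base point lands on the fixed edge of the sector, the soft ``displacements converge'' argument degenerates, and you need the quantitative local estimate $|g(x)-x|=O(\dist(x,I))$ near interior points of the edge (your half-plane base case, after reflection, run at scale $\dist(x,I)$), stated uniformly, before the contradiction closes. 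You flag this, but it is the one step that must actually be written out.

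Two further remarks. With the paper's literal definition of $H_a(I)$ (the free arc meets $\T\setminus I$ at angle $a$) the lens \emph{grows} as $a$ decreases, so ``$\beta\in(0,\alpha)$'' would place $H_\beta(I)$ outside the domain of $g$; you have, correctly, read the statement with McMullen's convention, in which $H_\beta(I)\subset H_\alpha(I)$ share the interval $I$ and the bound is asserted on the smaller lens. Also, the corner is less of an obstacle than you suggest: the free boundary arcs of $H_\beta(I)$ and $H_\alpha(I)$ are hypercycles of the geodesic joining the endpoints of $I$, hence remain at a fixed positive hyperbolic distance from each other all the way to the corners, so every $x\in H_\beta(I)$ carries a hyperbolic ball of definite radius inside $H_\alpha(I)$. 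This leads to a cleaner dichotomy (points at bounded hyperbolic distance from that geodesic are handled by one Möbius normalization and compactness, since $I$ then has definite visual size from $x$; points hugging $I$, including those near the corners, are handled by the reflected map and a rescaling at the nearest point of $I$), which avoids a separate sector model. Your sector blow-up is a correct alternative, just heavier.
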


\begin{proof}[Proof of Lemma \ref{approx-rot-01}]
    Since $\phi$ is uniformly continuous with respect to the hyperbolic metric of $\nohq$, it is sufficient to prove the lemma in the $w$-coordinate, where $w=\phi(z)$. (Compare with \cite[Theorem 3.4]{McM98}.) In the $w$-coordinate, $f$ is an irrational rotation along $\Hq$, which is the unit circle, and all iterates of $f$ are quasiregular with uniform dilatation. Let us pick a small scale $r>0$ and a point $w$ on $\Hq$.
    
    Denote by $\{p_k/q_k\}_{k\in \N}$ the rational approximations of $\theta$ and consider the combinatorial lengths $l_k := |p_k - q_k \theta|$. Let us pick $n \in \N$ such that $r \asymp l_n$. Apply Lemma \ref{counting} by taking $S$ to be the set of critical values of $f$ on $\Hq$ in order to obtain a pair of intervals $J_{q_{n+2}} \subset I_{q_{n+2}}$ in $\Hq$ such that
    \begin{itemize}
        \item[(i)] $J_{q_{n+2}}$ contains the level $n$ combinatorial interval centered at $f^{q_{n+2}}(w)$;
        \item[(ii)] the endpoints of $J_{q_{n+2}}$ split $I_{q_{n+2}}$ into three connected components each having combinatorial length $\asymp l_n$;
        \item[(iii)] $I_{q_{n+2}} \backslash J_{q_{n+2}}$ does not contain any critical value of $f^{q_{n+2}}$.
    \end{itemize}
    For $j=0,1,\ldots q_{n+2}$, let 
    \[
    J_j := (f|_{\Hq})^{-q_{n+2}+j}(J_{q_{n+2}}) \quad \text{and} \quad I_j := (f|_{\Hq})^{-q_{n+2}+j}(I_{q_{n+2}}).
    \]
    By Proposition \ref{renormalization-tiling}, there is some minimal $i<q_n+q_{n+1}$ such that $J_i$ contains an outer critical point $c \in \Hq$. Let $y := (f|_{\Hq})^{-i} (c)$.

    Let $\psi_\infty: \D \to \nohq^\infty$ be a biholomorphism sending $0$ to a point outside of $A^\infty$. By Carath\'eodory's theorem, $\psi_\infty$ extends to a homeomorphism on the boundary $\T \to \Hq$. For any $a\in(0,\pi)$, we define the domain 
    \[
    P^\infty_a(I_i) := \psi_\infty\left(P_a(\psi_\infty^{-1}(I_i))\right) \subset \nohq^\infty.
    \]
    We will set
    \[
        V:= P^\infty_{\frac{\pi}{2}}(I_i).
    \]
    
    \begin{claim} 
        For sufficiently small $r$, there is a well-defined inverse branch of $f^{-i}$ mapping $V$ to a domain $U$ in $A^\infty$ touching $\Hq$ along the interval $I_0$. Moreover,
        \begin{equation}
        \label{eqn:approx-rot-claim}
            d_{\nohq}(f^i(x), F^i(x))=O(1) \quad \text{ for all } x \in U.
        \end{equation}
    \end{claim}
    
    \begin{proof}
    Let us fix a small constant $\varepsilon \in \left( 0, \frac{\pi}{2} \right)$. There exists some $M = M(\varepsilon) \in \N$ such that for any interval $I \subset \Hq$ with combinatorial length at most $l_M$,
    \begin{itemize}
        \item[(a)] the domain $P^\infty_{\varepsilon}(I)$ is contained in $A^\infty$, and
        \item[(b)] if $I$ contains no outer critical values of $f$, there is a well-defined inverse branch of $f^{-1}$ mapping $P^\infty_{\varepsilon}(I)$ to a domain touching $\Hq$ on $(f|_{\Hq})^{-1}(I)$.
    \end{itemize}
    We will take $r$ to be sufficiently smaller than $l_M$.
    
    Denote by $V_{-1}$ a univalent lift of $V$ under $f$ that is touching $\Hq$. Since $I_i$ does not contain any outer critical value of $f^i$, $V_{-1}$ touches $\Hq$ precisely along $I_{i-1}$ and so $F \circ f^{-1}$ is the identity map on $I_i$. By Lemma \ref{lem:poincare-nbh}, there is some constant $C>0$ depending on the dilatation of $\phi$ such that 
    \[
    d_{\nohq}\left((f|_{V_{-1}})^{-1}(x), F^{-1}(x)\right) \leq C \quad \text{ for all } x \in V.
    \]
    Therefore, we can take $\varepsilon$ to be small enough and $M$ to be large enough so that $V_{-1}$ is contained in $P_\varepsilon^\infty(I_{-1})$. Property (b) ensures that $V_{-1}$ can again be lifted to a domain $V_{-2}$ touching $\Hq$ along $I_{i-2}$. By the same argument, we have 
    \[
    d_{\nohq}\left((f|_{V_{-2}})^{-2}(x), F^{-2}(x) \right) \leq C \quad \text{ for all }x \in V
    \]
    and again $V_{-2} \subset P_\varepsilon^\infty(I_{-1})$. Inductively, we can define the domains $V_{-2}$, $V_{-3}$, $\ldots$, $V_{-i}$ by pulling back. By (a), the domain $U:= V_{-i}$ is contained in $A^\infty$.
    \end{proof}
    
    We can ensure that $f^i:U \to V$ has bounded distortion by shrinking $I_i$ (and thus $V$ and $U$) by a little bit. By (\ref{eqn:approx-rot-claim}), we conclude that $f^i: U \to V$ is an approximate rotation. It remains to prove \ref{approx-3}. Properties (i) and (ii) imply that 
    \begin{equation}
    \label{eqn:extremal-length-w}
        \mathcal{L}_{V}(J_n, \partial V \backslash \Hq) \succ 1
    \end{equation}
    and that
    \begin{equation}
    \label{eqn:boundary-of-V}
        |x-f^i(w)| \asymp |I_i| 
        \quad \text{ for all } x \in \partial V \backslash \Hq.
    \end{equation}
    in the $w$-coordinate. Since $f^{-i}$ is uniformly quasiconformal on $V$, (\ref{eqn:extremal-length-w}) implies the first estimate in \ref{approx-3}. Since $f^{-i}$ is uniformly quasisymmetric on $V$, (\ref{eqn:boundary-of-V}) implies $\dist(z,\partial U \backslash \Hq) \asymp |I_0|$. Since $|I_0| \asymp l_n \asymp r$, the second estimate in \ref{approx-3} follows.
\end{proof}

Instead of an approximate rotation on one side of $\Hq$, we can also consider a two-sided approximate rotation, yielding the following lemma. See Figure \ref{fig:approx-rot} for an illustration. 

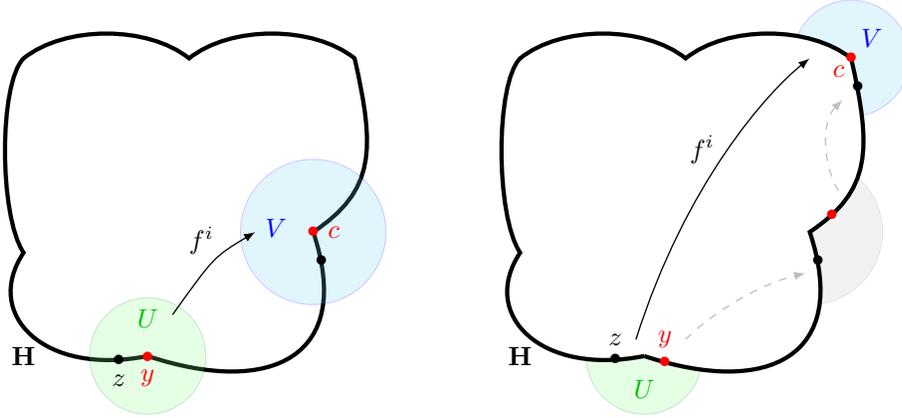
\begin{figure}
 \centering
    \begin{tikzpicture}[scale=1.1]
        \coordinate (t1) at (-5,2.6) {};
        \coordinate (t2) at (-3,2.6) {};
        \coordinate (t3) at (-1,2.6) {};
        \coordinate (t4) at (1,2.6) {};
        \coordinate (t5) at (3,2.6) {};
        \coordinate (t6) at (5,2.6) {};
        \coordinate (yl) at (-3.5,-1) {};
        \coordinate (yr) at (2.5,-1) {};
        \coordinate (cl) at (-1.5,0.5) {};
        \coordinate (cr) at (4.5,0.5) {};
        \coordinate (bl) at (-5,0.25) {};
        \coordinate (br) at (1,0.25) {};

        % lines and regions
        \filldraw[green!50!black,fill=green!50!white,opacity=0.2] (yr) circle (20pt);
        \filldraw[gray,fill=gray!50!white,opacity=0.2] (cr) circle (25pt);
        \filldraw[blue,fill=cyan!50!white,opacity=0.2] (t6) circle (20pt);
        \draw[ultra thick] (yl) .. controls (-2,-1.5) and (-1,-1) .. (cl) .. controls (-0.75,1) and (-0.75,1.5) .. (t3) .. controls (-1.5,3) and (-2.5,3) .. (t2) .. controls (-3.5,3) and (-4.5,3) .. (t1) .. controls (-5.3,2.3) and (-5.3,0.7) .. (bl) .. controls (-5.5,-0.5) and (-4.75,-1.25) .. (yl);
        \draw[ultra thick, fill=white] (yr) .. controls (4,-1.5) and (5,-1) .. (cr) .. controls (5.25,1) and (5.25,1.5) .. (t6) .. controls (4.5,3) and (3.5,3) .. (t5) .. controls (2.5,3) and (1.5,3) .. (t4) .. controls (0.7,2.3) and (0.7,0.7) .. (br) .. controls (0.5,-0.5) and (1.25,-1.25) .. (yr);
        \filldraw[green!50!black,fill=green!50!white,opacity=0.2] (yl) circle (20pt);
        \filldraw[blue,fill=cyan!50!white,opacity=0.2] (cl) circle (25pt);
        \draw[line width=0.5pt,-latex] (-3.2,-0.5) .. controls (-2.7,0.2) .. (-2.2,0.5);
        \node [black, font=\bfseries] at (-2.85,0.4) {$f^i$};
        \draw[gray!50!white,line width=0.5pt,-latex,dashed] (3,-0.8) .. controls (3.4,-0.4) and (4,-0.1) .. (4.45,0);
        \draw[gray!50!white,line width=0.5pt,-latex,dashed] (4.85,1) .. controls (4.7,1.2) and (4.6,1.8) .. (4.9,2.1);
        \draw[line width=0.5pt,-latex] (2.4,-0.8) .. controls (3,1.1) and (4,2.2) .. (4.5,2.6);
        \node [black, font=\bfseries] at (3.2,1.5) {$f^i$};

        % bullets
        \node [red, font=\bfseries] at (-3.5,-1.02) {$\bullet$};
        \node [red, font=\bfseries] at (cl) {$\bullet$};
        \node [black, font=\bfseries] at (-3.85,-1.06) {$\bullet$};
        \node [black, font=\bfseries] at (-1.4,0.15) {$\bullet$};
        \node [red, font=\bfseries] at (2.75,-1.08) {$\bullet$};
        \node [red, font=\bfseries] at (4.77,0.7) {$\bullet$};
        \node [red, font=\bfseries] at (t6) {$\bullet$};
        \node [black, font=\bfseries] at (2.15,-1.04) {$\bullet$};
        \node [black, font=\bfseries] at (4.6,0.15) {$\bullet$};
        \node [black, font=\bfseries] at (5.08,2.25) {$\bullet$};        

        % labels
        \node [red, font=\bfseries] at (-3.5,-1.27) {$y$};
        \node [red, font=\bfseries] at (-1.25,0.5) {$c$};
        \node [black, font=\bfseries] at (-3.85,-1.3) {$z$};
        \node [red, font=\bfseries] at (2.75,-0.8) {$y$};
        \node [red, font=\bfseries] at (4.85,2.45) {$c$};
        \node [black, font=\bfseries] at (2.15,-0.8) {$z$};
        \node [green!75!black, font=\bfseries] at (-3.5,-0.55) {$U$};
        \node [blue, font=\bfseries] at (-1.95,0.55) {$V$};
        \node [green!75!black, font=\bfseries] at (2.5,-1.4) {$U$};
        \node [blue, font=\bfseries] at (5.25,2.85) {$V$};
        \node [black, font=\bfseries] at (-5,-1) {$\Hq$};
        \node [black, font=\bfseries] at (1,-1) {$\Hq$};
\end{tikzpicture}

    \caption{On the left, we have a two-sided approximate rotation as stated in Lemma \ref{approx-rot-02}. The iteration stops once the forward orbit of $z$ is close to either an inner or outer critical point. On the right, we have a one-sided approximate rotation described in Lemma \ref{approx-rot-01}. In this case, the iteration stops when it reaches an outer critical point.}
    \label{fig:approx-rot}
\end{figure}

\begin{lemma}[Two-sided approximate rotation]
\label{approx-rot-02}
    There exists a constant $C>0$ such that the following holds. Given any point $z \in \Hq$ and sufficiently small scale $r>0$, there is an $C$-approximate rotation 
    $$
    f^i : (U,y) \to (V, c)
    $$
    such that $y$ lies on $\Hq$, $c \in \Hq$ is a critical point of $f$, and $(U,y)$ is a pointed disk that well contains the interval $[y,z] \subset \Hq$ and has bounded shape and diameter $\asymp r$.
\end{lemma}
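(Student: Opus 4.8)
The plan is to carry out the construction behind Lemma~\ref{approx-rot-01} \emph{on both sides of $\Hq$ at once}, stopping the pullback at the first critical point met by the forward orbit, be it inner or outer. As in Lemma~\ref{approx-rot-01}, I would work in the linearizing coordinate $w=\phi(z)$, where $\Hq=\T$, $f|_\T=R_\theta$, and all iterates of $f$ are quasiregular with uniform dilatation near $\T$; uniform continuity of $\phi$ for the hyperbolic metric of $\nohq$ transports the conclusions back, while the quasisymmetry of $\phi|_\Hq$ together with the bounded geometry of the tilings $\mathcal{P}_n$ (Lemma~\ref{lem:qc-control}) turns combinatorial-length estimates into the desired ``$\diam U \asymp r$'' and ``$U$ well contains $[y,z]$''.

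\emph{Combinatorial step.} Fix $n$ with $l_n\asymp r$ and take $S$ to be the set of \emph{all} critical values of $f$ on $\Hq$. As in Lemma~\ref{approx-rot-01}, Lemma~\ref{counting} and Proposition~\ref{renormalization-tiling} give nested combinatorial intervals $J\subset I$ with $|I|\asymp|J|\asymp l_n$, with the two components of $I\backslash J$ each of length $\asymp l_n$, and with $J$ disjoint from the critical values of $f^{q_{n+2}}$; pulling $I,J$ back along $f|_\Hq$ yields itineraries $I_j\supset J_j$, $j=0,\dots,q_{n+2}$, with $J_j$ roughly centered at $f^j(w)$. By Proposition~\ref{renormalization-tiling} there is a \emph{minimal} $i<q_n+q_{n+1}$ for which the orbit comes within $\asymp l_n$ of some critical point $c$ of $f$ (now of either type). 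I set $y:=(f|_\Hq)^{-i}(c)$; since $f^i|_\Hq$ is a combinatorial isometry, $|y-z|=O(l_n)$, and by minimality $f^j(y)$ stays a definite distance from the critical set for $0\le j<i$, so $f^i$ is a local diffeomorphism near $y$ with $f^i(y)=c$.

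\emph{Two-sided domain and pullback.} Using Carath\'eodory extensions $\psi_0:\D\to\nohq^0$ and $\psi_\infty:\D\to\nohq^\infty$ and a small angle $\varepsilon>0$, I would form the one-sided lune domains $H^0_a(\cdot)$ and $H^\infty_a(\cdot)$ as in Lemma~\ref{approx-rot-01} and let $V:=H^0_{\pi/2}(K)\cup K\cup H^\infty_{\pi/2}(K)$ straddle a combinatorial interval $K$ of length $\asymp l_n$ centered at $c$, chosen (using that $J$ avoids the critical values of $f^{q_{n+2}}$, exactly as in Lemma~\ref{approx-rot-01}) inside the range of the relevant branch of $f^{-i}$ so that $V$ lifts univalently along the itinerary. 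On the outer side the argument of Lemma~\ref{approx-rot-01} applies verbatim: with $\varepsilon$ small and a sub-collar $A_{\kappa'}$ large enough to beat the additive constant of Lemma~\ref{lem:poincare-nbh} at each of the $<q_n+q_{n+1}$ steps, the pulled-back domains stay inside $H^\infty_\varepsilon$ and $d_{\nohq}(f^j(\cdot),F^j(\cdot))=O(1)$. On the inner side, in case~\ref{case-H} the map $\phi$ restricts to a quasiconformal conjugacy on the inner collar for which $F$ is again the model rotation, so Lemma~\ref{lem:poincare-nbh} applies identically; in case~\ref{case-B}, after modifying $\phi$ so that $F\equiv f$ on the inner collar, the inner-side pullback is an \emph{exact} rotation and the estimate is trivial. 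Hence $U:=f^{-i}(V)$ straddles $\Hq$ with $y$ in its interior, $f^i=F^i$ on $\Hq$, and $d_{\nohq}(f^i(x),F^i(x))=O(1)$ on $U\backslash\Hq$, so $f^i:U\to V$ is an approximate rotation.

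\emph{Geometry and the main obstacle.} Bounded shape of $(V,c)$ is clear from the shape of the lunes and passes to $(U,y)$ since $f^{-i}$ has bounded distortion (Koebe, after shrinking $V$ slightly as in Lemma~\ref{approx-rot-01}); similarly $\diam U\asymp\diam K\asymp l_n\asymp r$ by the quasisymmetry of $\phi|_\Hq$ (Lemma~\ref{lem:qc-control}). Finally $U$ well contains the combinatorial interval $(f|_\Hq)^{-i}(K)$ lying below $V$, which is centered at $y$ and, taking $K$ at a slightly coarser level if necessary, contains $z$ because $|y-z|=O(l_n)$; so $U$ well contains $[y,z]$. The hard part will be the two-sided pullback bookkeeping --- ensuring that the lune domains on \emph{both} sides stay inside the collar through all the ($<q_n+q_{n+1}$) pullback steps despite the fixed per-step loss of Lemma~\ref{lem:poincare-nbh}, and handling cases~\ref{case-B} and~\ref{case-H} uniformly; once this is in place, the shape, diameter, and containment assertions follow routinely from the Koebe distortion theorem and the quasisymmetry of the linearization.
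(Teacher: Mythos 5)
Your proposal is correct and is essentially the paper's own proof: the paper proves Lemma \ref{approx-rot-02} precisely by rerunning the construction of Lemma \ref{approx-rot-01} with $i$ the first time $J_i$ meets a critical point of either type, gluing $H^0_{\pi/2}(I_i)$ and $H^\infty_{\pi/2}(I_i)$ into a two-sided $V$, pulling back as before, and then recovering bounded shape at the end (the paper shrinks $U$ to a hyperbolic ball $\D_U(y,R)$, while you use bounded distortion of $f^{-i}$ on a slightly shrunk $V$ --- both work). Two small clarifications: in case \ref{case-H} the linearization $\phi$ is \emph{not} a conjugacy on the inner collar (that is only arranged in case \ref{case-B}), but your actual mechanism --- Lemma \ref{lem:poincare-nbh} applied to the comparison $F^j\circ f^{-j}$, which is the identity on the base interval and uniformly quasiconformal because $f^{-j}$ is conformal --- is exactly what the paper uses on both sides, and for the same reason the ``per-step loss'' you flag as the hard part does not accumulate: a single constant works for every $j\le i$, so one choice of $\varepsilon$ and collar depth beats it, just as in the one-sided proof.
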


This lemma is a generalization of \cite[Theorem 3.4]{McM98} which was originally formulated for bounded type quadratic Siegel disks.

\begin{proof}
    We can adapt the same proof as the previous lemma with a few modifications. Take $i$ to be the smallest number such that $J_i$ contains either an outer or an inner critical point $c \in \Hq$. Define the half-neighborhood $P^0_a(I) \subset Y^0$ of an interval $I \subset \Hq$ in an analogous way, and define the Jordan domain $V$ by gluing $P^\infty_{\frac{\pi}{2}}(I_i)$ and $P^0_{\frac{\pi}{2}}(I_i)$. The rest of the proof resumes as before. Replace $U$ with a smaller disk (e.g. a hyperbolic ball $\D_U(y,R)$ for some definite radius $R>1$) so that $(U,y)$ has bounded shape.
\end{proof} 

For each $\bullet \in \{0,\infty\}$, the round annulus $\phi^{-1}(A^\bullet)$ admits a canonical radial foliation connecting its two boundary components. For each $z \in \Hq$, we denote by $\gamma_z^\bullet$ the unique proper curve in $A^\bullet$ such that $\phi(\gamma_z^\bullet)$ is the radial leaf with endpoint $\phi(z)$.

\begin{lemma}[Local preimages of $\Hq$]
\label{lem:preim}
    For every outer critical point $c \in \Hq$ of $f$, there are $2d_\infty(c)-2$ pairwise disjoint open quasiarcs $\Gamma_1^\infty,\ldots, \Gamma^\infty_{2d_\infty(c)-2}$ in $A^\infty$ which are all mapped into $\Hq$ by $f$ and attached to $c$ at one of its endpoints. Every point $z$ on $\Gamma^\infty_1 \cup \ldots \cup \Gamma^\infty_{2d_\infty(c)-2}$ satisfies
    \[
        \dist_\nohq(z, \gamma_c^\infty) = O(1).
    \]
\end{lemma}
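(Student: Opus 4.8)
The plan is to work in the linearizing $w$-coordinate $w = \phi(z)$, where $\Hq$ becomes the unit circle $\T$, $f|_\Hq$ becomes the rigid rotation $R_\theta$, and $F$ becomes the rotation $w \mapsto e^{2\pi i\theta}w$. Since $\phi$ is uniformly continuous with respect to the hyperbolic metric of $\nohq$, it suffices to produce the quasiarcs and the bound $\dist_\nohq(z,\gamma_c^\infty) = O(1)$ in this coordinate; pulling back by $\phi^{-1}$ costs only a bounded additive error. In the $w$-coordinate, let $c$ be an outer critical point with outer local degree $d_\infty(c) = d$. Near $c$, the map $f$ (extended quasiregularly on the outer collar via $\phi$) behaves like $z \mapsto z^{2d-1}$ in suitable local coordinates that straighten $\Hq$ to a line: there are $2d-2$ local preimages of the curve $\Hq$ through the critical value $f(c)$ that emanate from $c$ into the outer side, alternating with the two local arcs of $\Hq$ itself. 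These are the candidate quasiarcs $\Gamma_1^\infty,\ldots,\Gamma_{2d-2}^\infty$; they are quasiarcs because $f$ is locally quasiregular with uniformly bounded dilatation and $\Hq$ is a quasicircle, so each $\Gamma_j^\infty$ is a quasiconformal image of a real-analytic arc.

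First I would make precise the local model: choose a neighborhood $N$ of $c$ on which $f$ is quasiregular with bounded dilatation and no other critical points, and note that the total local degree is $d_0(c) + d - 1$. Using the quasiconformal conjugacy $\phi$, one can pass to a coordinate in which $f$ is honestly holomorphic (straightening the Beltrami coefficient), so that $f$ near $c$ is conformally conjugate to $w \mapsto w^{d_0(c)+d-1}$ with $\Hq$ mapping to the real axis locally. In this picture the full preimage of the real line near $0$ consists of $2(d_0(c)+d-1)$ rays; exactly $2d$ of them lie in the closed upper (outer) half-plane, of which two are the boundary arcs of $\Hq$ and the remaining $2d-2$ are the interior rays we want. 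Pulling back the straightening, these become $2d-2$ quasiarcs in $A^\infty$ attached to $c$, pairwise disjoint and each mapped by $f$ into $\Hq$ near $f(c)$. I would then extend each $\Gamma_j^\infty$ maximally inside $A^\infty$ while it continues to be mapped into $\Hq$; the extension stays a quasiarc by repeatedly lifting under $f$ with bounded distortion, exactly as in the approximate-rotation constructions (Lemmas~\ref{approx-rot-01}, \ref{approx-rot-02}).

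The bound $\dist_\nohq(z,\gamma_c^\infty) = O(1)$ is the substantive point. Here the idea is that each $\Gamma_j^\infty$ is a preimage of the arc $\Hq$ under $f$, so it shadows the backward orbit structure near $c$; more precisely, $\gamma_c^\infty$ is the radial leaf in the round annulus $\phi^{-1}(A^\infty)$ landing at $c$, and the one-sided approximate rotation of Lemma~\ref{approx-rot-01} says that the inverse branches of $f$ that build up $\Gamma_j^\infty$ act, up to bounded hyperbolic error, as the rigid rotations $F^{-k}$. Since the radial foliation is $F$-invariant (a rigid rotation of the round annulus permutes radial leaves), the leaf through any point of $\Gamma_j^\infty$ stays within bounded hyperbolic distance of the leaf through $c$, i.e. of $\gamma_c^\infty$. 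Concretely: given $z \in \Gamma_j^\infty$, $z$ is obtained from a point of $\Hq$ (namely the relevant local preimage) by an inverse branch which, by Lemma~\ref{lem:poincare-nbh} applied to the quasiconformal $\phi$ on half-neighborhoods $H^\infty_a(I)$, moves points by $O(1)$ in $d_\nohq$; composing with the fact that the corresponding quasi-rotation branch $F^{-k}$ sends the radial leaf at $f^k(z)\in\Hq$ to the radial leaf at $z$, one gets $\dist_\nohq(z,\gamma_c^\infty) \leq \dist_\nohq(z,\gamma_z^\infty) + \dist_\nohq(\gamma_z^\infty,\gamma_c^\infty) = O(1) + 0$ after identifying $\gamma_z^\infty$ with $\gamma_c^\infty$ through the rotation.

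The main obstacle I anticipate is controlling the quasiarcs $\Gamma_j^\infty$ along their \emph{entire} extent, not just near $c$: as one lifts under $f$, successive branches could in principle leave the collar $A^\infty$ or run into another critical point before the shadowing argument closes up. The remedy is the same mechanism as in Lemma~\ref{approx-rot-01}: one chooses the collar parameter $\kappa$ large enough and uses the combinatorial counting of Lemma~\ref{counting} (with $S$ the set of critical values on $\Hq$) to guarantee that the relevant inverse branches stay inside shrinking half-neighborhoods $H^\infty_\varepsilon(I)$ of intervals in $\Hq$ whose lengths contract geometrically, so the $\Gamma_j^\infty$ are genuinely proper quasiarcs in $A^\infty$ with the asserted bounded distance to $\gamma_c^\infty$. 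A secondary technical point is verifying the count $2d_\infty(c)-2$ rather than $d_\infty(c)-1$: this is because the real line (equivalently $\Hq$) has two local sides at the critical value, so each "missing" branch of $\Hq$ is doubled; keeping careful track of this in the local power-map model is routine but must be stated cleanly.
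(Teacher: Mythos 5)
Your construction of the local arcs is essentially the paper's: near $c$ the map has the form $h(z)^{d(c)}+f(c)$ with $h$ univalent of bounded distortion (no straightening is needed -- $f$ is already holomorphic on a neighborhood of $\Hq$), and pulling back the interval $\Hq\cap f(Q)$ produces exactly $2d_\infty(c)-2$ quasiarcs in $A^\infty\cap Q$ attached to $c$, alternating with the two sides of $\Hq$. That part is fine, although the ``maximal extension'' of the arcs inside $A^\infty$ is neither required by the statement nor harmless: the lemma only needs arcs attached to $c$ (the paper keeps them inside the small disk $Q$), and extending them would force you to prove the distance bound along regions where your local analysis no longer applies.

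The genuine gap is in the proof of $\dist_\nohq(z,\gamma_c^\infty)=O(1)$. Your argument routes this through Lemma~\ref{approx-rot-01} and Lemma~\ref{lem:poincare-nbh}, i.e.\ through inverse branches that behave like the quasi-rotation $F^{-k}$; but those lemmas concern iterates whose inverse branches avoid critical values, whereas a point $z\in\Gamma_j^\infty$ sits in the immediate vicinity of the critical point $c$ and is related to $\Hq$ by a \emph{single} application of the branched map $f$ at $c$. There is no tower of univalent lifts to shadow, and the final step ``$\dist_\nohq(\gamma_z^\infty,\gamma_c^\infty)=0$ after identifying the leaves through the rotation'' is not meaningful: two distinct radial leaves are at positive hyperbolic distance, the quasi-rotation is only a bounded-error quasi-isometry, and the whole content of the lemma is precisely that the leaf relevant to $z$ is angularly close to $c$ at the scale $\dist(z,\Hq)$. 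What is actually needed is a cone (Stolz-type) condition at $c$, namely $\dist(z,\Hq)\succ|z-c|$ for $z$ on the arcs, which then gives the hyperbolic bound because $\rho_\nohq(z)\asymp\dist(z,\Hq)^{-1}$ and $\gamma_c^\infty$ lands at $c$ non-tangentially in the quasiconformal sense. The paper derives this cone condition from the bounded-turning property of the quasicircle applied at the critical value: with $w\in\Hq$ the nearest point to $z$, the critical value $f(c)$ lies on the interval $[f(z),f(w)]\subset\Hq$, so $|f(z)-f(w)|\succ|f(z)-f(c)|$, which in the local model reads $|h(z)^{d(c)}-h(w)^{d(c)}|\succ|h(z)|^{d(c)}$, hence $|h(z)-h(w)|\succ|h(z)|$ and, by bounded distortion of $h$, the cone condition (together with a Teichm\"uller-modulus estimate to dispose of the case where $w$ lies outside a slightly larger disk $\tilde Q$). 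Your proposal contains no substitute for this quasicircle/bounded-turning input, and without it the claimed $O(1)$ bound is not established.
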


In Case \ref{case-H}, this lemma also works for inner critical points.

\begin{proof}
    Let $d(c) := d_0(c) + d_\infty(c)-1$ be the local degree of $f$ at $c$. There exists an open disk neighborhood $Q \subset A$ of $c$ on which $f$ is a degree $d(c)$ covering map branched only at $c$. When $Q$ is sufficiently small, the map $f$ on $Q$ is of the form $h(z)^{d(c)} + f(c)$ for some univalent map $h: Q \to \RS$ with bounded distortion sending $c$ to $0$. 
    
    The disk $Q$ can be picked such that $\phi(f(Q))$ is a round disk orthogonal to $\T$. Then, the preimage of the interval $\Hq \cap f(Q)$ will consist of $\Hq \cap Q$ as well as pairwise disjoint open quasiarcs 
    \[
    \Gamma_1^0, \: \ldots, \Gamma_{2d_0(c)-2}^0, \quad \Gamma_1^\infty, \: \ldots, \Gamma^\infty_{2d_\infty(c)-2}
    \]
    where each $\Gamma_i^\bullet$ is contained in $A^\bullet \cap Q$ and connects the critical point $c$ to a point in $A^\bullet \cap \partial Q$.
    
    Let us pick a point $z$ on $\Gamma_i^\infty$ for some $i$. Let $w$ be a point on $\Hq$ closest to $z$, i.e. $|z-w| = \dist(z,\Hq)$. Note that the hyperbolic metric of $\nohq$ at $z$ is comparable to $\dist(z,\Hq)^{-1}$. As such, in order to prove the lemma, it is sufficient to show that 
    \begin{equation}
    \label{cone-cdn}
    |z-w| \succ |z-c|.
    \end{equation}
    
    Before we do so, we will introduce another disk neighborhood $\tilde{Q}$ of $c$ in a similar way as $Q$, except that $\tilde{Q}$ is larger than $Q$ and $\modu\left(\tilde{Q}\backslash \overline{Q}\right) \asymp 1$. 
    If $\tilde{Q}$ does not contain $w$, then by Teichm\"uller estimates \cite[\S3]{A06},
    \[
    |z-c| \leq \diam\left( Q \right) \prec \dist\left(\partial \tilde{Q}, \partial Q\right) \leq |z-w|
    \]
    and we are done. 
    
    Suppose instead that $w$ lies inside of $\tilde{Q}$. Since $\Hq$ is a quasicircle, the ratio of the distance between $f(z)$ and $f(w)$ to the diameter of the interval $[f(z),f(w)] \subset \Hq$ must be bounded above by some definite constant. In particular, since the critical value $f(c)$ lies on $[f(z),f(w)]$, then
    \[
    |f(z)-f(w)| \succ |f(z)-f(c)|.
    \]
    Consider the univalent map $h: \tilde{Q} \to \RS$ described previously. The estimate above can be rewritten as
    \[
    |h(z)^{d(c)}- h(w)^{d(c)}| \succ |h(z)|^{d(c)},
    \]
    which implies that
    \[
    |h(z) - h(w)| \succ |h(z)|.
    \]
    Since $h$ has bounded distortion on $\tilde{Q}$, this estimate implies (\ref{cone-cdn}).
\end{proof}

\begin{corollary}
\label{approx-rot-to-preim}
    Given any point $z$ in the outer collar $A^\infty$, if $z$ is sufficiently close to $\Hq$, there exists an approximate rotation $f^i : U \to V$ such that $U$ contains $z$, $V$ contains a hyperbolic ball $D \subset \nohq$ of radius $\asymp 1$ centered at some point on $f^{-1}(\Hq) \backslash \Hq$, and 
    \[
    \dist_\nohq(f^i(z), D) = O(1).
    \]
\end{corollary}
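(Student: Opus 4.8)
The plan is to combine the two-sided approximate rotation of Lemma \ref{approx-rot-02} with the local-preimage analysis of Lemma \ref{lem:preim}, chaining the two together. Start with the given point $z \in A^\infty$ close to $\Hq$, and let $w \in \Hq$ be a nearest point, so that $|z-w| = \dist(z,\Hq)$ is small and $r := |z-w|$ plays the role of the scale. Apply Lemma \ref{approx-rot-02} to the point $w$ at scale $\asymp r$: this produces an approximate rotation $f^j : (U_0, y) \to (V_0, c)$ where $y, c \in \Hq$, $c$ is a critical point, and $(U_0, y)$ has bounded shape with diameter $\asymp r$ and well contains $[y,w] \subset \Hq$. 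The first point to check is that $z$ itself lies in $U_0$ (or in a bounded-distortion enlargement of it): since $z$ lies within distance $r$ of $w \in [y,w]$, and $U_0$ well contains $[y,w]$ and has bounded shape and diameter $\asymp r$, a slight enlargement of $U_0$ — still mapped univalently with bounded distortion by $f^j$ — will contain $z$. I would absorb this enlargement into the statement of Lemma \ref{approx-rot-02} or redo its proof with $V$ taken a touch larger; this is routine.

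Next, track the image. Under $f^j$, the point $z$ maps to a point $f^j(z) \in V_0$ which, by the approximate rotation property, satisfies $d_\nohq(f^j(z), F^j(z)) = O(1)$, and $F^j(z)$ lies on the radial curve $\gamma_c^\infty$ at bounded hyperbolic distance from $c$ (because $F$ preserves the radial foliation and $F^j(w) = c$, while $z$ is at bounded hyperbolic distance from $w$ inside $A^\infty$). Now invoke Lemma \ref{lem:preim} at the critical point $c$: there is an open quasiarc $\Gamma^\infty_i \subset A^\infty$ attached to $c$ that is mapped into $\Hq$ by $f$, and every point on it lies within bounded hyperbolic distance of $\gamma_c^\infty$. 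Thus $f^j(z)$ is at bounded hyperbolic distance from $\Gamma^\infty_i$, hence at bounded hyperbolic distance from a point $\zeta$ on $f^{-1}(\Hq) \setminus \Hq$. Around such a $\zeta$ one can place a hyperbolic ball $D = \D_\nohq(\zeta, R) \subset \nohq$ of radius $\asymp 1$. Setting $U := U_0$ (or its enlargement), $V := V_0$, and $i := j$ gives the desired approximate rotation with $\dist_\nohq(f^i(z), D) = O(1)$.

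The one genuine subtlety — the step I expect to be the main obstacle — is ensuring that the target ball $D$ of radius $\asymp 1$ actually fits inside $V_0 \cap \nohq$, i.e. that the point $\zeta \in f^{-1}(\Hq)\setminus\Hq$ near which $D$ is centered is itself \emph{well contained} in $V_0$. This requires coordinating the constants: the half-neighborhood $V_0 = H^\infty_{\pi/2}(I_j) \cup H^0_{\pi/2}(I_j)$ constructed in the proof of Lemma \ref{approx-rot-02} has, in the $\phi$-coordinate, bounded shape about $\phi(c)$ with inner radius $\asymp |I_j|$, while the quasiarc $\Gamma_i^\infty$ emanating from $c$ has length governed by the radius of the disk $Q$ in Lemma \ref{lem:preim}, which is comparable to the distance from $c$ to its nearest critical value $f(c)$ along $\Hq$ — and by Lemma \ref{counting} (as used in the proof of Lemma \ref{approx-rot-02}) the relevant piece $I_j$ was chosen to avoid critical values at scale $\asymp l_n \asymp |I_j|$. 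So $Q$ can be taken with $\diam(Q) \asymp |I_j| \asymp \diam(V_0)$, and a definite sub-ball of $V_0$ around the endpoint of $\Gamma_i^\infty$ lies in $\nohq$. One should verify this in the $\phi$-coordinate, where $f$ is a quasiregular map of bounded dilatation and $\Hq = \T$, so that "hyperbolic ball of radius $\asymp 1$" and "Euclidean disk of bounded shape and comparable size, at comparable distance from $\T$" are interchangeable notions; then transport back via the uniform quasiconformality of $\phi$ and the bounded distortion of $f^j$ on $V_0$. The remaining estimates — that $f^j(z)$ stays at bounded hyperbolic distance from the correct radial leaf, and that composing the two $O(1)$ hyperbolic-distance bounds (from the approximate rotation and from Lemma \ref{lem:preim}) yields an $O(1)$ bound — are immediate from the triangle inequality in $d_\nohq$.
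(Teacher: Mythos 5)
There is a genuine gap, and it is exactly at the point where you choose which approximate-rotation lemma to apply. You invoke the two-sided Lemma \ref{approx-rot-02}, which only guarantees that the stopping point $c$ is \emph{some} critical point of $f$ on $\Hq$ --- in case \ref{case-H} it may well be a purely inner critical point, with $d_\infty(c)=1$. For such a $c$, Lemma \ref{lem:preim} produces $2d_\infty(c)-2=0$ quasiarcs on the outer side: all local preimages of $\Hq$ attached to $c$ lie in $A^0$. Your point $f^j(z)$ lies in $A^\infty$ (the orbit stays on the outer side under the approximate rotation), and since $Y^0$ and $Y^\infty$ are \emph{different components} of $\nohq=\RS\backslash\Hq$, the hyperbolic distance from $f^j(z)$ to those inner arcs is infinite. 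So the chain ``$f^j(z)$ close to $\gamma_c^\infty$, hence close to $\Gamma^\infty_i \subset f^{-1}(\Hq)\backslash\Hq$'' breaks down, and no $O(1)$ bound on $\dist_\nohq(f^i(z), f^{-1}(\Hq)\backslash\Hq)$ follows. This is precisely why the paper's proof uses the \emph{one-sided} Lemma \ref{approx-rot-01}: its whole purpose (cf.\ Figure \ref{fig:approx-rot}) is that the iteration continues past inner critical points and stops only when it reaches an \emph{outer} critical point, for which Lemma \ref{lem:preim} does supply arcs of $f^{-1}(\Hq)\backslash\Hq$ in $A^\infty$ at bounded hyperbolic distance from $\gamma_c^\infty$. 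Repairing your argument essentially forces you to re-prove the one-sided lemma (pulling back half-neighborhoods $H^\infty_a(I)$ along intervals avoiding the outer critical values), i.e.\ to follow the paper's route.

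Two secondary points. First, your claim ``$F^j(w)=c$'' is not right: in Lemma \ref{approx-rot-02} it is the marked point $y$, not your nearest point $w$, that is mapped to $c$, and $w$ need not be the foot of the radial leaf through $z$; these are fixable ($|y-w|=O(r)$ and quasiconformality of $\phi$ give the needed $O(1)$ hyperbolic bounds), but as written the step is incorrect. Second, the issue you flag as the main obstacle --- fitting a definite hyperbolic ball $D$ around a point of $f^{-1}(\Hq)\backslash\Hq$ inside $V$ --- is handled in the paper not by coordinating diameters of $Q$ and $V$ but by the extremal-length condition in Lemma \ref{approx-rot-01}\ref{approx-3}: $\mathcal{L}_V([f^i(w),c],\partial V\backslash\Hq)\succ 1$ guarantees that a definite hyperbolic neighborhood of that boundary interval, and hence of the initial segment of the preimage arc at $c$, lies in $V$. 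Your proposed bookkeeping in the $\phi$-coordinate could be made to work, but only after the outer-critical-point issue above is fixed.
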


Again, in Case \ref{case-H}, this corollary also holds for points inside the inner collar.

\begin{proof}
    Let $w \in \Hq$ be the unique point such that $z$ lies on the radial segment $\gamma^\infty_w$. By Lemma \ref{approx-rot-01}, there is an approximate rotation $f^i :U \to V$ such that
    \begin{enumerate}[label=\textnormal{(\roman*)}]
        \item $U$ and $V$ are disks in $A^\infty$ and $z \in U$,
        \item there is some interval $[w,y] \subset \partial U \cap \Hq$ such that $c := f^i(y)$ is an outer critical point and $\mathcal{L}_U([w,y],\partial U \backslash \Hq) \succ 1$,
        \item $|w-y| = O(|w-z|)$.
    \end{enumerate} 
    Refer to Figure \ref{fig:approx-rot-to-preim}. Clearly, (iii) implies that 
    \[
    \dist_{\nohq}(z,\gamma^\infty_{y}) = O(1).
    \] 
    Since $f^i$ is an approximate isometry on $U$, then 
    \[
    \dist_{\nohq}(f^i(z),\gamma^\infty_{c}) = O(1).
    \]
    Because of Lemma \ref{lem:preim} and the fact that $c$ is an outer critical point, we have
    \begin{equation}
    \label{eqn:hyp-ball-estimate}
    \dist_\nohq(f^i(z), f^{-1}(\Hq)) = O(1).
    \end{equation}
    From (ii), we have $\mathcal{L}_V([f^i(w),c], \partial V\backslash \Hq) \succ 1$. Together with (\ref{eqn:hyp-ball-estimate}), we conclude that $V$ contains a hyperbolic ball $D$ with the desired properties.
\end{proof}

\begin{figure}
 \centering
    \begin{tikzpicture}[scale=1.1]
        \draw[gray,fill=gray!10!white] (-5,0) .. controls (-5,2.7) and (-1,2.7) .. (-1,0) -- (-4,0);
        \draw[gray,fill=gray!10!white] (1,0) .. controls (1,2.7) and (5,2.7) .. (5,0) -- (1,0);
        \filldraw[green, fill=green!50!white, opacity=0.3] (2.7,0.78) circle (12pt);
        \draw[ultra thick] (-5.5,0) -- (-0.5,0);
        \draw[ultra thick] (0.5,0) -- (5.5,0);
        \draw[red!80!white] (-3.8,0) -- (-3.8,2.4);
        \draw[gray!50!white] (3.4,0) -- (3,1.5);
        \draw[gray!50!white] (3.4,0) -- (3.8,1.5);
        \draw[gray!50!white] (3.4,0) .. controls (3,0.35) .. (2.6,1);
        \draw[gray!50!white] (3.4,0) .. controls (3.8,0.35) .. (4.2,1);
        
        \node [black, font=\bfseries] at (-5.2,-0.3) {$\Hq$};
        \node [black, font=\bfseries] at (5.2,-0.3) {$\Hq$};
        
        \node [red!80!white, font=\bfseries] at (-4.1,2.2) {$\gamma^\infty_w$};
        \node [red, font=\bfseries] at (-3.8,1.2) {$\bullet$};
        \node [red, font=\bfseries] at (-3.6,1.2) {$z$};
        \node [red, font=\bfseries] at (-3.8,-0.02) {$\bullet$};
        \node [red, font=\bfseries] at (-3.8,-0.25) {$w$};
        \node [black, font=\bfseries] at (-2.6,-0.02) {$\bullet$};
        \node [black, font=\bfseries] at (-2.6,-0.27) {$y$};
        
        \node [red, font=\bfseries] at (2.3,-0.02) {$\bullet$};
        \node [red, font=\bfseries] at (2.3,-0.28) {\small $f^i(w)$};
        \node [red, font=\bfseries] at (2.3,1.2) {$\bullet$};
        \node [black, font=\bfseries] at (3.4,-0.02) {$\bullet$};
        \node [black, font=\bfseries] at (3.4,-0.25) {$c$};
        \node [black, font=\bfseries] at (2.7,0.8) {$\bullet$};
        \node [red, font=\bfseries] at (2.3,1.55) {\small $f^i(z)$};
        
        \node [gray, font=\bfseries] at (-1.6,0.50) {$U$};
        \node [gray, font=\bfseries] at (1.60,0.50) {$V$};
        \node [green!80!black, font=\bfseries] at (2.7,0.52) {\small $D$};
        \draw[line width=0.5pt,-latex] (-1,1) .. controls (-0.5,1.4) and (0.5,1.4) .. (1,1);
        \node [black, font=\bfseries] at (0,1) {$f^i$};
\end{tikzpicture}

    \caption{The construction in the proof of Corollary \ref{approx-rot-to-preim}.}
    \label{fig:approx-rot-to-preim}
\end{figure}
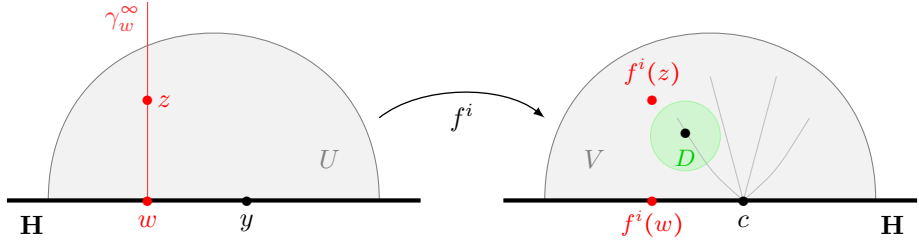

\section{Rigidity of J-rotational rational maps}
\label{sec:rat-map}

Throughout this section, we will consider a rational map $f: \RS \to \RS$ that is J-rotational of bounded type. Our goal here is to prove Theorem \ref{thm:NILF-main}. 

\subsection{J-rotationality}

Denote by $\mathcal{C}(f)$ be the set of critical points of $f$. The intersection of the Julia set $J(f)$ and the postcritical set $P(f)$ of $f$ contains some number, say $m \geq 1$, of pairwise disjoint rotation quasicircles $X_1$, $X_2$, $\ldots$, $X_m$. We will denote
\[
    \PJ = X_1 \cup X_2 \cup \ldots \cup X_m.
\]
Proposition \ref{trichotomy} guarantees that each $X_j$ contains a critical point of $f$, and is either a Herman curve or a boundary component of a rotation domain.

Let us denote by $\mathbf{S}$ the set of the critical and postcritical points of $f$ in its Julia set that are outside of any rotation curve. Hence, we can write
\[
    \left( \mathcal{C}(f) \cup  P(f) \right) \cap J(f) = \PJ \cup \mathbf{S}.
\]
The assumption that $f$ is J-rotational implies that $\mathbf{S}$ is a finite, possibly empty, set. 

\begin{lemma}
\label{lem:parabolic-or-repelling}
    Every point in $\mathbf{S}$ is eventually mapped to either a periodic point in $\mathbf{S}$ or a rotation curve in $\PJ$. Every periodic point in $\mathbf{S}$ is either parabolic or repelling in nature.
\end{lemma}

\begin{proof}
    The first assertion follows from J-rotationality after passing to a higher iterate. Consider a periodic point $s$ in $\mathbf{S}$. Since $s$ is contained in the Julia set, $s$ is either repelling or parabolic or Cremer (irrationally indifferent). Suppose for a contradiction that $s$ is Cremer. A theorem of Ma\~ne \cite{Ma93} asserts that $s$ is contained in the omega limit set $\omega(c)$ of a critical point $c$ that is recurrent, that is, $c \in \omega(c)$. The critical point $c$ has to be in $J(f)$, hence it is also in $\mathbf{S}$. By recurrence, $c$ has an infinite forward orbit, but this contradicts the finiteness of $\mathbf{S}$.
\end{proof}

Note that for all $n \geq 1$, $J(f^n) = J(f)$ and any invariant line field of $f$ on $J(f)$ is an invariant line field of $f^n$ on $J(f^n)$. As such, by replacing $f$ with a higher iterate, we will assume from now on that every periodic point in the postcritical set $P(f)$ is a fixed point and every component of $\PJ$ is fixed by $f$.

\begin{remark}
    We do not assume that the Fatou set $F(f)$ is non-empty. Indeed, it is possible that $F(f)$ is empty and $P(f)$ consists of a single trivial Herman curve. Here is one way to construct such an example. Pick a real rational map $g$ such that all of its critical points in $\RS$ lie along $\hat{\R} :=\R \cup \{\infty\}$ and that $g$ restricts to an orientation-preserving self homeomorphism of $\hat{\R}$. (For example, $g(z)=z^d$ for any odd $d\geq 3$.) Pick any M\"obius transformation $\psi$ sending $\hat{\R}$ onto the unit circle $\T$. By standard monotonicity considerations, for any irrational $\theta \in (0,1)$, there is a unique $\alpha \in [0,1)$ such that the rational map $f(z) := e^{2\pi i \alpha} \psi \circ g \circ \psi^{-1}(z)$ restricts to an analytic multicritical circle map with rotation number $\theta$. Since all critical points lie on $\T$, $f$ has no attracting basins. The rotation curve $\T$ must be a Herman curve because every critical point in $\T$ is both an inner and outer critical point. This also rules out the existence of rotation domains. Therefore, the Fatou set of $f$ is empty.
\end{remark}

\subsection{Thick thin decomposition}
\label{ss:decomposition}

Let  
\[
\Omega := \RS \backslash \left(\mathcal{C}(f) \cup P(f)\right).
\]
We shall equip the open set $\Omega$ with the hyperbolic metric $\rho_\Omega(z) |dz|$. Note that $f^n: f^{-n}(\Omega) \to \Omega$ is an unbranched covering map for all $n \geq 1$.

For any point $z$ in $\Omega$, denote by $\| f'(z) \|$ the norm of the derivative of $f$ at $z$ with respect to the hyperbolic metric of $\Omega$.

\begin{lemma}
\label{lem:expansion}
    Consider a point $z$ in $\Omega$.
    \begin{enumerate}[label=\textnormal{(\arabic*)}]
        \item If $f(z)$ is also contained in $\Omega$, then $\| f'(z) \| \geq 1$.
        \item If $z$ is in $J(f)$ and its forward orbit stays entirely within $\Omega$, then
    \[
        \| (f^n)'(z) \| \to \infty \quad \text{ as } n \to \infty.
    \]
    \end{enumerate}
\end{lemma}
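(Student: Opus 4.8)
The plan is to prove the two statements using the standard Schwarz--Pick machinery for the holomorphic covering maps $f^n\colon f^{-n}(\Omega)\to\Omega$, combined with the non-emptiness of the postcritical set (so that $\Omega\neq\emptyset$ and $f^{-1}(\Omega)\subsetneq\Omega$, or at worst $f^{-1}(\Omega)\subseteq\Omega$ as a covering).

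\textbf{Statement (1).} Since $P(f)$ is forward invariant and contains the postcritical set, while $\mathcal{C}(f)$ is pulled into $P(f)$, one checks that $f^{-1}(\Omega)\subseteq\Omega$: if $z\in f^{-1}(\Omega)$ then $z\notin\mathcal{C}(f)$ (otherwise $f(z)\in P(f)$, contradicting $f(z)\in\Omega$) and $z\notin P(f)$ (since $P(f)$ is forward invariant, $z\in P(f)$ would force $f(z)\in P(f)$). Thus $f$ restricts to an unbranched holomorphic covering $f\colon f^{-1}(\Omega)\to\Omega$, and the inclusion $\iota\colon f^{-1}(\Omega)\hookrightarrow\Omega$ is holomorphic. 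Writing $f$ locally as $\iota$ followed by this covering and applying the Schwarz--Pick lemma to $\iota$ (which is $1$-Lipschitz from the hyperbolic metric of $f^{-1}(\Omega)$ to that of $\Omega$, and is a local isometry for covering maps) shows $\|f'(z)\|\ge 1$ for $z\in f^{-1}(\Omega)$, i.e. whenever both $z$ and $f(z)$ lie in $\Omega$. The inequality is strict unless $\iota$ is a covering, i.e. unless the relevant component of $f^{-1}(\Omega)$ equals the corresponding component of $\Omega$.

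\textbf{Statement (2).} Let $z\in J(f)$ with $f^n(z)\in\Omega$ for all $n\ge 0$. By part (1), $\|(f^n)'(z)\|$ is non-decreasing in $n$, so the sequence has a limit $\ell\in[1,\infty]$; suppose for contradiction $\ell<\infty$. Then $\|f'(f^n(z))\|\to 1$. The key is a \emph{compactness/normal families} argument: consider the inverse branches $g_n$ of $f^n$ defined near $w_n:=f^n(z)$ sending $w_n$ back to $z$. Because $\|(f^n)'(z)\|$ stays bounded, these inverse branches are defined on hyperbolic balls $\mathbb{D}_\Omega(w_n,\delta)$ of a definite radius $\delta>0$ (Koebe-type control in the hyperbolic metric), and they contract by a factor bounded away from $0$; passing to a subsequential limit and using that $z\in J(f)$ contradicts equicontinuity --- more precisely, one argues that a definite hyperbolic ball around $z$ is mapped by some $f^n$ univalently onto a definite hyperbolic ball around $w_n$ with bounded distortion, and iterating/composing such maps one produces a normal family of iterates near $z$, forcing $z\in F(f)$. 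Equivalently, one can invoke the standard dichotomy: for the covering $f\colon f^{-1}(\Omega)\to\Omega$, either the derivative norm along the orbit of $z$ tends to infinity, or the orbit eventually stays in a component where $f$ is a covering isomorphism onto its image and the orbit is trapped in a region whose closure misses $J(f)$; the latter contradicts $z\in J(f)$.

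\textbf{Main obstacle.} The delicate point is (2): deducing $\|(f^n)'(z)\|\to\infty$ rather than merely $\ge 1$. The inequality $\|f'\|\ge 1$ can degenerate to equality along an orbit that spirals into (the closure of) a rotation domain or a puncture of $\Omega$ coming from a periodic point of $P(f)$. The resolution is that $z\in J(f)$ rules this out: near such a limiting locus $f$ acts (up to bounded distortion) like a rotation or a parabolic/hyperbolic germ, and in all cases a point whose entire forward orbit accumulates there with bounded derivative norm would have to lie in the Fatou set. I would carry this out by the standard argument of McMullen (cf. the analogous statements in \cite{McM94}): if $\|(f^n)'(z)\|\not\to\infty$, extract a limit of the rescaled inverse branches $(f^n)^{-1}$ at $w_n$ to get a non-constant holomorphic limit defined on a definite ball, then pull back along a further subsequence to obtain a non-normal-family violation at $z$, contradicting $z\in J(f)$. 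I expect the covering-space bookkeeping (keeping track of which component of $f^{-n}(\Omega)$ one is in, and the Koebe control in the hyperbolic metric) to be the only technical nuisance; the conceptual core is just Schwarz--Pick plus the characterization of the Julia set via non-equicontinuity.
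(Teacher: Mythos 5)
Your proof of (1) is the same Schwarz-lemma-for-the-inclusion argument the paper uses, so there is nothing to add there. For (2) you argue by contradiction via normal families, which is a genuinely different (contrapositive) route from the paper: the paper argues directly that for $z \in J(f)$ the backward orbits $f^{-n}\left(P(f)\cup\mathcal{C}(f)\right)$ accumulate at $z$, so the inclusion $\iota_n : f^{-n}(\Omega)\hookrightarrow\Omega$ contracts at $z$ by a factor $C(r_n)\to 0$, and composing $\iota_n^{-1}$ with the covering $f^n$ gives $\|(f^n)'(z)\|\geq C(r_n)^{-1}\to\infty$; this buys a short, quantitative proof at the cost of invoking the density of iterated preimages on the Julia set. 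Your route instead assumes $\|(f^n)'(z)\|\leq M$ and manufactures a protected neighborhood of $z$ whose forward images stay in $\Omega$, then concludes non-normality is violated; this is sound, but your execution through inverse branches and hyperbolic Koebe control is heavier than necessary and carries a genuine bookkeeping issue (hyperbolic balls in $\Omega$ need not be simply connected, so single-valued branches of $f^{-n}$ on $\D_\Omega(w_n,\delta)$ require lifting to the universal cover or shrinking to simply connected neighborhoods). A cleaner way to finish your argument: since $f^n: f^{-n}(\Omega)\to\Omega$ is a covering, $\|(f^n)'(z)\| = \rho_{f^{-n}(\Omega)}(z)/\rho_\Omega(z)$, so boundedness together with the standard estimate $\rho_U(z)\geq \left(2\,\dist(z,\partial U)\right)^{-1}$ forces a fixed disk $\D(z,\epsilon)$ to lie in $f^{-n}(\Omega)$ for every $n$; then all iterates on $\D(z,\epsilon)$ omit the infinite set $P(f)\cup\mathcal{C}(f)$, Montel gives normality, contradicting $z\in J(f)$ — no inverse branches or distortion control needed. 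Finally, the "standard dichotomy" you offer as an equivalent alternative is stated too loosely: boundedness of the product only forces the factors $\|f'(f^n(z))\|$ to tend to $1$, which does not by itself imply that the relevant components of $f^{-1}(\Omega)$ and $\Omega$ eventually coincide; since it is only an aside, it does not affect the main argument.
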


\begin{proof}
    (1) follows from a standard application of Schwarz lemma to the inclusion map $f^{-1}(\Omega) \hookrightarrow \Omega$. For every point $z$ in $J(f)$, the distance $r_n$ between $z$ and $f^{-n}\left(P(f) \cup \mathcal{C}(f)\right)$ decreases to zero as $n \to \infty$. At $z$, the inclusion map $\iota_n : f^{-n}(\Omega) \to \Omega$ is a contraction with respect to the hyperbolic metrics of the domain and range by a factor of $C(r_n)$ for some function $C(r)$ which tends to zero as $r \to 0$. By composing $\iota_n^{-1}$ with $f^n$, we obtain $\| (f^n)'(z) \| \geq C(r_n)^{-1} \to \infty$.
\end{proof}

For every rotation quasicircle $X_j$, let us pick a collar $A_j$ of $f|_{X_j}$ as described in \S\ref{ss:approx-rot}. In particular, we would want $A_j$ to be skinny enough such that it is compactly contained in $\Omega \cup X_j$. Denote the union of all collars by 
\[
\collar := A_1 \cup A_2 \cup \ldots \cup A_m.
\]

For every point $s$ in $\mathbf{S}$, let us fix a disk neighborhood $D_s$ of $s$ small enough such that they are disjoint from each other, as well as from $\collar$. Consider an $\varepsilon$-neighborhood $N_\varepsilon$ of $J(f)$. We define the \emph{thick part of} $\Omega$ by 
\begin{equation}
    \label{eqn:thicc}
    \Othick := N_\varepsilon \backslash \left( \mathbf{X} \cup \bigcup_{s \in \mathbf{S}} D_s \right).
\end{equation}
We select $\varepsilon>0$ to be small enough such that $\Othick$ does not contain any critical and postcritical points of $f$ that lie in the Fatou set $F(f)$. By making each $A_j$ skinnier, we also assume that 
\[
\collar \subset \Othick.
\]

\begin{lemma}
\label{1/d-metric}
    For every point $z$ in $\Othick$,
    \[
    \rho_\Omega(z) \asymp \frac{1}{\dist(z,\PJ)}.
    \]
    If $z$ lies in some collar $A_j$, the denominator can be replaced with $\dist(z,X_j)$.
\end{lemma}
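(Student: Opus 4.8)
The plan is to estimate the hyperbolic density $\rho_\Omega(z)$ of $\Omega = \RS \setminus (P(f) \cup \mathcal{C}(f))$ at points $z$ in the thick part $\Othick$ by comparing it with the density of a simpler reference surface, namely the complement of the union $\PJ$ of rotation quasicircles together with the finitely many marked disks and points.

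\textbf{Step 1: An upper bound via monotonicity.} Since $P(f) \cup \mathcal{C}(f) \supset \PJ$ (each rotation quasicircle lies in $P(f)$ by Proposition \ref{herman-in-pf}, and contains a critical point by Proposition \ref{trichotomy}), the inclusion $\Omega \hookrightarrow \RS \setminus \PJ =: \nohq_{\text{tot}}$ (or into each $\nohq = \RS \setminus X_j$) is holomorphic, so by the Schwarz--Pick lemma $\rho_\Omega(z) \geq \rho_{\nohq}(z)$. Since $\nohq$ is the complement of a quasicircle, a theorem of Beardon--Pommerenke (or a direct quasiconformal comparison) gives $\rho_{\nohq}(z) \asymp 1/\dist(z, X_j)$ on a collar, and more generally $\rho_{\nohq_{\text{tot}}}(z) \asymp 1/\dist(z,\PJ)$ near $\PJ$. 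This yields the lower bound $\rho_\Omega(z) \succ 1/\dist(z,\PJ)$ — but note: I want the \emph{reverse} inequality for the upper bound, so I should reverse roles: I need a \emph{lower} bound on $\rho_\Omega$ in terms of something, and an \emph{upper} bound in terms of $1/\dist(z,\PJ)$. Let me restate: monotonicity under the inclusion $\Omega \subset \nohq_{\text{tot}}$ gives $\rho_\Omega \geq \rho_{\nohq_{\text{tot}}} \asymp 1/\dist(\cdot,\PJ)$. This is the lower bound $\rho_\Omega(z) \succ 1/\dist(z,\PJ)$.

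\textbf{Step 2: The upper bound $\rho_\Omega(z) \prec 1/\dist(z,\PJ)$.} This is where the restriction to $\Othick$ matters. The standard estimate for hyperbolic density is $\rho_\Omega(z) \asymp 1/\dist(z, \RS \setminus \Omega) = 1/\dist(z, P(f) \cup \mathcal{C}(f))$, valid up to a $\log$ factor in general, and valid up to a bounded factor when $\RS \setminus \Omega$ is ``uniformly perfect'' near $z$ or when $z$ has bounded-shape complement. Since $\Othick = N_\varepsilon \setminus \bigcup_s D_s$ is bounded away from the marked points $\mathbf{S}$, every point $z \in \Othick$ whose closest point of $\RS \setminus \Omega$ is in $\PJ$ satisfies $\dist(z, \RS\setminus\Omega) = \dist(z,\PJ)$ (for $z$ near $\PJ$, in a collar, this is clear). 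And $\RS \setminus \Omega$ is uniformly perfect near $\PJ$ because $\PJ$ is a finite union of quasicircles, each of which is uniformly perfect; hence there is no $\log$ loss and $\rho_\Omega(z) \asymp 1/\dist(z,\PJ)$. For $z \in \Othick$ far from $\PJ$ but close to $J(f)$: here $\dist(z, P(f))$ need not equal $\dist(z,\PJ)$, but $\Othick$ is a fixed compact-ish region (an $\varepsilon$-neighborhood of $J(f)$ minus fixed disks), and $\PJ$ is compact, so on the part of $\Othick$ at definite distance $\succ 1$ from $\PJ$, both $\rho_\Omega(z)$ and $1/\dist(z,\PJ)$ are bounded above and below by constants — again giving comparability. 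The collar statement is then immediate: if $z \in A_j$, then $A_j$ being compactly contained in $\Omega \cup X_j$ and skinny forces $\dist(z,\PJ) = \dist(z,X_j)$ up to a bounded factor, since the other components $X_i$ and the disks $D_s$ are at definite distance from $A_j$.

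\textbf{Main obstacle.} The delicate point is the absence of a logarithmic correction in $\rho_\Omega(z) \asymp 1/\dist(z, \RS\setminus\Omega)$; in full generality one only has $\rho_\Omega(z) \asymp \frac{1}{\dist(z,\RS\setminus\Omega)\,\log(1/\dist)}$ near an isolated boundary point or a ``thin'' piece of boundary. This is precisely why the marked disks $D_s$ around the finite postcritical/critical set $\mathbf{S}$ are excised in the definition of $\Othick$ — near an isolated postcritical point the complement $\RS\setminus\Omega$ is \emph{not} uniformly perfect and the naive estimate fails. So the heart of the argument is: (i) the part of $P(f)\cup\mathcal{C}(f)$ that $\Othick$ can see, other than $\PJ$, is at definite distance (by choice of $\varepsilon$ and the $D_s$); (ii) $\PJ$, being a finite union of quasicircles, is uniformly perfect, which is the hypothesis that removes the $\log$. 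I would cite the standard comparison (e.g. as in \cite{McM94} or Beardon--Pommerenke) that for a domain whose complement is uniformly perfect, the hyperbolic density is comparable to the reciprocal of the distance to the boundary, and then simply verify that within $\Othick$ the nearest boundary point is always either on $\PJ$ (the uniformly perfect part) or at a definite distance, in either case giving the two-sided bound.
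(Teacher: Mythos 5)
Your argument is correct and follows essentially the same route as the paper: away from the collars both quantities are comparable to $1$ by compactness, and inside a collar the lower bound comes from Schwarz--Pick/monotonicity into the complement of the rotation quasicircle (the paper applies Koebe to the single complementary Jordan component $E$ of $\RS\backslash X_j$ rather than citing uniform perfectness or Beardon--Pommerenke), while the upper bound is the inscribed-disk estimate $\rho_\Omega(z)\prec 1/\dist(z,P(f))$ combined with $\dist(z,P(f))\asymp\dist(z,X_j)\asymp\dist(z,\PJ)$ for $z\in A_j$. One small remark: the logarithmic correction you worry about only threatens the lower bound near thin or isolated boundary, and since you already secured that bound by monotonicity in Step 1, the excision of the disks $D_s$ is really needed only to guarantee $\dist(z,\RS\backslash\Omega)\asymp\dist(z,\PJ)$ throughout $\Othick$ --- exactly the role it plays in the paper's proof.
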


\begin{proof}
    Since $\Omega_{\text{thick}} \backslash \collar$ is compactly contained in $\Omega$, then $\rho_\Omega(z) \asymp1 \asymp \frac{1}{\dist(z,\PJ)}$ for all $z \in \Omega_{\text{thick}}\backslash \collar$. Hence, it suffices to consider points $z$ lying in $A_j\backslash X_j$ for some $j$.
    
    Let us equip the connected component $E$ of $\RS \backslash X_j$ containing $z$ with the hyperbolic metric. A standard application of Koebe quarter theorem to any Riemann mapping $(\D,0) \to (E,z)$ yields the estimate $\rho_E(z) \succ \frac{1}{\dist(z,X_j)}$. Then, we apply Schwarz Lemma to the inclusion map on the connected component of $\Omega$ containing $z$ into $E$ in order to obtain  $\rho_\Omega(z) \geq \rho_E(z)$, which gives us the estimate $\rho_\Omega(z) \succ \frac{1}{\dist(z,X_j)}$.

    As we apply Schwarz lemma to the inclusion map $\D\left(z, \dist(z, P(f))\right) \hookrightarrow \Omega$, we also obtain the estimate $\rho_\Omega(z) \prec \frac{1}{\dist(z, P(f))}$. We conclude the proof with the observation that for every $z \in A_j$, $\dist(z,X_j) \asymp \dist(z, \PJ) \asymp \dist(z, P(f))$.
\end{proof}

In what follows, we will pay closer attention to certain compact subsets of the Julia set which behave nicely under iteration with respect to the hyperbolic metric of $\Omega$.

\begin{definition}
\label{def:little-julia-set}
    Define 
    \[
    \Jthick := \{ z \in J(f) \: : \: f^n(z) \in \overline{\Othick} \text{ for all } n \geq 0\}.
    \]
    For every $j \in \{1,\ldots,m\}$, define the \emph{$j$\textsuperscript{th} local Julia set} of $f$ to be
    \[
    J^{\textnormal{loc}}_j := \{ z \in J(f) \: : \: f^n(z) \in \overline{A_j} \text{ for all } n \geq 0\}.
    \]
\end{definition}

Clearly, $\Jthick$ contains each of the local Julia sets.

\subsection{Visiting a critical point from $\Jthick$}
\label{ss:nearby-critical-visit}

The key ingredient towards proving Theorem \ref{thm:NILF-main} is the following theorem. 

\begin{theorem}[Nearby critical visits]
\label{critical}
    For every point $z \in \Jthick$ and scale $r>0$, there exist an integer $i\geq 0$ and a pair of pointed disks $(U,y)$ and $(V,c)$ such that
    \begin{enumerate}[label=\textnormal{(\arabic*)}]
        \item $f^i: (U,y) \to (V,c)$ is a univalent map with bounded distortion,
        \item $(U,y)$ has bounded shape with diameter $\asymp r$,
        \item $|y-z| = O(r)$, and
        \item $c$ is a critical point of $f$ located in $\PJ$.
    \end{enumerate}
\end{theorem}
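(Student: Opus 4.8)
The plan is to distinguish two regimes depending on whether the point $z \in \Jthick$ spends most of its forward orbit deep inside a collar or not, and then apply the approximate rotation machinery of Section \S\ref{ss:approx-rot} in the former case and a Koebe/Schwarz-type expansion argument in the latter. Fix $z$ and a scale $r>0$. First I would consider the trajectory $z, f(z), f^2(z), \ldots$ inside $\overline{\Othick}$. If the orbit never enters any collar $\collar$, then it stays in $\overline{\Othick}\backslash\collar$, a set compactly contained in $\Omega$; by Lemma \ref{lem:expansion}(2) the derivatives $\|(f^n)'(z)\|$ grow to infinity, so there is a first moment $i$ at which the hyperbolic ball (for $\rho_\Omega$) of radius $\asymp r$ about $z$ gets blown up by $f^i$ to roughly unit hyperbolic size; since $\rho_\Omega \asymp 1$ on this compact set, this is comparable to the Euclidean picture, and standard Koebe distortion combined with the fact that the first blow-up of size order one must occur because an iterate eventually maps a definite neighborhood over a critical point (every point of $J(f)$ has a critical point of $f$ in the forward image of arbitrarily small neighborhoods — this is the classical mechanism of \cite[Theorem 3.2]{McM94}) gives the desired $(U,y)\to(V,c)$. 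I should be careful to take $i$ minimal so that bounded distortion (Koebe with a definite Koebe space) is preserved along the whole pullback.

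The genuinely new case is when the forward orbit of $z$ does enter a collar $A_j$. Here I would use the approximate rotation lemmas. Once $f^k(z)$ lies in $A_j$ and is sufficiently close to $X_j$, Corollary \ref{approx-rot-to-preim} produces an approximate rotation $f^\ell : U' \to V'$ with $U' \ni f^k(z)$ and $V'$ containing a hyperbolic ball of radius $\asymp 1$ centered at a point of $f^{-1}(X_j)\backslash X_j$; alternatively, if $f^k(z)$ is not yet close enough to $X_j$, the orbit must first drift toward $X_j$ under iteration (the collar dynamics is an approximate rotation away from critical points, and being in $\Jthick$ forbids escaping the collar) until it is close enough, or it lands near a critical point directly via Lemma \ref{approx-rot-02}. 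In either scenario the two-sided approximate rotation Lemma \ref{approx-rot-02}, applied at the appropriate scale $r' \asymp r$ (after transporting $r$ through the bounded-distortion map $f^k$), yields a pointed disk $(U,y)$ of bounded shape and diameter $\asymp r$, well containing an interval of $X_j$, mapped by an approximate rotation $f^i$ onto a disk $(V,c)$ with $c$ a critical point of $f$ on $X_j \subset \PJ$. The remaining bookkeeping is to compose the initial bounded-distortion iterate $f^k$ (controlled by Koebe on the thick part) with the approximate rotation $f^{i-k}$ and to verify that the composition still has bounded distortion — this follows because approximate rotations are univalent of bounded distortion by definition, and composition of finitely many such maps (with a definite Koebe collar at each stage) retains bounded distortion.

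The main obstacle I anticipate is controlling the geometry uniformly across the \emph{interface} between the two regimes: the orbit may bounce in and out of several collars and the thick part many times before a critical visit occurs, and at each transition one must guarantee a definite amount of Koebe space so that distortion does not accumulate. The way to handle this is to observe that each "excursion" into the thick-but-outside-collar region is through a set compactly contained in $\Omega$, where $\rho_\Omega \asymp 1$ and $f$ is locally univalent with uniformly bounded distortion on balls of definite hyperbolic radius, while each excursion inside a collar is governed by a genuine approximate rotation until the critical visit; the key quantitative point is Lemma \ref{1/d-metric}, which identifies $\rho_\Omega$ with $1/\dist(\cdot,\PJ)$ on the thick part, so scales measured hyperbolically and Euclidean-relative-to-$\PJ$ agree up to constants, letting me track the scale $r$ coherently through arbitrarily many transitions. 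Finally, once $\|(f^n)'(z)\|$ has grown past a definite threshold, the next critical visit must occur within a bounded hyperbolic distance, so only finitely many transitions happen before we stop, and the accumulated distortion stays bounded.
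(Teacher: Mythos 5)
Your proposal assembles the right toolbox (approximate rotations, Lemma \ref{1/d-metric}, expansion in $\Omega$, Koebe), but two load-bearing steps are missing or incorrect. The first is the stopping rule and scale bookkeeping, which is precisely what conclusions (2)--(3) require: the pullback disk must have diameter $\asymp r$ and lie within $O(r)$ of $z$. The paper achieves this by transporting a tangent vector of Euclidean length $r$ along the orbit and monitoring its hyperbolic norm $\|v_i\|$ in $\Omega$, stopping either at time $0$ (if $\|v_0\|\geq 1$), at the first time the norm lands in a fixed band $(\varepsilon,1)$, or when it jumps over the band in a single step --- the last case requiring a separate Euclidean Koebe argument on a round disk of radius $\asymp |v_{j-1}|/\varepsilon$ around $z_{j-1}$ on which $f$ is univalent. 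Your rule of applying Lemma \ref{approx-rot-02} ``at the appropriate scale $r'\asymp r$ after transporting $r$ through the bounded-distortion map $f^k$'' is not correct: bounded distortion does not preserve Euclidean scale (scales are multiplied by $|(f^k)'|$, which is unbounded along the orbit), and nothing in your setup determines \emph{when} to stop iterating so that the final pullback has size $\asymp r$ near $z$. Relatedly, your plan to control distortion ``stage by stage'' across collar/thick transitions does not close: the number of such transitions before the stopping time is finite but not uniformly bounded, so stage-wise constants accumulate. The paper avoids this entirely with a one-shot pullback: $f^j\colon f^{-j}(\Omega)\to\Omega$ is an unbranched covering, so the final ball is lifted univalently in a single step and its geometry is controlled by the hyperbolic Koebe distortion theorem, not by composing many bounded-distortion pieces.

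The second gap is the key intermediate statement (the paper's Lemma \ref{visiting-from-jf}): from any point of $\Jthick\backslash\PJ$ one can reach, within bounded hyperbolic distance and via a univalent bounded-distortion map defined on a hyperbolic ball of definite radius, a disk \emph{centered at} a critical point lying \emph{on} $\PJ$. In your first regime you replace this by the blow-up property (``some iterate maps a definite neighborhood over a critical point''), but covering is far weaker than what is needed: at the moment the image covers a critical point the iterate is typically branched on the relevant preimage component, there is no Koebe space or bounded shape, and the critical point reached need not lie on $\PJ$ (it could lie in $\mathbf{S}$ or in the Fatou set), so condition (4) is not addressed. The paper's proof of this step is genuinely dynamical: points of $\Jthick\backslash\collar$ lie within bounded hyperbolic distance of the strict preimage $f^{-1}(\PJ)\backslash\PJ$, and near such preimages Lemmas \ref{approx-rot-02} and \ref{lem:preim} together with Corollary \ref{approx-rot-to-preim} transport a definite ball univalently onto a disk centered at a critical point of $\PJ$. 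In your second regime the two supporting claims are false: membership in $\Jthick$ does not confine the orbit to a collar (only membership in $K^{\textnormal{loc}}_j$ does), and orbits in a collar do not ``drift toward $X_j$'' --- the escape function only satisfies $L(f(z))\leq L(z)+O(1)$, and generic Julia points eventually leave the collar. So the dichotomy you chose (orbit enters a collar or not) neither matches the actual difficulty nor is supported by the dynamics, and the theorem's conclusions (1), (2) and (4) are not secured by the argument as proposed.
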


Theorem \ref{critical} is an extension of \cite[Theorem 3.2]{McM98}, which was originally stated in the context of bounded type quadratic Siegel disks. The proof relies on the approximate rotation mechanism introduced in \S\ref{ss:approx-rot}, which is made compatible with the hyperbolic metric of $\Omega$ thanks to Lemma \ref{1/d-metric}.

\begin{lemma}
\label{visiting-from-jf}
    For every point $z$ in $\Jthick\backslash \PJ$, there is a univalent map 
    $$
    f^i: (B,x) \to (V,c)
    $$
    such that $i\geq 0$, $c$ is a critical point of $f$ in $\PJ$, and $B$ is a hyperbolic ball in $\Omega$ of radius $r \asymp 1$ centered at a point $x$ in $\Omega$ satisfying $d_{\Omega}(x,z) = O(1)$. 
\end{lemma}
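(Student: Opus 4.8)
The plan is to use expansion with respect to the hyperbolic metric of $\Omega$ to push the orbit of $z$ to a definite scale, and then to invoke the approximate rotation machinery of \S\ref{ss:approx-rot} (specifically Corollary \ref{approx-rot-to-preim}) to reach a critical point. I would proceed as follows. Since $z \in \Jthick \backslash \PJ$, its entire forward orbit lies in $\overline{\Othick}$, and by Lemma \ref{1/d-metric} the hyperbolic density $\rho_\Omega$ along this orbit is comparable to the reciprocal of the distance to $\PJ$ (equivalently, to $P(f)$). By Lemma \ref{lem:expansion}(2), $\| (f^n)'(z) \|_\Omega \to \infty$, so there is a first time $n_0$ at which $f^{n_0}$ maps a hyperbolic ball around $z$ of a fixed small radius $\varepsilon_0$ with bounded distortion onto a hyperbolic ball of radius $\asymp 1$; by the first-time choice and bounded derivative ratios (using Lemma \ref{lem:expansion}(1) to prevent overshooting), the image ball $B'$ centered at $z' := f^{n_0}(z)$ has radius $\asymp 1$, and $d_\Omega(z', \cdot)$-control is as desired. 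The point $z'$ still lies in $\overline{\Othick}$; its Euclidean distance to $\PJ$ is now comparable to its distance to $P(f)$, which by $\rho_\Omega(z')\asymp 1$ is bounded below, so $z'$ is at a definite hyperbolic distance from $\PJ$ \emph{unless} it lies in some collar $A_j$ close to $X_j$.

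Next I would separate two cases. If $z'$ stays a definite hyperbolic distance away from $\PJ$, then $z'$ lies in a compact part of $\Omega$; here one uses the fact that $J(f)$ has no isolated points and a standard Montel/normality argument (or simply the density of iterated preimages of any critical point, since every $X_j$ carries a critical point and $X_j \subset J(f)$) to find a univalent pullback of a fixed small ball around a critical point $c \in \PJ$ landing with bounded distortion near $z'$; compose with the earlier $f^{n_0}$-branch inverted. If instead $z'$ lies in a collar $A_j$ at small distance from $X_j$, then I apply Corollary \ref{approx-rot-to-preim}: after possibly one more bounded pull-forward to land genuinely inside the inner or outer collar, there is an approximate rotation $f^k : U \to V$ with $z' \in U$ and $V$ containing a hyperbolic ball $D$ of radius $\asymp 1$ centered at a point of $f^{-1}(X_j)\backslash X_j$, with $\dist_\nohq(f^k(z'),D)=O(1)$; since $f^{-1}(X_j)$ accumulates on the critical point $c \in X_j$ (indeed the quasiarcs $\Gamma_i^\bullet$ of Lemma \ref{lem:preim} emanate from $c$), one more short univalent pullback inside $D$ along these arcs brings us to a ball around $c$. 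In every case I compose all the intermediate univalent branches; bounded distortion is preserved because each stage is either an iterate with a Koebe-type bound on a hyperbolic ball of definite radius, or an approximate rotation (univalent of bounded distortion by Definition \ref{approx-rotation}).

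The main obstacle I anticipate is the bookkeeping at the interface between the "compact part" expansion and the "collar" regime: I must ensure that when the expanding orbit of $z$ first reaches a definite hyperbolic scale, the center $z'$ is positioned so that I can either see a critical point directly or feed $z'$ into Corollary \ref{approx-rot-to-preim}, and that the total distortion stays bounded even though the number of composed branches is unbounded. The resolution is that bounded distortion is only ever accumulated a bounded number of times: the expansion phase $f^{n_0}$ on a hyperbolic ball of definite radius contributes one bounded-distortion factor (Koebe), the approximate rotation contributes one more, and the final short pullback to a critical point contributes one more; the long iterates $f^{n_0}$ and $f^k$ are each treated as a single map, not a product of many, so no telescoping of distortions occurs. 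A secondary technical point is checking that $z'$, if in a collar, actually lies in the part where the approximate rotation lemmas apply (i.e.\ inside $A^0$ or $A^\infty$ and close enough to $X_j$); this follows from choosing the collars $A_j$ thin and from the fact that $\rho_\Omega(z')\asymp 1$ forces $\dist(z',X_j)$ small relative to the width of $A_j$ when $z'$ is near $\PJ$.
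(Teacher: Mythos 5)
Your proposal diverges from the paper's argument and has two genuine gaps, one of which concerns the core content of the lemma. The heart of Lemma \ref{visiting-from-jf} is the production of a univalent, bounded-distortion branch of some $f^i$ from a hyperbolic ball of \emph{definite} radius onto a neighborhood of a critical point $c\in\PJ$, and neither of your two endgames delivers this. In your ``compact part'' case, the Montel/density-of-iterated-preimages argument fails because every neighborhood $V$ of $c$ meets $P(f)$ (indeed it contains an arc of the rotation quasicircle), so components of $f^{-m}(V)$ are not automatically univalent, have no a priori bound on degree or distortion, and above all have no lower bound on their hyperbolic size --- high iterates expand the hyperbolic metric of $\Omega$, so such preimage components are typically far smaller than a ball of radius $\asymp 1$. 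In your collar case, ``one more short univalent pullback inside $D$ along these arcs brings us to a ball around $c$'' does not correspond to any actual map: from a point of $f^{-1}(\PJ)\backslash\PJ$ there is no short branch landing univalently on a definite neighborhood of $c$; one still has to run the argument the paper gives in its Case 1, namely pass to the mirror point $z'\in\PJ$ with $f(z)=f(z')$, apply the two-sided approximate rotation (Lemmas \ref{approx-rot-02} and \ref{lem:preim}, whose univalence rests on the combinatorial avoidance of critical values from Lemma \ref{counting}), transfer the resulting domain back across the local degree-$l$ symmetry $g$ with $f\circ g=f$ near the critical point, and only then conclude a definite hyperbolic radius from $\rin(U,x)\asymp|x-c_0|\geq\dist(x,\PJ)$ together with Lemma \ref{1/d-metric}. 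This transfer-and-size step is exactly what makes the lemma true, and it is absent from your plan.

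Secondly, your opening expansion phase is both unjustified and unnecessary. Lemma \ref{lem:expansion}(2) requires the forward orbit of $z$ to stay entirely in $\Omega$, which fails for the points of $\Jthick\backslash\PJ$ that eventually land on $\PJ$ (for instance all of $f^{-1}(\PJ)\backslash\PJ$); these are not exceptional --- they are precisely the points around which the paper organizes its proof. Moreover the statement contains no small scale $r$: everything is measured in the hyperbolic metric of $\Omega$, so there is nothing to ``blow up'' --- the rescaling by expansion is needed later, in Cases 2--3 of the proof of Theorem \ref{critical}, not here. The paper's proof is instead a purely case-by-case reduction: points of $f^{-1}(\PJ)\backslash\PJ$ near a critical point are handled by the symmetric-domain construction above; points of $f^{-1}(\PJ)\backslash\PJ$ away from critical points and points of $\Jthick\backslash\collar$ are moved a bounded hyperbolic distance by compactness; and points in the collars are fed into Corollary \ref{approx-rot-to-preim} and then reduced to the first case, with the final ball pulled back through the approximate rotation (a rough hyperbolic isometry), so that only a bounded number of distortion constants ever accumulate --- the one part of your plan that matches the paper.
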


\begin{proof}
    For every critical point $c$ on $\PJ$, consider two nested disk neighborhoods $Q_{c} \subset \tilde{Q}_c$ of $c$ with the following properties.
    \begin{enumerate}[label=\textnormal{(\roman*)}]
        \item $Q_{c}$ is compactly contained in $\tilde{Q}_c$ and $\text{mod}(\tilde{Q}_c \backslash \overline{Q_c}) \asymp 1$.
        \item The map $f$ is a covering map from $Q_{c}$ and $\tilde{Q}_c$ onto their respective images, branched only at $c$. 
        \item The map $f|_{\tilde{Q}_c}$ can be written as $f(z)=h(z)^l + f(c)$ where $l$ is the local degree of $c$ and $h$ is a univalent map of bounded distortion.
    \end{enumerate} 
    By taking $\tilde{Q}_c$ to be sufficiently small, we can further assume that the disks $\tilde{Q}_c$ are pairwise disjoint and are contained in $\collar$. Let $\mathbf{Q}$ be the union of disks $Q_{c}$ across every critical point $c \in \PJ$ of $f$. For brevity, let us also denote the strict preimage of $\PJ$ by $\PJ^{-1}:= f^{-1}(\PJ) \backslash \PJ$. 
    
    Pick a point $z$ in $\Jthick\backslash \PJ$. We will split into four cases.
    
    \vspace{0.1in}
    
    \noindent \textbf{Case 1:} $z \in \PJ^{-1} \cap \mathbf{Q}$.
    
    Suppose $z \in Q_{c_0}$ for some critical point $c_0 \in \PJ$. Let $z'$ be the unique point in the intersection $Q_{c_0} \cap \PJ$ such that $f(z)=f(z')$. By Lemmas \ref{approx-rot-02} and \ref{lem:preim}, there is an approximate rotation 
    \[
    f^i: (U',x') \to (V,c)
    \]
    such that $i \geq 1$, $c$ is a critical point of $f$, and $(U',x')$ is a pointed disk in $\tilde{Q}_{c_0} $ that avoids $\PJ^{-1}$, well contains the interval $[x',z']$, and
    \begin{equation}
    \label{eqn:00}
    \rin(U',x') \asymp |x'-c_0|.
    \end{equation}
    
    Let $(U,x)$ be the pointed disk containing $z$ such that $f(U) = f(U')$ and $f(x)=f(x')$. See Figure \ref{fig:inside-p2}. Since both $U$ and $U'$ are contained in $\tilde{Q}_{c_0}$, there is a univalent map $g: (U,x,z) \to (U',x',z')$ of bounded distortion such that $f \circ g = f$ on $U$. Therefore, $U$ avoids $\PJ$, well contains the interval $[x,z]$, and by (\ref{eqn:00}),
    \[
    \rin(U,x) \asymp |x-c_0| \geq \dist(x,\PJ).
    \]
    By Lemma \ref{1/d-metric}, this implies that $U$ contains a hyperbolic ball $B \subset \Omega$ of definite radius centered at $x$. Therefore, $f^i: (B,x) \to (V,c)$ is the desired univalent map.

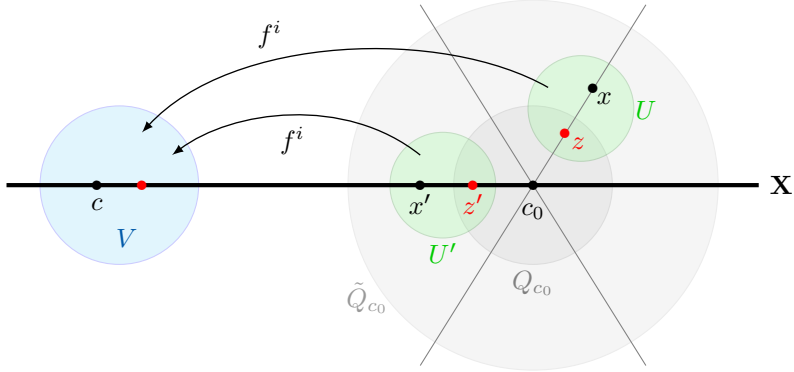
\begin{figure}
\centering
    \begin{tikzpicture}
        \filldraw[blue,fill=cyan!50!white,opacity=0.2] (-3.5,0) circle (30pt);
        \filldraw[gray,fill=gray!50!white,opacity=0.15] (2,0) circle (70pt);
        \filldraw[gray,fill=gray!50!white,opacity=0.2] (2,0) circle (30pt);
        \filldraw[green!50!black,fill=green!50!white,opacity=0.2] (0.8,0) circle (20pt);
        \filldraw[green!50!black,fill=green!50!white,opacity=0.2] (2.636,1.02) circle (20pt);
        \draw[ultra thick] (-5,0) -- (5,0);
        \draw[gray] (0.5,2.4) -- (3.5,-2.4);
        \draw[gray] (0.5,-2.4) -- (3.5,2.4);
        
        \node [black, font=\bfseries] at (5.3,0) {$\PJ$};
        \node [black, font=\bfseries] at (-3.8,-0.02) {$\bullet$};
        \node [black, font=\bfseries] at (-3.8,-0.25) {$c$};
        \node [red, font=\bfseries] at (-3.2,-0.02) {$\bullet$};
        \node [black, font=\bfseries] at (0.5,-0.02) {$\bullet$};
        \node [black, font=\bfseries] at (0.5,-0.25) {$x'$};
        \node [red, font=\bfseries] at (1.2,-0.02) {$\bullet$};
        \node [red, font=\bfseries] at (1.2,-0.25) {$z'$};
        \node [black, font=\bfseries] at (2,-0.02) {$\bullet$};
        \node [black, font=\bfseries] at (2,-0.35) {$c_0$};
        \node [red, font=\bfseries] at (2.424,0.68) {$\bullet$};
        \node [red, font=\bfseries] at (2.6,0.55) {$z$};
        \node [black, font=\bfseries] at (2.795,1.27) {$\bullet$};
        \node [black, font=\bfseries] at (2.95,1.15) {$x$};
        
        \node [black, font=\bfseries] at (-1.2,0.6) {$f^i$};
        \node [black, font=\bfseries] at (-1.5,2) {$f^i$};
        \node [cyan!30!blue, font=\bfseries] at (-3.4,-0.7) {$V$};
        \node [gray, font=\bfseries] at (2,-1.3) {$Q_{c_0}$};
        \node [gray!80!white, font=\bfseries] at (-0.2,-1.5) {$\tilde{Q}_{c_0}$};
        \node [green!80!black, font=\bfseries] at (0.8,-0.9) {$U'$};
        \node [green!80!black, font=\bfseries] at (3.5,1) {$U$};
        
        \draw[line width=0.5pt,-latex] (0.5,0.4) .. controls (-0.5,1.2) and (-2,1) .. (-2.8,0.4);
        \draw[line width=0.5pt,-latex] (2.2,1.3) .. controls (0.8,2.1) and (-2,2) .. (-3.2,0.7);
    \end{tikzpicture}
    \caption{Case 1 in the proof of Lemma \ref{visiting-from-jf}.}
    \label{fig:inside-p2}
\end{figure}
    
    \vspace{0.1in}
    
    \noindent \textbf{Case 2:} $z \in \PJ^{-1} \backslash \mathbf{Q}$. 
    
    The connected component $W$ of $\Omega$ containing $z$ must have at least one boundary component contained in $\PJ$, say $X_j$. If $X_j$ were the boundary component of a rotation domain and $A_j^0$ were contained in $W$, then $W$ would lie entirely within the rotation domain, contradicting the assumption that $z$ lies in $J(f)$. In all the other cases, $W$ contains a connected component of $\PJ^{-1}$ attached to a critical point of $X_j$. Since $W \cap \PJ^{-1} \backslash \mathbf{Q}$ is compactly contained in $W$, there is a point $z'$ in $W \cap \PJ^{-1} \cap \mathbf{Q}$ such that $\dist_\Omega(z,z') = O(1)$. This reduces us to Case 1.

    \vspace{0.1in}
    
    \noindent \textbf{Case 3:} $z \not\in \collar$.

    Similar to Case 2, since $\Jthick \backslash \collar$ is a compact subset of $\Omega$, there is a point $z' \in \PJ^{-1}$ satisfying $d_\Omega(z,z') = O(1)$. This reduces us to Cases 1 and 2.
    
    \vspace{0.1in}
    
    \noindent \textbf{Case 4:} $z \in \collar$.

    By Corollary \ref{approx-rot-to-preim}, there exists an approximate rotation 
    \[
    f^i:(U,w) \to (U',w')
    \]
    such that $w'$ is in $\PJ^{-1}$ and that $w$ satisfies $d_\Omega(w,z) = O(1)$ and $\dist_\Omega(w,\partial U) \succ 1$. By Cases 1 and 2, there also exists a univalent map
    \[
    f^j: \left(B',x'\right) \to \left(V,c\right)
    \]
    such that $B' \subset \Omega$ is a hyperbolic ball of radius $\asymp 1$ centered at $x'$, $c$ is a critical point of $f$ in $\PJ$, and $d_\Omega(w', x') = O(1)$. We can assume that $B'$ is inside of $U'$ by shrinking $B'$ by a little bit. As such, the lift $(f^i|_U)^{-1}(B')$ contains a hyperbolic ball $B$ of radius $\asymp 1$ centered at $x = (f^i|_U)^{-1}(x')$ with distance $d_\Omega(x,z) := O(1)$. Therefore, $f^{i+j}: (B,x) \to (V,c)$ is the desired univalent map.
\end{proof}

To prove Theorem \ref{critical}, we will apply Lemmas \ref{approx-rot-02} and \ref{visiting-from-jf} in a similar manner as the single Siegel case in \cite[\S3]{McM98}. (Compare with \cite[Theorem 8.10]{McM96}.)

\begin{proof}[Proof of Theorem \ref{critical}]
    For any tangent vector $v$ at a point $z$, we denote by $|v|$ the Euclidean length of $v$ and $\| v \|$ the hyperbolic length of $v$ with respect to the hyperbolic metric of $\Omega$ if $z \in \Omega$. If $z$ is outside of $\Omega$, we set $\| v \| = \infty$. By Lemma \ref{1/d-metric}, 
\begin{equation}
\label{lengths}
    \| v \| \asymp \frac{|v|}{\dist(z,\PJ)} \quad \text{ for all } z \in \Othick \text{ and } v \in T_z \RS,
\end{equation}
and the denominator can be replaced by $\dist(z,X_j)$ if $z$ lies in some collar $A_j$.

Let us fix a point $z_0 \in \Jthick$ and a scale $r > 0$. Let $v_0$ be a tangent vector at $z_0$ of length $|v_0|=r$. For every $i \in \N$, let $v_i := df^i_{z_0}(v_0)$ be the pushforward of $v_0$ by $f^i$ at $z_i := f^i(z_0)$. 

Let us fix a small constant $\varepsilon \in (0,1)$ independent of $z$, which will be determined later. By Lemma \ref{lem:expansion}, the proof can be split into the following three distinct cases.

\vspace{0.1in}
    
\noindent \textbf{Case 1:} $1 \leq \| v_0 \| \leq \infty$.

Let $w$ be a point in $\PJ$ closest to $z_0$. By (\ref{lengths}), $|z_0-w| = \dist(z_0,\PJ) = O(r)$. By Lemma \ref{approx-rot-02}, there is an approximate rotation $f^i : (U,y) \to (V,c)$ such that $c$ is a critical point of $f$ on $\PJ$, $|y-w| = O(r)$, and $(U,y)$ has bounded shape with diameter $\asymp r$. This is the univalent map we are looking for.

\vspace{0.1in}
    
\noindent \textbf{Case 2:} There is some $j\geq 1$ such that $\| v_j \|\geq 1$ but $\| v_{j-1} \|\leq \varepsilon$.

By (\ref{lengths}), the distance between $z_{j-1}$ and $\PJ$ satisfies $\dist(z_{j-1},\PJ) \succ \frac{|v_{j-1}|}{\varepsilon}$. Since $\dist(z_{j-1},\mathbf{S}) \succ \dist(z_{j-1}, \PJ)$ as well, then $\Omega$ contains a round disk $D_{j-1}$ centered at $z_{j-1}$ of radius $\asymp \frac{|v_{j-1}|}{\varepsilon}$ on which $f$ is univalent. By Koebe quarter theorem, the image $f(D_{j-1})$ contains another round disk $D_j \subset \Omega$ centered at $z_j$ of radius $\asymp \frac{|v_j|}{\varepsilon}$. Denote by $D_0$ the connected component of $f^{-j}(D_j)$ containing $z_0$. We have a univalent map $f^{j}: (D_0, z_0) \to (D_j, z_j)$.

From Case 1, there is a univalent map $f^i: (U', y') \to (V, c)$ of bounded distortion such that $|y'-z_j| = O(|v_j|)$ and $(U',y')$ has bounded shape with diameter $\asymp |v_j|$. Select $\varepsilon$ to be just small enough such that $U'$ is well contained in $D_j$. Let $(U,y)$ be the lift of $(U',y')$ under the map $f^j|_{D_0}$. Since the inverse branch $(f^j|_{D_0})^{-1}$ has bounded distortion on $U'$, then $|y-z_0|=O(r)$ and $(U,y)$ has bounded shape with diameter $\asymp r$. Therefore, $f^{i+j}: (U, y) \to (V, c)$ is the desired univalent map.
    
\vspace{0.1in}
    
\noindent \textbf{Case 3:} There is some $j \in \N$ such that $\varepsilon < \| v_j \| < 1$.

By Lemma \ref{visiting-from-jf}, there is a univalent map $f^i: (B, x) \to (V,c)$ where $c \in \PJ$ is a critical point of $f$ and $B \subset \Omega$ is a hyperbolic ball of radius $\asymp 1$ centered at a point $x$ which satisfies $d_\Omega(x,z_j) = O(1)$. If $z_j$ is in $\overline{B}$, then set $K' = \overline{B}$; otherwise, set $K' = \overline{B} \cup \gamma$ where $\gamma \subset \Omega$ be the shortest hyperbolic geodesic segment in $\Omega$ connecting $z_j$ and a point on $\partial B$. Let $K$ be the unique lift of $K'$ under $f^j$ containing $z_0$, and let $(U,y)$ be the lift of $(B,x)$ under $f^j|_K$. The map $f^j: (U,y) \to (B, x)$ is univalent.

By the hyperbolic Koebe distortion theorem \cite[Theorem 2.29]{McM94}, since $K'$ has bounded hyperbolic diameter in $\Omega$, the map $f^j$ is approximately an expansion by $\| (f^j)'(z_0) \|$ on $K$. Moreover, since $\| v_j\| \asymp 1$, 
\[
\| (f^j)'(z_0) \| = \frac{\| v_j \|}{\| v_0 \|} \asymp \frac{1}{\| v_0 \|}.
\]
As the hyperbolic inner radius of $B$ about $x$ satisfies $r_{\text{in}, \Omega}(B,x) \asymp 1$, the hyperbolic inner radius of $U$ about $y$ satisfies
\[
r_{\text{in},\Omega}(U,y) \asymp \| v_0 \|.
\]
Also, since $d_\Omega(x,z_j) = O(1)$, we have
\[
d_\Omega(y,z_0) = O(\| v_0 \|).
\]
From (\ref{lengths}) and the two estimates above, we have $\rin(U,y) \asymp r$ and $|y-z_0| = O(r)$. Thus, the map $f^{i+j}: (U,y) \to (V,c)$ is our desired univalent map.
\end{proof}

\subsection{No invariant line fields}
\label{ss:NILF}

Before we show the absence of invariant line fields on the Julia set, we will first restrict to the local Julia sets introduced in Definition \ref{def:little-julia-set}.

\begin{lemma}[No local invariant line fields]
\label{NILF-local}
    For every $j \in \{1,\ldots,m\}$, the local Julia set $J^{\textnormal{loc}}_j$ of $f$ does not carry any invariant line field of $f$.
\end{lemma}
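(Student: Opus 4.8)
The plan is to show that an invariant line field supported on $K^{\textnormal{loc}}_j$ would be forced to be continuous (in fact conformal-model-induced) near the quasicircle $X_j$, and then to contradict this using the approximate rotation mechanism together with the presence of the critical point on $X_j$. The heart of the matter is that near $X_j$ the dynamics looks, up to the quasiconformal linearization $\phi$, like an irrational rotation, so a line field invariant under $f|_{\text{collar}}$ becomes (almost) invariant under a rotation $R_\theta$ on a round annulus, and invariant line fields for irrational rotations on annuli are the $\phi$-pullbacks of the rotation-invariant ``radial/angular'' field, hence real-analytic. But $K^{\textnormal{loc}}_j$ is contained in the Julia set and accumulates on $X_j$ precisely near preimages of $X_j$ emanating from the critical point, and there the putative line field, being both conformally rigid and pulled back through a critical branched cover, cannot be continuous. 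So the plan breaks into three movements: (i) if $\mu$ is an invariant line field on $K^{\textnormal{loc}}_j$, show almost every point of $K^{\textnormal{loc}}_j$ is a Lebesgue density point where $\mu$ is ``almost continuous'' in the measurable sense (standard); (ii) use Theorem \ref{critical} — really its local incarnation via Lemmas \ref{approx-rot-02} and \ref{visiting-from-jf}, restricted to $K_j^{\textnormal{loc}}$ — to transport a small ball around such a density point by a univalent map of bounded distortion onto a \emph{definite-size} disk $V$ containing the critical point $c \in X_j$; (iii) conclude that $\mu$ would be almost continuous at $c$, but a line field invariant under $f$ near $c$ must be invariant under the degree-$d(c)$ branched cover $z \mapsto h(z)^{d(c)} + f(c)$, and by a rotation on each side of $X_j$, and no measurable line field can simultaneously be nearly-constant at $c$ and invariant under these two incompatible symmetries — contradiction.

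More concretely, for step (ii): given $z \in K^{\textnormal{loc}}_j$ a density point of $\mu$ and a small scale $r$, apply the two-sided approximate rotation of Lemma \ref{approx-rot-02} (whose proof uses only the collar $A_j$ and the quasiconformal conjugacy on $A_j$, so it applies verbatim in this local setting) to obtain an approximate rotation $f^i : (U,y) \to (V,c)$ with $(U,y)$ of bounded shape, diameter $\asymp r$, $|y - z| = O(r)$, and $c$ the critical point on $X_j$ with $V$ of \emph{definite} (scale-$1$) size. Since $f^i$ has bounded distortion and $z$ is a density point, the pushforward $(f^i)_* \mu$ is defined on a definite proportion of $V$ and, after normalizing, the normalized density points are preserved; thus we may assume $c$ itself is (essentially) a point where the transported line field is nearly constant in the measurable sense. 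Here one must be slightly careful that $z$ stays in $K^{\textnormal{loc}}_j$ so that $\mu$ is genuinely defined on $U \cap J(f)$ and the iterate $f^i$ is well-defined on a neighborhood of $U$ inside the collar; this is exactly why we restrict to the local non-escaping set rather than all of $\Jthick$ in this lemma. The conclusion of step (ii) is: there is a fixed-size disk $V \ni c$ and a line field $\nu = (f^i)_*\mu$ (extended by the $f$-invariant conformal model on the Fatou part) which is $f$-invariant on $V$ and has $c$ as an approximate continuity point.

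For step (iii): near $c$, write $f = h^{d(c)} + f(c)$ with $h$ univalent of bounded distortion, so an $f$-invariant line field on the inner side $Y^0 \cap V$ pulls back the rotation-invariant ``radial'' field of the round annulus $\phi(A_j^0)$ through the composition of the branched cover and $\phi$; hence on $Y^0 \cap V$ the field $\nu$ is, up to the bounded-distortion quasiconformal distortion of $h$ and $\phi$, the image of the radial line field $\frac{d\bar{w}}{dw}$ under a $d(c)$-fold branched chart. Such a field has no continuity point at $c$: as one circles $c$ once, the ``radial direction'' in the model winds $d_0(c)$ times (resp. $d_\infty(c)$ times on the outer side), and these winding numbers on the two sides of $X_j$ disagree with each other (and each is $\geq 1$ since $c$ is both an inner and an outer critical point in Case \circled{\textnormal{H}}, or $\geq 2$ on the relevant side in Case \circled{\textnormal{B}}), so the field cannot be approximately constant at $c$. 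This contradicts the conclusion of step (ii), and so no invariant line field on $K^{\textnormal{loc}}_j$ exists. The main obstacle I expect is purely bookkeeping in step (ii)–(iii): making rigorous the passage from ``$z$ is a measurable density point of $\mu$'' to ``$c$ is an approximate-continuity point of the transported field $\nu$'' under the bounded-distortion univalent map $f^i$, and then making precise the model-winding argument at the critical point; both are standard (this is McMullen's argument from \cite[\S3]{McM98}, here re-run with the collar $A_j$ playing the role of the Siegel disk and with the extra feature that the inner and outer criticalities may differ, which only strengthens the obstruction to continuity). Once those are in place the contradiction is immediate.
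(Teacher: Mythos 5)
There is a genuine gap, in two places. First, step (ii) claims more than the paper's tools give: neither Theorem \ref{critical} nor Lemma \ref{approx-rot-02} produces a univalent map from a disk of size $\asymp r$ near $z$ onto a disk $V$ of \emph{definite} size around the critical point $c$ — no lower bound on $\diam(V)$ is asserted anywhere (the approximate rotations are roughly hyperbolic isometries, and in the paper's own argument the diameter of the target disk around the critical point is an uncontrolled quantity depending on the scale). Moreover, even if you could reach a macroscopic disk, pushing a nearly parallel field forward by a univalent map of bounded distortion across a change of scale does not make it ``nearly constant at $c$'': invariance gives only that $\mu|_V$ is measurably close to the continuous nonvanishing field $e\,\overline{g'}/g'$ (with $g$ the inverse branch), whose direction varies by a bounded but not small amount; and being measurably close to \emph{some} continuous nonvanishing field on one macroscopic disk is not by itself contradictory. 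Second, and more seriously, step (iii) rests on the claim that an $f$-invariant line field near $X_j$ must be the branched pullback of the ``radial'' field of the linearizing coordinate. This is unjustified and essentially false: even a rigid irrational rotation of a round annulus admits an infinite-dimensional family of invariant measurable line fields, namely $\mu(z)=c(|z|)\,(z/\bar z)\,\frac{d\bar z}{dz}$ with $|c|=1$ measurable and arbitrary; and off $X_j$ the map $f$ is not a rotation at all — the support of $\mu$ lies in $K^{\textnormal{loc}}_j$, a Julia-type set, so nothing forces the field into a continuous radial normal form near $c$, and the winding-number contradiction has no foundation as written.

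What is missing is the actual mechanism of the contradiction: nonlinearity at arbitrarily \emph{small} scales near almost every point of the support. The paper verifies Shen's criterion (Proposition \ref{shen-criterion}) at every $x\in K^{\textnormal{loc}}_j$ by producing branched covers $h_n:U_n\to V_n$ of degree between $2$ and $N$ with bounded shape, with $\diam(U_n),\diam(V_n)\to 0$ and both disks relatively close to $x$: at the critical point these are the first-return maps $f^{q_n}$ on disks built from combinatorial intervals via Lemma \ref{counting}, and at other points these return covers are transported back by the bounded-distortion univalent maps of Theorem \ref{critical}. The near-parallelism of the field at a density point, holding at \emph{both} ends of such a small-scale branched cover, is what contradicts the presence of its critical point. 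Your proposal never constructs these return covers, and the substitute (continuity/winding of the field at one definite scale near $c$) does not close; to repair it you would in effect have to reintroduce the maps $f^{q_n}$ near $c$ and run the parallel-field-versus-branched-cover comparison at small scales, which is exactly the paper's proof.
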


In the proof, we will apply the following proposition by Shen.

\begin{proposition}[{\cite[Proposition 3.2]{She03}}]
\label{shen-criterion}
    Consider a rational function $g$ of degree $\geq 2$ and a forward invariant subset $J$ of $J(g)$. Suppose that for almost every point $x$ in $J$, there is a constant $C>1$, a positive integer $N \geq 2$, and a sequence $h_n :U_n \to V_n$ of holomorphic maps such that:
    \begin{enumerate}[label=\textnormal{(S\textsubscript{\arabic*})}]
        \item\label{S1} $g^i \circ h_n = g^j$ for some $i,j \in \N$;
        \item\label{S2} $U_n$ and $V_n$ are topological disks such that as $n\to \infty$, 
        $$
        \diam(U_n) \to 0 \quad \text{and} \quad \diam(V_n) \to 0;
        $$
        \item\label{S3} $h_n$ is a branched covering map of degree between $2$ and $N$;
        \item\label{S4} there are some critical point $u_n \in U_n$ of $h_n$ and critical value $w_n = h_n(u_n)$ such that both $(U_n,u_n)$ and $(V_n,w_n)$ have $C$-bounded shape;
        \item\label{S5} $U_n$ and $V_n$ are relatively close to $x$, i.e. 
        \[
        \dist(x,U_n) \leq C \diam(U_n) \quad \text{and} \quad \dist(x,V_n) \leq C \diam(V_n).
        \]
    \end{enumerate}
    Then, $g$ admits no invariant line field on $J$.
\end{proposition}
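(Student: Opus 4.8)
We plan to derive Lemma~\ref{NILF-local} from Shen's criterion (Proposition~\ref{shen-criterion}) applied to $g:=f$ and the forward invariant set $J:=K^{\textnormal{loc}}_j\subseteq J(f)$. Since each collar was chosen with $A_j\subset\Othick$, we have $K^{\textnormal{loc}}_j\subseteq\Jthick$, so Theorem~\ref{critical} is available at every point of $K^{\textnormal{loc}}_j$. It therefore suffices to exhibit, for every $x\in K^{\textnormal{loc}}_j$, a constant $C>1$, an integer $N\geq2$, and holomorphic maps $h_n:U_n\to V_n$ satisfying the conditions (S\textsubscript{1})--(S\textsubscript{5}).

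Fix $x$ and scales $r_n\searrow0$. Theorem~\ref{critical} provides $i_n\geq0$ and a pointed disk $(U^0_n,y_n)$ of bounded shape with $\diam U^0_n\asymp r_n$ and $|y_n-x|=O(r_n)$, together with a univalent map $f^{i_n}:(U^0_n,y_n)\to(\widehat V_n,c_n)$ of bounded distortion onto a pointed neighbourhood of a critical point $c_n\in\PJ$. As $\PJ$ carries only finitely many critical points, passing to a subsequence we may take $c_n\equiv c$, of local degree $l\in\{2,\dots,N\}$ with $N:=\max\{\deg_{c'}f:\ c'\in\PJ,\ f'(c')=0\}$; by bounded distortion $(\widehat V_n,c)$ has bounded shape as well. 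Choosing a round disk $V^\sharp_n\subset f(\widehat V_n)$ about $f(c)$ with $\diam V^\sharp_n\asymp\diam f(\widehat V_n)$, letting $\widehat V^\flat_n$ be the component of $f^{-1}(V^\sharp_n)$ containing $c$, and setting $U_n:=\bigl(f^{i_n}|_{U^0_n}\bigr)^{-1}(\widehat V^\flat_n)$, the map
\[
g_n:=f^{i_n+1}\big|_{U_n}=f\circ\bigl(f^{i_n}|_{U_n}\bigr):(U_n,y_n)\to(V^\sharp_n,f(c))
\]
is a proper branched covering of degree $l$ whose only branch point is $y_n$, while $(U_n,y_n)$ retains bounded shape with $\diam U_n\asymp r_n$ and $|y_n-x|=O(r_n)$. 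Thus $g_n$ is, near $x$, a degree-$l$ branched-covering germ of bounded geometry that is genuinely an iterate of $f$: it delivers (S\textsubscript{3}), the $U_n$-halves of (S\textsubscript{2}) and (S\textsubscript{5}), and the bounded shape of $(U_n,u_n)$ with $u_n:=y_n$, $w_n:=f(c)$, as well as a trivial instance of (S\textsubscript{1}) (it is a restriction of $f^{i_n+1}$).

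I expect the main obstacle to be that the codomain $V^\sharp_n$ is a fixed-size neighbourhood of the fixed point $f(c)$, so it is neither small nor close to $x$, and $g_n$ by itself fails the $V_n$-halves of (S\textsubscript{2}) and (S\textsubscript{5}). To overcome this I would proceed as in the single-Siegel argument of \cite[\S3]{McM98} and complement $g_n$ by a univalent \emph{return}: a pointed disk $(W_n,v_n)$ of bounded shape with $v_n$ within $O(\diam W_n)$ of $x$, $\diam W_n\to0$, and $f^{m_n}(v_n)=f(c)$ for some iterate $f^{m_n}$ carrying $W_n$ univalently onto $V^\sharp_n$; one then sets $h_n:=\bigl(f^{m_n}|_{W_n}\bigr)^{-1}\circ g_n:U_n\to W_n$, which is again a degree-$l$ branched covering with branch point $y_n$, critical value $w_n=v_n$, and $f^{m_n}\circ h_n=f^{i_n+1}$, so that all of (S\textsubscript{1})--(S\textsubscript{5}) hold with $V_n:=W_n$, a uniform $C$, and $N$ as above. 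The construction of $(W_n,v_n)$ is the technically delicate step and is where the fine geometry of the rotation quasicircle $X_j$ (rather than plain geometric finiteness) is used: it is produced by the approximate-rotation mechanism of \S\ref{ss:approx-rot} --- pulling back through the collar $A_j$ via Lemmas~\ref{approx-rot-02} and~\ref{lem:preim}, with Lemma~\ref{counting} ensuring that critical values are avoided. Granting the maps $h_n$, Proposition~\ref{shen-criterion} immediately yields that $f$ carries no invariant line field on $K^{\textnormal{loc}}_j$.
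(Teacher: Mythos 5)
Two preliminary remarks before the substance. First, Proposition~\ref{shen-criterion} is quoted from Shen \cite{She03} and is not proved in this paper; your text does not prove it either, but rather assumes it and sketches how to deduce Lemma~\ref{NILF-local} from it, so strictly speaking the statement under review is not addressed at all. Judging your sketch instead against the paper's proof of Lemma~\ref{NILF-local} (which is what it parallels): the first half agrees with the paper --- apply Theorem~\ref{critical} at $x$ and scale $r_n$ to obtain a univalent, bounded-distortion $f^{i_n}$ from a bounded-shape disk near $x$ onto a pointed neighborhood of a critical point $c$ on the rotation quasicircle, and take $N$ to be the maximal local degree over the finitely many critical points on $\PJ$.

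The genuine gap is the ``univalent return'' $f^{m_n}\colon W_n \to V^\sharp_n$, which you defer as the technically delicate step: such a map does not exist for large $n$. Since $\diam(W_n)\to 0$ while $V^\sharp_n$ is, as you say yourself, a definite-size disk centered at the critical value $f(c)$ (note $f(c)$ is not a fixed point --- the critical orbit is dense in $X_j$), necessarily $m_n\to\infty$. But a univalent branch of $f^{-m_n}$ over all of $V^\sharp_n$ requires $V^\sharp_n$ to avoid the critical values of $f^{m_n}$, which include $f^k(c)$ for $1\le k\le m_n$; these points equidistribute along $X_j$, so once $m_n$ exceeds a bound depending only on $\diam(V^\sharp_n)$ they enter the arc $X_j\cap V^\sharp_n$, and no univalent branch exists. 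Lemmas~\ref{approx-rot-02}, \ref{lem:preim} and \ref{counting} cannot supply the missing map: they produce approximate rotations landing at critical points or at preimages of the quasicircle, not inverse branches over a definite-size neighborhood of the critical value. (Shrinking $V^\sharp_n$ instead is not an option either, for then the $U$-side of \ref{S5} fails: the shrunken preimage sits at distance $O(r_n)$ from $x$ but has diameter much smaller than $r_n$.) The paper's proof avoids this entirely by a different mechanism: in its Case 1 it builds, at the critical point itself, first-return branched coverings $f^{q_n}$ of uniformly bounded degree whose domain \emph{and} range are both small, of bounded shape, and relatively close to $c$ (Lemma~\ref{counting} is used there to ensure the surrounding annulus pulls back unbranched); in Case 2 it transports both the domain and the range of that covering by the single univalent map $f^{j_n}$ from Theorem~\ref{critical}, so that \ref{S2} and \ref{S5} hold on both sides with no return map needed. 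Your construction --- one application of $f$ at $c$ followed by a univalent return from a neighborhood of $f(c)$ to a tiny disk near $x$ --- would have to be reorganized along those lines; as written, the key step fails.
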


The idea behind this criterion comes from the fact that at almost every point $x$ in $J$, any invariant line field on a small neighborhood around $x$ is almost parallel, but the presence of critical points at small scales would carry non-linearity throughout $J$ and contradict such parallel structure. Shen's criterion was inspired by McMullen's treatment of Feigenbaum maps in \cite{McM94, McM96}.

\begin{proof}[Proof of Lemma \ref{NILF-local}]
    It is sufficient to show that the hypothesis of Proposition \ref{shen-criterion} holds for every point $x$ in $J^{\textnormal{loc}}_j$. (In fact, we will show that the constants $C$ and $N$ can be made uniform in $x$.) There are two cases.
    
    \vspace{0.1in}
    
    \noindent \textbf{Case 1:} $x$ is a critical point of $f$ on $X_j$. 

    In this case, we will take the sequence $\{h_n\}$ to be the first return maps near $x$. 
    
    Let $\{p_k/q_k\}_{k \in \N}$ denote the best rational approximations of the rotation number $\theta_j$ of $f|_{X_j}$, and let $l_k := |p_k - q_k \theta_j|$. Let $\phi$ be a global quasiconformal map conjugating $f|_{X_j}$ with the irrational rotation $R_{\theta_j}|_\T$.
    
    Pick a sufficiently large $n \in \N$ and let $w_n:= f^{q_n}(x)$. By Lemma \ref{counting}, there exists a pair of intervals $I' \subset I''$ in $X_j$ such that
    \begin{enumerate}[label=(\roman*)]
        \item $I'$ contains the level $n$ combinatorial interval centered at $w_n$,
        \item the endpoints of $I'$ split $I''$ into three components each of combinatorial length $\asymp l_n$, and
        \item $I''\backslash I'$ contains no critical values of $f^{q_{n+2}}$.
    \end{enumerate}

    Let $V'$ and $V''$ be the unique pair of disks such that their closures intersect $X_j$ along intervals $I'$ and $I''$ respectively, and that both $\phi(\partial V')$ and $\phi(\partial V'')$ are round circles orthogonal to $\T$. Let us pick $n$ to be large enough such that $V''$ is contained in the collar $A_j$.
    
    Let $V_n$ be the Jordan disk that contains $V'$ and is enclosed by the core curve of the annulus $V'' \backslash \overline{V'}$. Denote by $U'$, $U''$, and $U_n$ the connected component containing $x$ of the preimage under $f^{q_n}$ of $V'$, $V''$ and $V_n$ respectively. By (iii), $f^{q_n} : U'' \backslash \overline{U'} \to V'' \backslash \overline{V'}$ is an unbranched covering map between two annuli.
    
    We claim that $f^{q_n} : U_n \to V_n$ satisfies \ref{S1}-\ref{S5} in Proposition \ref{shen-criterion}. Indeed, \ref{S1} is immediate from the construction. \ref{S3} follows from the fact that, by (ii) and Proposition \ref{bounded-type}, the number of critical points of $f^{q_n}$ on $U''$ is at most some constant $N$ independent of $x$ and $n$.
    
    Take $u_n = x$. By construction, $V'$ is well contained in $V''$ and $U'$ is well contained in $U''$. Since $\partial U_n$ and $\partial V_n$ are core curves of annuli of definite moduli, both $U_n$ and $V_n$ are quasidisks of uniformly bounded dilatation; in particular, they have bounded shape about $x$ and $w_n$ respectively. Thus, \ref{S4} holds.
    
    By construction, \ref{S4} ensures that $\diam(U_n) \asymp \diam(V_n)$. As such, \ref{S2} follows from the fact that $\diam(\phi(V_n)) \asymp l_n \to 0$ as $n \to \infty$, and \ref{S5} follows from $\dist(x,U_n)=0$ and $\dist(x,V_n) \leq \diam(U_n)$. This concludes the proof.
    \vspace{0.1in}
    
    \noindent \textbf{Case 2:} $x$ is not a critical point of $f$ on $X_j$. 
    
    Fix a sequence $\{r_n\}_{n \in \N}$ of small positive real numbers decreasing to $0$. Pick $n \in \N$. By Theorem \ref{critical}, there is a univalent map
    \[
    f^{j_n} : (A_n,u_n) \to \left(\tilde{A}_n,c_n \right)
    \]
    between pointed disks with bounded distortion such that $c_n$ is a critical point on $X_j$, $(A_n,u_n)$ has bounded shape with diameter $\asymp r_n$, and \begin{equation}
    \label{ineq-00}
        |u_n-x| = O(r_n).
    \end{equation}
    
    Let $s_n:= \diam\left(\tilde{A}_n, c_n\right)$; this depends on $r_n$. From Case 1, by appropriately selecting $r_n$, there is some $k_n \in \mathbb{N}$ and some branched covering map
    \[
    f^{k_n}: \left(\tilde{U}_n, c_n\right) \to \left(\tilde{V}_n, v_n\right)
    \]
    of degree at most some constant $N$ independent of $x$ and $n$ such that $\left(\tilde{U}_n,c_n\right)$ and $\left(\tilde{V}_n,v_n\right)$ are pointed disks compactly contained in $\tilde{A}_{n}$ with bounded shape and diameter $\asymp s_n$, and $\dist\left(c_n,\tilde{V}_n\right) = O(s_n)$.
    
    Let $(U_n, u_n)$ and $(V_n, w_n)$ be the pointed disks obtained by pulling back $\left(\tilde{U}_n, c_n\right)$ and $\left(\tilde{V}_n, v_n\right)$ under $f^{j_n}|_{A_n}$ respectively. Then, there is a branched covering map
    \[
    h_n : (U_n, u_n) \to (V_n, w_n)
    \]
    of degree at most $N$ such that $f^{j_n} \circ h_n = f^{k_n+j_n}$ on $U_n$. See Figure \ref{fig:thm-a-case-2}.

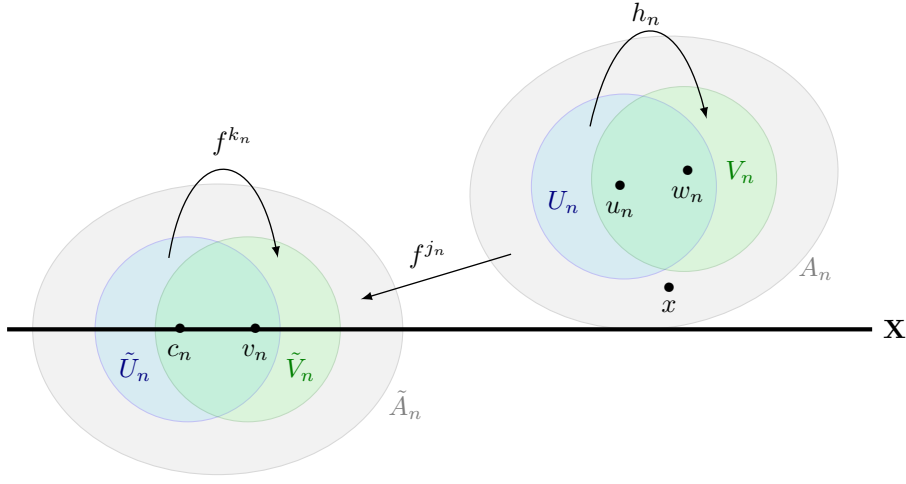
\begin{figure}
 \centering
    \begin{tikzpicture}
        \draw[gray!50!black, fill=gray!50!white, opacity=0.2] (-2.7,0) ellipse (70pt and 55pt);
        \filldraw[blue, fill=cyan!50!white, opacity=0.2] (-3.1,0) circle (35pt);
        \filldraw[green!50!black, fill=green!50!white, opacity=0.2] (-2.3,0) circle (35pt);
        \draw[gray!50!black, fill=gray!50!white, rotate around={10:(3.1,1.95)}, opacity=0.2] (3.1,1.95) ellipse (70pt and 55pt);
        \filldraw[blue, fill=cyan!50!white, opacity=0.2] (2.7,1.9) circle (35pt);
        \filldraw[green!50!black, fill=green!50!white, opacity=0.2] (3.5,2) circle (35pt);
        
        \draw[ultra thick, black] (-5.5,0) -- (6,0);
        \node [black, font=\bfseries] at (6.3,0) {$\PJ$};
        \node [black, font=\bfseries] at (-2.5,2.5) {$f^{k_n}$};
        \node [black, font=\bfseries] at (0.1,1) {$f^{j_n}$};
        \node [black, font=\bfseries] at (3,4.2) {$h_n$};
        \node [black, font=\bfseries] at (-3.2,0) {$\bullet$};
        \node [black, font=\bfseries] at (-3.2,-0.3) {$c_n$};
        \node [black, font=\bfseries] at (-2.2,0) {$\bullet$};
        \node [black, font=\bfseries] at (-2.2,-0.3) {$v_n$};
        \node [black, font=\bfseries] at (2.65,1.9) {$\bullet$};
        \node [black, font=\bfseries] at (2.65,1.6) {$u_n$};
        \node [black, font=\bfseries] at (3.55,2.1) {$\bullet$};
        \node [black, font=\bfseries] at (3.55,1.8) {$w_n$};
        \node [black, font=\bfseries] at (3.3,0.55) {$\bullet$};
        \node [black, font=\bfseries] at (3.3,0.3) {$x$};
        \node [gray, font=\bfseries] at (5.25,0.8) {$A_n$};
        \node [gray, font=\bfseries] at (-0.2,-1.0) {$\tilde{A}_n$};
        \node [blue!50!black, font=\bfseries] at (-3.8,-0.5) {$\tilde{U}_n$};
        \node [green!50!black, font=\bfseries] at (-1.6,-0.5) {$\tilde{V}_n$};
        \node [blue!50!black, font=\bfseries] at (1.9,1.7) {$U_n$};
        \node [green!50!black, font=\bfseries] at (4.25,2.1) {$V_n$};
        \draw[line width=0.5pt,-latex] (-3.35,0.95) .. controls (-3.1,2.5) and (-2.2,2.5) .. (-1.9,0.95);
        \draw[line width=0.5pt,-latex] (1.2,1) -- (-0.8,0.40);
        \draw[line width=0.5pt,-latex] (2.25,2.7) .. controls (2.6,4.3) and (3.4,4.4) .. (3.8,2.8);
\end{tikzpicture}

    \caption{The construction of the branched covering map $h_n$ in Case 2.}
    \label{fig:thm-a-case-2}
\end{figure}
    
    We claim that $h_n$ satisfies \ref{S1}-\ref{S5} in Proposition \ref{shen-criterion}. Indeed, \ref{S1} and \ref{S3} are immediate from the construction. Since $f^{j_n}$ has bounded distortion, $(U_n,u_n)$ and $(V_n,w_n)$ have bounded shape with diameter $\asymp r_n$, and $\dist(u_n,V_n) = O(r_n)$. Therefore, \ref{S2} and \ref{S4} are satisfied. Together with (\ref{ineq-00}), we also have \ref{S5}.
\end{proof}

To prove the first part of Theorem \ref{thm:NILF-main}, we will apply the following theorem.

\begin{theorem}[{\cite[Theorem 3.17]{McM94}}]
\label{thm:toral/attr}
    If a rational function $g$ has an invariant line field on its Julia set $J(g)$, then either $g$ is a \emph{flexible Latt\'es map} or for almost every point $z$ in $J(g)$,
    \[
        \dist\left(g^n(z), P(g)\right) \to 0 \quad \text{ as } n \to\infty.
    \]
\end{theorem}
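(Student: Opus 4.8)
The plan is to reproduce McMullen's blow‑up argument for invariant line fields, in which Lebesgue density points supply the rigidity. Fix a $g$‑invariant line field $\mu$ of modulus $1$ on a positive‑measure set $E \subseteq J(g)$ and $0$ elsewhere, and suppose for contradiction that $g$ is not a flexible Latt\'es map. By measurability and Lebesgue's density theorem, almost every $z \in E$ is a point of density one of $E$ at which the direction of $\mu$ is approximately continuous. It suffices to show that any such $z$ whose forward orbit does not converge to $P(g)$ yields a contradiction: this forces the orbit of a.e. $z \in E$ to converge to $P(g)$, and the conclusion for a.e. $z \in J(g)$ then follows by a standard measure‑theoretic propagation, using that $g$ preserves null sets in both directions and that backward orbits of $E$ are dense in $J(g)$.

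So fix such a $z$ and a sequence $n_k \to \infty$ with $\dist(g^{n_k}(z), P(g)) > \delta$ for some $\delta>0$; we may assume the whole orbit of $z$ misses $P(g)$. Since $D_k := \D(g^{n_k}(z), \delta)$ misses $P(g)$ and $g^{n_k}$ is unbranched over $\RS \backslash P(g)$, there is a univalent inverse branch $\phi_k : D_k \to \RS$ with $\phi_k(g^{n_k}(z)) = z$. Because $z \in J(g)$ has orbit in $\RS \backslash P(g)$, the norm of $(g^{n_k})'(z)$ in the hyperbolic metric of $\RS \backslash P(g)$ tends to infinity (as in Lemma \ref{lem:expansion}), so $\diam \phi_k(\D(g^{n_k}(z), \delta/2)) \to 0$. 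Pass to a subsequence with $g^{n_k}(z) \to \zeta$, put $T_k(w) := g^{n_k}(z) + (\delta/2)w$ and $\eta_k := \phi_k \circ T_k : \D \to \RS$, and write $\eta_k = z + c_k \psi_k$ with $\psi_k(0)=0$, $\psi_k'(0)=1$. By Koebe the $\psi_k$ form a normal family, so after a further subsequence $\psi_k \to \psi$ with $\psi$ univalent, $c_k \to 0$, $\overline{c_k}/c_k \to u$ with $|u|=1$, and $T_k \to T_\infty(w) = \zeta + (\delta/2)w$ uniformly.

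The heart of the matter is to evaluate $\eta_k^*\mu$ in two ways. Since $\phi_k$ is an inverse branch of $g^{n_k}$ and $\mu$ is $g$‑invariant, $\phi_k^*\mu = \mu|_{D_k}$, so $\eta_k^*\mu = T_k^*\mu$, i.e. $\mu|_{\D(g^{n_k}(z),\delta/2)}$ read in the $\D$‑coordinate via the affine map $T_k$, whose multiplier $\delta/2$ is real and hence does not rotate directions. Computing the conformal pullback directly gives $\eta_k^*\mu = (\mu\circ \eta_k)\cdot(\overline{c_k}/c_k)\cdot(\overline{\psi_k'}/\psi_k')$; since $z$ is a density point of approximate continuity of the direction field and the $\psi_k$ have uniformly bounded distortion, $\mu\circ\eta_k$ converges in measure on $\D$ to the constant $\mu(z)$, while $\overline{\psi_k'}/\psi_k' \to \overline{\psi'}/\psi'$ locally uniformly. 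Hence $T_k^*\mu$ converges in measure to $\psi^*(u\,\mu(z))$, and by the uniform convergence $T_k \to T_\infty$ this limit equals $T_\infty^*\mu$. Pushing forward, $\mu$ coincides a.e. on the definite‑size disk $D_\infty := \D(\zeta,\delta/2)$ with $(\psi\circ T_\infty^{-1})^*$ of a constant line field; in particular $\mu$ is conformally flat on $D_\infty$ and $|\mu|=1$ a.e. there. Therefore $D_\infty \subseteq \overline{E} \subseteq J(g)$, which forces $J(g)=\RS$.

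To conclude I would invoke the classical Fatou rigidity: a conformally flat $g$‑invariant line field on $\RS = J(g)$ defines a flat conformal metric with finitely many cone singularities that is respected by $g$; unfolding it through a branched double cover by a complex torus $\C/\Lambda$ (the Weierstrass $\wp$‑function), $g$ lifts to an affine endomorphism $t \mapsto \alpha t + \beta$, and invariance of a parallel direction field forces $\alpha \in \R$, hence $\alpha \in \Z$ since it is an algebraic integer preserving $\Lambda$. Thus $g$ is a flexible Latt\'es map, contrary to hypothesis. The main obstacle is the blow‑up step: making precise the in‑measure convergence of the renormalized line fields under the distorting univalent rescalings $\psi_k$ — this is exactly where the Lebesgue density theorem has to be pushed through Koebe distortion bounds — and then extracting conformal flatness on a disk of definite size. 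The concluding rigidity input, though delicate, is standard.
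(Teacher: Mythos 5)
The paper does not actually prove this statement: it is quoted as \cite[Theorem 3.17]{McM94} and used as a black box (the only additional fact the paper needs is that Latt\`es maps are postcritically finite). So the comparison is really with McMullen's argument, and the core of your reconstruction does follow it: univalent inverse branches over disks of definite size avoiding $P(g)$ (note your assumption that the orbit misses $P(g)$ is automatic, since $P(g)$ is forward invariant, so any orbit that enters it has distance $0$ from it forever), Koebe-controlled rescalings at a density point of almost continuity of $\mu$, and the in-measure limit identifying $\mu$ on a disk $D_\infty$ of definite radius with the pullback of a constant line field by a univalent map, forcing $J(g)=\RS$.

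There is, however, a genuine gap at your very first reduction. Your blow-up produces a contradiction only at density points of the \emph{support} $E$ of the line field whose orbits fail to tend to $P(g)$, so it yields the conclusion for a.e.\ $z\in E$, not for a.e.\ $z\in J(g)$. The claimed upgrade by ``standard measure-theoretic propagation'' (null sets preserved, backward orbits of $E$ dense) does not work: the set $B$ of points whose orbits do not tend to $P(g)$ and the set $E$ are both completely invariant (the latter mod null, since $|\mu(z)|=|\mu(g(z))|$ a.e.), and without ergodicity two completely invariant sets of positive measure can a priori be disjoint, in which case there is no density point of $E$ in $B$ for your argument to start from; density of backward orbits is purely topological and gives nothing measure-theoretic. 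What is needed, and what McMullen uses, is the ergodic-or-attracting dichotomy \cite[Theorem 3.9]{McM94}: either a.e.\ point of $J(g)$ tends to $P(g)$, or $J(g)=\RS$ and $g$ is ergodic, in which case $B$ has full measure and $E\cap B$ has positive measure. That dichotomy is a substantive theorem in its own right, not a routine step, and must be proved or cited. Secondarily, your concluding step is compressed: you obtain flatness of $\mu$ only on the single disk $D_\infty$, and promoting this to a flat structure on all of $\RS$ with \emph{finitely many} cone singularities (so that the torus double cover exists and the multiplier argument gives $\alpha\in\Z$) requires spreading the univalent structure by the dynamics and controlling the singular set along the grand orbit of the critical points; this is exactly McMullen's theorem relating univalent line fields to complex tori, and it should be proved or cited rather than attributed to ``classical Fatou rigidity.''
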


A comprehensive exposition on Latt\'es maps can be found in \cite{Mil06}. We only need the fact that Latt\'es maps are postcritically finite, so in particular, our map $f$ is not one of them.

\begin{theorem}[No global invariant line fields]
\label{thm:NILF}
    The map $f$ does not admit any invariant line field on its Julia set.
\end{theorem}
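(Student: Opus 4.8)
The plan is to reduce the global statement to the local one, Lemma~\ref{NILF-local}, via the ergodic dichotomy of Theorem~\ref{thm:toral/attr}. Suppose for contradiction that $f$ carries an invariant line field $\mu$ on $J(f)$, and let $E\subset J(f)$ be the set where $|\mu|=1$, so $|E|>0$. As noted above, Latt\'es maps are postcritically finite while $P(f)\supseteq\PJ$ is infinite, so $f$ is not a flexible Latt\'es map; Theorem~\ref{thm:toral/attr} then gives $\dist(f^n(z),P(f))\to 0$ as $n\to\infty$ for a.e.\ $z\in J(f)$, in particular for a.e.\ $z\in E$.

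The next step is to localize the accumulation set of these orbits. By Proposition~\ref{trichotomy} and J-rotationality, $P(f)\cap J(f)$ is the union of $\PJ$ with the finite set $\mathbf{S}=(P(f)\cup\mathcal{C}(f))\cap J(f)\setminus\PJ$ of (pre)periodic and parabolic points, and any postcritical point of $f$ lying in $F(f)$ arbitrarily close to $J(f)$ is arbitrarily close to $\PJ\cup\mathbf{S}$. Hence, for a.e.\ $z\in E$, the orbit of $z$ eventually enters every prescribed neighborhood of $\PJ\cup\mathbf{S}$; shrinking the disks $D_s$ around the points of $\mathbf{S}$ and the collars, the orbit eventually lies in $\collar\cup\bigcup_{s\in\mathbf{S}}D_s$. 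I then claim that, for a.e.\ $z\in E$, the orbit in fact eventually avoids $\bigcup_s D_s$ and is therefore eventually confined to $\collar$; since $f(X_j)=X_j$, skinny collars satisfy $f(A_j)\cap A_{j'}=\emptyset$ for $j\neq j'$, so the orbit is eventually trapped in a single $A_j$. Equivalently, for a.e.\ $z\in E$ there are $j$ and $N\geq 0$ with $f^N(z)\in K^{\textnormal{loc}}_j$; thus, modulo a null set, $E\subseteq\bigcup_j\hat K_j$, where $\hat K_j:=\bigcup_{N\geq 0}\bigl(f^{-N}(K^{\textnormal{loc}}_j)\cap J(f)\bigr)$.

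To conclude, choose $j$ and $N$ with $\bigl|E\cap f^{-N}(K^{\textnormal{loc}}_j)\bigr|>0$. Because $f^N$ sends positive-measure sets to positive-measure sets and $f^*\mu=\mu$ forces $|\mu|$ to be $f$-invariant, the image $f^N\bigl(E\cap f^{-N}(K^{\textnormal{loc}}_j)\bigr)$ is a positive-measure subset of $K^{\textnormal{loc}}_j$ on which $|\mu|=1$. This contradicts Lemma~\ref{NILF-local}, completing the proof.

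The main obstacle is the claim in the second paragraph: ruling out, for a.e.\ orbit converging to $P(f)$, that it returns to $\bigcup_s D_s$ infinitely often. This is vacuous when $\mathbf{S}=\emptyset$ (e.g.\ for a single rotation domain, or for a single Herman curve with symmetric combinatorics, as in the Remark above). In general I would argue by Koebe distortion: each excursion of the orbit near the finite set $\mathbf{S}$ is controlled by the (pre)repelling or parabolic local dynamics there, so between consecutive returns to $\bigcup_s D_s$ the derivative grows by a definite factor in the hyperbolic metric of $\Omega$ (Lemma~\ref{lem:expansion}), while Lemma~\ref{1/d-metric} pins down that metric near $\PJ$; consequently the set of such $z$ is contained in a nested family of sets with geometrically decaying measure, hence null. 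Alternatively, one can treat these transient points directly by the geometrically finite argument, verifying the hypotheses of Shen's criterion (Proposition~\ref{shen-criterion}) at infinitely many scales near a critical point in $\mathbf{S}$, exactly as in Case~2 of the proof of Lemma~\ref{NILF-local}.
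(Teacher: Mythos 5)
You have the paper's overall architecture exactly right: rule out Latt\'es, invoke Theorem \ref{thm:toral/attr} so that a.e.\ $z$ in the support of the line field satisfies $\dist(f^n(z),P(f))\to 0$, show such orbits are eventually trapped in a single collar and hence land in some $K^{\textnormal{loc}}_j$, and push the line field forward (or, as the paper does, pull back: $E_j=\bigcup_n f^{-n}(E_j\cap K^{\textnormal{loc}}_j)$) to contradict Lemma \ref{NILF-local}. That last transfer step is fine. The genuine gap is precisely the step you flag yourself: showing that a.e.\ orbit converging to $P(f)$ eventually avoids the disks $D_s$, $s\in\mathbf{S}$. In the paper this is not a measure-theoretic statement at all but a pointwise one, proved by elementary local dynamics: after the normalization at the start of Section \ref{sec:rat-map}, each $s\in\mathbf{S}$ is preperiodic to a fixed point which is parabolic or repelling; if the orbit of a Julia point $z$ with $\dist(f^n(z),P(f))\to 0$ entered $\D(s,\varepsilon)$ infinitely often, then --- since Julia points cannot lie in attracting petals and the local dynamics at a repelling or parabolic fixed point expels all other nearby points --- the orbit would sit in the annulus $\{\delta\le |w-s|\le\varepsilon\}$ infinitely often for some definite $\delta>0$, hence (for $\varepsilon$ small compared with the separation of the components of $P(f)\cap J(f)$) would be at definite distance from $P(f)$ infinitely often, contradicting convergence. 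The preperiodic case reduces to the fixed case. No distortion estimates, derivative growth, or Borel--Cantelli-type measure decay are needed.

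By contrast, neither of your proposed repairs closes the gap as stated. The first route needs a uniform expansion factor per excursion between returns to $\bigcup_s D_s$, but Lemma \ref{lem:expansion}(1) only gives $\|f'\|\ge 1$, and near a parabolic point of $\mathbf{S}$ the expansion in the hyperbolic metric of $\Omega$ is genuinely non-uniform, so the asserted ``nested family of sets with geometrically decaying measure'' is unsubstantiated; moreover the true statement is pointwise (up to the countable set of preimages of $\mathbf{S}$), so aiming for an a.e.\ estimate is both harder and unnecessary. The second route cannot proceed ``exactly as in Case 2'' of Lemma \ref{NILF-local}: that case rests on Theorem \ref{critical}, which applies only to points of $\Jthick$, i.e.\ to orbits that already stay out of the disks $D_s$, and points of $\mathbf{S}$ need not be critical points (they are typically repelling or parabolic (pre)periodic points on which critical orbits land), so there is no ready supply of bounded-degree branched covers at all small scales near them; making that route work would amount to redoing McMullen's geometrically finite analysis, which the paper's short argument renders unnecessary.
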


\begin{proof}
    Suppose for a contradiction that there is an invariant line field $\mu$ supported on a positive measure subset of $J(f)$. By Theorem \ref{thm:toral/attr}, the set $E$ of points $z$ in $J(f)$ satisfying
    \[
        \dist\left(f^n(z), P(f)\right) \to 0 \quad \text{ as } n \to\infty
    \]
    has full measure. We will first show that $E$ can be decomposed as the union $E_1 \cup E_2 \cup \ldots \cup E_m$ where for each $j$,
    \[
        E_j := \{ z\in J(f) \: : \: \dist(f^n(z), X_j) \to 0 \text{ as } n \to\infty \}.
    \]

    Let us pick a constant $\varepsilon>0$ that is much smaller than the distance between any two connected components of $P(f) \cap J(f)$. Pick any point $z$ in $E$. There is some $N \in \N$ such that $\dist(f^n(z),P(f))<\varepsilon$ for all $n \geq N$. 

    Consider a fixed point $s$ in $\mathbf{S}$. By Lemma \ref{lem:parabolic-or-repelling}, $s$ must be either parabolic or repelling in nature. We claim that the forward orbit of $z$ can only visit the disk $\D(s,\varepsilon)$ at most finitely many times. Indeed, suppose for the contradiction that there is an infinite subsequence $\{n_j\}_{j \in \N}$ such that $f^{n_j}(z) \in \D(s,\varepsilon)$ for all $j \in \N$. As $z$ lies in the Julia set, each $f^{n_j}(z)$ cannot lie in any attracting petal of $s$ if $s$ is parabolic. By the repelling nature of $s$, there is some $\delta>0$ and a further subsequence $\{m_j\}_j$ of $\{n_j\}_j$ such that $\delta \leq |f^{m_j}(z)-s|\leq \varepsilon$ for all $j$. This would imply that $\dist\left(f^n(z), P(f)\right) \geq \delta >0$ for infinitely many $n$'s, which is impossible.

    Suppose $s \in \mathbf{S}$ is preperiodic, so there exists $k \geq 1$ such that $f^k(s)$ is a fixed point. Then, the forward orbit of $z$ can visit a small neighborhood of $s$ at most finitely many times, because, due to the previous argument, it can only get close to $f^k(s)$ at most finitely many times.

    As such, there are some moment $n \geq N$ and some point $w$ on some rotation curve $X_j$ such that $|f^n(z) - w| \leq \varepsilon$. As $X_j$ is $f$-invariant, we have $\dist(f^{n+k}(z),X_j) < \varepsilon$ for all $k \in \N$. Indeed, for small $\varepsilon$, we can make sure that the $\varepsilon$-neighborhood of $X_j$ is disjoint from the preimage of the $\varepsilon$-neighborhood of every connected component of $P(f) \cap J(f) \backslash X_j$. Consequently, $z$ must be in $E_j$.

    So far, we have shown that there is some $j \in \{1,\ldots m \}$ such that $E_j$ is a positive measure subset of the support of $\mu$. Every point in $E_j$ is eventually mapped to the local Julia set $J^{\textnormal{loc}}_j$, that is, $E_j = \bigcup_{n \in \N} f^{-n}\left(E_j \cap J^{\textnormal{loc}}_j\right)$. It follows that the intersection $E_j \cap J^{\textnormal{loc}}_j$ must have positive measure and $\mu$ induces an invariant line field on $J^{\textnormal{loc}}_j$. However, this contradicts Lemma \ref{NILF-local}.
\end{proof}

\subsection{Zero Lebesgue measure}
\label{ss:leb-meas}

The absence of invariant line fields of $f$ only has meaning when its Julia set has positive Lebesgue measure. Under the assumption that none of the $X_j$'s is a Herman curve, we can further obtain the stronger result that $J(f)$ has zero Lebesgue measure.

\begin{definition}
\label{def:porosity}
    Let $J$ be a compact subset of $\RS$. The set $J$ is \emph{porous} if there is some $\alpha >0$ such that for every point $x$ in $J$ and every sufficiently small $r>0$, the disk $\D(x,r)$ contains a smaller disk of radius $\alpha r$ disjoint from $J$.
\end{definition}

Porous sets have upper box dimension less than two, which implies that its Hausdorff dimension is less than two and it has zero Lebesgue measure. The following lemma is an immediate consequence of Theorem \ref{critical}.

\begin{lemma}[Local porosity]
\label{local-porosity}
    For every $j \in \{1,\ldots,m\}$, if $X_j$ is not a Herman curve, then the local Julia set $J^{\textnormal{loc}}_j$ is porous.
\end{lemma}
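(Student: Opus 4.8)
The plan is to deduce porosity of $K^{\textnormal{loc}}_j$ directly from Theorem \ref{critical}, using the fact that when $X_j$ is not a Herman curve, the Julia set locally lies on only one side of $X_j$. First I would fix the constants: let $C$ be the implicit shape/distortion constant coming from Theorem \ref{critical}, uniform over all points of $\Jthick \supset K^{\textnormal{loc}}_j$. Pick any $x \in K^{\textnormal{loc}}_j$ and any sufficiently small scale $r>0$. Theorem \ref{critical} produces an integer $i \geq 0$ and pointed disks $(U,y)$, $(V,c)$ with $f^i : (U,y) \to (V,c)$ univalent of bounded distortion, $(U,y)$ of bounded shape with $\diam(U) \asymp r$, $|y-z|=O(r)$, and $c$ a critical point of $f$ on $\PJ$ — in fact on some rotation quasicircle $X_k$. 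Since $X_j$ is not a Herman curve, Proposition \ref{trichotomy} tells us $X_j$ is the boundary component of a rotation domain, and likewise every rotation quasicircle reached by the orbit of a point in $K^{\textnormal{loc}}_j$ is such a boundary component; in particular, near $c$ there is a definite-size round disk (a chunk of the adjacent rotation domain, e.g. a half-neighborhood $H^\bullet_{\pi/2}$ of a small interval of $X_k$ on the rotation-domain side) that is disjoint from $J(f)$ and of diameter $\asymp \diam(V)$.

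The second step is to pull this hole back. The disk $V$ contains a definite-proportion round subdisk $D^\ast \subset V$ of diameter $\asymp \diam(V)$ lying in the rotation domain, hence disjoint from $J(f)$, and moreover $D^\ast$ can be taken well contained in $V$ so that the inverse branch $(f^i|_U)^{-1}$, which has bounded distortion on $V$, maps $D^\ast$ to a set containing a round disk $D \subset U$ of diameter $\asymp \diam(U) \asymp r$. Since $f^i(D) \subset D^\ast$ is disjoint from $J(f)$ and $J(f)$ is completely invariant, $D$ is disjoint from $J(f)$ as well; in particular $D$ is disjoint from $K^{\textnormal{loc}}_j$. Finally, since $(U,y)$ has bounded shape, $\diam(U) \asymp r$, and $|y-x|=O(r)$ (writing $x$ for the base point $z$ of Theorem \ref{critical}), both $U$ and its subdisk $D$ lie within $\D(x, r')$ for $r' \asymp r$ with a uniform constant; rescaling $r$ by this uniform factor, $\D(x,r)$ contains a disk of radius $\succ r$ disjoint from $K^{\textnormal{loc}}_j$. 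As all constants are uniform in $x \in K^{\textnormal{loc}}_j$ and in $r$, this is exactly the porosity condition of Definition \ref{def:porosity}.

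The one point requiring a little care — and the main (minor) obstacle — is guaranteeing that the critical point $c$ delivered by Theorem \ref{critical} is always one at which a definite hole is available. This is where the hypothesis "$X_j$ is not a Herman curve" is essential: a priori the orbit of $x \in K^{\textnormal{loc}}_j$ stays in $\overline{A_j}$, so $c \in X_j$, and by Proposition \ref{trichotomy} case \circled{B}, $X_j$ bounds a rotation domain, so on one side of $X_j$ near $c$ there is genuinely a component of the Fatou set disjoint from $J(f)$; the half-neighborhoods $H^\bullet_a(I)$ constructed in \S\ref{ss:approx-rot} on that side have definite modulus relative to the interval $I = X_j \cap f^i(U)$, giving the required round subdisk of $V$. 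If $X_j$ were a Herman curve (case \circled{H}), $J(f)$ could accumulate on $X_j$ from both sides and no such hole would be guaranteed — which is precisely why the zero-measure conclusion of Theorem \ref{thm:NILF-main} excludes Herman curves. With this observation in place the argument is a routine Koebe-distortion pullback, so I would keep the write-up short.
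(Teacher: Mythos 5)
Your proposal is correct and is essentially the paper's own argument: apply Theorem \ref{critical}, use that the non-Herman quasicircle bounds a rotation domain to find a round disk of definite relative size in $V$ lying in the Fatou set, and pull it back by the bounded-distortion univalent branch of $f^{-i}$ to produce the hole in $\D(x,r)$, using complete invariance of $J(f)$. The caveat you flag—that the critical point $c$ must lie on $X_j$ (or at least on a non-Herman quasicircle), which is not literally part of the statement of Theorem \ref{critical}—is real but is handled exactly as you suggest, and the paper makes the same implicit step: for $x \in K^{\textnormal{loc}}_j$ the whole construction in the proof of Theorem \ref{critical} takes place in the collar $A_j$, so the critical point produced lies on $X_j$ itself.
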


\begin{proof}
    Pick any point $z$ in $J^{\textnormal{loc}}_j$ and pick a sufficiently small $r >0$. We will show that the round disk $\D(z,r)$ contains a smaller round disk $D$ of radius $\asymp r$ contained in $F(f)$.
    
    By Theorem \ref{critical}, there is a univalent map $f^i: (U,y) \to (V,c)$ of bounded distortion such that $(U,y)$ is a pointed disk inside $\D(z,r)$ with inner radius $\asymp r$ and $c$ is a critical point on a rotation quasicircle $X_j$. Since $X_j$ is a quasicircle and is a boundary component of a rotation domain $W \subset F(f)$, there is a round disk $D' \subset V \cap W$ of diameter $\asymp \diam(V)$. Finally, since $f^i$ has bounded distortion on $U$, the lift of $D'$ under $f^i|_U$ contains a round disk $D$ of radius $\asymp r$ contained in $F(f)$.
\end{proof}

Let us now prove the second part of Theorem \ref{thm:NILF-main}.

\begin{theorem}[Zero area]
    If $f$ admits no Herman curves, then $J(f)$ has zero Lebesgue measure. 
\end{theorem}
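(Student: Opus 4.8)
I would argue by contradiction: suppose $J(f)$ has positive Lebesgue measure. Since $f$ has no Herman curves, Proposition~\ref{trichotomy} shows every $X_j$ is a boundary component of a rotation domain, so $F(f)\ne\varnothing$, hence $J(f)\ne\RS$; and by Lemma~\ref{local-porosity} each local non-escaping set $K^{\mathrm{loc}}_j$ is porous, hence has zero Lebesgue measure. Since $(P(f)\cup\mathcal C(f))\cap J(f)\subseteq\big(\bigcup_j K^{\mathrm{loc}}_j\big)\cup\mathbf{S}$ (every rotation quasicircle lies in some $K^{\mathrm{loc}}_j$, every critical point in $J(f)$ lies on $\PJ$ or in $\mathbf{S}$, and $\mathbf{S}$ is finite), this set has zero area, so its grand orbit $\bigcup_{n\ge0}f^{-n}\big((P(f)\cup\mathcal C(f))\cap J(f)\big)$ has zero area as well; thus for Lebesgue-almost every $z\in J(f)$ the entire forward orbit of $z$ stays in $\Omega$.

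Next I would establish the dichotomy that each such $z$ is either a \emph{conical} point of $J(f)$ --- there exist $r_k\to0$, $n_k\to\infty$ and a fixed $\delta_0>0$ with $f^{n_k}\colon \D(z,r_k)\to\D(f^{n_k}(z),\delta_0)$ univalent of bounded distortion --- or satisfies $\dist(f^n(z),P(f))\to0$. Indeed, if the latter fails, then (since $\omega(z)$ is forward invariant and $f$ maps $P(f)\cup\mathcal C(f)$ into $P(f)$) there is a $\delta_0>0$ and infinitely many $n$ with $\dist\big(f^n(z),\RS\setminus\Omega\big)\ge\delta_0$. For each such $n$ the disk $\D(f^n(z),\delta_0)$ contains no critical value of $f^n$ (these all lie in $P(f)\subset\RS\setminus\Omega$), so there is a univalent inverse branch of $f^n$ through $z$ on it; moreover $\rho_\Omega(f^n(z))\asymp1$ and $\rho_\Omega(z)\asymp1$, while $\|(f^n)'(z)\|\to\infty$ by Lemma~\ref{lem:expansion}\,(2). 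Hence $|(f^n)'(z)|\to\infty$ along this sequence, and the Koebe distortion theorem makes the inverse-branch images shrink to $z$, so $z$ is conical.

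It then remains to handle the two alternatives. If $\dist(f^n(z),P(f))\to0$, the argument in the proof of Theorem~\ref{thm:NILF} applies verbatim: the points of $\mathbf{S}$ being repelling or parabolic and the $D_s$ disjoint from the collars $A_j$, the orbit of $z$ meets each $D_s$ only finitely often, so it is eventually confined to the collar of a single $X_j$; that is, $z\in\bigcup_{n\ge0}f^{-n}(K^{\mathrm{loc}}_j)$, a null set. For the conical points I would use that $f$ is nonsingular and ergodic on the conical Julia set $J_{\mathrm{con}}$ with respect to Lebesgue measure; since $J(f)\ne\RS$, a positive-measure $J_{\mathrm{con}}$ would, by blowing up at a Lebesgue density point of $J_{\mathrm{con}}$ and exploiting the mixing of scales --- exactly as in the proof of Theorem~\ref{thm:toral/attr}, cf.\ \cite{McM94} --- produce an invariant line field on $J_{\mathrm{con}}$, contradicting Theorem~\ref{thm:NILF}; hence $|J_{\mathrm{con}}|=0$. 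Thus Lebesgue-almost every point of $J(f)$ lies in one of two null sets, which contradicts $|J(f)|>0$, and therefore $J(f)$ has zero Lebesgue measure.

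The step I expect to be the main obstacle is the conical case: it rests on the standard ergodic theory of the conical (radial) Julia set together with a McMullen-type blowing-up argument showing that, absent an invariant line field and with $J(f)\ne\RS$, the conical Julia set is null. The other ingredients --- the conical/postcritical dichotomy (via Lemma~\ref{lem:expansion}) and the reduction of orbits attracted to $P(f)$ to the porous sets $K^{\mathrm{loc}}_j$ (via Lemma~\ref{local-porosity} and the proof of Theorem~\ref{thm:NILF}) --- are essentially already available.
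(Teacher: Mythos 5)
Your skeleton is the paper's: no Herman curves forces a rotation domain, hence $F(f)\neq\emptyset$; almost every point of $J(f)$ is attracted to $P(f)$; such orbits are eventually confined to a collar of some $X_j$ exactly as in the proof of Theorem \ref{thm:NILF}; and porosity of $K^{\textnormal{loc}}_j$ (Lemma \ref{local-porosity}) gives the contradiction. The difference is that the paper simply quotes \cite[Theorem 3.9]{McM94} for the attraction statement, while you re-derive it via conical points, and that is where the genuine gap sits. Your claim that a positive-measure conical set $J_{\mathrm{con}}$ would ``produce an invariant line field on $J_{\mathrm{con}}$'', to be excluded by Theorem \ref{thm:NILF}, is not a valid implication, and nothing in your argument constructs such a line field. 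The argument of \cite[Theorem 3.17]{McM94} (Theorem \ref{thm:toral/attr} here) runs in the opposite direction: one \emph{assumes} a measurable invariant line field and uses conical points to blow it up to a univalent line field, forcing a flexible Latt\'es map; conical points alone never yield a line field (a non-flexible Latt\'es map has conical set of full measure and no invariant line field). The correct --- and simpler --- way to close your conical case is the density-point blow-up that proves \cite[Theorem 3.9]{McM94} itself: at a Lebesgue density point $x$ of $J_{\mathrm{con}}\subset J(f)$, the bounded-distortion univalent maps $f^{n_k}\colon\D(x,r_k)\to\D(f^{n_k}(x),\delta_0)$ together with $f^{-1}(J(f))=J(f)$ push the density of $J(f)$ up to the definite scale $\delta_0$, so a limit of the image disks is a round disk contained in the closed set $J(f)$; hence $J(f)$ has nonempty interior, so $J(f)=\RS$, contradicting $F(f)\neq\emptyset$. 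No line field is needed. With that replacement your proof is correct and amounts to the paper's proof with the proof of \cite[Theorem 3.9]{McM94} inlined.

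A smaller wrinkle in the same step: from the failure of $\dist(f^n(z),P(f))\to0$ you only get a subsequence staying a definite distance from $P(f)$, not from $\RS\setminus\Omega=P(f)\cup\mathcal{C}(f)$; at those times the orbit could still be close to a critical point in $\mathcal{C}(f)\setminus P(f)$, where $\rho_\Omega$ blows up, so the claim $\rho_\Omega(f^{n}(z))\asymp1$ is not justified as written. The standard repair is to run the expansion and inverse-branch argument in $\Omega'=\RS\setminus P(f)$: all critical values of $f^n$ lie in $P(f)$, so $f^n\colon f^{-n}(\Omega')\to\Omega'$ is still an unbranched covering, the analogue of Lemma \ref{lem:expansion} holds there, and the univalent branches on $\D(f^{n}(z),\delta_0)$ exist whenever $\dist(f^{n}(z),P(f))\geq\delta_0$. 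This is routine, unlike the line-field step.
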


\begin{proof}
    Suppose $f$ has no Herman curves and suppose for a contradiction that $J(f)$ has positive measure. The map $f$ must contain a rotation domain, and, in particular, has a non-empty Fatou set. By \cite[Theorem 3.9]{McM94}, the set of points $z$ in $J(f)$ satisfying
    \[
        d(f^n(z), P(f)) \to 0 \quad \text{ as } n \to\infty
    \]
    has full measure. By the same argument as in the proof of Theorem \ref{thm:NILF}, there is some $j \in \{1,\ldots,m\}$ such that $J^{\textnormal{loc}}_j$ has positive measure. However, this contradicts Lemma \ref{local-porosity}.
\end{proof}

With stronger assumptions, we can even obtain global porosity. 

\begin{theorem}[Global porosity]
\label{porosity-theorem}
    Suppose every critical point of $f$ lies in either the basin of an attracting cycle, the boundary of a bounded type rotation domain, or the grand orbit of a bounded type rotation domain. Then, $J(f)$ is porous.
\end{theorem}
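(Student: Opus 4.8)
The plan is to reduce the statement to Lemma~\ref{local-porosity}, after checking that under the present hypothesis the thick part of $\Omega$ is a genuine neighbourhood of $J(f)$ and $\Jthick$ coincides with $J(f)$. First I would unwind the hypothesis. A critical point lying in the basin of an attracting cycle, or in the grand orbit of a rotation domain, belongs to the Fatou set; hence every critical point of $f$ on $J(f)$ lies on the boundary of a \emph{bounded type} rotation domain. Such a boundary component is a rotation quasicircle (Zhang's theorem for Siegel disks, Shishikura's surgery for Herman rings) and it equals the closure of the critical orbit it carries, so it coincides with one of the curves $X_j$. Thus $f$ has no Herman curve, each $X_j$ is a boundary component of a bounded type rotation domain $W_j\subset F(f)$, and no critical point of $f$ on $J(f)$ is preperiodic. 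Since the forward orbit of every critical point of $f$ either stays on some $X_j$ or stays in $F(f)$, we get $P(f)\cap J(f)=\PJ$, so the finite set $\mathbf{S}$ of \S\ref{ss:decomposition} is empty. Finally, $f$ has no parabolic cycle (a parabolic basin would absorb a critical point not allowed by the hypothesis), so the critical and postcritical points of $f$ lying in $F(f)$ form a compact subset of $F(f)$ --- the union of the finitely many critical points in $F(f)$, the attracting cycles together with the orbit tails converging to them, and the invariant analytic curves inside rotation domains on which the remaining critical orbits accumulate --- hence lie at some positive distance $\delta_0$ from $J(f)$.

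Next I would record the consequence for the thick--thin decomposition. Choosing $\varepsilon<\delta_0$ in (\ref{eqn:thicc}) and using $\mathbf{S}=\emptyset$, we obtain $\Othick=N_\varepsilon\supseteq J(f)$. As $J(f)$ is forward invariant and contained in $\overline{\Othick}$, Definition~\ref{def:little-julia-set} yields $\Jthick=J(f)$, so Theorem~\ref{critical} applies at every point of $J(f)$.

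Then I would repeat the porosity argument of Lemma~\ref{local-porosity} globally. Fix $z\in J(f)$ and a small $\rho>0$. By Theorem~\ref{critical}, applied at a scale $r\asymp\rho$, there are $i\geq 0$ and a univalent map $f^i\colon(U,y)\to(V,c)$ of bounded distortion with $(U,y)$ of bounded shape, $\diam U\asymp\rho$ and $U\subset\D(z,\rho)$ (using $|y-z|=O(\rho)$), and with $c$ a critical point lying on some $X_j=\partial W_j$. Bounded distortion makes $(V,c)$ a bounded-shape pointed disk centred at $c\in X_j$; since $X_j$ is a quasicircle bounding the rotation domain $W_j\subset F(f)$, the set $V\cap W_j$ contains a round disk $D'$ with $\diam D'\asymp\diam V$, exactly as in the proof of Lemma~\ref{local-porosity}. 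Pulling $D'$ back by $f^i|_U$, whose inverse branch has bounded distortion, we obtain a round disk $D\subset U\subset\D(z,\rho)$ with $\diam D\asymp\rho$ and $D\subset F(f)$, so $D$ is disjoint from $J(f)$. As all implicit constants are uniform in $z$, this is porosity of $J(f)$.

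The only genuine work is the first step: verifying that the hypothesis rules out preperiodic critical orbits on $J(f)$, rules out Herman curves, and confines every Fatou-side critical orbit to a compact piece of $F(f)$ --- this is precisely what makes $\mathbf{S}$ empty, $\Othick$ a full neighbourhood of the Julia set, and $\Jthick=J(f)$. Once this structural reduction is in place, the porosity estimate is literally the content of Lemma~\ref{local-porosity}, now run at every point of $J(f)$ instead of only on the local non-escaping sets.
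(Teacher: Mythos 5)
Your proposal is correct and follows the paper's own route: the hypothesis forces the exceptional set $\mathbf{S}$ to be empty and the thick non-escaping set to be all of $J(f)$, after which Theorem \ref{critical} applies at every point of $J(f)$ and the porosity argument of Lemma \ref{local-porosity} is repeated verbatim. The only difference is that you spell out the verifications (no Herman curves among the $X_j$, no parabolic cycles, Fatou-side critical and postcritical orbits compactly contained in the Fatou set) that the paper's two-line proof leaves implicit.
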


\begin{proof}
    The assumption implies that $\mathbf{S}$ is empty and that $\Jthick$ coincides with the whole Julia set. This allows us to apply Theorem \ref{critical} to any point on the Julia set and repeat the proof of Lemma \ref{local-porosity}.
\end{proof}

\section{Combinatorial rigidity of Herman quasicircles}
\label{sec:applications}

In this section, we discuss a number of applications of Theorem \ref{thm:NILF-main} to the space $\HQspace_{d_0,d_\infty,\theta}$ introduced in Definition \ref{main-definition}. Rational maps in $\HQspace_{d_0,d_\infty,\theta}$ contain a single Herman quasicircle of the simplest configuration, and the unicritical ones will serve as model maps for critical quasicircle maps in Section \S\ref{sec:c1plusalpharigidity}.

\subsection{Combinatorial rigidity}
\label{ss:rigidity}

Fix a pair of integers $d_0, d_\infty \geq 2$ and a bounded type irrational number $\theta \in (0,1)$. For every rational map $f$ in $\HQspace_{d_0, d_\infty,\theta}$, we denote by $\comb(f) \in \mathcal{C}_{d_0,d_\infty}$ the combinatorics of $f$ along its Herman quasicircle, as defined in Definition \ref{combi}.

\begin{theorem}
\label{thm:combinatorial-rigidity-1}
    Any two combinatorially equivalent rational maps in $\HQspace_{d_0,d_\infty,\theta}$ are conformally conjugate.
\end{theorem}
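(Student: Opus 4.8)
The plan is the classical pullback (or ``spreading'') argument, with Theorem~\ref{thm:NILF-main} serving as the crucial input that promotes a quasiconformal conjugacy to a conformal one. Let $f,g\in\HQspace_{d_0,d_\infty,\theta}$ with $\comb(f)=\comb(g)$, and write $\Hq_f,\Hq_g$ for their Herman quasicircles. Since $0$ and $\infty$ are superattracting fixed points, every other critical point lies on the Herman quasicircle, and $f|_{\Hq_f}$ is minimal (being conjugate to $R_\theta$), the postcritical set is $P(f)=\Hq_f\cup\{0,\infty\}$, so $P(f)\cap J(f)=\Hq_f$, and likewise for $g$. Note also that condition~\ref{defn1} forbids any non-repelling cycle other than $0,\infty$, hence any rotation domain or other attracting/parabolic basin, so $F(f)$ is exactly the union of the basins of $0$ and $\infty$.

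The first step is to construct a quasiconformal homeomorphism $\phi_0\colon\RS\to\RS$ with $\phi_0\circ f=g\circ\phi_0$ on $P(f)$ which is, in addition, conformal on $F(f)$ and conjugates $f$ to $g$ there. Along $\Hq_f$ I would use that the linearizations of $f|_{\Hq_f}$ and $g|_{\Hq_g}$ are quasisymmetric (Theorem~\ref{petersen}) and that $\comb(f)=\comb(g)$ yields a quasisymmetric circle conjugacy carrying the critical points of $f$ to those of $g$ with matching inner and outer multiplicities; this extends quasiconformally to $\RS$, after which one adjusts the map on the two superattracting basins via B\"ottcher coordinates and successive lifts, then glues.

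Next comes the pullback. Because every critical value of $f$ lies in the forward-invariant set $P(f)$, the restriction $f\colon\RS\setminus f^{-1}(P(f))\to\RS\setminus P(f)$ is an unbranched covering of degree $d_0+d_\infty-1$, and similarly for $g$; since $\phi_0$ conjugates $f$ to $g$ on $P(f)$ and (this is what the combinatorial equivalence guarantees) lies in the right homotopy class, it admits a lift $\phi_1$ with $g\circ\phi_1=\phi_0\circ f$, normalized so that $\phi_1=\phi_0$ on $P(f)\cup F(f)$. Locally $\phi_1$ is a holomorphic inverse branch of $g$ composed with $\phi_0$ composed with $f$, so it has the same maximal dilatation as $\phi_0$; it extends across $f^{-1}(P(f))$, a finite union of quasicircles and quasiarcs (cf. Lemma~\ref{lem:preim}) together with a finite set, which is quasiconformally removable. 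Iterating gives $K$-quasiconformal maps $\phi_n$ with $K=K(\phi_0)$, all fixing $0$, $\infty$ and one marked point of $\Hq_f$, satisfying $g\circ\phi_{n+1}=\phi_n\circ f$ and $\phi_{n+1}=\phi_n$ on $F(f)\cup\bigcup_{k=0}^{n-1}f^{-k}(P(f))$. The grand orbit $\bigcup_{k\ge 0}f^{-k}(P(f))$ is dense in $J(f)$, since $\emptyset\neq\Hq_f\subset J(f)$ and iterated preimages of Julia points are dense; together with $\phi_n|_{F(f)}=\phi_0|_{F(f)}$ this shows $\{\phi_n\}$ converges pointwise on a dense subset of $\RS$, so by equicontinuity of a $K$-quasiconformal family, $\phi_n\to\phi$ uniformly with $\phi$ a $K$-quasiconformal homeomorphism. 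Passing to the limit in $g\circ\phi_{n+1}=\phi_n\circ f$ yields $g\circ\phi=\phi\circ f$: a quasiconformal conjugacy, conformal on $F(f)$.

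Finally, the Beltrami coefficient $\mu_\phi$ is $f$-invariant (differentiate $\phi\circ f=g\circ\phi$) and supported on $J(f)$. If $\mu_\phi$ were non-zero on a positive-measure set, then $\mu_\phi/|\mu_\phi|$ on $\{|\mu_\phi|>0\}$, extended by $0$ elsewhere, would be an $f$-invariant line field on $J(f)$ (using $|f^{*}\mu_\phi|=|\mu_\phi|\circ f$, so $|\mu_\phi|$ is $f$-invariant and the normalization is genuinely invariant); but $f$ is J-rotational of bounded type --- its only rotation curve is $\Hq_f$, which has bounded type rotation number $\theta$, and its only critical points off $\Hq_f$ are the superattracting fixed points $0,\infty\notin J(f)$ --- so this contradicts Theorem~\ref{thm:NILF-main}. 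Hence $\mu_\phi=0$ a.e. and $\phi$ is a conformal conjugacy between $f$ and $g$. Granting Theorem~\ref{thm:NILF-main}, the hard part will be the construction of $\phi_0$: one must realize the combinatorial equivalence by a map that matches the cyclic order and the inner/outer multiplicities of the critical points along the Herman quasicircles, respects the two superattracting basins, and sits in the homotopy class for which the first lift $\phi_1$ can be taken to agree with $\phi_0$ on $P(f)$; everything after that is routine bookkeeping with covering maps and normal families of quasiconformal mappings.
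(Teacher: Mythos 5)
Your proposal is correct and follows essentially the same route as the paper: a pullback argument starting from a quasiconformal map built from the quasisymmetric linearizations on the Herman quasicircles (matched via the combinatorial equivalence) and B\"ottcher coordinates at $0$ and $\infty$, followed by repeated lifting, passage to a quasiconformal limit conjugacy, and promotion to a conformal conjugacy via Theorem \ref{thm:NILF-main} and Weyl's lemma. The only differences are cosmetic: the paper's initial map $h_0$ is conformal merely on equipotential neighborhoods of $0$ and $\infty$ (conformality on the full Fatou set emerging in the limit), and its convergence step uses a subsequential limit plus the nowhere density of the Julia set rather than your stabilization on the dense grand orbit of $P(f)$.
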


In the proof, we will apply the standard pullback argument to promote combinatorial equivalence to quasiconformal conjugacy. The absence of invariant line fields will allow us to further promote the quasiconformal conjugacy to a conformal one.

\begin{proof}
    Suppose $f_1$ and $f_2$ are two combinatorially equivalent rational maps in $\HQspace_{d_0,d_\infty,\theta}$. For each $i \in \{1,2\}$, let $\Hq_i$ be the Herman quasicircle of $f_i$ and $\phi_i: \Hq_i \to \T$ be a quasisymmetric conjugacy between $f_i$ and $R_\theta$. By combinatorial equivalence, the conjugacies can be picked such that $\phi_2^{-1} \circ \phi_1$ preserves the critical points of $f_1$ and $f_2$ along their Herman curves.
    
    For each $i \in \{1,2\}$ and $\bullet\in \{0,\infty\}$, denote by 
    $\mathfrak{B}^\bullet_i$ the immediate basin of attraction of $\bullet$ of the map $f_i$. Since no critical points of $f_i$ are attracted to $\bullet$ other than $\bullet$ itself, the basin $\mathfrak{B}^\bullet_i$ is simply connected and there exists a B\"ottcher coordinate
    \[
    b_i^{\bullet}: (\mathfrak{B}^{\bullet}_i, \bullet) \to (\D,0),
    \]
    that is, a conformal isomorphism such that 
    \[
    b_i^\bullet \circ f_i(z) = b_i^\bullet(z)^{d_{\bullet}} \quad \text{ for all } \quad z \in \mathfrak{B}^{\bullet}_i.
    \]
    Let us also consider the neighborhood 
    \[
    E^{\bullet}_i := \left\{ z \in B_i^\bullet \: : \: |b_i^\bullet(z)| < 1/2 \right\}
    \]
    of $\bullet$ cut out by an equipotential.
    
    Let $h_0 : \RS \to \RS$ be a quasiconformal map such that
    $$
    h_0(z) = \begin{cases}
    (b_2^{\bullet})^{-1} \circ b_1^{\bullet}(z), & \text{if } z \in E_1^{\bullet}, \bullet \in \{0,\infty\}, \\
    \phi_2^{-1} \circ \phi_1(z), & \text{if } z \in \Hq_1, \\
    \text{quasiconformal interpolation}, & \text{if otherwise}. 
    \end{cases}
    $$
    Then, $h_0$ is conformal on $E_1:= E_1^0 \cup E_1^\infty$ and provides a conjugacy between $f_1$ and $f_2$ on $\Hq_1 \cup E_1$.
    
    Our choice of $\phi_1$ and $\phi_2$ ensures that $h_0$ preserves the covering structure of $f_1$ and $f_2$. In particular, we can lift $h_0$ to a quasiconformal map $h_1 : \RS \to \RS$ such that $f_2 \circ h_1 = h_0 \circ f_1$. This new map $h_1$ coincides with $h_0$ on $\Hq_1 \cup E_1$, restricts to a conformal conjugacy between $f_1$ and $f_2$ on $f_1^{-1}(E_1)$, and is homotopic to $h_0$ rel $P(f_1)$. Moreover, $h_1$ has the same quasiconformal dilatation as $h_0$ because both $f_1$ and $f_2$ are holomorphic. Repeat this lifting process to obtain an infinite sequence of uniformly quasiconformal homeomorphisms $\{h_n\}_{n \in \N}$ of $\RS$ such that for all $n\in\N$,
    \begin{enumerate}[label=(\roman*)]
        \item $f_2 \circ h_{n+1} = h_n \circ f_1$;
        \item $h_{n+1} = h_n$ on $f_1^{-n}(\Hq_1 \cup E_1)$;
        \item $h_n$ restricts to a conformal conjugacy between $f_1$ and $f_2$ on $f_1^{-n}(E_1)$.
    \end{enumerate}
    
    By the compactness of the space of normalised quasiconformal maps, $h_n$ converges in subsequence to a quasiconformal map $h_\infty : \RS \to \RS$. The limit $h_\infty$ is a conformal conjugacy between $f_1$ and $f_2$ on the Fatou sets because $\bigcup_{n \geq 0} f^{-n}(E_i)$ coincides with the Fatou set of $f_i$ for $i \in \{1,2\}$. By continuity, since the Julia sets of $f_1$ and $f_2$ are nowhere dense, $h_\infty$ is a global quasiconformal conjugacy between $f_1$ and $f_2$. 
    
    The absence of invariant line fields on the Julia set implies that $\overline{\partial}h_\infty = 0$ almost everywhere on $J(f_1)$. By Weyl's lemma, $h_\infty$ is indeed a conformal conjugacy between $f_1$ and $f_2$ in $\RS$.
\end{proof}

% Anonymous Version
In \cite{Lim23}, \emph{a priori bounds} for Herman rings of rational maps in $\mathcal{H}_{d_0,d_\infty,\theta}$ was established. This leads to the following precompactness result.

% Real Version
% In \cite{Lim23}, we proved \emph{a priori bounds} for Herman rings of rational maps in $\mathcal{H}_{d_0,d_\infty,\theta}$ and as a result obtained the following precompactness result.

\begin{theorem}[{\cite[Theorem 9.1]{Lim23}}]
\label{precompactness}
    Denote by $\He_f$ the Herman ring of a rational map $f$ whenever there is a unique one. For any $\mu>0$ and $N \in \N$, the quotient space 
    \[
    \bigcup_{\theta \in \Theta_N} \left\{ f \in  \mathcal{H}_{d_0,d_\infty,\theta} \: | \:  \modu(\He_f) < \mu \right\}/_\sim
    \]
    is precompact in $\rat_{d_0+d_\infty-1}$.
\end{theorem}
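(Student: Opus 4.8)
The plan is to verify that every sequence $(f_n)$ drawn from the displayed union has a subsequence which, after conjugation by suitable maps $z\mapsto\lambda_n z$, converges in $\rat_d$, where $d:=d_0+d_\infty-1$. By \ref{defn1} each $f_n$ has exactly two superattracting fixed points; choosing the representative of $[f_n]$ so that these are $0$ and $\infty$, with local degrees $d_0$ and $d_\infty$, leaves the residual freedom $z\mapsto\lambda z$, which I would spend placing one of the $d_0+d_\infty-2$ free critical points of $f_n$ (the critical points other than $0$ and $\infty$, which by \ref{defn4} all lie in $\overline{\He_{f_n}}$) at $z=1$; after passing to a subsequence we may assume the same free critical point is being normalized for every $n$. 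Since the space of rational maps of degree at most $d$ is compact, a further subsequence of the normalized $f_n$ converges, uniformly on compact subsets of $\RS$ minus a finite set $\mathrm{Hole}$ of \emph{holes}, to a rational map $g$ of degree $d-k$, where $k=\#\,\mathrm{Hole}$ counted with multiplicity. The case $k=0$ gives $g\in\rat_d$ and $[f_n]\to[g]$ in $\rat_d/\!\sim$, which is the desired conclusion, so everything reduces to excluding $k\ge 1$. When $k\ge 1$, near each hole $z_0$ the maps $f_n$ are arbitrarily wild: for any fixed small $\rho>0$, $f_n(\D(z_0,\rho))$ eventually omits only a set of spherical diameter tending to $0$, and critical points of $f_n$ of total multiplicity at least $2$ converge to $z_0$.

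Excluding holes is the heart of the matter, and this is exactly what the \emph{a priori bounds} for Herman rings from \cite{Lim23} are designed to do. Those bounds supply, uniformly over all $\theta\in\Theta_N$ and all $f$ in the family with $\modu(\He_f)<\mu$: definite lower bounds for the moduli of canonically defined collar annuli of the core curve $\gamma_f$ of $\He_f$ on each side, bounded geometry of the full grand orbit of these collars, and control on the shapes and positions, relative to $\gamma_f$, of the immediate basins $B_f^0\ni 0$ and $B_f^\infty\ni\infty$ (including the equipotential neighbourhoods $E_f^\bullet$ on which $f$ is $z\mapsto z^{d_\bullet}$ in B\"ottcher coordinates), of all critical points, and of all critical values. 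Equivalently, one may recast these bounds as a uniform bound on the dilatation of the Shishikura-type surgery relating $f$ to a pair of rational maps carrying bounded-type Siegel disks — a manifestly precompact target family — so that reversing the surgery returns compactness to $\{[f_n]\}$. Either way, under our normalization the data $\gamma_{f_n}$, $B_{f_n}^\bullet$, $E_{f_n}^\bullet$ and all $2d-2$ critical points of $f_n$ lie in a compact family of configurations in $\RS$, and on each dynamical piece of this configuration $f_n$ is a branched cover of uniformly bounded local degree that converges to the non-degenerate map $g$ (it is $z\mapsto z^{d_\bullet}$ on $E_{f_n}^\bullet$ and an approximate rotation on the collars, by \S\ref{ss:approx-rot}). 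A hole $z_0$ would sit at bounded spherical distance from $\gamma_{f_n}$, hence inside this controlled picture, and the wildness of $f_n$ near $z_0$ contradicts its uniform tameness there; therefore $k=0$.

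The main obstacle is precisely this last implication: converting the geometric a priori bounds into the algebraic statement that the degree cannot drop, while correctly permitting the \emph{benign} degeneration $\modu(\He_{f_n})\to 0$, in which the Herman rings pinch onto a Herman quasicircle but the limit still lies in $\rat_d$ (compare Theorem \ref{APB02}). Two points need care. First, one must check that the collar moduli and bounded-geometry estimates are genuinely scale-free — phrased without reference to $\modu(\He_{f_n})$ itself — so that they survive the pinching; concretely the relevant collars should be drawn in the grand orbit of $\He_{f_n}$, or on the polynomial sides of the welding, rather than inside $\He_{f_n}$. Second, the uniform tameness of $f_n$ must be propagated to \emph{all} of $\RS$ rather than only to the Fatou set, so that no wild spot can hide in the Julia set of the limit; since each critical point of $f_n$ lies either in an attracting basin or on the boundary of the bounded-type rotation domain $\He_{f_n}$, the Julia set $J(f_n)$ is porous by Theorem \ref{porosity-theorem}, and thus carries no room for such a spot, which closes the argument.
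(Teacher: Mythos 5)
This theorem is not proved in the paper you were given: it is quoted verbatim from \cite[Theorem 9.1]{Lim23}, and the present paper only uses it. So the relevant question is whether your outline would stand on its own as a proof, and it does not. Your framing is the right one (normalize the linear conjugacy, use compactness of degree-$\le d$ rational maps, reduce to excluding ``holes''/degree drop), but the entire content of the theorem is concentrated in that exclusion, and there you only gesture. You assert that the a priori bounds of \cite{Lim23} give ``uniform tameness'' on a controlled configuration and that ``a hole would sit at bounded spherical distance from $\gamma_{f_n}$, hence inside this controlled picture,'' but no argument is given for why a hole must lie where the bounds give control, nor for why uniform geometric control of collars, basins and critical orbits forbids degeneration elsewhere. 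The alternative route you sketch---recasting the bounds as a uniform bound on the dilatation of the Shishikura surgery and ``reversing the surgery''---is likewise asserted rather than proved: one would need the dilatation to stay bounded as $\modu(\He_{f_n})\to 0$, compactness of the target Siegel family, and properness of the correspondence, none of which is supplied. You yourself flag this implication as ``the main obstacle,'' which is accurate: as written, the proposal restates the difficulty rather than resolving it; the resolution is precisely the near-degenerate-surface analysis in \cite{Lim23}.

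The one concrete device you offer to close the gap, porosity of $J(f_n)$ via Theorem \ref{porosity-theorem}, is a non sequitur. Porosity is a statement about each individual Julia set, with implicit constants depending on the map (making them uniform over the family would require exactly the compactness you are trying to prove), and in any case porosity of the individual Julia sets is perfectly compatible with degeneration of the sequence in parameter space: the maps $z^2+c_n$ with $c_n\to\infty$ have porous (Cantor) Julia sets yet degenerate. A hole is a phenomenon of the sequence $(f_n)$, not of the fine structure of any single $J(f_n)$, so ``the Julia set is porous, hence no wild spot can hide in it'' does not exclude $k\ge 1$. (A smaller unproved side claim: that critical points of total multiplicity at least $2$ must converge to each hole; this needs a counting argument and is not automatic from the definition of holes.) In short: correct skeleton and correct identification of where the a priori bounds must enter, but the decisive step is missing, so this is an outline of the theorem's difficulty rather than a proof.
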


One of the corollaries of \emph{a priori bounds} is that the accumulation space $\mathcal{H}^\partial_{d_0,d_\infty,\theta}$ is always contained in $\HQspace_{d_0,d_\infty,\theta}$. Then, Theorems \ref{APB02} and \ref{thm:combinatorial-rigidity-1} directly imply that
\[
    \HQspace_{d_0,d_\infty,\theta} = \Herspace^\partial_{d_0,d_\infty,\theta}.
\]
This proves Corollary \ref{main-corollary}. Let us now prove the second part of Theorem \ref{thm:combinatorial-rigidity}. For convenience, we will use the notation
\[
\HQspace_{d_0,d_\infty,\Theta_N} := \bigcup_{\theta \in \Theta_N} \HQspace_{d_0,d_\infty,\theta}.
\]

\begin{theorem}
\label{thm:combinatorial-rigidity-2}
    For any integer $N \geq 1$, the map 
    \[
    \Phi: \HQspace_{d_0,d_\infty,\Theta_N}/_\sim \to \mathcal{C}_{d_0,d_\infty} \times \Theta_N, \quad [f] \mapsto \left(\comb(f),\rot(f)\right)
    \]
    is a homeomorphism.
\end{theorem}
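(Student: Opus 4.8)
The plan is to establish that $\Phi$ is a continuous bijection and then upgrade to a homeomorphism using compactness. Injectivity is exactly Theorem \ref{thm:combinatorial-rigidity-1}: if $\comb(f_1) = \comb(f_2)$ and $\rot(f_1)=\rot(f_2)=\theta$, then $f_1$ and $f_2$ are conformally conjugate, hence $[f_1]=[f_2]$ in $\HQspace_{d_0,d_\infty,\Theta_N}/_\sim$; one has to observe that a conformal conjugacy between two rational maps normalized so that $0,\infty$ are the superattracting fixed points must be linear, so conformal conjugacy coincides with the equivalence $\sim$. Surjectivity onto $\mathcal{C}_{d_0,d_\infty}\times\{\theta\}$ for each fixed $\theta\in\Theta_N$ follows from Theorem \ref{APB02} (realization via degenerating Herman rings, since $\Herspace^\partial_{d_0,d_\infty,\theta}\subset \HQspace_{d_0,d_\infty,\theta}$ and the combinatorics map is surjective), and ranging over all $\theta\in\Theta_N$ gives surjectivity onto the full product.

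For continuity of $\Phi$, I would argue that $f\mapsto(\comb(f),\rot(f))$ depends continuously on $f\in\HQspace_{d_0,d_\infty,\Theta_N}$. The key input is quasisymmetric rigidity with uniform constants: by Petersen's theorem (Theorem \ref{petersen}) and the bounded-type bound $\beta(\theta)\le N$, the linearizing quasisymmetric conjugacy $\phi_f:\Hq_f\to\T$ can be taken with dilatation depending only on $N$ and the degrees, and the Herman quasicircle $\Hq_f$ varies continuously (in Hausdorff distance) with $f$ — this is standard for holomorphically moving invariant quasicircles, or can be extracted from the normal-families / $\lambda$-lemma machinery. Since the critical points of $f$ off $\{0,\infty\}$ all lie on $\Hq_f$ and vary continuously, their images under $\phi_f$ — which determine $\comb(f)$ — vary continuously, and $\rot(f)=\theta$ is locally constant on each $\HQspace_{d_0,d_\infty,\theta}$ but globally continuous as the rotation number of $f|_{\Hq_f}$, which depends continuously on $f$ by continuity of rotation numbers of circle homeomorphisms.

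For the inverse continuity, the plan is to use properness: I would show $\Phi$ is a proper map, i.e. the preimage of a compact set is compact, and then a continuous proper bijection onto a locally compact Hausdorff space is a homeomorphism. Properness is where Theorem \ref{precompactness} enters: given a sequence $[f_n]$ with $(\comb(f_n),\rot(f_n))$ converging in $\mathcal{C}_{d_0,d_\infty}\times\Theta_N$, I would realize each $f_n$ as a limit of Herman-ring maps $F_{n,t}\in\Herspace_{d_0,d_\infty,\rot(f_n)}$ with $\modu(\He_{F_{n,t}})$ bounded (using Theorem \ref{APB02} / Corollary \ref{main-corollary}), invoke Theorem \ref{precompactness} to extract a convergent subsequence, and check the limit lies in $\HQspace_{d_0,d_\infty,\Theta_N}$ with the limiting combinatorics and rotation number — the conditions \ref{defn1}--\ref{defn4} being closed under such limits because the Herman quasicircle survives (by a priori bounds preventing its collapse) and no extra non-repelling cycles can appear in the limit since $\Theta_N$ is closed. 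The main obstacle I expect is precisely this last point: controlling the limit of a sequence $f_n\in\HQspace_{d_0,d_\infty,\theta_n}$ with $\theta_n\to\theta_\infty\in\Theta_N$ and ensuring the Herman quasicircle does not degenerate and that conditions \ref{defn1}--\ref{defn4} persist — this requires the uniform a priori bounds of \cite{Lim23} and a careful argument that the limit map is not, say, a map with a parabolic cycle or with the critical orbit leaving the quasicircle. Once compactness of fibers and properness are in hand, the homeomorphism conclusion, and hence the statement that $\HQspace_{d_0,d_\infty,\theta}/_\sim$ is a compact connected topological orbifold of dimension $d_0+d_\infty-3$, follows formally.
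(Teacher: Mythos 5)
Your proposal follows the same skeleton as the paper: injectivity from Theorem \ref{thm:combinatorial-rigidity-1}, surjectivity from Theorem \ref{APB02}, and then a continuity-plus-compactness argument resting on Theorem \ref{precompactness}. The packaging differs in one place: you prove inverse continuity by showing $\Phi$ is proper, which forces you to re-realize each $f_n$ as a limit of Herman-ring maps and to re-verify that conditions \ref{defn1}--\ref{defn4} persist in limits --- the step you yourself flag as the main obstacle. The paper avoids this entirely: by Theorem \ref{precompactness} (together with Corollary \ref{main-corollary}) the domain $\HQspace_{d_0,d_\infty,\Theta_N}/_\sim$ is already compact and Hausdorff, so once $\Phi$ is a continuous bijection onto the Hausdorff space $\mathcal{C}_{d_0,d_\infty}\times\Theta_N$ it is automatically a homeomorphism. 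Since the target is compact, properness of $\Phi$ is in any case equivalent to compactness of the domain, so your detour buys nothing and imports extra verification that the cited compactness already contains.

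The one concrete soft spot is your justification of continuity. You assert that $f\mapsto\Hq_f$ is continuous in the Hausdorff metric because it is ``standard for holomorphically moving invariant quasicircles'' via the $\lambda$-lemma; there is no holomorphic motion available here --- $\HQspace_{d_0,d_\infty,\Theta_N}$ is not an open (or even complex-analytic) subset of $\rat_{d_0+d_\infty-1}$, and each slice $\HQspace_{d_0,d_\infty,\theta}/_\sim$ is a real orbifold of dimension $d_0+d_\infty-3$, so the $\lambda$-lemma does not apply. The paper's mechanism (your ``normal families'' alternative, which is the actual work rather than a citation) is: take linearizers $\phi_n$ conjugating $f_n|_{\Hq_n}$ to $R_{\theta_n}|_\T$ that are $K(d_0,d_\infty,N)$-quasiconformal on all of $\RS$ (uniformity coming from Theorem \ref{petersen} and the complex-bounds machinery behind Lemma \ref{regularity}), normalize them, pass to a locally uniform limit $\phi$ using compactness of normalized $K$-quasiconformal maps, and observe that $\phi\circ f\circ\phi^{-1}$ preserves $\T$; this simultaneously yields $\Hq_n\to\Hq_f=\phi^{-1}(\T)$, convergence of the rotation numbers, and (since critical points of $f_n$ on $\Hq_n$ converge to those of $f$ on $\Hq_f$) convergence of the combinatorics. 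Your statement that the rotation number of $f|_{\Hq_f}$ varies continuously ``by continuity of rotation numbers of circle homeomorphisms'' presupposes exactly this convergence of the conjugated circle maps, so it should be derived from the limit of linearizers rather than cited as standard.
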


\begin{proof}
    Fix $N \geq 1$. By Theorem \ref{thm:combinatorial-rigidity-1}, we know that $\Phi$ is a well-defined bijection. By Theorem \ref{precompactness}, the space $\HQspace_{d_0,d_\infty,\Theta_N}/_\sim$ is Hausdorff and compact, so it remains to show that $\Phi$ is continuous.

    Suppose a sequence $f_n$ of rational maps in $ \HQspace_{d_0,d_\infty,\Theta_N}$ converges to $f \in \HQspace_{d_0,d_\infty,\Theta_N}$. For each $n \in \N$, let $\Hq_n$ denote the Herman quasicircle of $f_n$. There exists a $K(d_0,d_\infty,N)$-quasiconformal map $\phi_n: \RS \to \RS$ that fixes $0$ and $\infty$ and conjugates $f_n|_{\Hq_n}$ and some irrational rotation $R_{\theta_n}$ on the unit circle $\T$.
    
    By passing to a subsequence, the sequence $\phi_{n}$ converges locally uniformly to a $K$-quasiconformal map $\phi: \RS \to \RS$ that fixes $0$ and $\infty$. Since the maps $\phi_n \circ f_n \circ \phi_n^{-1}$ preserve $\T$, so is its limit $\phi \circ f \circ \phi^{-1}$. Thus, the sequence of rotations $R_{\theta_{n}}$ converges to $R_\theta$ where $\theta = \lim_{k\to\infty}\theta_{n}$. It follows that $\Hq := \phi^{-1}(\T) = \lim_{n\to \infty} \Hq_n$ is the Herman quasicircle of $f$ and $\rot(f) = \theta = \lim_{n\to\infty} \rot(f_n)$, and in particular, the limit $\theta$ is independent of the initial choice of subsequence. This proves the continuity of the map $\rot(\cdot)$. To see that $\comb(\cdot)$ is also continuous, observe that given any critical point $c$ of $f$ on $\Hq$, there is a sequence $c_n$ of critical points of $f_n$ such that $c_n \to c$.
\end{proof}

\subsection{Trivial Herman curves}
\label{ss:blaschke}

Consider an integer $d\geq 2$ and a bounded type irrational $\theta \in (0,1)$. Let us denote by $\mathcal{B}_{d,\theta}$ the space of rational maps in $\HQspace_{d,d,\theta}$ which are Blaschke products, i.e. those that commute with the reflection $\tau(z)=1/\bar{z}$ along the unit circle $\T$, or equivalently, those whose Herman quasicircles are $\T$.

For any $T^0 \in \SP^{d_0-1}(\T)$ and $T^\infty \in \SP^{d_\infty-1}(\T)$, we denote the corresponding element in $\mathcal{C}_{d_0,d_\infty}$ by $\mathcal{C}=[(T^0,T^\infty)]$ and say that $\mathcal{C}$ is \emph{symmetric} if $d_0=d_\infty$ and $T^0=T^\infty$. If a Herman curve $\Hq$ has symmetric combinatorics, then every critical point on $\Hq$ is both an outer and an inner critical point, and its outer and inner criticalities coincide.

\begin{proposition}[Blaschke $\leftrightarrow$ combinatorial symmetry]
\label{blaschke-characterization}
    Every $f \in \mathcal{B}_{d,\theta}$ has symmetric combinatorics. Conversely, given a symmetric combinatorial data $\mathcal{C} \in \mathcal{C}_{d,d}$, the map $f \in \HQspace_{d,d,\theta}$ realizing $\mathcal{C}$ as described in Theorem \ref{APB02} is conformally conjugate to a Blaschke product, unique up to conjugacy by a rigid rotation.
\end{proposition}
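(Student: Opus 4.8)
The plan is to treat the two implications separately, reducing the harder one to the already-established realization (Theorem~\ref{APB02}) and combinatorial rigidity (Theorem~\ref{thm:combinatorial-rigidity-1}), and using the characterization of $\mathcal{B}_{d,\theta}$ as the set of $f\in\HQspace_{d,d,\theta}$ commuting with $\tau(z)=1/\bar z$. For the easy direction, suppose $f\in\mathcal{B}_{d,\theta}$, so $f\circ\tau=\tau\circ f$; note that $\tau$ fixes $\T$ pointwise while interchanging the two complementary components $\D\ni0$ and $\RS\setminus\overline{\D}\ni\infty$. First I would show that $d_0(c)=d_\infty(c)$ for every critical point $c\in\T$: for $z\in\D$ near $f(c)$, the relation $f\circ\tau=\tau\circ f$ together with $\tau(c)=c$ identifies, bijectively, the $f$-preimages of $z$ near $c$ inside $\D$ with the $f$-preimages of $\tau(z)$ near $c$ inside $\RS\setminus\overline{\D}$. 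Hence the tuple of inner critical points on $\T$ counted with inner multiplicity coincides with the tuple of outer critical points counted with outer multiplicity, and pushing forward by the linearizing conjugacy $\phi\colon\T\to\T$ gives $T^0=T^\infty$; that is, $\comb(f)$ is symmetric.

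For the converse, fix a symmetric $\mathcal{C}=[(T,T)]\in\mathcal{C}_{d,d}$. By Theorem~\ref{APB02} there is $f\in\HQspace_{d,d,\theta}$ realizing $\mathcal{C}$, and by Theorem~\ref{thm:combinatorial-rigidity-1} it is unique up to conformal conjugacy, so it suffices to produce \emph{one} Blaschke product $g\in\mathcal{B}_{d,\theta}$ with $\comb(g)=\mathcal{C}$. I would build $g$ by surgery. Apply Douady--Ghys surgery \cite{G84,D87} to one side of the Herman quasicircle of $f$, as in \cite{Lim23}, to obtain a degree-$d$ rational map $P$ with an invariant Siegel disk $Z$ of rotation number $\theta$, all finite critical points on $\partial Z$, and boundary combinatorics $T$ (symmetry of $\mathcal{C}$ makes the choice of side irrelevant). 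Now form the rational map $\tau P\tau$, which carries a Siegel disk $\tau Z$ of rotation number $-\theta$, and perform Shishikura's surgery \cite{S87}: excise $Z$ and $\tau Z$ and weld $\RS\setminus Z$ to $\RS\setminus\tau Z$ along $\partial Z\cong\T\cong\partial\tau Z$ using the identification induced by $\tau|_{\partial Z}$ (which intertwines the two boundary rotations) and the welding parameter corresponding to a degenerate Herman ring. The welded quasiregular map commutes with the reflection swapping the two copies; choosing an invariant conformal structure symmetric under this reflection (it exists by the standard argument, since off the welding seam the map is holomorphic in the welded coordinate and the seam is a forward-invariant quasicircle on which it is a rotation) and straightening it equivariantly by the Measurable Riemann Mapping Theorem yields a rational map $g$ of degree $2d-1$. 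After a M\"obius normalization, $g$ commutes with $\tau$, has superattracting fixed points $0,\infty$ of local degree $d$, an invariant Herman quasicircle $\T$ of rotation number $\theta$, all other finite critical points on $\T$, and combinatorics $[(T,T)]=\mathcal{C}$; so $g\in\mathcal{B}_{d,\theta}$ realizes $\mathcal{C}$. Combinatorial rigidity then makes $f$ conformally conjugate to $g$.

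Uniqueness is short: two Blaschke products realizing $\mathcal{C}$ are each conformally conjugate to the realizing map, hence to each other by a M\"obius $M$; since $M$ carries the Herman quasicircle $\T$ of one onto that of the other and permutes the superattracting fixed points $\{0,\infty\}$, we have $M(z)=\lambda z$ or $M(z)=\lambda/z$ with $|\lambda|=1$, and the latter reverses the orientation of $\T$ and would conjugate the rotation number $\theta$ to $1-\theta\neq\theta$, which is impossible; hence $M$ is a rigid rotation. The genuine work is the surgery step, where one must check that the welding of $P$ with $\tau P\tau$ and the subsequent straightening can both be carried out $\tau$-equivariantly, and that the output satisfies \ref{defn1}--\ref{defn4} with combinatorics exactly $\mathcal{C}$. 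With the Douady--Ghys and Shishikura machinery already in place (and used systematically in \cite{Lim23}) this is careful bookkeeping rather than a new obstacle; everything else rests on Theorems~\ref{APB02} and~\ref{thm:combinatorial-rigidity-1}.
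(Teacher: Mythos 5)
Your first implication and your uniqueness argument are fine (the former is essentially the paper's observation that conjugation by $\tau$ sends combinatorics $[(T^0,T^\infty)]$ to $[(T^\infty,T^0)]$, so a map commuting with $\tau$ has symmetric combinatorics). The gap is in the converse, in the step you dismiss as bookkeeping: the construction of a Blaschke representative by excising $Z$ and $\tau Z$ and welding $\RS\setminus Z$ to $\RS\setminus\tau Z$ "with the welding parameter corresponding to a degenerate Herman ring" is not an operation that the Douady--Ghys/Shishikura machinery provides. Shishikura's surgery glues along invariant curves strictly \emph{inside} the two Siegel disks and uses a quasiconformal interpolation whose dilatation blows up as the modulus of the resulting Herman ring tends to $0$; there is no modulus-zero version of it. If instead you try to glue the two complements directly along the Siegel boundaries, the glued dynamics is not even well defined (points of $\RS\setminus Z$ that $P$ maps into $Z$ have no image in the glued sphere), and even after repairing this one must prove that the glued map is quasiregular across the seam and that the welded structure can be straightened; this is exactly the realization problem for Herman quasicircles, which in this paper is accessed only through the a priori bounds of \cite{Lim23} and limits of degenerating Herman rings (Theorem \ref{APB02}), and whose direct surgery-theoretic counterpart is precisely the conjecture left open in the introduction. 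So the "genuine work" you defer is the main obstacle, and your proposal does not close it.

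The paper's converse avoids constructing any new map. Take the realizing $f\in\HQspace_{d,d,\theta}$ with symmetric combinatorics $[(T,T)]$, normalize a marked critical point to $z=1$, and set $g:=\tau\circ f\circ\tau$. Then $g$ again lies in $\HQspace_{d,d,\theta}$, with Herman quasicircle $\tau(\Hq)$, the same rotation number, and---by the symmetry of the combinatorial data---the same combinatorics $[(T,T)]$. Combinatorial rigidity (Theorem \ref{thm:combinatorial-rigidity}) gives a linear map $L(z)=\lambda z$ with $g=L\circ f\circ L^{-1}$, and choosing $L$ to respect the marked critical points $1$ and $\tau(1)=1$ forces $\lambda=1$, i.e.\ $f=\tau\circ f\circ\tau$, so $f$ itself commutes with $\tau$ and is a Blaschke product; uniqueness up to rotation then follows from rigidity exactly as in your last paragraph. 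Replacing your welding construction by this symmetry-plus-rigidity argument repairs the proof; as written, however, the existence of a Blaschke representative is not established.
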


\begin{proof}
    The first statement follows from the observation that for any general rational map $f \in \HQspace_{d_0,d_\infty,\theta}$, if $f$ has combinatorics $[(T^0,T^\infty)]$, then $\tau \circ f \circ \tau$ lies in $\HQspace_{d_\infty,d_0,\theta}$ with combinatorics $[(T^\infty,T^0)]$.
    
    Suppose $f \in \HQspace_{d,d,\theta}$ has a Herman quasicircle $\Hq$ with symmetric combinatorics $[(T,T)]$. Mark one of the critical points of $f$ and assume it is $z=1$ after conjugation with a linear map. The rational map $g(z) := \tau \circ f \circ \tau$ has a Herman quasicircle $\tau(\Hq)$ with the same rotation number and the same combinatorics $[(T,T)]$ due to combinatorial symmetry. By Theorem \ref{thm:combinatorial-rigidity}, there is a linear map $L(z)=\lambda z$, $\lambda \in \C^*$ such that $g = L \circ f \circ L^{-1}$. Moreover, $L$ can be chosen to preserve the marked critical points of $f$ and $g$, which are $1$ and $\tau(1)=1$. Thus, $\lambda=1$ and $g=f$, which implies that $f$ is a Blaschke product. Uniqueness also follows from rigidity. 
\end{proof}

By Theorem \ref{thm:combinatorial-rigidity-2} and Proposition \ref{blaschke-characterization}, the map $\comb(\cdot)$ induces a homeomorphism between $\mathcal{B}_{d,\theta}/_\sim$ and the space 
\[
\left\{[(T,T)] \in \mathcal{C}_{d,d} \: : \: T \in \SP^{d-1}(\T)\right\}.
\]
Observe that the latter is homeomorphic the quotient space $\mathcal{S}_{d}$ from Definition \ref{combspace}.

\begin{corollary}
\label{topology-blaschke}
    $\comb(\cdot)$ induces a homeomorphism $\comb': \mathcal{B}_{d,\theta}/_\sim \to \mathcal{S}_{d}$. For every $N \geq 1$,
    \[
    \bigcup_{\theta \in \Theta_N} \mathcal{B}_{d,\theta}/_\sim \to \mathcal{S}_d \times \Theta_N, \qquad [f] \mapsto \left( \comb'(f), \rot(f)\right)
    \]
    is a homeomorphism.
\end{corollary}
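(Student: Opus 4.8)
The plan is to deduce the corollary formally from Theorem~\ref{thm:combinatorial-rigidity-2} and Proposition~\ref{blaschke-characterization}, together with an elementary identification of the symmetric locus of $\mathcal{C}_{d,d}$ with $\mathcal{S}_d$. First I would make that identification precise. Write $\Delta_d := \{[(T,T)] \in \mathcal{C}_{d,d} \: : \: T \in \SP^{d-1}(\T)\}$ for the symmetric locus. The diagonal map $\SP^{d-1}(\T) \to \SP^{d-1}(\T) \times \SP^{d-1}(\T)$, $T \mapsto (T,T)$, is continuous and equivariant for the $\T$-actions by rigid rotation, so it descends to a continuous map $\iota : \mathcal{S}_d \to \mathcal{C}_{d,d}$ with image $\Delta_d$; it is injective because $[(T,T)] = [(S,S)]$ in $\mathcal{C}_{d,d}$ forces $R(T) = S$ for some rotation $R$, i.e. $[T] = [S]$ in $\mathcal{S}_d$. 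Since $\mathcal{S}_d$ is compact and $\mathcal{C}_{d,d}$ is Hausdorff, $\iota : \mathcal{S}_d \to \Delta_d$ is a homeomorphism onto the subspace $\Delta_d$. I would use $\iota$ to identify $\mathcal{S}_d$ with $\Delta_d$ and set $\comb' := \iota^{-1} \circ \comb$ on Blaschke products.

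The core step is then short. By Proposition~\ref{blaschke-characterization}, every $f \in \mathcal{B}_{d,\theta}$ has symmetric combinatorics, so $\comb(f) \in \Delta_d$; conversely, every $\mathcal{C} \in \Delta_d$ is realized for the given $\theta$ by a Blaschke product that is unique up to conjugacy by a rigid rotation, and since rigid rotations are linear, that uniqueness is exactly uniqueness up to the relation $\sim$. Hence $[f] \mapsto \comb(f)$ is a bijection $\mathcal{B}_{d,\theta}/_\sim \to \Delta_d$. For the parametrized version, fix $N$ and regard $\bigcup_{\theta\in\Theta_N}\mathcal{B}_{d,\theta}/_\sim$ as the subspace of $\HQspace_{d,d,\Theta_N}/_\sim$ consisting of those linear-conjugacy classes that contain a Blaschke product. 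The map $\Phi$ of Theorem~\ref{thm:combinatorial-rigidity-2} is a homeomorphism, so its restriction to this subspace is a homeomorphism onto its image; by the two directions of Proposition~\ref{blaschke-characterization} that image is exactly $\Delta_d \times \Theta_N$ (it lies in $\Delta_d\times\Theta_N$ because Blaschke products have symmetric combinatorics, and it contains every $(\mathcal{C},\theta)\in\Delta_d\times\Theta_N$ because such data is realized by a Blaschke product). Post-composing with the homeomorphism $\iota^{-1}\times\mathrm{id}_{\Theta_N}$ shows that $[f]\mapsto(\comb'(f),\rot(f))$ is a homeomorphism $\bigcup_{\theta\in\Theta_N}\mathcal{B}_{d,\theta}/_\sim \to \mathcal{S}_d\times\Theta_N$, which is the second assertion. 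The first assertion is the special case obtained by running the same argument with $\Theta_N$ replaced by the singleton $\{\theta\}$ and $\HQspace_{d,d,\Theta_N}$ by $\HQspace_{d,d,\theta}$, using the fiber of $\Phi$ over $\theta$; equivalently it is the restriction of the parametrized homeomorphism to the fiber over $\theta$.

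I do not expect a genuine analytic obstacle: the whole argument reduces to the already-established homeomorphism $\Phi$ of Theorem~\ref{thm:combinatorial-rigidity-2} and the bijection of Proposition~\ref{blaschke-characterization}. The only points that need a little care are the interpretation of $\bigcup_{\theta\in\Theta_N}\mathcal{B}_{d,\theta}/_\sim$ as a subspace of $\HQspace_{d,d,\Theta_N}/_\sim$ (so that restricting $\Phi$ is legitimate) and the observation that ``uniqueness up to rigid rotation'' is precisely uniqueness up to $\sim$, since rigid rotations are linear maps. Neither is substantive, so I would keep the write-up brief and lean entirely on the cited results.
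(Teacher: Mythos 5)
Your argument is correct and takes essentially the same route as the paper: the corollary is deduced from the homeomorphism $\Phi$ of Theorem \ref{thm:combinatorial-rigidity-2} together with Proposition \ref{blaschke-characterization}, plus the elementary identification of the symmetric locus $\left\{[(T,T)] \in \mathcal{C}_{d,d}\right\}$ with $\mathcal{S}_d$. Your write-up merely makes explicit the point-set details (the diagonal embedding $\iota$, the fact that rotation conjugacy is linear conjugacy on the Blaschke locus, and the restriction of $\Phi$ to that locus) which the paper leaves implicit.
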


Let $\mathcal{Z}_{d,\theta}$ denote the space of degree $d$ polynomials $f$ that admit a single Siegel disk $Z$ such that $Z$ is centered at $0$, has rotation number $\rot(f)= \theta$, and contains every free critical point of $f$ on its boundary. We denote by $\comb(f) \in \mathcal{S}_{d}$ the combinatorics of $f|_{\partial Z}$ in accordance to Definition \ref{combi}.

The dynamical relation between $\mathcal{B}_{d,\theta}$ and $\mathcal{Z}_{d,\theta}$ can be formulated via the Douady-Ghys surgery. (Cf. \cite{G84, D87}. See also \cite[\S7.2]{BF14}.) In short, for every map $f$ in $\mathcal{B}_{d,\theta}$, we replace the dynamics of $f \in \mathcal{B}_{d,\theta}$ inside the unit disk with a quasiconformal copy of irrational rotation $R_\theta$ on the unit disk. After straightening via the measurable Riemann mapping theorem, we obtain a linear conjugacy class of a polynomial in $\mathcal{Z}_{d,\theta}$ of the same combinatorics and rotation number.

\begin{corollary}
    For $N \geq 1$, the Douady-Ghys surgery induces a homeomorphism 
\[
    \textnormal{DG}: \bigcup_{\theta \in \Theta_N} \mathcal{B}_{d,\theta}/_\sim \to \bigcup_{\theta \in \Theta_N} \mathcal{Z}_{d,\theta}/_\sim
\]
    satisfying $(\comb, \rot) \circ \textnormal{DG} = (\comb',\rot)$.
\end{corollary}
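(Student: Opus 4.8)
The plan is to exhibit $\textnormal{DG}$ as a continuous bijection between two compact Hausdorff spaces, whence it is automatically a homeomorphism; the substantive inputs are Corollary~\ref{topology-blaschke} and the absence of invariant line fields (Theorem~\ref{thm:NILF-main}). First I would verify that $\textnormal{DG}$ is well defined on linear conjugacy classes and satisfies the stated intertwining relation. Recall that the Douady--Ghys construction cuts an $f\in\mathcal{B}_{d,\theta}$ along $\T$, discards $f|_{\overline{\D}}$, glues in an affine copy of the rigid rotation $R_\theta$ on the disk along the linearizing quasisymmetric homeomorphism $\phi_f:\T\to\T$, and straightens the resulting quasiregular map (whose invariant Beltrami form is supported off the grand orbit of the disk) by the measurable Riemann mapping theorem; the output is a degree $d$ polynomial with an invariant Siegel disk centered at $0$ carrying every free critical point on its boundary, i.e. a member of $\mathcal{Z}_{d,\theta}$. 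Normalizing the uniformizing map to fix $0$, $\infty$ and a marked critical point makes $\textnormal{DG}([f])\in\mathcal{Z}_{d,\theta}/_\sim$ unambiguous, and since the straightening is a quasiconformal conjugacy off the rotation locus that carries $\phi_f$ to the corresponding Siegel linearization along the invariant curve, it preserves the cyclic order of critical points and each of their inner and outer local degrees, as well as the rotation number. This is exactly $(\comb,\rot)\circ\textnormal{DG}=(\comb',\rot)$.

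Injectivity is then immediate: $\textnormal{DG}([f_1])=\textnormal{DG}([f_2])$ forces $(\comb'(f_1),\rot(f_1))=(\comb'(f_2),\rot(f_2))$ and hence $[f_1]=[f_2]$ by Corollary~\ref{topology-blaschke}. For surjectivity I would fix $g\in\mathcal{Z}_{d,\theta}$, set $\mathcal{C}:=\comb(g)\in\mathcal{S}_d$, use Corollary~\ref{topology-blaschke} to pick $[f]\in\mathcal{B}_{d,\theta}/_\sim$ with $\comb'(f)=\mathcal{C}$ and $\rot(f)=\theta$, and observe that $\textnormal{DG}([f])$ is then represented by a polynomial in $\mathcal{Z}_{d,\theta}$ having the same combinatorics and rotation number as $g$. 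To conclude $\textnormal{DG}([f])=[g]$ I need combinatorial rigidity within $\mathcal{Z}_{d,\theta}$, namely that any two of its members with equal combinatorics are linearly conjugate. I would obtain this by rerunning the pullback argument of Theorem~\ref{thm:combinatorial-rigidity-1} verbatim: a map in $\mathcal{Z}_{d,\theta}$ is geometrically finite away from its single bounded type Siegel boundary, a rotation quasicircle through all free critical points by Zhang's theorem, hence J-rotational of bounded type, so Theorem~\ref{thm:NILF-main} supplies the absence of invariant line fields needed to promote the quasiconformal conjugacy produced by the pullback to a conformal one. (Alternatively, the inverse Douady--Ghys surgery sends $g$ back into $\mathcal{B}_{d,\theta}$ and one invokes Corollary~\ref{topology-blaschke} directly.)

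The remaining point, and the one I expect to be the main technical obstacle, is continuity of $\textnormal{DG}$: one must check that the glued Beltrami coefficient depends continuously on $f$, that normalized solutions of the Beltrami equation depend continuously on the coefficient, and that the straightened, normalized polynomial therefore varies continuously, so that $\textnormal{DG}$ descends to a continuous map on moduli. Granting this, the proof closes formally: by Corollary~\ref{topology-blaschke} the domain $\bigcup_{\theta\in\Theta_N}\mathcal{B}_{d,\theta}/_\sim$ is homeomorphic to the compact space $\mathcal{S}_d\times\Theta_N$, the target $\bigcup_{\theta\in\Theta_N}\mathcal{Z}_{d,\theta}/_\sim$ is Hausdorff as a subset of the moduli space of degree $d$ polynomials, and a continuous bijection from a compact space onto a Hausdorff space is a homeomorphism. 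The intertwining relation $(\comb,\rot)\circ\textnormal{DG}=(\comb',\rot)$ has already been recorded above.
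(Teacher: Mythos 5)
Your set-up (well-definedness plus the intertwining relation, injectivity via Corollary \ref{topology-blaschke}, surjectivity via rigidity inside $\mathcal{Z}_{d,\theta}$) follows the same lines as the paper, but the argument does not close: continuity of $\textnormal{DG}$ is precisely the point you leave open (``Granting this, the proof closes formally''), and it is not a routine verification. The surgery involves non-canonical choices --- the quasiconformal extension of the linearizer to $\overline{\D}$ and the interpolation --- so ``the glued Beltrami coefficient depends continuously on $f$'' is not even well-posed until those choices are made canonically and continuously in $f$, which is delicate and is nowhere carried out in your proposal. The paper never proves continuity of the surgery in parameters at all. Instead it shows that $(\comb,\rot)$ is itself a homeomorphism from $\bigcup_{\theta\in\Theta_N}\mathcal{Z}_{d,\theta}/_\sim$ onto $\mathcal{S}_d\times\Theta_N$, using combinatorial rigidity of Siegel polynomials, compactness of $\mathcal{Z}_{d,\theta}/_\sim$ (quoted from \cite[Remark 9.4]{Lim23}), and a repetition of the continuity argument from the proof of Theorem \ref{thm:combinatorial-rigidity-2}; since $\textnormal{DG}$ satisfies $(\comb,\rot)\circ\textnormal{DG}=(\comb',\rot)$, it is then forced to be the composition of the inverse of that homeomorphism with the homeomorphism of Corollary \ref{topology-blaschke}, and its continuity comes for free. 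You actually have most of the ingredients for this shortcut (your rigidity argument in $\mathcal{Z}_{d,\theta}$ gives injectivity of $(\comb,\rot)$ there), but you would still need compactness of $\mathcal{Z}_{d,\theta}/_\sim$ and continuity of $(\comb,\rot)$ on the Siegel side, neither of which appears in your write-up; without one of these two routes the homeomorphism claim is unproved.

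A smaller issue: well-definedness of $\textnormal{DG}$ on conjugacy classes is not settled by ``normalizing the uniformizing map''. Different admissible choices in the surgery produce polynomials that are a priori only quasiconformally conjugate, not conformally conjugate; the correct justification (and the paper's first sentence) is that any two outputs share combinatorics and rotation number, so combinatorial rigidity in $\mathcal{Z}_{d,\theta}$ identifies their linear conjugacy classes. Since you establish that rigidity anyway for surjectivity, this is easily repaired, but as written the well-definedness step is incomplete.
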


For $d=3$, a variation of this corollary was previously studied in \cite{Z99}.

\begin{proof}
    Combinatorial rigidity of Siegel polynomials in $\mathcal{Z}_{d,\theta}$ (cf. \cite{Z08}) ensures that $\textnormal{DG}$ is well-defined. The equation $(\comb, \rot) \circ \textnormal{DG} = (\comb',\rot)$ holds because the surgery preserves the combinatorics and the rotation number. Note that the moduli space $\mathcal{Z}_{d,\theta}/_\sim$ is also compact (see \cite[Remark 9.4]{Lim23}). By repeating the proof of Theorem \ref{thm:combinatorial-rigidity-2}, we can show that the map $(\comb,\rot)$ is a homeomorphism from $\bigcup_{\theta \in \Theta_N} \mathcal{Z}_{d,\theta}/_\sim$ onto $\mathcal{S}_d \times \Theta_N$. Together with Corollary \ref{topology-blaschke}, we conclude that the map $\textnormal{DG}$ is a homeomorphism.
\end{proof}

\subsection{Antipode-preserving cubic rational maps}
\label{ss:antipode}
We end this section with an application of rigidity to the following family of cubic rational maps
\[
f_q(z) = z^2\frac{q-z}{1+\bar{q}z}, \qquad q \in \C^*.
\]
This family was first studied in \cite{BBM18} and is characterized by a simple critical fixed point at $0$ and the property that $f_q$ is antipode-preserving, that is, $f_q$ commutes with the antipodal map $z \mapsto -1/\overline{z}$.

\begin{figure}
    \centering

\begin{tikzpicture}
    \node[anchor=south west,inner sep=0] (image) at (0,0) {\includegraphics[width=0.85\linewidth]{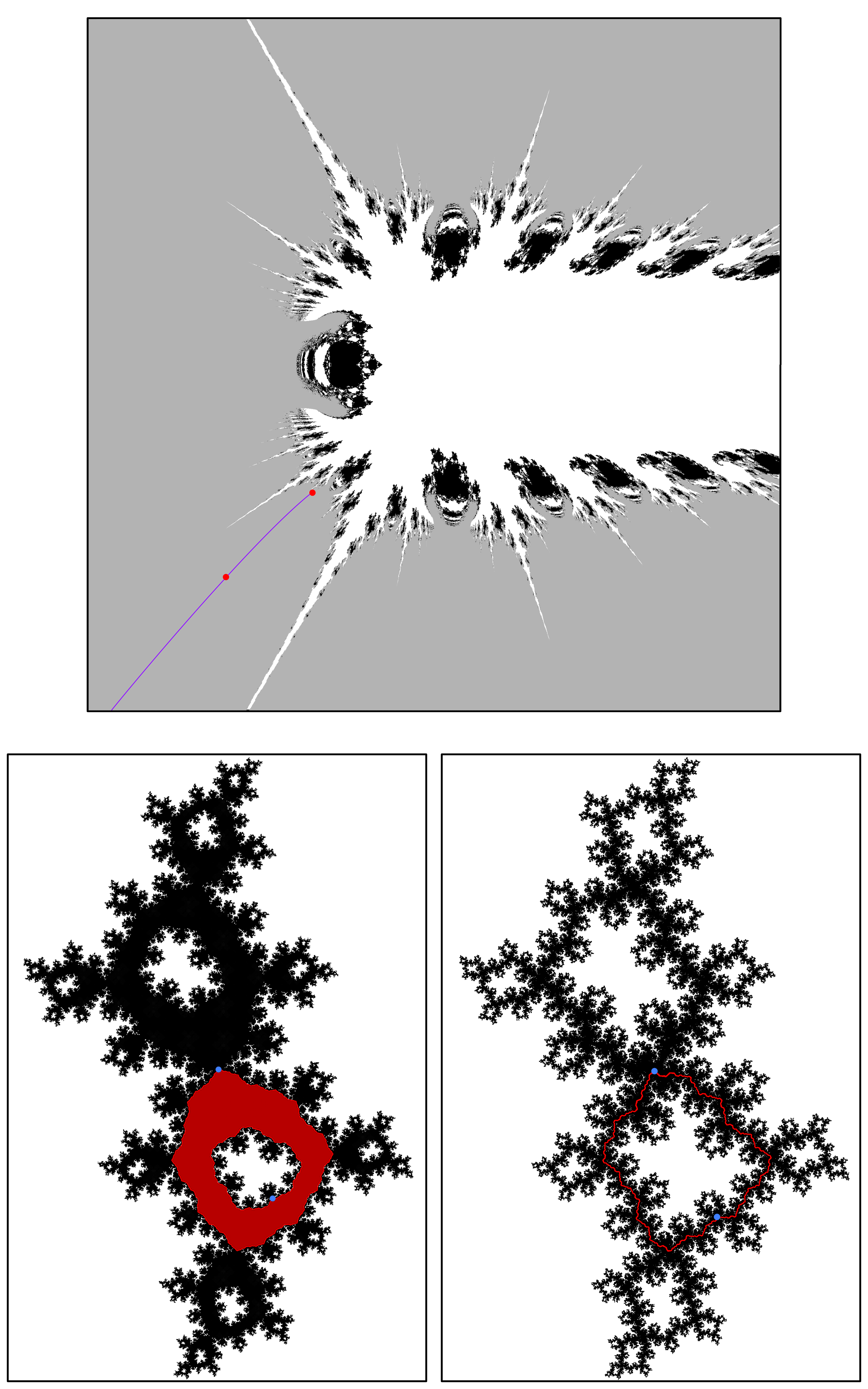}};
    \begin{scope}[
        x={(image.south east)},
        y={(image.north west)}
    ]
        \node [violet, font=\bfseries] at (0.16,0.54) {$\mathscr{H}_\theta$};
        \node [black, font=\bfseries] at (0.94,0.43) {$f_{q_\theta}$};
        \node [black, font=\bfseries] at (0.065,0.43) {$f_{q(m)}$};
        \node [red, font=\bfseries] at (0.295,0.57) {$q(m)^2$};
        \node [red, font=\bfseries] at (0.365,0.63) {$q_\theta^2$};
    \end{scope}
\end{tikzpicture}
    
    \caption{Above: The $q^2$-parameter plane for $\{f_q\}$ containing the golden mean hair $\mathscr{H}_\theta$, colored in purple. Below: The dynamical planes of $f_{q(m)}$ and $f_{q_\theta}$ where $q(m)^2 \approx -12.06-12.30i$ lies on $\mathscr{H}_\theta$ and $q_\theta^2 \approx -7.05 - 7.41i$ is the endpoint of $\mathscr{H}_\theta$. The Herman ring of $f_{q(m)}$ and the Herman quasicircle of $f_{q_\theta}$ are colored in red.}
    \label{fig:antipodal}
\end{figure}

Note that $f_q$ and $f_{q'}$ are linearly conjugate if and only if $q'=-q$, so it is natural to consider the $q^2$-plane as the appropriate parameter space. According to \cite{BBM18}, this parameter space has the remarkable property of admitting Herman rings of arbitrary Brjuno rotation number and modulus. Below, we quote a more precise formulation from a sequel \cite{BBM23} in progress.

\begin{theorem}[Hair Theorem]
    For any Brjuno number $\theta \in (0,1)$, there exists a unique ``hair`` $\mathscr{H}_\theta$ in the $q^2$-plane consisting of all maps $f_q$ with a Herman ring of rotation number $\theta$. They satisfy the following properties.
    \begin{enumerate}[label=\textnormal{(\arabic*)}]
        \item For any $m \in (0,\infty)$, there is a unique parameter $q(m)^2$ in $\mathscr{H}_\theta$ such that $f_{q(m)}$ admits a unique invariant Herman ring of modulus $m$.
        \item The map $(0,\infty) \to \mathscr{H}_\theta$, $m \mapsto q(m)^2$ is an analytic and regular parametrization of $\mathscr{H}_\theta$.
        \item As $m \to \infty$, $|q(m)| \to \infty$.
    \end{enumerate}
    Moreover, the Herman ring locus $\mathscr{H}:=\bigcup_\theta \mathscr{H}_\theta$ in the $q^2$-plane has positive measure. The Hausdorff $1$-measure of the intersection $\mathscr{H} \cap \{|q^2|=r\}$ tends to $1$ as $r\to\infty$.
\end{theorem}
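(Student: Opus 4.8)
The plan is to realize the hair $\mathscr{H}_\theta$ as the image of a surgery-parametrized family, to deduce its analytic structure from a holomorphic motion together with a non-degeneracy (transversality) argument, to handle the limit $m\to\infty$ by a normal-families argument, and to obtain the measure statements from the fact that $f_q$ degenerates to a rigid rotation as $|q|\to\infty$. Throughout I would follow \cite{BBM18,BBM23}, using the rigidity results of the present paper to streamline the parts that become soft when $\theta$ is of bounded type.

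\emph{Construction and existence of $q(m)^2$.} Fix a Brjuno number $\theta$. Apply Shishikura's quasiconformal surgery \cite{S87} $\iota$-symmetrically, starting from the pair of Siegel germs of complementary rotation numbers $\theta$ and $1-\theta$ attached to the two sides of an invariant rotation curve; this yields a real-analytic one-parameter family $F_m$, $m\in(0,\infty)$, of degree-$3$ rational maps carrying an invariant Herman ring of modulus $m$, rotation number $\theta$, and with the dynamics on each complementary component unchanged up to quasiconformal conjugacy. Because the antipodal involution $\iota(z)=-1/\bar z$ is respected (it interchanges the two Siegel germs --- whence the complementarity of the rotation numbers) and $0$ can be normalized to be the critical fixed point, each $F_m$ is linearly conjugate to some $f_{q(m)}$, giving a well-defined $q(m)^2$; set $\mathscr{H}_\theta:=\{\,q(m)^2 : m\in(0,\infty)\,\}$.

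\emph{Uniqueness, analyticity, regularity.} Let $\mathcal{W}_\theta$ be the parameter set where $f_q$ has an invariant Herman ring of rotation number $\theta$. The surgery equips $\mathcal{W}_\theta$ with a holomorphic motion of the ring configuration, so the modulus function $m\colon\mathcal{W}_\theta\to(0,\infty)$ is real-analytic, and it remains to show $dm\neq0$ everywhere. If $dm$ vanished at some $q_0^2$, the motion would supply a nontrivial infinitesimal deformation fixing the modulus, i.e.\ an invariant line field on a positive-measure subset of the Julia set; for bounded-type $\theta$ this is ruled out by Theorem \ref{thm:NILF-main} (whose hypotheses the maps $f_q$ satisfy once $\theta$ is of bounded type), and for general Brjuno $\theta$ one invokes the transversality estimate of \cite{BBM23}. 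Hence $m\mapsto q(m)^2$ is an analytic, regular, injective parametrization, $\mathscr{H}_\theta=\mathcal{W}_\theta$ is a ``hair'' of constant combinatorial type, and uniqueness follows. For bounded-type $\theta$, Theorem \ref{thm:combinatorial-rigidity} and Corollary \ref{main-corollary} moreover identify the limit of $f_{q(m)}$ as $m\to0$, so $\mathscr{H}_\theta$ lands at a parameter $q_\theta^2$ with a Herman \emph{quasicircle} --- the landing statement advertised in \S\ref{ss:antipode}.

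\emph{The limit $m\to\infty$ and the measure statements.} The parameter $q(m)$ cannot accumulate at $0$ (where $f_0(z)=-z^3$ has no rotation domain); nor can it accumulate at any point of $\C^*$, for otherwise a subsequential limit $f_{q_\infty}$ would, by the Gr\"otzsch inequality applied to nested subannuli of the Herman rings of the $F_{m_k}$, possess invariant annuli of arbitrarily large modulus, which is impossible for a fixed rational map (it has only finitely many rotation domains, of finite total modulus). Hence $|q(m)|\to\infty$. For the measure statements, observe that $f_q(z)=(q/\bar q)\,z+O(1/|q|)$ locally uniformly on $\C^*$, so as $|q|\to\infty$ the map $f_q$ is an arbitrarily small perturbation of the rigid rotation by $\arg(q^2)$; by Herman--Yoccoz theory \cite{H79,Yo02} it is then holomorphically linearizable near its rotational dynamics --- hence carries a Herman ring --- exactly when its rotation number $\rho(q)$ is a Brjuno number, and $\rho(q)\to\arg(q^2)/2\pi$ uniformly. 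Restricted to the circle $\{|q^2|=r\}$, $\rho$ therefore converges to the identity of $\R/\Z$ as $r\to\infty$; since the Brjuno numbers have full Lebesgue measure in $\R/\Z$, the Hausdorff $1$-measure of $\mathscr{H}\cap\{|q^2|=r\}$ tends to $1$, and Fubini in polar coordinates gives that $\mathscr{H}$ has positive planar Lebesgue measure.

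\emph{Main obstacle.} The substantive points are the non-degeneracy $dm\neq0$ for general Brjuno $\theta$ and the quantitative control of $f_q$ near the rigid-rotation limit --- both the asymptotics $\rho(q)=\arg(q^2)/2\pi+o(1)$ and the uniform implication ``Brjuno rotation number $\Rightarrow$ Herman ring'' for the family. These are where the genuine analytic work of \cite{BBM18,BBM23} lies; the present paper contributes only the rigidity input: clean regularity and uniqueness of the hair when $\theta$ is of bounded type, and the identification of its endpoint.
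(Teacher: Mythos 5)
You should first note a mismatch of scope: the paper does not prove the Hair Theorem at all. It is quoted verbatim as an external input (``we quote a more precise formulation from a sequel \cite{BBM23} in progress''), and the paper's own contribution in \S\ref{ss:antipode} is only the landing corollary, proved \emph{assuming} this theorem together with Theorems \ref{precompactness} and \ref{thm:combinatorial-rigidity}. So there is no internal proof to compare against, and your proposal is in effect an attempt to reprove Bonifant--Buff--Milnor's result; as written it does not succeed independently, since at both of the places you yourself flag as ``the main obstacle'' you fall back on \cite{BBM18,BBM23}, which is exactly what the paper does by citation.

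Beyond the scope issue, two steps of your sketch would not survive scrutiny. First, the non-degeneracy argument: from $dm=0$ at some parameter you infer ``a nontrivial infinitesimal deformation fixing the modulus, i.e.\ an invariant line field on a positive-measure subset of the Julia set.'' That implication does not follow: a quasiconformal deformation preserving the modulus can be supported on the Herman ring itself (the twist direction) or be infinitesimally trivial, and neither produces a line field on $J(f_q)$; moreover Theorem \ref{thm:NILF-main} only covers bounded type rotation numbers, while the Hair Theorem is stated for all Brjuno $\theta$. Relatedly, the uniqueness claim $\mathscr{H}_\theta=\mathcal{W}_\theta$ (every parameter with a Herman ring of rotation number $\theta$ lies on the surgery curve) needs the twist parameter to be killed by the antipodal symmetry and an injectivity statement for modulus; you assert this rather than prove it. Second, the measure statements: for finite $q$ the map $f_q$ has no distinguished invariant circle, so ``its rotation number $\rho(q)$'' is not defined a priori, and Herman--Yoccoz linearization of analytic circle diffeomorphisms does not directly yield a Herman ring for a genuinely two-dimensional perturbation of the rigid rotation $z\mapsto (q/\bar q)z$; one needs a KAM-type (Herman/Risler) invariant-curve theorem with quantitative control of the set of good parameters to obtain both the existence of rings near infinity and the asymptotic $\mathrm{H}^1\bigl(\mathscr{H}\cap\{|q^2|=r\}\bigr)\to 1$. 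Those quantitative inputs are precisely the analytic core of \cite{BBM18,BBM23}, and without them your outline is a plan rather than a proof.
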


Assuming the theorem above, we can apply our rigidity result and deduce that when $\theta$ is of bounded type, the corresponding hair $\mathscr{H}_{\theta}$ lands at a unique point. See Figure \ref{fig:antipodal}.

\begin{corollary}[Landing of hairs]
    When $\theta$ is of bounded type, the hair $\mathscr{H}_\theta$ has a unique endpoint $\displaystyle{q_\theta^2:=\lim_{m\to 0} q(m)^2}$. The map $f_{q_\theta}$ lies in $\HQspace_{2,2,\theta}$.
\end{corollary}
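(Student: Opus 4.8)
The plan is to exhibit each $f_{q(m)}$ as a member of $\Herspace_{2,2,\theta}$, extract a limit as $m\to0$ via Theorem~\ref{precompactness} and Theorem~\ref{APB02}, and pin that limit down with combinatorial rigidity. First I would verify $f_{q(m)}\in\Herspace_{2,2,\theta}$. The degree is $3=2+2-1$; the factor $z^2$ makes $0$ a superattracting fixed point of local degree $2$, and the antipodal symmetry $f_q\circ\sigma=\sigma\circ f_q$ with $\sigma(z)=-1/\overline{z}$ does the same for $\infty$, giving the degree requirement in \ref{defn1}, while \ref{defn2} is the Hair Theorem. The Herman ring $\He$ of $f_{q(m)}$ is unique, hence $\sigma$-invariant; since $\sigma$ is a fixed-point-free orientation-reversing involution exchanging $0$ and $\infty$, Brouwer's theorem forbids it from fixing either complementary disk of $\overline{\He}$, so $\sigma$ swaps them and $\overline{\He}$ separates $0$ and $\infty$, proving \ref{defn3}. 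Finally, a cubic rational map has four critical points counted with multiplicity, two of which are $0$ and $\infty$; since $\theta$ is of bounded type, each of the two boundary circles of $\He$ carries a critical point (Zhang's theorem, transported through Shishikura surgery), so the remaining two critical points lie one on each component of $\partial\He$. This accounts for every critical point, which yields \ref{defn4} and, via Fatou's theorem together with the critical-point count, rules out any further non-repelling cycle or rotation domain, completing \ref{defn1}.

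Next I would study $m\to0$. By Theorem~\ref{precompactness} (applied with $N=\beta(\theta)$ and modulus bound $1$) the family $\{f_{q(m)}:0<m<1\}$ is precompact modulo linear conjugacy, so $\{q(m)^2:0<m<1\}$ is relatively compact in $\C$ and its accumulation set $\mathscr{L}:=\bigcap_{\varepsilon>0}\overline{\{q(m)^2:0<m<\varepsilon\}}$ is non-empty, compact, and connected. Fix $Q\in\mathscr{L}$, pick $m_k\to0$ with $q(m_k)^2\to Q$, and set $g:=\lim_k f_{q(m_k)}\in\overline{\Herspace_{2,2,\theta}}$. I claim $g\notin\Herspace_{2,2,\theta}$: otherwise $g$ would have a Herman ring of rotation number $\theta$ and some modulus $\mu>0$, so the uniqueness clause of the Hair Theorem gives $Q=q(\mu)^2$, and since $m\mapsto q(m)^2$ is a homeomorphism onto the hair $\mathscr{H}_\theta$ this forces $m_k\to\mu\neq0$, a contradiction. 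Hence $g\in\overline{\Herspace_{2,2,\theta}}\setminus\Herspace_{2,2,\theta}=\Herspace^\partial_{2,2,\theta}\subseteq\HQspace_{2,2,\theta}$ by Theorem~\ref{APB02}. In particular $Q\neq0$ (else $g=-z^3\in\HQspace_{2,2,\theta}$, which is absurd since $-z^3$ is hyperbolic) and $Q\neq\infty$; thus $\mathscr{L}\subset\C^{*}$ and every such $g$ is a genuine element of $\HQspace_{2,2,\theta}$.

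To conclude, I would use rigidity to show $\mathscr{L}$ is a single point. Every limit map $g$ above is antipode-preserving, so $\sigma$ preserves its Herman quasicircle $\Hq_g$, exchanges its two sides, and therefore restricts to an orientation-preserving circle homeomorphism of $\Hq_g$ commuting with the $\theta$-rotation; as a fixed-point-free involution, $\sigma|_{\Hq_g}$ must be the rotation by $1/2$. Hence $g$'s inner and outer critical points satisfy $c^{\infty}=\sigma(c^{0})$ and differ in combinatorial phase by $1/2$, so $\comb(g)$ equals one fixed ``antipodal'' element $\mathcal{C}_0\in\mathcal{C}_{2,2}$, the same for all such $g$. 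By combinatorial rigidity (Theorem~\ref{thm:combinatorial-rigidity-1}) any two limits $f_{Q_1},f_{Q_2}$ are conformally conjugate; since $0$ and $\infty$ are dynamically marked, the conjugacy is either linear --- giving $Q_1=Q_2$ by the normalization of the family \cite{BBM18} --- or of the form $z\mapsto\lambda/z$, which (as $z\mapsto-1/z$ conjugates $f_q$ to $f_{\overline q}$) gives $Q_2=\overline{Q_1}$. So $\mathscr{L}\subseteq\{Q_*,\overline{Q_*}\}$ for any $Q_*\in\mathscr{L}$, and being connected $\mathscr{L}=\{q_\theta^2\}$ is a single point. Relative compactness then implies $q(m)^2\to q_\theta^2$ as $m\to0$, and $f_{q_\theta}=\lim_{m\to0}f_{q(m)}$ lies in $\HQspace_{2,2,\theta}$.

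The main obstacle is the claim $g\notin\Herspace_{2,2,\theta}$ --- that the Herman ring genuinely degenerates in the limit rather than reappearing with positive modulus somewhere on the hair. This rests on the fine structure of $\mathscr{H}_\theta$ from \cite{BBM23}, in particular that the modulus depends continuously on the parameter along the hair, and it dovetails with Theorem~\ref{APB02}, which forbids a limit sitting outside both $\Herspace_{2,2,\theta}$ and $\HQspace_{2,2,\theta}$. The verification in the first step is otherwise routine bookkeeping, though the critical-point count that places both free critical points on $\partial\He$ genuinely uses that $\theta$ is of bounded type.
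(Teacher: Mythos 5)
Your proposal is correct and follows essentially the same route as the paper's proof: hair parameters lie in $\Herspace_{2,2,\theta}$, precompactness together with Theorem \ref{APB02} places the $m\to 0$ limits in $\HQspace_{2,2,\theta}$, the antipodal symmetry forces every limit to have the fixed combinatorics $[\{1\},\{-1\}]$, and combinatorial rigidity (Theorem \ref{thm:combinatorial-rigidity}) makes all limits linearly conjugate, hence a single parameter $q_\theta^2$. The extra details you supply --- verifying membership of the hair in $\Herspace_{2,2,\theta}$, invoking the Hair Theorem's parametrization to rule out the limit landing back on the hair, and excluding a conjugacy swapping $0$ and $\infty$ --- are points the paper treats as automatic or sidesteps by working directly with $\mathscr{H}^\partial_\theta=\overline{\mathscr{H}_\theta}\setminus\mathscr{H}_\theta$ and the fact that the hair contains \emph{all} parameters with a $\theta$-rotation Herman ring.
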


\begin{proof}
    Herman rings in $\mathscr{H}_\theta$ automatically lie in the space $\Herspace_{2,2,\theta}$. By Theorem \ref{precompactness}, the set of maps in $\mathscr{H}_\theta$ admitting Herman rings of modulus bounded above by some positive constant is precompact. Therefore, the accumulation set $\mathscr{H}^\partial_\theta:= \overline{ \mathscr{H}_\theta }\backslash \mathscr{H}_\theta$ is non-empty and contained in $\HQspace_{2,2,\theta}$. 
    
    Pick any parameter $q^2$ in $\mathscr{H}^\partial_\theta$. Let $\phi: \Hq \to \T$ be a quasiconformal conjugacy between $f_q$ on its Herman quasicircle and the irrational rotation $R_\theta$. Since $f_q$ commutes with $\tau(z)=-1/\overline{z}$ and since $\phi$ is unique up to post-composition with rigid rotation, then $\phi \circ \tau = -\phi$. In particular, $f_q|_\Hq$ must have combinatorics $[\{1\}, \{-1\}]$. By Theorem \ref{thm:combinatorial-rigidity}, maps in $\mathscr{H}^\partial_\theta$ are linearly conjugate to each other, so $\mathscr{H}^\partial_\theta$ must be a singleton.
\end{proof}

\section{Deep points along rotation quasicircles}
\label{sec:deep-points}

Suppose a holomorphic map $f$ admits a bounded type rotation quasicircle $\Hq$ containing a critical point. Let $A$ be a collar of $f|_\Hq$ (cf. Section \S\ref{ss:approx-rot}). We split $A$ along $\Hq$ into two:  the inner collar $A^0$ and the outer collar $A^\infty$. In case \ref{case-B}, we assume that $f|_{A^0}$ is analytically conjugate to irrational rotation.

\begin{definition}
    \label{def:local-filled-julia-set}
    In case \ref{case-H}, we define the \emph{local filled Julia set} $K^{\textnormal{loc}}_A(f)$ of $f$ rel $A$ to be the closure of the set of points whose forward orbit lies entirely in $\overline{A}$ and eventually lands in $\Hq$, i.e.
    \[
        K^{\textnormal{loc}}_A(f) := \overline{ \bigcap_{n\geq 0} f^{-n}\left(\overline{A}\right) \cap \bigcup_{n \geq 0} f^{-n}(\Hq) }.
    \] 
    In case \ref{case-B}, we define the \emph{local filled Julia set} $K^{\textnormal{loc}}_A(f)$ of $f$ rel $A$ to be the closure of the set of points whose forward orbit lies entirely in $\overline{A}$ and eventually lands in $\overline{A^0}$.
\end{definition}

\begin{definition}
    \label{eqn:uniform-depth}
    We say that a subset $S$ of a compact set $J \subset \RS$ is \emph{uniformly deep} if there are positive constants $C,\delta,r>0$ such that for every point $z$ inside the $r$-neighborhood of $S$,
    \[
        \dist(z,J) \leq C \, \dist(z,S)^{1+\delta}. 
    \]
\end{definition} 

The following theorem is a generalization of \cite[Theorem 4.2]{McM98}.

\begin{theorem}[Rotation quasicircles are deep]
\label{deep-point-theorem}
    Consider a rational map $f$ admitting a rotation quasicircle $\Hq$ with bounded type rotation number. If $P(f) \backslash \Hq$ is disjoint from a neighborhood of $\Hq$, then $\Hq$ is uniformly deep in the local filled Julia set $K^{\textnormal{loc}}_A(f)$ rel any collar $A$ of $f|_\Hq$. In particular,
    \begin{enumerate}[label=\textnormal{(\arabic*)}]
        \item if $\Hq$ is a Herman quasicircle, then $\Hq$ is uniformly deep in $J(f)$;
        \item otherwise, $\Hq$ is contained in the closure of a rotation domain $D$, and it is uniformly deep in the closure of the grand orbit of $D$.
    \end{enumerate}
\end{theorem}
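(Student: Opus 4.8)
The plan is to reduce the statement to the estimate that $\Hq$ is uniformly deep in $J^{\textnormal{loc}}_A(f)$, and then transfer this to $J(f)$ (case (1)) or to the grand orbit of the rotation domain (case (2)) by noting in each case that $J^{\textnormal{loc}}_A(f)$ embeds into the relevant set near $\Hq$. In case (1), if $\Hq$ is a Herman quasicircle then by Proposition \ref{herman-in-pf} it lies in $P(f)\cap J(f)$; since $P(f)\setminus\Hq$ is assumed to be away from $\Hq$, every iterated preimage of $\Hq$ lying close to $\Hq$ belongs to $J(f)$, so locally $J^{\textnormal{loc}}_A(f)\subset J(f)$ and the depth bound passes over verbatim. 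In case (2), $\Hq$ bounds a rotation domain $D$; the iterated preimages of $\Hq$ near $\Hq$ lie in the grand orbit of $\partial D$, and again the local estimate transfers. Thus the whole theorem follows once we prove: there are constants $C,\delta,r>0$ such that for every $z$ within distance $r$ of $\Hq$, $\dist(z,J^{\textnormal{loc}}_A(f))\le C\,\dist(z,\Hq)^{1+\delta}$.

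To prove the local estimate I would argue exactly along the lines of \cite[Theorem 4.2]{McM98}, using the approximate rotation machinery of Section \S\ref{ss:approx-rot} as the engine. Fix $z$ near $\Hq$, let $w\in\Hq$ be a nearest point, and set $t:=\dist(z,\Hq)\asymp\dist(z,w)$, which we may assume is small. Apply Lemma \ref{approx-rot-to-preim} (or, depending on which collar $z$ sits in, Corollary \ref{approx-rot-to-preim} together with Lemma \ref{lem:preim}): there is an approximate rotation $f^i:U\to V$ with $z\in U$, where $V$ contains a hyperbolic ball $D\subset\nohq$ of radius $\asymp 1$ centered at a point of $f^{-1}(\Hq)\setminus\Hq$, and $\dist_\nohq(f^i(z),D)=O(1)$. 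Because $f^i$ is an approximate isometry for the hyperbolic metric of $\nohq$ and has bounded distortion, we may pull the ball $D$ back under $f^i$ to a round-ish disk $D'\subset U$ of Euclidean diameter $\asymp t$ with $D'\subset f^{-i}(f^{-1}(\Hq))\subset J^{\textnormal{loc}}_A(f)$ (the forward orbit of $D'$ stays in $\overline A$ by construction of the collar and lands on $\Hq$), and with $\dist(z,D')=O(t)$. Crucially, the multiplicative constants here are uniform in $z$. This already shows $\dist(z,J^{\textnormal{loc}}_A(f))=O(t)$ with a uniform constant, which is the shape of a depth estimate but only with $\delta$ effectively $0$; to upgrade to $\delta>0$ one iterates the construction at the smaller scale: the point $z$ is within $O(t)$ of a preimage piece of $\Hq$, and re-running the argument centered at the nearest point of that piece (now at scale $\asymp t^{\,1+c}$ for a definite $c>0$, using Lemma \ref{counting} to control the combinatorics of critical-value positions) produces a still-finer approximation, and summing the geometric improvement yields the bound with a definite $\delta>0$. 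The bounded type hypothesis enters precisely through Proposition \ref{bounded-type} and Lemma \ref{counting}, which guarantee that at each combinatorial scale $l_n$ one can locate a definite gap free of critical values, so the approximate rotation survives a definite number of steps and contracts scales by a definite factor at each stage.

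The main obstacle, and the step deserving the most care, is the iteration that turns the crude linear bound $O(t)$ into the genuine deep-point bound $t^{1+\delta}$: one must ensure that the geometric control (bounded shape of the pulled-back disks, bounded distortion of $f^i$, the $O(1)$ hyperbolic displacement in the definition of approximate rotation) is uniform not only in $z$ but across all the nested scales one descends through, so that the accumulated distortion stays bounded and the scale contractions genuinely compound. This is exactly the uniform twisting / deep point mechanism; here it is available because the collar $A$ is compactly contained in $\Omega$ away from $P(f)\setminus\Hq$ (by hypothesis), so the only obstruction to iterating — getting close to a critical point — is handled by Lemma \ref{counting}, and the approximate rotation lemmas supply bounds with constants depending only on the dilatation of the linearizing quasiconformal map $\phi$ (Theorem \ref{petersen}) and on the bounded type constant $N$. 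The remaining points — that $J^{\textnormal{loc}}_A(f)\subset J(f)$ near $\Hq$ in case (1) and $J^{\textnormal{loc}}_A(f)$ lies in the grand orbit of $D$ in case (2), and that the resulting estimate is independent of the choice of collar $A$ (any two collars are comparable near $\Hq$, with the local Julia sets agreeing on a neighborhood of $\Hq$) — are routine.
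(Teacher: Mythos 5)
There is a genuine gap, and it sits exactly at the heart of the theorem: the passage from the linear bound $\dist(z,J^{\textnormal{loc}}_A(f))=O(t)$ (with $t=\dist(z,\Hq)$) to the deep-point bound $O(t^{1+\delta})$. Your single application of Corollary \ref{approx-rot-to-preim} does give, after pulling back, a point of some $f^{-(i+1)}(\Hq)$ within distance $O(t)$ of $z$ (note, by the way, that the pulled-back disk $D'$ is not contained in $f^{-i}(f^{-1}(\Hq))$ --- $D$ is a hyperbolic ball \emph{centered at} a point of $f^{-1}(\Hq)$, not a subset of it). But the proposed upgrade, ``re-run the argument centered at the nearest point of that piece, now at scale $\asymp t^{1+c}$ for a definite $c>0$,'' is unsubstantiated: nothing in the construction produces a smaller scale $t^{1+c}$, and the approximate-rotation machinery lives on a collar of $\Hq$ itself, not on a neighborhood of an iterated preimage piece, so ``re-running'' it there is not defined. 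A bound of the form $O(t)$ with a uniform multiplicative constant does not compound into a power gain by repetition; you need a mechanism that converts the \emph{number} of dynamical returns into a definite geometric contraction, and your sketch never identifies one.

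The paper's proof supplies exactly that mechanism, and it is worth seeing why each ingredient is needed. One works with the hyperbolic metric of $\Omega=\RS\setminus P(f)$ (comparable to $\dist(\cdot,\Hq)^{-1}$ on the collar) and the escape function $L(z)=\log\dist(\phi(z),\T)$. Two facts are proved: (i) $L(f(z))\leq L(z)+O(1)$ for $z\in A$, so each iterate can push the orbit away from $\Hq$ by only a bounded additive amount on the logarithmic scale; and (ii) by Corollary \ref{approx-rot-to-preim}, the orbit repeatedly comes within bounded hyperbolic distance of $f^{-1}(\Hq)$, and at each such moment $\|f'\|\geq M>1$ in the metric of $\Omega$. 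Since $L(z)\asymp\log t$, the orbit survives $m\asymp\log(1/t)$ such returns before it can leave $\overline{A}$, so the accumulated expansion is $\|(f^{n_m})'(z)\|\succ t^{-\alpha}$ with $\alpha\asymp\log M$. Pulling back a hyperbolic arc of length $O(1)$ from $z_{n_m}$ to $A\cap f^{-1}(\Hq)$ then lands a point $y\in f^{-n_m-1}(\Hq)$ (whose forward orbit one arranges to stay in $\overline{A}$, so $y\in J^{\textnormal{loc}}_A(f)$) within hyperbolic distance $O(t^{\alpha})$, i.e.\ Euclidean distance $O(t^{1+\alpha})$, of $z$. Your reduction of cases (1) and (2) to the local statement, and the role of Lemma \ref{counting} and the bounded type hypothesis in making the approximate rotations uniform, are fine; but without the expansion-versus-escape bookkeeping (or an equivalent substitute), the exponent $1+\delta$ is not reached. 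You should also treat case \ref{case-B} separately, as the paper does: there only the outer collar requires the argument, since points falling into the inner collar are already in $J^{\textnormal{loc}}_A(f)$.
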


In case (1), the local filled Julia set of a Herman curve $\Hq$ is contained in the Julia set $J(f)$ of $f$, so $J(f)$ cannot be porous (see Definition \ref{def:porosity}) and in fact it converges exponentially fast to the whole plane when magnified about any point on $\Hq$. See Figure \ref{fig:julia-set}.

\begin{figure}
    \centering
    \includegraphics[width=\columnwidth]{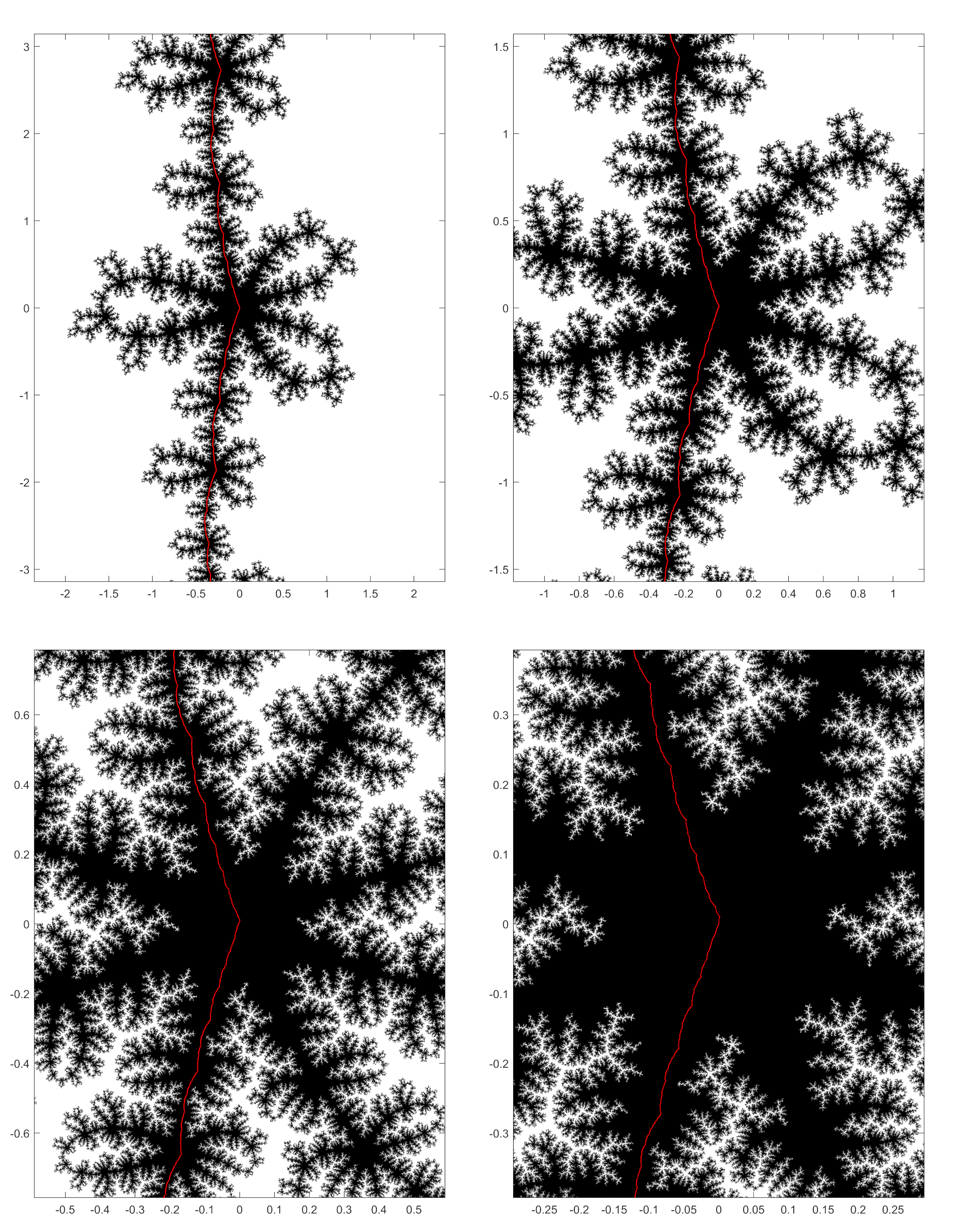}
    \caption{The plot of the Julia set of $f(z)=c z^2 \dfrac{z^2-4z+6}{4z-1}$ in logarithmic coordinates, magnified about its free critical point. The critical value $c = f(1) \approx -0.51094768-0.43067820i$ is chosen such that $f$ admits a Herman quasicircle with golden mean rotation number, shown in red.} 
    \label{fig:julia-set}
\end{figure}

\begin{proof}
    We will first consider case \ref{case-H}. The treatment for case \ref{case-B} is analogous, with minor differences which will be outlined at the end of the proof.
    
    Consider a global quasiconformal map $\phi$ conjugating $f|_\Hq$ and the irrational rotation $R_\theta|_\T$. Recall the function $L(z)$ defined in (\ref{eqn:escape-function}). Pick any sufficiently large $\kappa>1$ such that 
    \[
    A:=\{-\infty \leq L(z)<-\kappa \}
    \]
    is a collar for $\Hq$ disjoint from some small neighborhood of $P(f) \backslash \Hq$. By the H\"older continuity of $\phi$, for every $z \in A$,
    \begin{equation}
        \label{log-distance}
        L(z) \asymp \log(\dist(z,\Hq)).  
    \end{equation}
    Consider the complement $\Omega := \RS \backslash P(f)$. This is the largest open subset of $\RS$ such that $f^n: f^{-n}(\Omega) \to \Omega$ is an unbranched covering map for all $n \in \N$. Similar to Lemma \ref{1/d-metric}, since $A \cap P(f) = \Hq$, the hyperbolic metric $\rho_\Omega(z) |dz|$ of $\Omega$ satisfies
    \begin{equation}
    \label{eqn:hyp-metric-deep}
        \rho_\Omega(z) \asymp \frac{1}{\dist(z,\Hq)} \asymp \rho_{\RS \backslash \Hq}(z) \quad \text{ for all } z \in A.
    \end{equation}
    
    Given an approximate rotation $f^i : U\to V$ (rel $A$), we have $L(f^i(z)) \leq L(z) + O(1)$ for every $z \in U$. In general, we have the following property.
    
    \begin{claim} 
        For every $z \in A$, $L(f(z)) \leq L(z) + O(1)$.
    \end{claim}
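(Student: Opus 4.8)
The strategy is to split the collar, at each scale, into the part where $f$ is well approximated by the quasi-rotation $F$ and the part dominated by a critical point, exploiting that near a critical point $f$ actually \emph{contracts} the distance to $\Hq$ --- the direction favourable for an upper bound on $L\circ f$. If $\dist(z,\Hq)$ is bounded below by a definite constant the inequality holds trivially, since then $L(z)$ is $O(1)$ and $L(f(z))\le O(1)$; so I may assume $z$ is close to $\Hq$. Let $a\in\Hq$ be the point nearest to $z$ and fix a large constant $M$. Say $z$ is \emph{regular at its scale} if the round ball of radius $M\dist(\phi(z),\T)$ about $\phi(z)$ contains no image $\phi(c)$ of a critical point $c$ of $f$ on $\Hq$, and \emph{critical at its scale} otherwise. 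In the regular case, pulling back by $\phi$ shows that a neighbourhood of $z\cup\gamma^\bullet_{a}$ (the radial curve joining $z$ to $a$) contains no critical point of $f$; running the one-step version of the construction in the proof of Lemma \ref{approx-rot-01} --- a univalent inverse branch of $f$ on a half-neighbourhood of a short combinatorial interval about $a$, agreeing on $\Hq$ with $F$ --- together with Lemma \ref{lem:poincare-nbh}, gives $d_{\nohq}(f(z),F(z))=O(1)$. Since $\phi$ carries the hyperbolic metric of $\nohq$ to that of $\RS\backslash\T$ with bounded distortion, since bounded hyperbolic displacement near $\T$ forces the distances to $\T$ to be comparable, and since $F$ preserves $L$ exactly (as $\phi\circ F=R_\theta\circ\phi$ and multiplication by $e^{2\pi i\theta}$ is an isometry of $\T$), this yields $L(f(z))=L(z)+O(1)$.

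Suppose now $z$ is critical at its scale, for the critical point $c$. Using that $\phi$ is quasisymmetric, $a$ lies near $c$ at the scale $\dist(z,\Hq)$, so $|z-c|\asymp\dist(z,\Hq)$. On a fixed neighbourhood of $c$ write $f(\zeta)=h(\zeta)^{d}+f(c)$ with $h$ univalent of bounded distortion, $h(c)=0$, and $d\ge 2$ the local degree; since $f(c)\in\Hq$,
\[
\dist(f(z),\Hq)\ \le\ |f(z)-f(c)|\ =\ |h(z)|^{d}\ \asymp\ |z-c|^{d}\ \asymp\ \dist(z,\Hq)^{d},
\]
so $f$ contracts the distance to $\Hq$, and in particular $\dist(f(z),\Hq)\prec\dist(z,\Hq)$. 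It then remains to upgrade this Euclidean bound to $\dist(\phi(f(z)),\T)\prec\dist(\phi(z),\T)$. The point of the ``critical at its scale'' hypothesis is exactly that it matches $|z-c|$ with $\dist(z,\Hq)$, so that $z$, $a$, $f(z)$ and $f(c)$ sit at mutually comparable scales; one may then either rerun the contraction estimate directly in the $\phi$-coordinate via the Stoilow factorisation $\phi\circ f\circ\phi^{-1}=\psi\circ p$ near $\phi(c)$ (with $p$ holomorphic carrying an order-$d$ branch point and $\psi$ quasiconformal with $\psi(p(\T))=\T$), or compare $\phi$ at these four points by a bounded number of applications of quasisymmetry at a fixed distance ratio; either way $\dist(\phi(f(z)),\T)\prec\dist(\phi(z),\T)$, i.e. $L(f(z))\le L(z)+O(1)$.

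The delicate step is this last transfer. Because $\phi$ is only quasiconformal, distances to $\Hq$ and to $\T$ are related by a two-sided H\"older --- not bi-Lipschitz --- estimate, so one cannot simply feed ``$\dist(f(z),\Hq)\prec\dist(z,\Hq)$'' into (\ref{log-distance}); the resolution is to never leave a single coordinate system on the critical piece and to rely on the scale-matching built into the dichotomy so that quasisymmetry of $\phi$ is invoked only at bounded distance ratios. By contrast the regular case is painless, since there the honest substitute $F$ for $f$ preserves $L$ up to a bounded hyperbolic error, and no comparison of Euclidean with $\phi$-coordinate scales is needed.
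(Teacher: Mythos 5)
Your regular case is fine and is essentially the paper's: away from the critical points (relative to $\dist(z,\Hq)$) the map $f$ is an approximate rotation near $z$, and approximate rotations increase $L$ by at most $O(1)$. The gap is in the critical case. Your Euclidean estimate $\dist(f(z),\Hq)\prec\dist(z,\Hq)$ is correct but, as you yourself note, it cannot be fed through $\phi$; the trouble is that neither of your proposed repairs closes this. The assertion that ``$z$, $a$, $f(z)$ and $f(c)$ sit at mutually comparable scales'' is false: $|f(z)-f(c)|\asymp|z-c|^{d}$ while $|z-c|\asymp\dist(z,\Hq)$, so the domain pair $(z,c)$ and the image pair $(f(z),f(c))$ live at scales whose ratio is $\asymp\dist(z,\Hq)^{d-1}$, unbounded as $z\to\Hq$. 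Hence no bounded chain of quasisymmetry estimates for $\phi$ ``at fixed distance ratios'' can relate $|\phi(f(z))-\phi(f(c))|$ to $|\phi(z)-\phi(c)|$: bridging on the order of $\log\bigl(\dist(z,\Hq)^{-(d-1)}\bigr)$ dyadic scales costs exactly the power (H\"older) loss you were trying to avoid, and quasisymmetry of $\phi$ alone, with no dynamical input, is consistent with the desired inequality failing. So your alternative (ii) does not work.

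Your alternative (i) can be completed, but only after adding the one ingredient you never invoke in the critical case: that $F:=\phi\circ f\circ\phi^{-1}$ restricts to the rigid rotation $R_\theta$ on $\T$, i.e.\ to an \emph{isometry} --- not merely that $\psi(p(\T))=\T$. This is precisely the paper's (one-line) argument: since $F$ is quasiregular with a branch point of bounded degree at $c':=\phi(c)$ and $F|_\T$ is an isometry, choosing $w'\in\T$ with $|w'-c'|\asymp|z'-c'|$ gives $|F(z')-F(c')|\asymp|F(w')-F(c')|=|w'-c'|\asymp|z'-c'|$, whence $\dist(F(z'),\T)\le|F(z')-F(c')|\asymp|z'-c'|\asymp\dist(z',\T)$, and the claim follows by taking logarithms. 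In your Stoilow language: $|p(z')-p(c')|\asymp|p(w')-p(c')|$ by bounded distortion of a $d$-th root of $p$, and the quasisymmetric factor $\psi$ preserves comparability of distances from the common base point $p(c')$; it is the isometry of $F|_\T$ that converts $|F(w')-F(c')|$ back into $|w'-c'|$ and thereby anchors the image scale to the domain scale. Without that anchor the quasiconformal factor $\psi$ loses the absolute scale, and ``rerunning the contraction estimate in the $\phi$-coordinate'' does not go through; note also that once this anchor is used, the Euclidean estimate $|f(z)-f(c)|\asymp|z-c|^d$ plays no role at all, which is why the paper's proof never leaves the $\phi$-coordinate.
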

    
    \begin{proof}
        Let us denote by $S$ the set of critical points of $f$ on $\Hq$. If $z$ is sufficiently far from $S$, i.e. $\dist(z,S) \succ \dist(z,\Hq)$, then $f$ is an approximate rotation on a neighborhood of $z$. Else, suppose $z$ is close to a critical point $c \in S$. Let $z':=\phi(z)$, $c':=\phi(c)$, and $F:= \phi \circ f \circ \phi^{-1}$. As $F$ is a quasiregular map that restricts to an isometry of the unit circle $\T$, we have $|F(z')-F(c')| \asymp |z'-c'|$. Therefore,
        \[
        \dist(F(z'),\T) \leq |F(z')-F(c')| \asymp |z'-c'| \asymp \dist(z', \T).
        \]
        By taking the logarithm, this inequality implies the claim.
    \end{proof}
    
    Let us equip $\Omega$ with the hyperbolic metric. For $z \in A$, let $r_z := \dist_\Omega\left(z, f^{-1}(\Hq)\right)$. The norm of $f'(z)$ with respect to the hyperbolic metric of $\Omega$ satisfies
    \begin{equation}
        \label{eqn:exp}
    \| f'(z)\| \geq C(r_z) > 1
    \end{equation}
    for some function $C(r)$ where $C(r) \to \infty$ as $r \to 0$.

    Let us pick any point $z$ in $A$ with $L(z) \ll -\kappa$, and denote $z_i := f^i(z)$ for every $i \in \N$. By Corollary \ref{approx-rot-to-preim}, there is an approximate rotation $f^{n_1}: (U,z) \to (V,z_{n_1})$ such that $\dist_\Omega\left(z_{n_1},f^{-1}(\Hq)\right) = O(1)$. By (\ref{eqn:exp}), we have $\| f'(z_{n_1}) \| \geq M > 1$ for some $M$ independent of $z$. On the other hand, $L(z_{n_1 + 1}) \leq L(z) + O(1)$ due to the claim above. We repeat this argument inductively to obtain an increasing sequence of positive integers $\{ n_j \}_j$ such that 
    \begin{align}
    \label{eqn:close-to-preimage}
    \dist_\Omega\left(z_{n_j},f^{-1}(\Hq)\right) &= O(1), \\
    \label{eqn:definite-exp}
    \|f'(z_{n_j})\| &\geq M,\text{ and } \\
    \label{eqn:induct} 
    L(z_{n_j}) &\leq L(z) + O(j).
    \end{align}
    
    By (\ref{log-distance}) and (\ref{eqn:induct}), there exists some $m \in \N$ such that $z_{n_i} \in \overline{A}$ for all $i \in \{1,2,\ldots m\}$ and that
    \begin{equation}
    \label{eqn:const-N}
    m \asymp - L(z) \asymp -\log\left(\dist(z,\Hq)\right).
    \end{equation}
    Then, by (\ref{eqn:definite-exp}) and (\ref{eqn:const-N}),
    \begin{equation}
    \label{eqn:overall-exp}
    \| (f^{n_m})'(z)\| \geq M^{m-1} \succ \dist(z,\Hq)^{-\alpha},
    \end{equation}
    where $\alpha \asymp \log M$.
    
    By (\ref{eqn:close-to-preimage}), we can pick an arc $\gamma_m \subset \Omega$ of hyperbolic length $O(1)$ joining $z_{n_m}$ and some point $y'$ in $A \cap f^{-1}(\Hq)$. By (\ref{eqn:overall-exp}), $\gamma_m$ lifts under $f^{n_m}$ to an arc $\gamma \subset \Omega$ joining $z$ and some point $y \in f^{-n_m-1}(\Hq)$ of hyperbolic length $O(\dist(z,\Hq)^{\alpha})$. Therefore, by (\ref{eqn:hyp-metric-deep}),
    \[
    |z-y| = O\left(\dist(z,\Hq)^{\alpha+1}\right).
    \]
    We can make sure that the forward orbit of $y$ stays within $\overline{A}$ and is thus contained in $K^{\textnormal{loc}}_A(f)$. As such, the estimate above implies that $\Hq$ is uniformly deep in $K^{\textnormal{loc}}_A(f)$.

    In case \ref{case-B}, every point $z \in A$ that eventually lands in the inner collar $A^0$ is contained in $K^{\textnormal{loc}}_A(f)$. Hence, we only need to consider points $z \in A$ whose forward orbit stays entirely within the outer collar $A^\infty$. Since Corollary \ref{approx-rot-to-preim} is applicable for points in $A^\infty$, we can repeat the same analysis above.
\end{proof}

A stronger notion of deep points is the following. We say that a point $x$ in a compact subset $J$ of $\RS$ is a \emph{measurable deep point} of $J$ if there is some $\alpha >0$ such that for any $r>0$, the Lebesgue measure of $\D(z,r) \backslash J$ is $O(r^{2+\delta})$. 

McMullen \cite{McM98} showed that every point along the boundary of a bounded type quadratic Siegel disk is a measurable deep point of the filled Julia set. Fagella and Henriksen \cite{FH17} also showed that a similar result holds for cubic Blaschke products admitting a bounded type Herman ring, which can be thought of as a model map for quadratic Siegel disks. Below, we state a generalization of their results.

\begin{corollary}
\label{thm:meas-deep-point}
    Consider a rational map $f$ that is J-rotational of bounded type. For every rotation domain $D$ of $f$, every point along $\partial D$ is a measurable deep point of the closure of the grand orbit of $D$.
\end{corollary}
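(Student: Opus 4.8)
The plan is the standard McMullen-type upgrade: extract metric depth from Theorem \ref{deep-point-theorem}, extract uniform twisting from the approximate-rotation machinery of \S\ref{ss:approx-rot}--\S\ref{sec:rat-map}, and combine the two.

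First I would reduce to $f(D)=D$ by passing to an iterate $f^{k}$ — harmless, since a measurable deep point of a set is a measurable deep point of every larger set, and the closure of the grand orbit of $D$ under $f$ is a finite union of closures of grand orbits under $f^{k}$. Fix a boundary component $\Hq$ of $\partial D$ and write $J_D$ for the closure of the grand orbit of $D$. Since $f$ is J-rotational of bounded type, $D$ has bounded type rotation number, so by Zhang's theorem \cite{Z11} (recalled in \S\ref{sec:intro}) $\Hq$ is a rotation quasicircle carrying a critical point and contained in $P(f)$; in particular $\Hq$ is one of the quasicircles $X_{j}$ of \S\ref{sec:rat-map}. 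The remaining postcritical points lie on the other $X_{i}$, in the finite set $\mathbf{S}$, or in $P(f)\cap F(f)$, and none of these accumulates on $\Hq$; hence $P(f)\backslash\Hq$ is disjoint from a neighborhood of $\Hq$ and Theorem \ref{deep-point-theorem} applies. Moreover $\partial D\subseteq J(f)$ has backward orbit dense in $J(f)$, so $J(f)\subseteq J_D$, and therefore $\RS\backslash J_D$ is a grand-orbit invariant union of Fatou components disjoint from the grand orbit of $D$; if it is empty the assertion is trivial, so assume it is not.

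Theorem \ref{deep-point-theorem}(2) gives that $\Hq$ is uniformly deep in $J_D$: there are $C_{0},\delta_{0}>0$ with $\dist(z,J_D)\prec\dist(z,\Hq)^{1+\delta_{0}}$ for $z$ near $\Hq$. Rewriting, for every $x\in\Hq$ and every small $r>0$ the set $\D(x,r)\backslash J_D$ lies in the $C_{0}r^{1+\delta_{0}}$-neighborhood of $J_D$ — that is, $x$ is a deep point of $J_D$ in McMullen's metric sense \cite{McM96,McM98}. The heart of the matter is to promote this to measurable depth, $|\D(x,r)\backslash J_D|=O(r^{2+\delta})$. For this I would invoke uniform twisting. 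Because the grand orbit of $D$ is forward- and backward-invariant, every univalent bounded-distortion map supplied by Theorem \ref{critical} (built from the approximate rotations of \S\ref{ss:approx-rot}) transports $J_D$ into $J_D$; so for each $x\in\Hq$ and each scale $r\asymp l_{n}$ there is a bounded-distortion conformal magnification $f^{-i}\colon(V,c)\to(U,y)$, with $(U,y)$ of bounded shape, $\diam U\asymp r$, $|y-x|=O(r)$, $c$ a critical point on $\Hq$, carrying a fixed-size configuration of $J_D$ about $c$ onto the scale-$r$ configuration of $J_D$ near $x$. On $\Hq$ these maps act as the rotations $z\mapsto z+i\theta$, and as the scale varies the exponents $i$ that occur make $\{\,i\theta\bmod 1\,\}$ equidistribute, so the framings of the magnifications are dense in the circle. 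This is precisely the uniform twisting of $J_D$ about $\Hq$ already obtained in the course of proving the first part of Theorem \ref{thm:NILF-main}. Feeding metric depth and uniform twisting into McMullen's measure-theoretic deep point criterion (\cite[\S2]{McM96}; see also \cite[\S3]{McM98}) yields some $\delta>0$ with $|\D(x,r)\backslash J_D|=O(r^{2+\delta})$ uniformly in $x\in\Hq$. Applying this to each boundary component of $\partial D$ finishes the proof.

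The step I expect to be the main obstacle is the precise verification of uniform twisting in the quantitative form McMullen's criterion demands — in particular, that the magnifications furnished by Theorem \ref{critical} have densely (indeed uniformly) distributed framings at every small scale and uniformly over the base point on $\Hq$. It is worth stressing that the naive shortcut — metric depth together with a box-dimension estimate for $\partial J_D$ near $\Hq$ — does not work, since when $f$ carries Herman curves the set $\partial J_D$ can have full Hausdorff dimension near $\Hq$; the twisting is genuinely needed in order to rule out concentration of $\RS\backslash J_D$.
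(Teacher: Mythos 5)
Your reduction and the first half of the argument (invoking Theorem \ref{deep-point-theorem} to get metric depth of $\Hq$ in the relevant invariant set) match the paper. The genuine gap is in the upgrade from metric depth to measurable depth. You feed ``metric depth $+$ uniform twisting'' into what you call McMullen's measure-theoretic deep point criterion, but no such criterion exists: uniform twisting is the nonlinearity hypothesis of the \emph{inflexibility} theorem (\cite[\S 9]{McM96}), used in this paper only later to promote quasiconformal conjugacies to $C^{1+\alpha}$ ones, and it carries no area information about the complement of the set. The statement that actually does the job is \cite[Proposition 2.24]{McM96}: a deep point of a compact set whose \emph{boundary is porous} is a measurable deep point. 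So the missing ingredient is a porosity statement, which is exactly the route you dismiss as ``the naive shortcut that does not work.'' That dismissal is where your argument fails: the corollary concerns boundary components of \emph{rotation domains}, not Herman curves, and one does not need porosity of $\partial J_D$ for the full grand-orbit closure $J_D$. The paper instead works with the local Julia set $J^{\textnormal{loc}}_{A_j}(f)$ rel the collar $A_j$ of $X_j=\Hq$ (Definition \ref{def:local-julia-set}, case \ref{case-B}), which is contained in $J_D$, so measurable depth there transfers to $J_D$. Theorem \ref{deep-point-theorem} gives depth of $\Hq$ in $J^{\textnormal{loc}}_{A_j}(f)$, and the boundary of $J^{\textnormal{loc}}_{A_j}(f)$ \emph{is} porous: exactly as in Lemma \ref{local-porosity}, Theorem \ref{critical} produces at every point and scale a bounded-distortion univalent pullback of a neighborhood of a critical point on $X_j$, and since $X_j$ bounds a rotation domain, that neighborhood contains a definite disk inside the inner collar $A^0_j$, which lies in the interior of $J^{\textnormal{loc}}_{A_j}(f)$; pulling it back gives a comparably sized disk disjoint from the boundary. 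Your Herman-curve objection is thus irrelevant to the set the argument is actually applied to.

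Concretely: replace the ``uniform twisting'' step by (i) pass from $J_D$ to $J^{\textnormal{loc}}_{A_j}(f)\subset J_D$, (ii) prove porosity of $\partial J^{\textnormal{loc}}_{A_j}(f)$ via Theorem \ref{critical} as in Lemma \ref{local-porosity}, and (iii) apply \cite[Proposition 2.24]{McM96}. Without some substitute for step (ii), your proposal does not establish the area estimate $\left|\D(x,r)\setminus J_D\right|=O(r^{2+\delta})$, so as written the proof is incomplete at its central step.
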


\begin{proof}
    Consider one of the rotation quasicircles of $f$, say $X_j$, and suppose it is a boundary component of a rotation domain $D$. Consider the collar $A_j$ introduced in \S\ref{ss:decomposition}. The previous theorem states that every point along $X_j$ is a deep point of the local filled Julia set $K^{\textnormal{loc}}_{A_j}(f)$. To prove the corollary, \cite[Proposition 2.24]{McM96} states that it is sufficient to show that the boundary of $K^{\textnormal{loc}}_{A_j}(f)$ is porous. The proof is analogous to that of Lemma \ref{local-porosity}. 
\end{proof}

\section{Rigidity of critical quasicircle maps}
\label{sec:c1plusalpharigidity}

From this section onwards, we will be studying rotation quasicircles admitting a single critical point, i.e. critical quasicircle maps $f: \Hq \to \Hq$.

The rigidity problem for critical quasicircle maps in case \ref{case-B} was already solved in \cite{McM98}, \cite[\S4]{AL22}, and \cite{Y08}. As such, we will restrict our attention to the rigidity problem in case \ref{case-H}. In the bounded type regime, this is equivalent to the assumption that both inner and outer criticalities are at least two. (Refer to Proposition \ref{trichotomy}.) Our goal is to study renormalizations of unicritical Herman quasicircles and ultimately prove Theorem \ref{thm:c1plusalpharigidity}.

Throughout this section, we fix a triplet of integers $d_0\geq 2$, $d_\infty \geq 2$, and $N \geq 1$. Unless otherwise stated, any quasicircle $\Hq \subset \RS$ considered will be assumed to separate $0$ and $\infty$. Denote by $Y_\Hq^0$ and $Y_\Hq^\infty$ the connected components of $\RS \backslash \Hq$ containing $0$ and $\infty$ respectively. 

\subsection{Critical quasicircle maps}
\label{ss:model-maps}

We say that a critical quasicircle map $f: \Hq \to \Hq$ is a \emph{$(d_0,d_\infty)$-critical quasicircle map} if the critical point $c$ of $f$ has inner criticality $d_0(c) = d_0$ and outer criticality $d_\infty(c) = d_\infty$.

The prototype example of critical quasicircle maps in case \ref{case-H} comes from the family $\HQspace_{d_0,d_\infty,\theta}$.

\begin{proposition}
\label{prop:prototype-example}
    Consider a bounded type irrational number $\theta \in (0,1)$.
    \begin{enumerate}[label=\textnormal{(\arabic*)}]
        \item There exists a unique rational map $F_{d_0,d_\infty,\theta}$ in $\HQspace_{d_0,d_\infty,\theta}$ such that the point $z=1$ is a critical point with full local degree $d_0+d_\infty-1$. It is of the form
    \[
        F_{d_0,d_\infty,\theta}(z) := - c \: \cfrac{ \displaystyle\sum_{j=d_0}^{d_0+d_\infty-1} \binom{d_0+d_\infty-1}{j} \cdot (-z)^j}{ \displaystyle\sum_{j=0}^{d_0-1} \binom{d_0+d_\infty-1}{j} \cdot (-z)^j},
    \]
        for some unique $c \in \C^*$, which is the image of $1$. Restricted to its Herman quasicircle $\Hq$, $F_{d_0,d_\infty,\theta} : \Hq \to \Hq$ is a $(d_0,d_\infty)$-critical quasicircle map with rotation number $\theta$ and a unique critical point at $1 \in \Hq$.
        \item When $d_0=d_\infty=d$, the map $F_{d,d,\theta}$ is a Blaschke product of the form
    \[
    B_{d,\alpha}(z) := e^{2\pi i \alpha} z^{d} \cdot \frac{ \displaystyle\sum_{j=0}^{d-1} \binom{2d-1}{j} (-1)^j z^{d-1-j} }{\displaystyle\sum_{j=0}^{d-1} \binom{2d-1}{j} (-1)^j z^j}
    \]
        for some unique $\alpha \in [0,1)$. In this case, $\Hq$ is the unit circle.
    \end{enumerate}
\end{proposition}

\begin{proof}
    Such a map $F_{d_0,d_\infty,\theta}$ must have unicritical combinatorics $[(T^0,T^\infty)] \in \mathcal{C}_{d_0,d_\infty}$ where $T^0 = (1,\ldots,1) \in \SP^{d_0-1}(\T)$ and $T^\infty = (1,\ldots,1) \in \SP^{d_\infty-1}(\T)$. The formulas can be computed from the fact that $F_{d_0,d_\infty,\theta}$ fixes $0$ with local degree $d_0$, fixes $\infty$ with local degree $d_\infty$, and has $1$ as the only other critical point; see \cite[Example 2.10]{Lim23} and \cite[Proposition 10.1]{Lim23} for details. The uniqueness of $F_{d_0,d_\infty,\theta}$ is due to Theorem \ref{thm:combinatorial-rigidity}, and item (2) is a consequence of Proposition \ref{blaschke-characterization}.
\end{proof}

The Julia sets of some of these rational maps are illustrated in Figure \ref{fig:cqc-comparison}. The following lemma, when applied to $f = F_{d_0,d_\infty,\theta}$, will come in handy later towards the end of this section.

\begin{lemma}
\label{lem:fatou-set-structure}
    Fix a bounded type $\theta$, a map $f$ in $\HQspace_{d_0,d_\infty,\theta}$ and a point $\bullet \in \{0,\infty\}$. Denote by $\Hq$ the Herman quasicircle of $f$.
    \begin{enumerate}[label=\textnormal{(\arabic*)}]
        \item The immediate attracting basin $\mathfrak{B}^\bullet$ of $\bullet$ of $f$ is a topological disk with locally connected boundary.
        \item Let $b^\bullet:(\mathfrak{B}^\bullet,\bullet) \to (\D,0)$ be the corresponding B\"ottcher coordinate. For every point $x$ on $\Hq$, if $x$ is not a pre-critical point ($f^n(x)$ is not a critical point for all $n\geq 0$), then there exists a unique angle $t \in \R \backslash \Z$ such that the external ray $\{ z \in \mathfrak{B}^\bullet \: : \: \arg b^\bullet(z) = 2\pi t \}$ lands at $x$.
    \end{enumerate}
\end{lemma}

\begin{proof}
    Assume $\bullet = \infty$ for convenience.
    Since no finite critical point is contained in $Y^\infty_\Hq$, the basin $\mathfrak{B}^\bullet$ must be simply connected. To prove the rest, we will appeal to Douady-Ghys surgery.
    
    By \cite[Proposition 2.12]{Lim23}, there exists a degree $d_\infty$ polynomial $P$ admitting a Siegel disk $Z$ with rotation number $\theta$, and a quasiconformal map $\psi: \RS \to \RS$ that is conformal on $\mathfrak{B}^\infty$, sends $Y^\infty_\Hq$ onto $\RS \backslash \overline{Z}$, and $\psi \circ f(z) = P \circ \psi(z)$ for all $z \in Y^\infty_\Hq$. Every critical finite critical point of $P$ is contained in the boundary of $Z$. Then, it suffices to show that the basin of infinity $\mathfrak{B}_P$ of $P$ satisfies properties analogous to items (1) and (2).

    Firstly, by \cite{WYZZ} (or \cite{Pe96} for $d_\infty=2$), the boundary of $\mathfrak{B}_P$ is locally connected, so every external ray of $P$ lands. For every $t \in [0,1)$, denote by $\Gamma_t$ the external ray of $P$ with angle $t$. The ray $\Gamma_0$ lands at a repelling fixed point. We will consider the open disk $D = \mathfrak{B}_P \backslash \overline{\Gamma_0}$.

    Now, fix a point $x$ on the boundary of $Z$ that is not a pre-critical point. Then, for every $n \geq 1$, there exists a unique lift $D_{-n}$ of $D$ under $P^n$ such that its boundary contains $x$. By the uniqueness of $D_{-n}$, every external ray landing at $x$ must be contained in $D_{-n}$ for all $n$. Observe that each $D_{-n}$ is bounded by a pair of external rays of the form $\Gamma_{t_n}$, $\Gamma_{t_n + d_\infty^{-n}}$ and we have $[t_{n+1},t_{n+1} + d_\infty^{-n-1}] \subset [t_n,t_n + d_\infty^{-n}]$. As $n \to \infty$, $t_n$ converges to a unique limit $t_* \in \R \backslash \Z$ and $\Gamma_{t_*}$ is the unique external ray landing at $x$.
\end{proof}

Let $f: \Hq \to \Hq$ be a $(d_0,d_\infty)$-critical quasicircle map and let $c \in \Hq$ be its critical point. We call an annular neighborhood $A$ of $\Hq$ \emph{$f$-relevant} if it satisfies the following properties.
\begin{enumerate}[label=(R\textsubscript{\arabic*})]
    \item\label{R01} The map $f$ admits a holomorphic extension to an annular neighborhood $A$ of $\Hq$ on which $c$ is the only critical point.
    \item\label{R02} The annulus $A$ can be decomposed into a disjoint union of an open disk neighborhood $B$ of $c$ and a topological rectangle $R$ intersecting $\Hq$ along $\Hq \backslash B$.
    \item\label{R03} On $B$, $f$ is a degree $d_0+d_\infty-1$ covering map branched only at $c$.
    \item \label{R04} The preimage of $f(B)\cap \Hq$ under $f$ is the union of the interval $B \cap \Hq$, $2d_\infty -2$ pairwise disjoint open quasiarcs in $Y_\Hq^\infty$ connecting $c$ and $\partial B \cap Y_\Hq^\infty$, and $2d_0-2$ pairwise disjoint open quasiarcs in $Y_\Hq^0$ connecting $c$ and $\partial B \cap Y_\Hq^0$.
    \item\label{R05} On the interior of $R$, $f$ is a conformal isomorphism onto the interior of $f(R)$, and the preimage of $f(R)$ under $f|_A$ is precisely $R$.
\end{enumerate}

Let us denote by 
\[
\mathcal{HQ}(d_0,d_\infty,N,K,\mu)
\]
the space of $(d_0,d_\infty)$-critical $K$-quasicircle maps $f: \Hq \to \Hq$ with rotation number in $\Theta_N$ that admits an $f$-relevant $2\mu$-collar neighborhood $A$ of $\Hq$ whose image $f(A)$ contains a $\mu$-collar neighborhood of $\Hq$. For example, for any $\theta \in \Theta_N$, $F_{d_0,d_\infty,\theta}$ lies in $\mathcal{HQ}(d_0,d_\infty,N,K,\mu)$ where $K$ and $\mu$ depend only on $d_0$, $d_\infty$, and $N$.

For any annular neighborhood $A$ of a quasicircle $\Hq$ and for $\bullet \in \{0,\infty\}$, we denote by $A^\bullet$ the annulus $A \cap Y_\Hq^\bullet$. Given any $\mu>0$, we say that an open neighborhood $A$ of $\Hq$ is a \emph{$\mu$-collar neighborhood} of $\Hq$ if $\modu(A^0)\geq \mu$ and $\modu(A^\infty) \geq \mu$.

\subsection{Quasicritical circle maps}
\label{ss:uni-herman-qc}

To study critical quasicircle maps, we will make use of Avila-Lyubich's theory of quasicritical circle maps in \cite[\S3]{AL22}.

\begin{definition}
    For any integer $d\geq 2$, a \emph{$d$-quasicritical circle map} is an orientation-preserving homeomorphism $g: \T \to \T$ of the circle with the following properties.
    \begin{enumerate}[label=(Q\textsubscript{\arabic*})]
        \item\label{QCC1} The map $g$ admits a $\T$-symmetric quasiregular extension of the form $B_{d,\alpha} \circ h$ on some $\T$-symmetric annular neighborhood $A$ of $\T$ where $\alpha \in [0,1)$ and $h$ is some $\T$-symmetric quasiconformal map on $\C$.
        \item\label{QCC2} The annulus $A$ can be decomposed into a disjoint union of a $\T$-symmetric open disk neighborhood $B$ of $1$ and a $\T$-symmetric topological rectangle $R$ intersecting $\T$ along $\T \backslash B$.
        \item\label{QCC3} On $B$, $g$ is a degree $2d-1$ quasiregular covering map branched only at $1$, and it is holomorphic at the set of points $z$ in $B$ such that $\T$ does not separate $z$ and $f(z)$. 
        \item \label{QCC4} The preimage of $g(B)\cap \T$ under $g$ is the union of the interval $B \cap \T$, $2d-2$ pairwise disjoint open quasiarcs in $Y_\T^\infty$ connecting $1$ and $\partial B \cap Y_\T^\infty$, and $2d-2$ pairwise disjoint open quasiarcs in $Y_\T^0$ connecting $1$ and $\partial B \cap Y_\T^0$.
        \item\label{QCC5} On the interior of $R$, $g$ is a conformal isomorphism onto the interior of $g(R)$, and the preimage of $g(R)$ under $f|_A$ is precisely $R$.
    \end{enumerate}
\end{definition}

Denote by $\Cir(d,N,K,\delta)$ the space of all $d$-quasicritical circle maps $g: \T \to \T$ such that the rotation number $\theta$ of $g$ is in $\Theta_N$, the map $h$ in \ref{QCC1} is $K$-quasiconformal, and that there exists a $2\delta$-collar annular neighborhood $A$ of $\T$ satisfying \ref{QCC1}--\ref{QCC5} whose image $g(A)$ is also a $\delta$-collar neighborhood of $\T$. Note that our parameters differ slightly from those used by Avila and Lyubich, but it is not difficult to show that they encode equivalent amount of information.

Even though only $2$-quasicritical circle maps are discussed in \cite[\S3]{AL22}, the results and proofs still hold for general $d$-quasicritical circle maps. Such maps admit the usual cross ratio distortion bounds and, as a result, they are quasisymmetrically rigid.

\begin{theorem}[{\cite[Theorem 3.9]{AL22}}]
\label{cqc-real-bounds}
    Every quasicritical circle map $g: \T \to \T$ in $\Cir(d,N,K,\delta)$ is quasisymmetrically conjugate to the irrational rotation with dilatation depending only on $(d,N,K,\delta)$. 
\end{theorem}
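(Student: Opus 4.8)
The plan is to follow the classical route to quasisymmetric linearization of critical circle maps (\'Swiatek, Herman, de Faria--de Melo), adapted to the quasiregular category as in \cite[\S3]{AL22}. The core is a pair of \emph{real a priori bounds}: writing $\mathcal{P}_n$ for the dynamical partition of $\T$ generated by the orbit $\{g^i(1)\}_{i=0}^{q_{n+1}+q_n-1}$ of the critical point, one shows there is a constant $C=C(d,N,K,\delta)$ such that for every $n$ the partition $\mathcal{P}_n$ has $C$-bounded geometry (adjacent atoms, and consecutively nested atoms, have $C$-comparable lengths). As a byproduct, $g$ has no wandering intervals, hence — being a circle homeomorphism with irrational rotation number $\theta\in\Theta_N$ — is topologically conjugate to the rigid rotation $R_\theta$ by a homeomorphism $\phi:\T\to\T$, and that is the conjugacy we will show to be quasisymmetric.

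First I would set up the cross-ratio / Koebe machinery in this setting. Away from the critical point $1$, an iterate $g^k$ restricted to a ``long monotone branch'' (a union of consecutive atoms of some $\mathcal{P}_m$ mapped homeomorphically and missing $1$) decomposes, after undoing the universal cover, into a composition of branches of the Blaschke model $B_{d,\alpha}$ — which are genuinely holomorphic with Schwarzian-type control and hence obey the usual Koebe and cross-ratio distortion estimates — interspersed with copies of the fixed quasiconformal map $h$; near $1$ the local model is the power map $z\mapsto z^{2d-1}$ precomposed with $h$. Since $B_{d,\alpha}|_\T$ is itself a real-analytic critical circle map of criticality $2d-1$, for which these bounds are classical, the real content is to show they survive the quasiconformal perturbation by $h$. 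The key observation — which is exactly the point of working with quasicritical circle maps — is that each such branch meets the non-holomorphic locus of $g$ and the disk $B$ around the critical point only boundedly often, the bound being controlled by $N$ through the bounded-type combinatorics; thus the total cross-ratio distortion picked up from the $h$-insertions and from the single power-law passages is bounded in terms of $K$ and $N$. Feeding these estimates into the standard \'Swiatek--Herman inductive scheme — together with Proposition~\ref{bounded-type}, which keeps the number of inductive steps between successive return times bounded — propagates the bounds to all scales with a constant depending only on $(d,N,K,\delta)$, and simultaneously the shrinking of atoms rules out wandering intervals.

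Next I would compare partitions. The conjugacy $\phi$ carries $\mathcal{P}_n$ onto the orbit partition $\mathcal{P}_n^{R_\theta}$ of $R_\theta$ rooted at $\phi(1)$, respecting the combinatorial (Markov) structure and the nesting $\mathcal{P}_{n+1}\prec\mathcal{P}_n$. Since $\theta\in\Theta_N$, Proposition~\ref{bounded-type} shows $\mathcal{P}_n^{R_\theta}$ also has $C'(N)$-bounded geometry, and each atom of $\mathcal{P}_n$ (resp.\ $\mathcal{P}_n^{R_\theta}$) is subdivided into at most $N+1$ atoms of $\mathcal{P}_{n+1}$ (resp.\ $\mathcal{P}_{n+1}^{R_\theta}$). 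Now I invoke the standard criterion that a circle homeomorphism conjugating two sequences of nested Markov partitions, both with bounded geometry and with bounded refinement multiplicity, is $\eta$-quasisymmetric with $\eta$ depending only on those constants: given $x,y,z\in\T$ with $|x-y|\asymp|y-z|$, one chooses the largest $n$ for which $[x,z]$ contains an atom of $\mathcal{P}_n$; bounded geometry then forces $[x,y]$ and $[y,z]$ to each be a union of a bounded number of atoms of $\mathcal{P}_n$ and $\mathcal{P}_{n+1}$, and transporting this to the $R_\theta$ side and using bounded geometry there controls $|\phi(x)-\phi(y)|/|\phi(y)-\phi(z)|$. This gives the bound on the quasisymmetry norm of $\phi$ in terms of $(d,N,K,\delta)$ alone.

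The main obstacle is the real a priori bounds of the second paragraph: the cross-ratio distortion tools are tailored to holomorphic (negative-Schwarzian) maps and to power maps, and one must show that the fixed quasiconformal factor $h$, together with the finitely many near-critical passages, contributes only a bounded, parameter-controlled amount of distortion along arbitrarily long orbit segments. Keeping precise track of how often a long branch revisits the non-holomorphic locus and the critical disk $B$, and absorbing those revisits into the constant without losing uniformity over the class, is the technical heart of the argument — this is exactly what \cite[\S3]{AL22} carries out, and our job is to cite it.
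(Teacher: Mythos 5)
Your proposal matches the paper's treatment: the paper does not reprove this statement but quotes it from \cite[\S3]{AL22}, remarking only that although Avila--Lyubich state it for $2$-quasicritical circle maps, their cross-ratio distortion bounds and the resulting quasisymmetric rigidity go through verbatim for general $d$ — which is exactly the Świątek--Herman-type scheme you sketch before concluding that the job is to cite \cite{AL22}. So your route is essentially the paper's, with your middle paragraphs serving as a (correct) expansion of what the cited proof contains rather than a new argument.
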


Similar to critical circle maps, quasicritical circle maps also admit complex bounds, since the main ingredients of the proof, namely real bounds and Schwarz lemma, are available. We will apply complex bounds later in the proof of Theorem \ref{thm:complex-bounds}.

The primary motivation behind introducing quasicritical circle maps is that critical quasicircle maps can be identified as a gluing of two quasicritical circle maps.

\begin{proposition}
\label{welding}
    Every map $f: \Hq \to \Hq$ in $\mathcal{HQ}(d_0,d_\infty,N,K,\mu)$ is a welding of two quasicritical circle maps. There is an annular neighborhood $A$ of $\Hq$ on which $f$ is holomorphic, and a pair of quasicritical circle maps $g_0$ and $g_\infty$ such that for each $\bullet \in \{0,\infty\}$,
    \begin{enumerate}[label=\textnormal{(\arabic*)}]
        \item $g_\bullet$ is in $\Cir(d_\bullet,N,L,\delta)$ for some $L=L(K)>1$ and $\delta= \delta(d_0,d_\infty,K,\mu)>0$;
        \item there is an $L$-quasiconformal map $\phi_\bullet: \RS \to \RS$ that maps $\Hq$ to $\T$ and conjugates $f|_{A \cap \overline{Y_\Hq^\bullet}}$ and $g_\bullet|_{\phi_\bullet(A) \cap \overline{Y_\T^\bullet}}$.
    \end{enumerate}
\end{proposition}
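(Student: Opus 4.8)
The plan is to build $g_0$ out of the inner dynamics of $f$ and $g_\infty$ out of the outer dynamics; the two constructions are interchanged by the reflection $\sigma(z) := 1/\bar z$, so I describe only $g_0$. Fix an $f$-relevant $2\mu$-collar $A = B\sqcup R$ furnished by the hypothesis $f\in\mathcal{HQ}(d_0,d_\infty,N,K,\mu)$, and set $A^0 := A\cap\overline{Y^0_\Hq}$, $B^0 := B\cap\overline{Y^0_\Hq}$, $R^0 := R\cap\overline{Y^0_\Hq}$. Since $\Hq$ is a $K$-quasicircle, I can extend the Riemann map $Y^0_\Hq\to\D$ to a global quasiconformal homeomorphism $\phi_0\colon\RS\to\RS$ that is conformal on $Y^0_\Hq$, carries $\Hq$ onto $\T$ and $Y^0_\Hq$ onto $\D$, and has dilatation bounded by some $L = L(K)$; this is the $\phi_0$ of the statement. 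Because $f$ is holomorphic on $A$ and $\phi_0$ is conformal on $Y^0_\Hq$, the map $\tilde g_0 := \phi_0\circ f\circ\phi_0^{-1}$ on $\phi_0(A^0)$ is holomorphic at every point $w$ for which $\phi_0^{-1}(w)$ and $f(\phi_0^{-1}(w))$ lie on the same closed side of $\Hq$, and merely $L$-quasiregular where this fails — and, by \ref{R03}--\ref{R04}, that exceptional set is confined to a neighbourhood of $\hat c_0 := \phi_0(c)$. I then extend $\tilde g_0$ across $\T$ to a $\T$-symmetric quasiregular map $g_0$ on a full annular neighbourhood of $\T$ by reflection, setting $g_0 := \sigma\circ\tilde g_0\circ\sigma$ on $\sigma(\phi_0(A^0))$; the two definitions agree continuously along $\T$, where $\tilde g_0$ restricts to a homeomorphism of $\T$, and since $\T$ is removable for quasiregular maps the resulting $g_0$ is $L$-quasiregular and $\T$-symmetric.

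Next I would verify \ref{QCC1}--\ref{QCC5} with $d = d_0$, together with the membership $g_0\in\Cir(d_0,N,L,\delta)$. The rotation number of $g_0|_\T$ equals $\rot(f)\in\Theta_N$, since $\phi_0|_\Hq$ conjugates $f|_\Hq$ to $g_0|_\T$; conformality of $\phi_0$ on $Y^0_\Hq$ gives $\modu(\phi_0(A^0)) = \modu(A^0)\geq 2\mu$ and makes $g_0(\phi_0(A))$ contain a $\delta$-collar of $\T$ for some $\delta = \delta(d_0,d_\infty,K,\mu)$, the reflected pieces carrying the same moduli; and the splitting of $\phi_0(A^0)$ into $\phi_0(B^0)$ and $\phi_0(R^0)$ furnished by \ref{R02}, together with Schwarz reflection applied to the conformal isomorphism $f|_{\operatorname{int}R^0}$ of \ref{R05}, yields \ref{QCC2} and \ref{QCC5}. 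The heart of the matter is the local picture near $\hat c_0$: by \ref{R03}--\ref{R04}, $f$ maps $B^0$ onto $f(B)$ with $d_0$ sheets over $f(B)\cap\overline{Y^0_\Hq}$ and $d_0-1$ sheets over $f(B)\cap Y^\infty_\Hq$, and the $f$-preimage of $f(B)\cap\Hq$ inside $B^0$ is $(B^0\cap\Hq)\cup\Gamma^0_1\cup\cdots\cup\Gamma^0_{2d_0-2}$, where $\Gamma^0_1,\dots,\Gamma^0_{2d_0-2}$ are the $2d_0-2$ arcs in $Y^0_\Hq$ of \ref{R04}; transporting this by $\phi_0$ and adjoining its $\sigma$-reflection exhibits $g_0$ near $\hat c_0$ as a quasiregular branched covering of $g_0(B)$ of degree $d_0 + (d_0-1) = 2d_0-1$, branched only at $\hat c_0$, holomorphic off the crossing locus, and whose preimage of $g_0(B)\cap\T$ consists of $B\cap\T$ together with $2d_0-2$ quasiarcs in $Y^\infty_\T$ and $2d_0-2$ quasiarcs in $Y^0_\T$ — exactly \ref{QCC3} and \ref{QCC4}.

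It then remains to put $g_0$ in the form $B_{d_0,\alpha_0}\circ h_0$ required by \ref{QCC1}. Since $g_0$ is holomorphic away from $\hat c_0$, I would only work near $\hat c_0$: a $\T$-symmetric application of the standard factorization of a quasiregular branched covering through a holomorphic one writes $g_0 = P_0\circ\eta_0$ there, with $\eta_0$ a $\T$-symmetric quasiconformal germ fixing $\hat c_0$ and $P_0$ holomorphic with a critical point of local degree $2d_0-1$; extending $\eta_0$ by the identity and solving the induced Beltrami equation produces a $\T$-symmetric quasiconformal $h_0'$ of $\RS$, of dilatation controlled by $L$, with $g_0\circ(h_0')^{-1}$ holomorphic near $\T$, $\T$-symmetric, of winding number one, and with a single critical point of local degree $2d_0-1$. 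As $B_{d_0,\alpha_0}$ is, for a suitable $\alpha_0\in[0,1)$, another germ of that type, matching the two holomorphic branched coverings over their quasiconformally identified images and lifting gives a $\T$-symmetric quasiconformal $h_0''$ with $B_{d_0,\alpha_0}\circ h_0'' = g_0\circ(h_0')^{-1}$, so $h_0 := h_0''\circ h_0'$ does it, and $g_0\in\Cir(d_0,N,L,\delta)$. Property (2) then holds tautologically, $g_0|_{\phi_0(A)\cap\overline{\D}} = \phi_0\circ f\circ\phi_0^{-1}|_{A^0}$ with $\phi_0$ $L(K)$-quasiconformal, and the construction of $g_\infty$ is the mirror image, using a $\phi_\infty$ conformal on $Y^\infty_\Hq$. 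The hard part will be this last step: the factorization and the matching surgery must be organised so that the quasiconformal factor near $\T$ inherits its dilatation from $\phi_0$ alone — giving $L = L(K)$ — while the collar bound $\mu$ enters only through $\delta$; by comparison, deducing \ref{QCC3}--\ref{QCC4} from \ref{R03}--\ref{R04} is bookkeeping.
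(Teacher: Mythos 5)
Your proposal is correct and follows essentially the same route as the paper: extend the Riemann maps of the two sides of $\Hq$ to global $L(K)$-quasiconformal maps (conformal on the respective complementary components), conjugate $f$ on each half-collar, and complete by Schwarz reflection across $\T$, after which \ref{R01}--\ref{R05} translate into \ref{QCC1}--\ref{QCC5}. The only difference is one of detail: where the paper asserts that the properties "immediately transfer," you spell out the degree bookkeeping ($d_0+(d_0-1)=2d_0-1$) and sketch the Sto\"ilow-type factorization and lifting needed to realize the $B_{d_0,\alpha}\circ h$ form in \ref{QCC1}, which is a legitimate (if slightly more laborious) filling-in of the same argument.
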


\begin{proof}
    Let $A$ be an $f$-relevant neighborhood of $\Hq$. For $\bullet \in \{0,\infty\}$, let $\phi_\bullet : Y_\Hq^\bullet \to Y_\T^\bullet$ denote the Riemann mapping fixing $\bullet$ whose continuous extension to the boundary sends the critical point of $f$ to $1$. Since $\Hq$ is a $K$-quasicircle, the map $\phi_\bullet$ extends to a global $L(K)$-quasiconformal map sending $\Hq$ to $\T$. Let $g_\bullet := \phi_\bullet \circ f \circ \phi_\bullet^{-1}$ on $\phi_\bullet(A^\bullet)$ and apply the Schwarz reflection principle to extend $g_\bullet$ to a $\T$-symmetric quasiregular map that restricts to a self homeomorphism of $\T$. Properties \ref{R01}--\ref{R05} for $f$ immediately transfer to \ref{QCC1}--\ref{QCC5} for $g_\bullet$, so $g_\bullet$ is the desired quasicritical circle map.
\end{proof}

This proposition is the key towards transferring known results on quasicritical circle maps to critical quasicircle maps. For instance, we have a quantitative version of Theorem \ref{petersen}.

\begin{lemma}
\label{regularity}
    Given a map $f: \Hq \to \Hq$ in $\mathcal{HQ}(d_0,d_\infty,N,K,\mu)$, there is a quasiconformal map $h$ on $\RS$ that restricts to a conjugacy between $f|_\Hq$ and the rigid rotation $R_\theta|_\T$, and has dilatation depending only on $(d_0,d_\infty,N,K,\mu)$.
\end{lemma}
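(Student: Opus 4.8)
The plan is to reduce the statement to the welding picture of Proposition \ref{welding} and the quantitative real bounds of Theorem \ref{cqc-real-bounds}. First I would invoke Proposition \ref{welding} to produce quasicritical circle maps $g_0 \in \Cir(d_0,N,L,\delta)$ and $g_\infty \in \Cir(d_\infty,N,L,\delta)$, with $L=L(K)$ and $\delta=\delta(d_0,d_\infty,K,\mu)$, together with $L$-quasiconformal maps $\phi_0,\phi_\infty:\RS\to\RS$ carrying $\Hq$ onto $\T$, $Y_\Hq^\bullet$ onto $Y_\T^\bullet$, and conjugating $f$ on $A\cap\overline{Y_\Hq^\bullet}$ to $g_\bullet$ on $\phi_\bullet(A)\cap\overline{Y_\T^\bullet}$. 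In particular $\phi_\bullet|_\Hq$ conjugates $f|_\Hq$ to $g_\bullet|_\T$, so both circle maps $g_0|_\T$ and $g_\infty|_\T$ have the same rotation number $\theta\in\Theta_N$ as $f|_\Hq$. Applying Theorem \ref{cqc-real-bounds} to each, I obtain $M$-quasisymmetric circle homeomorphisms $\psi_\bullet:\T\to\T$ with $\psi_\bullet\circ g_\bullet|_\T = R_\theta\circ\psi_\bullet$, where $M$ depends only on $(d_0,d_\infty,N,K,\mu)$. Then $\psi_0\circ\phi_0|_\Hq$ and $\psi_\infty\circ\phi_\infty|_\Hq$ are both conjugacies from $f|_\Hq$ to $R_\theta|_\T$; since $\theta$ is irrational, the centralizer of $R_\theta$ in $\mathrm{Homeo}^+(\T)$ consists only of rotations, so these two conjugacies differ by post-composition with some rotation $R_\alpha$. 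Replacing $\psi_\infty$ by $R_\alpha\circ\psi_\infty$, which is still $M$-quasisymmetric, I may assume $\psi_0\circ\phi_0 = \psi_\infty\circ\phi_\infty$ on $\Hq$.

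Next I would extend each $\psi_\bullet$ across the Jordan disk $Y_\T^\bullet$ by the Beurling--Ahlfors (or Douady--Earle) extension to a quasiconformal self-homeomorphism $\Psi_\bullet$ of $Y_\T^\bullet$ whose dilatation is bounded by a constant $K_1=K_1(M)$, and define $h:\RS\to\RS$ by $h:=\Psi_\bullet\circ\phi_\bullet$ on $Y_\Hq^\bullet$, $\bullet\in\{0,\infty\}$. The two branches agree on $\Hq$ by the normalization above, so $h$ is a well-defined orientation-preserving homeomorphism of $\RS$ that restricts on $\Hq$ to $\psi_0\circ\phi_0$, hence conjugates $f|_\Hq$ to $R_\theta|_\T$. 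On each side of $\Hq$ the map $h$ is quasiconformal with dilatation at most $L\cdot K_1$, so $h$ is quasiconformal on $\RS\backslash\Hq$.

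The one genuinely non-formal step is upgrading this to global quasiconformality: a homeomorphism of $\RS$ that is separately quasiconformal on the two complementary components of a $K$-quasicircle is quasiconformal on all of $\RS$, with dilatation controlled quantitatively by $K$ and the one-sided bounds. This is the standard quasiconformal gluing/removability lemma for quasicircles (it follows, for instance, from quasiconformal removability of quasicircles, or from solving the Beltrami equation for the coefficient defined off $\Hq$ and comparing with $h$ via uniqueness of normalized solutions). Granting it, the dilatation of $h$ is bounded in terms of $K$, $L$, and $K_1$, hence ultimately depends only on $(d_0,d_\infty,N,K,\mu)$, which is exactly the assertion. A routine supporting fact used above is that a topological conjugacy of a circle homeomorphism to a rigid irrational rotation is unique up to post-composition with a rotation; this is what permits the rotation adjustment that makes the two one-sided linearizations match along $\Hq$.
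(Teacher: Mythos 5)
Your proposal is correct and follows the same route as the paper, whose proof of this lemma is simply to cite Proposition \ref{welding} and Theorem \ref{cqc-real-bounds}. You have merely filled in the standard details left implicit there (rotation adjustment via the centralizer of $R_\theta$, Douady--Earle/Beurling--Ahlfors extension, and quasiconformal removability of quasicircles for the gluing), all of which are sound and quantitatively controlled by $(d_0,d_\infty,N,K,\mu)$.
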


\begin{proof}
    This follows directly from Theorems \ref{cqc-real-bounds} and \ref{welding}.
\end{proof}

\begin{remark}
    In general, if $f$ is a multicritical quasicircle map with arbitrary irrational rotation number, then $f$ is still conjugate to irrational rotation. Indeed, similar to Proposition \ref{welding}, $f$ is a conformal welding of two \emph{multi-quasicritical circle maps} $g_0$ and $g_\infty$. In \cite[Theorem 1.5]{Pe04}, Petersen showed that such maps satisfy the usual cross ratio distortion bounds, which in turn implies that they do not admit any wandering interval. (Compare with \cite[\S3]{Pe00}.)
\end{remark}

\subsection{Commuting pairs}
\label{ss:commuting-pairs}

Before we delve into a discussion on renormalization, let us define the abstract notion of commuting pairs relevant in our context.

Let us denote by $\mathbb{H}$ and $-\mathbb{H}$ the standard upper and lower half planes in $\C$ respectively.

\begin{definition}
\label{def:com-pair}
    Let $\I \Subset \C$ be a closed quasiarc containing $0$ on its interior. A \emph{commuting pair} $\zeta$ based on $\mathbf{I}$ is a pair of orientation preserving analytic homeomorphisms 
    \[
    \zeta = \left(f_-: I_- \to f_-(I_-), \: 
    f_+: I_+ \to f_+(I_+) \right)
    \]
    with the following properties.
    \begin{enumerate}[label = (P\textsubscript{\arabic*})]
        \item\label{P-1} $I_-$ and $I_+$ are closed subintervals of $\mathbf{I}$ of the form $[f_+(0),0]$ and $[0,f_-(0)]$ respectively such that $\mathbf{I} = I_- \cup I_+ = f_-(I_-) \cup f_+(I_+)$ and $I_- \cap I_+ = \{0\}$.
        \item\label{P-2} For all $x \in I_\pm \backslash \{0\}$, $f_\pm'(x) \neq 0$.
        \item\label{P-3} Both $f_-$ and $f_+$ admit holomorphic extensions to a neighborhood $B$ of $0$ on which $f_-$ commutes with $f_+$ and $f_- \circ f_+ (\I \cap B) \subset I_-$.
    \end{enumerate}
    Additionally, a commuting pair $\zeta$ is a \emph{critical commuting pair} if
    \begin{enumerate}[label = (P\textsubscript{\arabic*}), start=4]
        \item\label{P-4} $0$ is a critical point of both $f_-$ and $f_+$.
    \end{enumerate}
    The quasiarc $\mathbf{I}$ is called the \emph{base} of $\zeta$. We say that $\zeta$ is \emph{normalized} if $f_+(0) = -1$. A critical commuting pair $\zeta$ is called a \emph{$(d_0,d_\infty)$-critical commuting pair} if for any quasiconformal map $\phi$ mapping $I_-$ and $I_+$ to real intervals $[-1,0]$ and $[0,1]$ respectively and for any sufficiently small round disk $D$ centered at $\phi(f_+(f_-(0)))$, the number of connected components of $\phi (f_+ \circ f_-)^{-1} \phi^{-1}(D \cap -\He)$ in $-\He$ is $d_\infty$, whereas the number of connected components of $\phi (f_+ \circ f_-)^{-1} \phi^{-1}(D \cap \He)$ in $\He$ is $d_0$. Refer to Figure \ref{fig:ccp} for an illustration when $(d_0,d_\infty)=(3,2)$.
\end{definition}

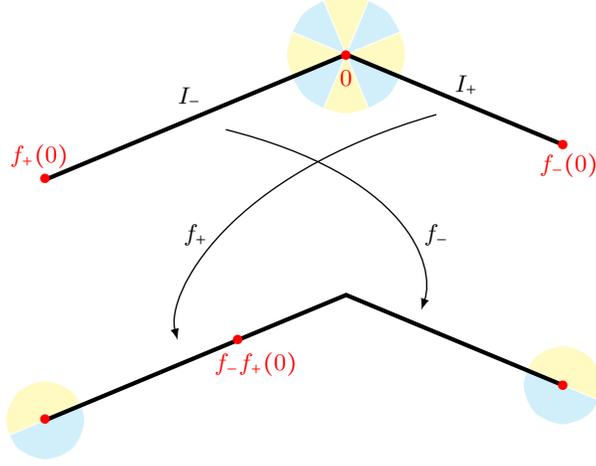
\begin{figure}
    \centering
    \begin{tikzpicture}[scale=0.8]
        \draw[white, fill=cyan!40!white,opacity=0.4] (0,0) -- (0.9,0.375) .. controls (0.72,0.72) .. (0.375,0.9) -- (0,0);
        \draw[white, fill=cyan!40!white,opacity=0.4] (0,0) -- (-0.9,0.375) .. controls (-0.72,0.72) .. (-0.375,0.9) -- (0,0);
        \draw[white, fill=cyan!40!white,opacity=0.4] (0,0) -- (0.9,-0.375) .. controls (0.72,-0.72) .. (0.375,-0.9) -- (0,0);
        \draw[white, fill=cyan!40!white,opacity=0.4] (0,0) -- (-0.9,-0.375) .. controls (-0.72,-0.72) .. (-0.375,-0.9) -- (0,0);
        \draw[white, fill=yellow!50!white,opacity=0.6] (0,0) -- (0.9,0.375) .. controls (1.0182,0) .. (0.9,-0.375) -- (0,0);
        \draw[white, fill=yellow!50!white,opacity=0.6] (0,0) -- (-0.9,0.375) .. controls (-1.0182,0) .. (-0.9,-0.375) -- (0,0);
        \draw[white, fill=yellow!50!white,opacity=0.6] (0,0) -- (0.375,0.9) .. controls (0,1.0182) .. (-0.375,0.9) -- (0,0);
        \draw[white, fill=yellow!50!white,opacity=0.6] (0,0) -- (0.375,0.-0.9) .. controls (0,-1.0182) .. (-0.375,-0.9) -- (0,0);
        \draw[white,fill=yellow!50!white,opacity=0.6] 
        (-4.4,-5.8333) .. controls (-4.52,-5.603) .. (-4.75,-5.483) .. controls (-5,-5.4042) .. (-5.25,-5.483) .. controls (-5.48,-5.603) .. (-5.6,-5.8333) .. controls (-5.6788,-6.083) .. (-5.6,-6.333) -- (-4.4,-5.8333);
        \draw[white,fill=cyan!40!white,opacity=0.4] 
        (-4.4,-5.8333) .. controls (-4.3212,-6.083) .. (-4.4,-6.333) .. controls (-4.52,-6.563) .. (-4.75,-6.683) .. controls (-5,-6.7618) .. (-5.25,-6.683) .. controls (-5.48,-6.563) .. (-5.6,-6.333) -- (-4.4,-5.8333);
        \draw[white,fill=yellow!50!white,opacity=0.6]
        (3,-5.25) .. controls (3.12,-5.02) ..
        (3.35,-4.9) .. controls (3.6, -4.8212) ..
        (3.85,-4.9) .. controls (4.08,-5.02) ..
        (4.2,-5.25) .. controls (4.2788,-5.5) ..
        (4.2,-5.75) -- (3,-5.25);
        \draw[white,fill=cyan!40!white,opacity=0.4]
        (3,-5.25) .. controls (2.9212,-5.5) ..
        (3,-5.75) .. controls (3.12,-5.98) ..
        (3.35,-6.1) .. controls (3.6,-6.1788) ..
        (3.85,-6.1) .. controls (4.08,-5.98) ..
        (4.2,-5.75) -- (3,-5.25);
        
        \draw[ultra thick] (-5,-2.083) -- (0,0) -- (3.6,-1.5);
        \draw[ultra thick] (-5,-6.083) -- (0,-4) -- (3.6,-5.5);
        \draw[line width=0.5pt,-latex] (-2,-1.25) .. controls (1,-2) and (1.5,-3.5) .. (1.25,-4.25);
        \draw[line width=0.5pt,-latex] (1.5,-1) .. controls (-2,-2) and (-3,-4) .. (-2.8,-4.75);

        % labels
        \node [red, font=\bfseries] at (0,-0.03) {$\bullet$};
        \node [red, font=\bfseries] at (0,-0.4) {\small $0$};
        \node [red, font=\bfseries] at (3.6,-1.52) {$\bullet$};
        \node [red, font=\bfseries] at (-5,-2.088) {$\bullet$};
        \node [red, font=\bfseries] at (3.6,-5.52) {$\bullet$};
        \node [red, font=\bfseries] at (-5,-6.088) {$\bullet$};
        \node [red, font=\bfseries] at (-1.8,-4.76) {$\bullet$};
        \node [black, font=\bfseries] at (-2.6,-0.7) {\small $I_-$};
        \node [black, font=\bfseries] at (2,-0.5) {\small $I_+$};
        \node [red, font=\bfseries] at (3.7,-1.85) {\small $f_-(0)$};
        \node [red, font=\bfseries] at (-5.1,-1.7) {\small $f_+(0)$};
        \node [red, font=\bfseries] at (-1.5,-5.15) {\small $f_-f_+(0)$};
        \node [black, font=\bfseries] at (1.5,-3) {\small $f_-$};
        \node [black, font=\bfseries] at (-2.5,-3) {\small $f_+$};
\end{tikzpicture}

    \caption{A cartoon of a $(3,2)$-critical commuting pair.}
    \label{fig:ccp}
\end{figure}

\begin{definition}
\label{def:crit-comm-pair}
    We say that a $(d_0,d_\infty)$-critical commuting pair $\zeta = (f_-,f_+)$ is \emph{renormalizable} if there exists a positive integer $\chi=\chi(\zeta)$ that corresponds to the first time $f_-^{\chi+1}\circ f_+(0)$ lies in the interior of $I_+$. If renormalizable, we call the $(d_\infty,d_0)$-critical commuting pair 
    \[
        p\renorm \zeta := 
        \left(
        f_-^\chi \circ f_+|_{[0,f_-(0)]}, \:
        f_-|_{[f_-^\chi f_+(0),0]}
        \right)
    \]
    the \emph{pre-renormalization} of $\zeta$, and we call the normalized $(d_0,d_\infty)$-critical commuting pair obtained by conjugating $p\renorm \zeta$ with the antilinear map $z \mapsto -f_-(0) \bar{z}$ the \emph{renormalization} $\renorm \zeta$ of $\zeta$. 
    
    If $\renorm \zeta$ is again renormalizable, we call $\zeta$ twice renormalizable, and so on. If $\zeta$ is infinitely renormalizable, we define the \emph{rotation number} of $\zeta$ to be the irrational number
    \[
        \rot(\zeta) := [0; \chi(\zeta), \chi(\renorm \zeta), \chi(\renorm^2 \zeta), \ldots].
    \]
\end{definition}

In what follows, we only consider commuting pairs that are infinitely renormalizable. Our renormalization operator transforms the rotation number according to the Gauss map $G(x) := \left\{ \frac{1}{x} \right\}$.

\begin{lemma}
\label{rotation-number}
    For any critical commuting pair $\zeta$ and $n \geq 1$, $\rot(\renorm^n \zeta) = G^n(\rot(\zeta))$.
\end{lemma}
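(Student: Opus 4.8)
The statement is a direct consequence of the definition of $\rot(\zeta)$ together with the elementary shift identity for continued fractions, so the plan is simply to unwind the definition. First I would recall that for any irrational $x \in (0,1)$ with continued fraction expansion $x = [0; a_1, a_2, a_3, \ldots]$, where each $a_i$ is a positive integer, one has $1/x = a_1 + [0; a_2, a_3, \ldots]$ with $[0; a_2, a_3, \ldots] \in (0,1)$, and therefore
\[
G(x) = \left\{ \tfrac{1}{x} \right\} = [0; a_2, a_3, \ldots].
\]
In other words, $G$ acts on continued fraction expansions as the one-sided shift. (Here it is worth noting that, because each partial quotient is a positive integer, the tail $[0; a_2, a_3, \ldots]$ is a genuine irrational in $(0,1)$, so $1/x$ is never an integer and the fractional part strips off exactly the first partial quotient; this is the only point requiring a moment's care.)

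\textbf{Key steps.} Since we only consider infinitely renormalizable commuting pairs, $\renorm^k \zeta$ is renormalizable for every $k \geq 0$ and the quantities $\chi(\renorm^k \zeta)$ are well-defined positive integers. By Definition \ref{def:crit-comm-pair}, the renormalization of $\renorm\zeta$ is $\renorm^2\zeta$, the renormalization of $\renorm^2\zeta$ is $\renorm^3\zeta$, and so on, hence
\[
\rot(\zeta) = [0; \chi(\zeta), \chi(\renorm \zeta), \chi(\renorm^2 \zeta), \ldots]
\quad \text{and} \quad
\rot(\renorm \zeta) = [0; \chi(\renorm \zeta), \chi(\renorm^2 \zeta), \ldots].
\]
Applying the shift identity with $a_k = \chi(\renorm^{k-1} \zeta)$ gives $G(\rot(\zeta)) = [0; \chi(\renorm\zeta), \chi(\renorm^2\zeta), \ldots] = \rot(\renorm\zeta)$, which settles the case $n = 1$. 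The general case then follows by an immediate induction: assuming $\rot(\renorm^{n-1}\zeta) = G^{n-1}(\rot(\zeta))$, one applies the case $n=1$ to the infinitely renormalizable pair $\renorm^{n-1}\zeta$ to obtain
\[
\rot(\renorm^n \zeta) = G\bigl(\rot(\renorm^{n-1}\zeta)\bigr) = G\bigl(G^{n-1}(\rot(\zeta))\bigr) = G^n(\rot(\zeta)).
\]

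\textbf{Main obstacle.} There is essentially no obstacle: the proof is a bookkeeping exercise that translates the recursive definition of $\rot$ into the action of the Gauss map on continued fractions. The only thing to be careful about is the bookkeeping itself — confirming that renormalizing $\renorm^{k}\zeta$ produces $\renorm^{k+1}\zeta$ so that the partial quotients line up as $\chi(\renorm^{k}\zeta)$ — which is immediate from Definition \ref{def:crit-comm-pair}.
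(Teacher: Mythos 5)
Your proof is correct and is exactly the argument the paper intends: the paper states this lemma without proof, treating it as immediate from the definition $\rot(\zeta) = [0;\chi(\zeta),\chi(\renorm\zeta),\chi(\renorm^2\zeta),\ldots]$ together with the fact that the Gauss map acts as the shift on continued fraction expansions. Your unwinding of the definition, the remark that irrationality guarantees the fractional part strips exactly the first partial quotient, and the induction are precisely the bookkeeping the paper leaves to the reader.
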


For any $a \in \C$, let us denote by $T_a(z) := z+a$ the translation by $a$. For any irrational $\theta \in (0,1)$, the (non-critical) commuting pair
\begin{equation}
    \label{translation-pair}
    \mathbf{T}_\theta = \left( T_\theta|_{[-1,0]}, \: T_{-1}|_{[0,\theta]} \right)
\end{equation}
on intervals along the real line is infinitely renormalizable with rotation number $\theta$. The pair of translations (\ref{translation-pair}) gives a combinatorial model for normalized critical commuting pairs of the same rotation number.

Gluing the two ends of the real interval $[\theta-1,\theta]$ by $T_1$ projects the modified pair of translations $(T_\theta|_{[\theta-1,0]}, T_{\theta-1}|_{[0,\theta]})$ into the standard irrational rotation $R_\theta$ on the unit circle $\T$. In general, one can convert a commuting pair to a quasicircle map as follows.

\begin{proposition}
\label{gluing}
    Let $\zeta= (f_-|_{I_-}, f_+|_{I_+})$ be a commuting pair. Let $G_\zeta$ be the gluing map which corresponds to identifying $z$ with $f_+(z)$ for every point $z$ in a neighborhood of $f_-(0)$. Then, $G_\zeta$ projects the pair $(f_-|_{[f_+f_-(0),0]}, f_+f_-|_{[0,f_-(0)]})$ into a quasicircle map $f_\zeta: \Hq_\zeta \to \Hq_\zeta$ having the same rotation number as $\zeta$. If $\zeta$ is $(d_0,d_\infty)$-critical, then $f_\zeta: \Hq_\zeta \to \Hq_\zeta$ is a $(d_0,d_\infty)$-critical quasicircle map.
\end{proposition}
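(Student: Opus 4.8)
The plan is to run the classical construction that turns a critical commuting pair into a critical circle map (de Faria--de Melo \cite{dFdM2}, and Yampolsky) while keeping track of the quasiconformal geometry, so that the resulting curve comes out a quasicircle rather than a round circle, and to read off the rotation number and the inner/outer criticalities from the commuting pair data.

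\textbf{The glued surface and the curve $\Hq_\zeta$.} By \ref{P-3} the map $f_+$ extends univalently and holomorphically to a neighbourhood $\mathcal N$ of $f_-(0)$, carrying $\mathcal N$ onto a neighbourhood of $q:=f_+(f_-(0))=f_-(f_+(0))$ (the two descriptions of $q$ agree by the commutation in \ref{P-3}). I would let $G_\zeta$ be the identification $z\sim f_+(z)$ for $z\in\mathcal N$; applied to a thin enough neighbourhood $W$ of the base $\mathbf I$ it produces a Riemann surface $\widehat\Sigma$, and I set $\Hq_\zeta:=G_\zeta(\mathbf I)$ and $c:=G_\zeta(0)$. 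Since $G_\zeta$ acts conformally and $\mathbf I$ is a quasiarc, $\Hq_\zeta$ is a Jordan curve that is locally a quasiarc with a uniform constant; by Ahlfors' three-point (bounded turning) characterisation such a compact Jordan curve is globally a quasicircle, and uniformising the annular part of $\widehat\Sigma$ around $\Hq_\zeta$ onto a round annulus in $\RS$ realises $\Hq_\zeta$ as a quasicircle in $\RS$ separating its two complementary components, which I label $Y^0_{\Hq_\zeta}$ and $Y^\infty_{\Hq_\zeta}$. I expect this to be the main obstacle: one must quantify that bounded turning survives at $c$, where two quasiarc germs are glued by a conformal identification, and that the uniformisation does not spoil it.

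\textbf{The glued map.} On $W$ I consider the two holomorphically extended branches $g_-:=f_-$ near $[q,0]$ and $g_+:=f_+\circ f_-$ near $[0,f_-(0)]$. By \ref{P-1} together with \ref{P-3}, the images $g_-([q,0])$ and $g_+([0,f_-(0)])$ tile $\mathbf I$ modulo $G_\zeta$, meeting only along $G_\zeta$-identified points, so $G_\zeta$ pushes $(g_-,g_+)$ forward to a single orientation-preserving self-homeomorphism $f_\zeta$ of $\Hq_\zeta$. Near $c$ the two branches coincide after gluing: the defining relation $G_\zeta\circ f_+=G_\zeta$ on $\mathcal N$ gives $G_\zeta\circ g_+=G_\zeta\circ(f_+\circ f_-)=G_\zeta\circ f_-=G_\zeta\circ g_-$ on a neighbourhood of $0$, so $f_\zeta$ is represented near $c$ by the single holomorphic germ $G_\zeta\circ f_-$; away from $c$, each branch is a local biholomorphism by \ref{P-2}. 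Hence $f_\zeta$ extends holomorphically to a neighbourhood of $\Hq_\zeta$ in $\widehat\Sigma$, i.e.\ $f_\zeta:\Hq_\zeta\to\Hq_\zeta$ is a quasicircle map, and \ref{P-2}--\ref{P-4} force its only critical point to be $c$, so it is unicritical.

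\textbf{Rotation number and criticality.} For the rotation number I would compare with the model: the translation pair $\mathbf T_\theta$ of (\ref{translation-pair}) with $\theta=\rot(\zeta)$ glues by the same recipe to the rigid rotation $R_\theta$ on $\T$, and, since renormalisation of commuting pairs intertwines $\rot$ with the Gauss map (Lemma \ref{rotation-number}), the heights $\chi(\renorm^{n}\zeta)$ are precisely the continued-fraction entries of $\rot(\zeta)$. Unwinding the gluing, the $n$-th renormalisation of the circle map $f_\zeta$ in the usual sense is conjugate, by an affine or anti-affine map, to $\renorm^{n}\zeta$, with first-return time $\chi(\renorm^{n-1}\zeta)$; hence the successive approximation denominators of the Poincar\'e rotation number $\rho(f_\zeta)$ and of $\rot(\zeta)$ agree, so $\rho(f_\zeta)=\rot(\zeta)$. (Equivalently, the finite orbit $0,\,f_+(0),\,f_-(0),\,f_-f_+(0),\dots$ descends to an $f_\zeta$-orbit of $c$ with the same cyclic order as the corresponding $R_\theta$-orbit, and one invokes Poincar\'e's theorem.) Finally, assume $\zeta$ is $(d_0,d_\infty)$-critical and choose a quasiconformal chart $\phi$ straightening $I_-,I_+$ to $[-1,0],[0,1]$ as in Definition \ref{def:crit-comm-pair}, so that $-\mathbb H$ and $\mathbb H$ correspond, after possibly swapping, to $Y^0_{\Hq_\zeta}$ and $Y^\infty_{\Hq_\zeta}$. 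Since $f_\zeta(c)=G_\zeta(q)$ and $f_\zeta=G_\zeta\circ(f_+\circ f_-)$ near $c$, counting the local $f_\zeta$-preimages of a small round disk about $f_\zeta(c)$ on each side of $\Hq_\zeta$ reduces verbatim to the counts defining $(d_0,d_\infty)$-criticality of $\zeta$; thus $c$ has inner criticality $d_0$ and outer criticality $d_\infty$, and $f_\zeta:\Hq_\zeta\to\Hq_\zeta$ is a $(d_0,d_\infty)$-critical quasicircle map.
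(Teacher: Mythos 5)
The paper does not actually prove Proposition \ref{gluing}: it is stated as the standard gluing construction (the quasicircle analogue of the de Faria--de Melo/Yampolsky passage from a critical commuting pair to a circle map), so your plan follows the intended route. But as written there are concrete gaps. First, $\Hq_\zeta$ is not $G_\zeta(\mathbf{I})$. Since the identification $z\sim f_+(z)$ is imposed only for $z$ near $f_-(0)$, the quotient of $\mathbf{I}=[f_+(0),f_-(0)]$ is the circle $G_\zeta\bigl([f_+f_-(0),f_-(0)]\bigr)=G_\zeta(f_-(I_-))$ with the arc $G_\zeta\bigl([f_+(0),f_+f_-(0)]\bigr)$ still attached at $G_\zeta(f_+f_-(0))$ and not identified into it; so $G_\zeta(\mathbf{I})$ is not a Jordan curve, and your claim that $g_-([q,0])$ and $g_+([0,f_-(0)])$ ``tile $\mathbf{I}$ modulo $G_\zeta$'' is false --- they tile $f_-(I_-)=[q,f_-(0)]$, whose quotient is the correct $\Hq_\zeta$ (this is also what the statement's pair $(f_-|_{[f_+f_-(0),0]}, f_+f_-|_{[0,f_-(0)]})$ is telling you). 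The slip matters because your Jordan-curve and bounded-turning claims are made about the wrong set.

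Second, you verify holomorphic consistency of the glued map only at $c=G_\zeta(0)$, but the two branch-arcs also meet at the seam $P=G_\zeta(f_-(0))=G_\zeta(q)$. In the chart at $q$, your branches $f_-$ and $f_+\circ f_-$ match across $P$ only if $f_-$ and $f_+$ commute near $f_-(0)$, which \ref{P-3} does not provide (it gives commutation only near $0$); moreover $f_+\circ f_-$ on $[0,f_-(0)]$ presupposes that $f_-$ is defined beyond $I_-\cup B$. Both issues disappear if you take the second branch to be $f_-\circ f_+|_{I_+}$, which typechecks, agrees with $f_+\circ f_-$ near $0$ by \ref{P-3}, makes the seam matching automatic (both sides become the germ of $f_-$ at $q$), and still yields your computation at $c$ via the gluing relation. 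Third, the quasicircle property, which you yourself flag as the main obstacle, is asserted rather than proved; the clean observation is that in every chart of $\widehat\Sigma$ the curve coincides with a subarc of $\mathbf{I}$ (near $P$ it is exactly $\mathbf{I}$ near $q$, since the other side is $f_+(I_+)\subset\mathbf{I}$ by \ref{P-1}), hence is locally a quasiarc with a uniform constant, and bounded turning then survives the conformal uniformization of a compactly contained annular neighborhood. Your rotation-number and criticality counts are fine in outline, though ``after possibly swapping'' glosses exactly the orientation bookkeeping that distinguishes $(d_0,d_\infty)$ from $(d_\infty,d_0)$, which is part of the assertion.
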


Conversely, we can obtain a commuting pair out of a $(d_0,d_\infty)$-critical quasicircle map $f: \Hq \to \Hq$ as follows. By conjugation with a linear map, let us assume that the critical point of $f$ is normalized at $1$. Replace $\Hq$ with its lift under the universal covering $z \mapsto e^{2\pi i z}$. In these logarithmic coordinates, $\Hq$ is a $\Z$-periodic quasicircle passing through $0$ and $\infty$. Replace $f$ with its corresponding lift $F$ admitting a critical point at $c_0:=0$ and a critical value $c_1:=F(0)$ located in the interval $[0,1] \subset \Hq$. Then, 
\[
\zeta_f := \left( F|_{[-1,0]}, \: T_{-1}|_{[0,c_1]} \right)
\]
is a commuting pair with the same rotation number as $f$. Applying the gluing operation from Proposition \ref{gluing} to $\zeta_f$ results in a $(d_0,d_\infty)$-critical quasicircle map conformally conjugate to $f: \Hq \to \Hq$.

We define renormalizations $\renorm^n f$ of $f$ to be the renormalizations of the commuting pair $\zeta_f$. This can be more explicitly described as follows. Denote by $\{p_n/q_n\}_{n \in \N}$ the best rational approximations of the rotation number $\theta$ of $f$. For every $n$, let $c_{q_n} := T_{-p_n}F^{q_n}(0)$. The \emph{$n$\textsuperscript{th} pre-renormalization} of $f$ is the critical commuting pair 
\[
    p\renorm^n f := p \renorm^n \zeta_f = \left( 
    T_{-p_n} F^{q_n}\big|_{[c_{q_{n-1}},0]}, \: 
    T_{-p_{n-1}} F^{q_{n-1}}\big|_{[0, c_{q_n}]} 
    \right),
\]
and the \emph{$n$\textsuperscript{th} renormalization} $\renorm ^n f$ of $f$ is the normalized $(d_0,d_\infty)$-critical commuting pair obtained by conjugating $p \renorm^n f$ with either the antilinear map $z \mapsto -c_{q_{n-1}}\bar{z}$ if $n$ is odd, or the linear map $z \mapsto -c_{q_{n-1}} z$ if $n$ is even.

Let us denote by $\mathcal{CP}(d_0,d_\infty,N,K,\mu)$ the space of all normalized $(d_0,d_\infty)$-critical commuting pairs $\zeta =(f_-, f_+): \I \to \I$ with rotation number in $\Theta_N$ such that the gluing procedure described in Proposition \ref{gluing} produces a critical quasicircle map in $\mathcal{HQ}(d_0,d_\infty,N,K,\mu)$. The following is a direct consequence of Lemma \ref{regularity}.

\begin{corollary}
\label{qs-conjugacy}
    Every critical commuting pair $\zeta: \I \to \I$ in $\mathcal{CP}(d_0,d_\infty,N,K,\mu)$ admits a unique quasisymmetric map $h_\zeta: [-1,\theta] \to \I$ conjugating the pair $\mathbf{T}_\theta$ of translations in \textnormal{(\ref{translation-pair})} with $\zeta$, where $\theta$ is the rotation number of $\zeta$. The dilatation of $h_\zeta$ depends only on $(d_0,d_\infty,N,K,\mu)$.
\end{corollary}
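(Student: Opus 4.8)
The plan is to unglue the quasiconformal conjugacy furnished by Lemma \ref{regularity}. Given $\zeta = (f_-,f_+) \in \mathcal{CP}(d_0,d_\infty,N,K,\mu)$ with base $\I$, I first apply the gluing operation of Proposition \ref{gluing} to obtain the associated $(d_0,d_\infty)$-critical quasicircle map $f_\zeta : \Hq_\zeta \to \Hq_\zeta$; by the very definition of $\mathcal{CP}(d_0,d_\infty,N,K,\mu)$ this map lies in $\mathcal{HQ}(d_0,d_\infty,N,K,\mu)$ and has the same rotation number $\theta \in \Theta_N$ as $\zeta$. Lemma \ref{regularity} then supplies a quasiconformal homeomorphism $h : \RS \to \RS$, with dilatation bounded in terms of $(d_0,d_\infty,N,K,\mu)$ only, whose restriction to $\Hq_\zeta$ conjugates $f_\zeta|_{\Hq_\zeta}$ to the rigid rotation $R_\theta|_\T$. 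Since $\Hq_\zeta$ is a $K$-quasicircle, $h|_{\Hq_\zeta} : \Hq_\zeta \to \T$ is $\eta$-quasisymmetric with $\eta$ depending only on the same data.

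Next I transport $h|_{\Hq_\zeta}$ to a map of base quasiarcs. Recall from Proposition \ref{gluing} that $\Hq_\zeta$ is the quotient of a neighborhood of $\I$ by the identification $z \sim f_+(z)$ near $f_-(0)$, via a quotient map $q_\zeta$ which is conformal on the interior of $\I$ away from the gluing locus; similarly $\T$ is the analogous quotient of a neighborhood of $[-1,\theta]$ attached to the translation pair $\mathbf{T}_\theta$ of (\ref{translation-pair}), via a quotient map $q$. Because $h|_{\Hq_\zeta}$ conjugates $R_\theta|_\T$ to $f_\zeta|_{\Hq_\zeta}$ and both pairs are infinitely renormalizable with the same rotation number, $h$ carries the $R_\theta$-orbit of $q(0)$ to the $f_\zeta$-orbit of $q_\zeta(0)$ with matching combinatorics; in particular $h$ respects the two gluing identifications $z \sim f_\pm(z)$, which is exactly where the hypothesis that $h$ is a \emph{conjugacy} (not merely a quasisymmetry) enters. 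Consequently $h$ lifts through $q$ and $q_\zeta$ to a quasiconformal homeomorphism between a neighborhood of $[-1,\theta]$ and a neighborhood of $\I$, of the same dilatation as $h$ (the quotient maps being locally conformal). Restricting this lift to the base quasiarc $\I$ produces a homeomorphism $h_\zeta : [-1,\theta] \to \I$ with $h_\zeta(-1)=f_+(0)$, $h_\zeta(0)=0$, $h_\zeta(\theta)=f_-(0)$, satisfying $h_\zeta \circ T_\theta = f_- \circ h_\zeta$ on $[-1,0]$ and $h_\zeta \circ T_{-1} = f_+ \circ h_\zeta$ on $[0,\theta]$; and since restriction of a quasiconformal map to a uniform quasiarc is quantitatively quasisymmetric, the distortion of $h_\zeta$ is controlled by $(d_0,d_\infty,N,K,\mu)$. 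Uniqueness is then immediate: conjugating the pairs forces $h_\zeta$ to send $[-1,0]$, $[0,\theta]$ to $I_-$, $I_+$ and hence $0 \mapsto 0$, and to carry the $\mathbf{T}_\theta$-combinatorial orbit of $0$ — which is dense in $[-1,\theta]$, as it projects to the dense $R_\theta$-orbit of $q(0)$ — to the corresponding combinatorial orbit of $0$ under $\zeta$; two such maps therefore agree on a dense set.

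The only substantive point, as opposed to combinatorial bookkeeping, is the ungluing step: one must check that $h$ genuinely descends through the two conformal-welding quotient maps attached to $\zeta$ and to $\mathbf{T}_\theta$ (which follows from $h$ being a dynamical conjugacy together with the commuting relations \ref{P-3}), and that $\I$ is a quasiarc with quasisymmetry constants controlled purely by $K$, so that restricting the resulting lift does not introduce uncontrolled constants. With those two observations in hand the corollary follows directly from Lemma \ref{regularity}.
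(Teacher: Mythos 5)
Your proposal is correct and follows essentially the same route as the paper: the paper's entire proof is that the statement ``follows directly from Lemma \ref{regularity}'', i.e.\ glue $\zeta$ into a critical quasicircle map via Proposition \ref{gluing}, invoke the uniformly quasiconformal linearization of Lemma \ref{regularity}, and transfer the conjugacy back through the gluing to the base quasiarc, which is exactly what you spelled out (including the ungluing and uniqueness bookkeeping the paper leaves implicit).
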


\subsection{Butterflies}
\label{ss:complex-bounds}

Consider a critical quasicircle map $f: \Hq\to \Hq$. From now on, it will be more convenient to use logarithmic coordinates by identifying $\Hq$ as a $\Z$-periodic quasicircle passing through $0$ and $\infty$ and $f$ as a map on a neighborhood of $\Hq$ in $\C$ with a critical point at $c_0:=0$ which commutes with the translation $T_1$. We will not notationally distinguish the sets $\Hq$, $Y_\Hq^0$, $Y_\Hq^\infty$ from their respective quotients in $\C / \Z$.

\begin{definition}
    A \emph{bowtie} is a quadruplet of Jordan domains $(V,\domO_-, \domO_+,\domO_\times)$ in $\C$ together with a quasicircle $\Hq$ containing $0$ and $\infty$ satisfying the following properties. 
    \begin{enumerate}[label=(B\textsubscript{\arabic*})]
        \item\label{bow:1} $\domO_-$, $\domO_+$, and $\domO_\times$ are compactly contained in $\ran$.
        \item\label{bow:2} $\domO_- \cap \domO_+ = \emptyset$ and $\overline{\domO_-} \cap \overline{\domO_+} = \{0\} \subset \domO_\times$.
        \item\label{bow:3} $\domO_- \backslash \domO_\times$, $\domO_\times \backslash \domO_-$, $\domO_+ \backslash \domO_\times$, and $\domO_\times \backslash \domO_+$ are all non-empty and connected.
        \item\label{bow:4} $J_- := \Hq \cap \overline{\domO_-}$, $J_+ := \Hq \cap \overline{\domO_+}$, and $J_\times := \Hq \cap \overline{\domO_\times}$ are closed intervals in $\Hq$, and their interiors are precisely $\mathring{J}_- := \Hq \cap \domO_-$, $\mathring{J}_+ := \Hq \cap \domO_+$ and $\mathring{J}_0 := \Hq \cap \domO_\times$ respectively.
    \end{enumerate}
    We call $\Hq$ the \emph{axis} of the bowtie.
\end{definition}

\begin{definition}
    A $(d_0,d_\infty)$-\emph{critical butterfly} $\butterfly$ is a pair of holomorphic maps $(f_-,f_+)$ together with a bowtie $(V, \domO_-, \domO_+,\domO_\times)$ with some axis $\Hq$ satisfying the following properties.
    \begin{enumerate}[label=(B\textsubscript{\arabic*}), start=5]
        \item\label{bow:5} $f_\pm$ is a univalent map from $\domO_\pm$ onto $(\ran \backslash \Hq) \cup f_\pm(\mathring{J}_\pm)$. 
        \item\label{bow:6} Both $f_-$ and $f_+$ extend holomorphically to $\domO_\times$ on which they commute. On $\domO_\times$, the map $f_-\circ f_+$ is a degree $d_0+d_\infty-1$ covering map onto $(\ran \backslash \Hq) \cup f_- f_+(\mathring{J}_\times)$ branched only at $0$.
        \item\label{bow:7} $\Hq$ is $f_\pm$-invariant, that is, whenever $f_\pm$ extends holomorphically to a neighborhood $E$ of a point $x \in \Hq$, then $f_\pm$ sends $E \cap \Hq$ to a subset of $\Hq$. 
        \item\label{bow:8} $I_- := [f_+(0),0]$ is a subset of $J_-$, $I_+ := [0,f_-(0)]$ is a subset of $J_+$, and $(f_-|_{I_-}, f_+|_{I_+})$ is a $(d_0,d_\infty)$-critical commuting pair.
        \item\label{bow:9} There is some integer $m\geq 1$ such that $f_-(0) = f_+^m(b_+)$ and $f_+(0)=f_-(b_-)$ where $J_- =[b_-,0]$ and $J_+ = [0,b_+]$.
    \end{enumerate}
    The \emph{axis} of $\butterfly$ is the quasicircle $\Hq$, the \emph{height} of $\butterfly$ is the integer $m$, and the \emph{rotation number} $\rot(\butterfly)$ of $\butterfly$ is the rotation number of the commuting pair $(f_-|_{I_-}, f_+|_{I_+})$. The interval 
    \[
    \textbf{I}:=[f_+(0),f_-(0)]=I_- \cup I_+ \subset \Hq
    \]
    is called the \emph{base} of $\butterfly$, whereas 
    \[
    \J:=[b_-,b_+]=J_- \cup J_+ \subset \Hq
    \]
    is called the \emph{extended base} of $\butterfly$. We say that $\butterfly$ is \emph{normalized} if $f_-(0)=-1$. The \emph{domain} of $\butterfly$ is the Jordan domain 
    \[
    U := \domO_- \cup \domO_\times \cup \domO_+.
    \]
    The \emph{shadow} of a butterfly $\butterfly$ is the piecewise holomorphic map $F: \dom \to \ran$ where
    \[
    F = \begin{cases}
    f_- & \text{ on } \domO_-,\\
    f_+ &\text{ on } \domO_+,\\
    f_- \circ f_+ &\text{ on } \domO_\times \backslash (\domO_- \cup \domO_+).
    \end{cases}
    \]
    The \emph{limit set} of $\butterfly$ is the non-escaping set of $F$, namely
    \[
    \Lambda_{\butterfly} := \bigcap_{n\geq0} F^{-n}(\dom).
    \]
\end{definition}

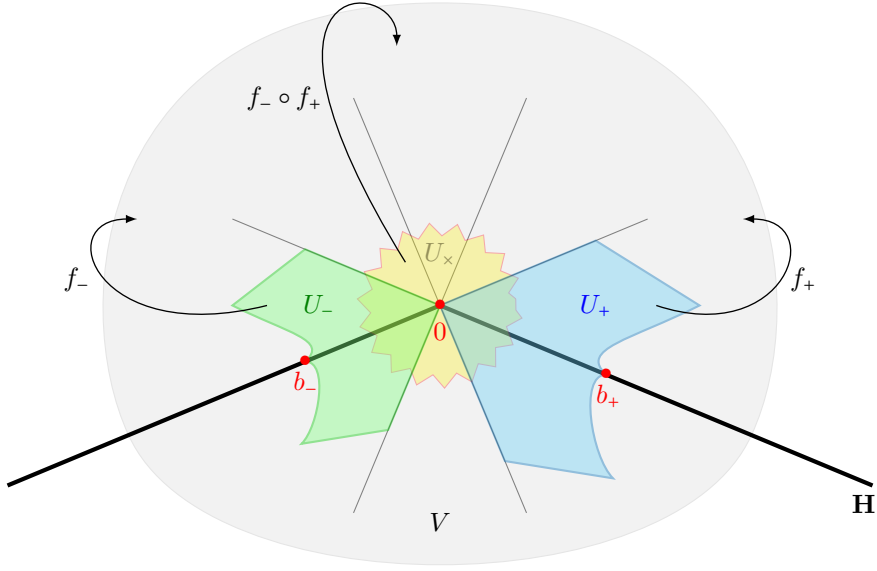
\begin{figure}
    \centering
    \begin{tikzpicture}[scale=1.15]
        \draw[gray,fill=gray!50!white,opacity=0.2] (3.6,-1.5) .. controls (4.3,0) and (4,3.5) .. (0,3.5) .. controls (-4,3.5) and (-4.3,0) .. (-3.6,-1.5) .. controls (-2.7,-3.5) and (2.7,-3.5) .. (3.6,-1.5);
        \draw[ultra thick] (-5,-2.083) -- (0,0) -- (5,-2.083);
        \draw[gray] (0,0) -- (2.4,1);
        \draw[gray] (0,0) -- (-2.4,1);
        \draw[gray] (0,0) -- (1, -2.4);
        \draw[gray] (0,0) -- (-1, -2.4);
        \draw[gray] (0,0) -- (1, 2.4);
        \draw[gray] (0,0) -- (-1, 2.4);
        \filldraw[red, fill=yellow, opacity=0.3, decorate, decoration={zigzag,segment length=9.8pt}, rotate around={5:(0,0)}] (0,0) circle (25pt);
        \draw[thick,cyan!50!blue, fill=cyan!50!white, opacity=0.4] (0,0) -- (1.8,0.75) -- (3,0) .. controls (2.2,-0.4) and (1.6,-0.5) .. (1.92,-0.8) .. controls (1.6,-0.8) and (1.75,-1.75).. (2,-2) -- (0.75,-1.8) -- (0,0);
        \draw[thick,green!80!black, fill=green!50!white, opacity=0.4] (0,0) -- (-1.56,0.65) -- (-2.4,0) .. controls (-1.76,-0.32) and (-1.28,-0.4) .. (-1.56,-0.65) .. controls (-1.28,-0.64) and (-1.4,-1.4).. (-1.6,-1.6) -- (-0.6,-1.44) -- (0,0);
        \draw[line width=0.5pt,-latex] (-2,0) .. controls (-4,-0.5) and (-4.5,1) .. (-3.5,1);
        \draw[line width=0.5pt,-latex] (2.5,0) .. controls (4,-0.5) and (4.5,1) .. (3.5,1);
        \draw[line width=0.5pt,-latex] (-0.4,0.5) .. controls (-2.5,3.9) and (-0.5,3.8) .. (-0.5,3);

        % labels
        \node [black, font=\bfseries] at (4.9,-2.3) {$\Hq$};
        \node [green!50!black, font=\bfseries] at (-1.4,0) {$U_-$};
        \node [blue, font=\bfseries] at (1.8,0) {$U_+$};
        \node [yellow!50!black, font=\bfseries] at (0,0.6) {$U_\times$};
        \node [black, font=\bfseries] at (0,-2.5) {$V$};
        \node [red, font=\bfseries] at (0,0) {$\bullet$};
        \node [red, font=\bfseries] at (0,-0.3) {$0$};
        \node [red, font=\bfseries] at (1.92,-0.8) {$\bullet$};
        \node [red, font=\bfseries] at (-1.56,-0.65) {$\bullet$};
        \node [red, font=\bfseries] at (1.95,-1.05) {$b_+$};
        \node [red, font=\bfseries] at (-1.55,-0.9) {$b_-$};
        \node [black, font=\bfseries] at (-4.2,0.3) {$f_-$};
        \node [black, font=\bfseries] at (4.2,0.3) {$f_+$};
        \node [black, font=\bfseries] at (-1.8,2.4) {$f_-\circ f_+$};
\end{tikzpicture}

    \caption{A $(3,2)$-critical butterfly.}
    \label{fig:butterfly}
\end{figure}

Following de Faria and de Melo \cite{dFdM2}, we will impose geometric assumptions on our butterflies.

\begin{definition}
\label{def:butterfly}
    For $K> 1$, a normalized butterfly $\butterfly$ with axis $\Hq$ is called a \emph{$K$-butterfly} if the following conditions are satisfied.
    \begin{enumerate}[label=(G\textsubscript{\arabic*})]
        \item\label{cond:1} $\modu \left(\ran \backslash \overline{\dom}\right) \geq K^{-1}$.
        \item\label{cond:2} The components of $\C \backslash \Hq$, $\C \backslash (\partial \ran\cup\Hq)$, and $\ran \backslash \left( \overline{\dom} \cup \Hq \right)$ are $K$-quasidisks.
        \item\label{cond:3} Any two points in the set $\{b_-,f_+(0),0,f_-(0)=1, b_+\}$ are at least $K^{-1}$ away from each other.
        \item\label{cond:4} The annulus $\overline{V} \backslash \domO_\times$ is contained in $\{ K^{-1} < |z| < K \}$.
    \end{enumerate}
    In general, a butterfly is a \emph{$K$-butterfly} if it is linearly conjugate to a normalized $K$-butterfly.
\end{definition}

We endow the space of butterflies with the topology where a sequence of butterflies $\butterfly_n = \{(f_{-,n},f_{+,n}), (V_n, U_{-,n}, U_{+,n}, U_{\times,n})\}$ converges to the butterfly $\butterfly=\{(f_{-},f_{+}), (V, U_{-}, U_{+}, U_{\times})\}$ if 
\begin{enumerate}[label=(\roman*)]
    \item in the Carath\'eodory topology (refer to \cite[\S5]{McM94}),
    \begin{align*}
        \left( U_{-,n},f_{+,n}(0) \right) &\to \left( U_{-},f_{+}(0) \right), & (U_{\times,n},0) &\to (U_{\times},0),\\
        \left( U_{+,n},f_{-,n}(0) \right) &\to \left( U_{+},f_{-}(0) \right), & \left( V_n, f_{-,n}f_{+,n}(0) \right) &\to \left( V, f_{-}f_{+}(0) \right);
    \end{align*}
    \item $\butterfly_n$ has an axis converging to an axis of $\butterfly$ in Hausdorff metric;
    \item $f_{-,n}$ converges uniformly to $f_-$ on compact subsets of $U_{-} \cup U_{\times}$, and $f_{+,n}$ converges uniformly to $f_+$ on compact subsets of $U_{+} \cup U_{\times}$.
\end{enumerate}

\begin{proposition}
     \label{compactness}
     The space of normalized $(d_0,d_\infty)$-critical $K$-butterflies with rotation number in $\Theta_N$ is compact.
\end{proposition}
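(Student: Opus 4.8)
The plan is to prove compactness via a normal families argument combined with a closedness check: every sequence of normalized $(d_0,d_\infty)$-critical $K$-butterflies with rotation number in $\Theta_N$ has a subsequence converging (in the topology defined just before the statement) to a butterfly of the same type. Since the ambient convergence notion is a conjunction of a Carath\'eodory limit of the four pointed domains, a Hausdorff limit of axes, and locally uniform limits of the defining maps, I would extract a convergent subsequence for each ingredient and then verify that the limiting data still satisfies all of \ref{bow:1}--\ref{bow:9}, \ref{cond:1}--\ref{cond:4}, and the $(d_0,d_\infty)$-criticality condition.

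First I would set up precompactness of the geometric pieces. By condition \ref{cond:1}, $\ran\setminus\overline{\dom}$ has modulus $\geq K^{-1}$; together with the normalization $f_-(0)=-1$ and \ref{cond:3} (the five marked points $b_-,f_+(0),0,1,b_+$ are $K^{-1}$-separated) and \ref{cond:4} (the annulus $\overline V\setminus \domO_\times$ lies in $\{K^{-1}<|z|<K\}$), the domains $\dom$ and $\ran$ are trapped in a fixed compact region of $\C$ and bounded away from $0$ in the relevant places, so after passing to a subsequence the pointed domains $(U_{-,n},f_{+,n}(0))$, $(U_{\times,n},0)$, $(U_{+,n},f_{-,n}(0))$, $(V_n,f_{-,n}f_{+,n}(0))$ converge in the Carath\'eodory topology to nondegenerate pointed domains (nondegeneracy is forced by the uniform moduli bounds in \ref{cond:1}, \ref{cond:2}, \ref{cond:4}). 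The axes $\Hq_n$ are $K$-quasicircles through $0$ and $\infty$; by compactness of the space of normalized $K$-quasiconformal maps they converge, along a further subsequence, uniformly to a $K$-quasicircle $\Hq$, and the intervals $J_{\pm,n}$, $J_{\times,n}$, $I_{\pm,n}$ converge to subintervals of $\Hq$. Finally, the univalent maps $f_{\pm,n}$ on $U_{\pm,n}\cup U_{\times,n}$ form a normal family (they are univalent with uniformly bounded range, by \ref{bow:5} and the domain bounds), so along a further subsequence $f_{-,n}\to f_-$ and $f_{+,n}\to f_+$ locally uniformly on the Carath\'eodory kernels.

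Next I would check that the limit $\butterfly=((f_-,f_+),(V,\domO_-,\domO_+,\domO_\times))$ with axis $\Hq$ is again a $(d_0,d_\infty)$-critical $K$-butterfly. The bowtie axioms \ref{bow:1}--\ref{bow:4} pass to the limit because nested inclusions, disjointness up to the common point $0$, and "interior $=$ trace of $\Hq$" are all closed conditions under Carath\'eodory and Hausdorff convergence (nondegeneracy of $\domO_\pm\setminus\domO_\times$ etc. in \ref{bow:3} uses the uniform moduli again). Univalence of $f_\pm$ in \ref{bow:5} survives by Hurwitz's theorem (the limit of univalent maps is univalent or constant, and constancy is excluded by \ref{cond:3}); the image description $\ran\setminus\Hq\cup f_\pm(\mathring J_\pm)$ follows from continuity of the boundary correspondence together with the convergence of $\Hq_n\to\Hq$. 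The commutation relation $f_-\circ f_+=f_+\circ f_-$ on $\domO_\times$ and the branched-covering/degree statement in \ref{bow:6} are preserved because algebraic identities and local degrees are stable under locally uniform convergence of nonconstant holomorphic maps (the degree cannot drop without a collapse of modulus, which \ref{cond:1} forbids). Invariance of $\Hq$ under $f_\pm$ in \ref{bow:7} passes to the limit since $f_{\pm,n}(\Hq_n)\subset\Hq_n$ and both sides converge. The commuting-pair conditions \ref{bow:8}, the height relations in \ref{bow:9}, and the geometric bounds \ref{cond:1}--\ref{cond:4} are all closed (moduli bounds and separation bounds are non-strict, quasidisk constants are semicontinuous). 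The key point needing care is the $(d_0,d_\infty)$-criticality of the limit commuting pair: the local degree of $f_-\circ f_+$ at $0$ is exactly $d_0+d_\infty-1$ in the limit by the argument principle applied to small circles (this does not drop, again by a modulus/Hurwitz argument), and the inner/outer split $d_0$ versus $d_\infty$ is detected by counting preimage components of a small disk on each side of $\Hq$, a count that is stable because $\Hq_n\to\Hq$ and the preimage quasiarcs in \ref{bow:6} (inherited from \ref{R04}-type structure) vary continuously.

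Finally I would address rotation numbers. The bound $\rot(\butterfly_n)\in\Theta_N$ means all continued fraction entries are $\leq N$; since the rotation number of a commuting pair is determined by the first-return combinatorics $\chi(\butterfly_n),\chi(\renorm\butterfly_n),\dots$, and these combinatorial data are locally constant in a neighborhood of each butterfly with irrational rotation number (the limit pair is still a genuine $(d_0,d_\infty)$-critical commuting pair, hence infinitely renormalizable by the trichotomy/real-bounds machinery, in particular has irrational rotation number), the limiting rotation number again has all entries $\leq N$, i.e.\ lies in $\Theta_N$. I expect the main obstacle to be the last items in the second-to-last paragraph: ruling out degeneration of the limit, namely showing the limiting maps $f_\pm$ are nonconstant, that the domains $\domO_\times\setminus\domO_\pm$ do not collapse, and that the local degree at $0$ is preserved. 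All of these follow once one exploits the uniform modulus bounds \ref{cond:1} and \ref{cond:4} together with Teichm\"uller-type estimates, so the proof is conceptually a packaging of compactness of $K$-quasiconformal maps plus Hurwitz/Carath\'eodory stability, with no genuinely new ingredient — I would present it at that level of detail rather than grinding through each axiom.
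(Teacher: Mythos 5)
The paper itself gives no proof of Proposition \ref{compactness} -- it is stated as a routine compactness fact, with the geometric conditions \ref{cond:1}--\ref{cond:4} imposed (following de Faria--de Melo) precisely so that the standard Carath\'eodory/normal-families/Hurwitz argument applies. Your outline is exactly that standard argument, so in spirit it is the intended route; most of the closedness checks you list are indeed soft. Two points, however, need attention, and one of them is a genuine error as written.

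The error is in your last paragraph: you deduce that the limit has all continued fraction entries $\leq N$ by claiming the limiting object ``is still a genuine $(d_0,d_\infty)$-critical commuting pair, hence infinitely renormalizable \ldots in particular has irrational rotation number,'' and then invoking local constancy of the heights $\chi(\renorm^k\cdot)$ at the limit. But being a critical commuting pair does \emph{not} force irrational rotation number: the glued quasicircle homeomorphism of Proposition \ref{gluing} can perfectly well have a periodic orbit (think of phase-locked parameters in the Blaschke/Arnold family $B_{d,\alpha}$), in which case renormalization terminates and $\chi$ is not locally constant there, so your argument is circular at the one place where something could actually degenerate. The fix is cheap but different: rotation number is continuous under uniform convergence of circle homeomorphisms, and the gluing of Proposition \ref{gluing} depends continuously on the pair (or argue directly via semicontinuity of the heights), so $\rot$ of the limit is $\lim_n\rot(\butterfly_n)$; since $\Theta_N$ is a compact subset of $(0,1)$, the limit rotation number lies in $\Theta_N$, and in particular is irrational -- which is what then justifies that the limit is infinitely renormalizable, not the other way around. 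Secondly, the nondegeneracy of the Carath\'eodory limits is not quite a consequence of \ref{cond:1} and \ref{cond:4} alone, as you suggest: those give a definite inner radius for $(\domO_\times,0)$ and trap $V$ in $\{K^{-1}<|z|<K\}$, but for the pointed disks $(\domO_\pm, f_\mp(0))$ one must rule out pinching of $\partial\domO_\pm$ against the marked point, and that uses \ref{cond:2} (the uniform quasidisk bounds on the components of $\RS\backslash\Hq$, of $V$ cut along $\Hq$, and of $\ran\backslash(\overline{\dom}\cup\Hq)$) together with the separation \ref{cond:3}; this is routine but it is where the actual work of the compactness proof sits, and it should not be dismissed as a pure moduli estimate.
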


\begin{proof}
    The compactness the geometric objects in the sense of (i) and (ii) follow from the compactness of appropriately normalized $K$-quasiconformal maps as well as properties \ref{cond:1}, \ref{cond:2}, \ref{cond:3}, and \ref{cond:4}. The subsequential convergence to limit maps $(f_-,f_+)$ as described in (iii) follows from \cite[Theorem 5.6]{McM94}.
\end{proof}

Similar to critical circle maps, critical quasicircle maps also admit complex a priori bounds.

\begin{theorem}[Complex bounds]
\label{thm:complex-bounds}
    Given $f: \Hq \to \Hq$ in $\mathcal{HQ}(d_0,d_\infty,N,K,\mu)$, there exist constants $n_0 \in \N$ and $K'>1$ depending only on $(d_0,d_\infty,N,K,\mu)$ such that for all $n \geq n_0$, the $n^{\text{th}}$ pre-renormalization of $f$ extends to a $K'$-butterfly $\butterfly_n: \dom_n \to \ran_n$.
\end{theorem}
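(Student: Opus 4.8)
The plan is to reduce the statement to complex \emph{a priori} bounds for quasicritical circle maps and then to reassemble a butterfly over $\Hq$ by gluing the two resulting one-sided complex box configurations along the axis. First I would apply Proposition~\ref{welding} to obtain an annular neighborhood $A$ of $\Hq$ on which $f$ is holomorphic, quasicritical circle maps $g_0 \in \Cir(d_0,N,L,\delta)$ and $g_\infty \in \Cir(d_\infty,N,L,\delta)$ with $L = L(K)$ and $\delta = \delta(d_0,d_\infty,K,\mu)$, and $L$-quasiconformal maps $\phi_\bullet : \RS \to \RS$ carrying $\Hq$ to $\T$ and conjugating $f|_{A \cap \overline{Y_\Hq^\bullet}}$ to $g_\bullet|_{\phi_\bullet(A)\cap \overline{Y_\T^\bullet}}$ for $\bullet\in\{0,\infty\}$. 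As noted just before the statement, de Faria's proof of complex bounds for critical circle maps uses only the real bounds (Theorem~\ref{cqc-real-bounds}) and the Schwarz lemma, both of which are available here, so the estimates of \cite[\S3]{AL22} apply to each $g_\bullet$: there is a threshold $n_0 = n_0(d_0,d_\infty,N,L,\delta)$ such that for all $n\ge n_0$ and all $\bullet$ the $n$\textsuperscript{th} pre-renormalization of $g_\bullet$ extends to a complex box configuration with definite complex bounds --- $\T$-symmetric Jordan domains $D^\bullet_-, D^\bullet_+, D^\bullet_\times$ around the relevant combinatorial intervals, compactly contained in a larger $\T$-symmetric range $W^\bullet$, on which the appropriate iterates of the lift of $g_\bullet$ realize the pre-renormalization, with all moduli, shapes and marked-point separations controlled by $(d_0,d_\infty,N,L,\delta)$. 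Here I would choose the domains and ranges on the two sides compatibly, so that $D^0_\ast\cap\T$ and $D^\infty_\ast\cap\T$ (and likewise $W^0\cap\T$, $W^\infty\cap\T$) correspond to the \emph{same} combinatorial intervals of $\Hq$ under $\phi_0$ and $\phi_\infty$.

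Next I would fix $n\ge n_0$, pass to logarithmic coordinates, let $F$ be the lift of $f$ with critical point $0$, and set, for $\ast\in\{-,+,\times\}$,
\[
  \domO_\ast := \phi_0^{-1}(D^0_\ast \cap \overline{Y_\T^0}) \cup \phi_\infty^{-1}(D^\infty_\ast \cap \overline{Y_\T^\infty}),
\]
with $V$ built analogously from $W^0$ and $W^\infty$. By the compatibility of the choices above, the two halves of each $\domO_\ast$ meet along a common boundary interval of $\Hq$ (a renormalization tile), and the maps $f_-:=T_{-p_n}F^{q_n}$ and $f_+:=T_{-p_{n-1}}F^{q_{n-1}}$ defined from the two sides agree there --- they are simply these iterates of the holomorphic map $F$, so the glued maps are genuinely holomorphic (not merely quasiregular) across $\Hq$. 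Thus $(V,\domO_-,\domO_+,\domO_\times)$ is a bowtie with axis $\Hq$ on which $(f_-,f_+)$ is holomorphic, and I would then verify \ref{bow:5}--\ref{bow:9}: the restricted pair $(f_-|_{[f_+(0),0]}, f_+|_{[0,f_-(0)]})$ is, by construction, exactly the $n$\textsuperscript{th} pre-renormalization $p\renorm^n f$, which is a $(d_0,d_\infty)$-critical commuting pair; the local degree $d_0+d_\infty-1$ of $f_-\circ f_+$ at $0$ arises by combining the $d_0$-branching of $g_0$ on the inner side and the $d_\infty$-branching of $g_\infty$ on the outer side via \ref{R04} and \ref{QCC4}; the height $m$ is bounded by $N$, being one of the partial quotients of $\rot(f)\in\Theta_N$, which govern the successive return times by Lemma~\ref{rotation-number}; and \ref{bow:7} is immediate.

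After a real-affine normalization making $f_-(0)=-1$, I would check the geometric conditions \ref{cond:1}--\ref{cond:4} of a $K'$-butterfly by transferring the complex bounds through the $L$-quasiconformal maps $\phi_\bullet$. Condition \ref{cond:1} follows from the definite lower bound on the modulus of $W^\bullet$ around $D^\bullet_-\cup D^\bullet_\times\cup D^\bullet_+$, since $\phi_\bullet$ distorts moduli of annuli by at most a factor depending on $L$; \ref{cond:2} follows because each one-sided piece is a quasidisk of dilatation $O(L)$ and the union of two quasidisks glued along a boundary quasiarc whose endpoints abut with definite opening is again a quasidisk with controlled constant; \ref{cond:3} follows from the real bounds (bounded geometry of the renormalization tilings, cf.\ Lemma~\ref{lem:qc-control}) pushed forward by $\phi_\bullet$; and \ref{cond:4} follows from the analogous round-annulus bound on the $\T$-side together with quasiconformal distortion. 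Since every constant produced depends only on $(d_0,d_\infty,N,K,\mu)$, taking $K'$ to be their maximum would yield the desired $K'$-butterfly $\butterfly_n : \dom_n \to \ran_n$, consistently with the compactness asserted in Proposition~\ref{compactness}.

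The hard part will be the gluing: one must show that the two one-sided quasidisks, each controlled only on its own side of the quasiarc $\Hq$, fuse into a genuine Jordan domain bounded by a quasicircle of \emph{a priori} bounded constant, the subtlety being the angles at which the four boundary arcs of the bowtie abut at the critical point $0$ and at the marked points $b_\pm$. This is precisely where conditions \ref{cond:2} and \ref{cond:3} are used, and where it matters that $\phi_0$ and $\phi_\infty$ are \emph{global} quasiconformal maps rather than mere conjugacies on $\Hq$, so that the local geometry of the glued domains near these points is comparable to the model cusp geometry of the butterfly configuration. A secondary point is the uniformity of the threshold: $n_0$ is simply the larger of the two thresholds supplied by the complex bounds for $g_0$ and $g_\infty$, and the boundedness of the heights $\chi$ all along the renormalization tower --- again a consequence of $\rot(f)\in\Theta_N$ --- is what keeps the combinatorial complexity, and hence all the constants, uniform.
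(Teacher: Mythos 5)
Your overall strategy (weld via Proposition \ref{welding} into two quasicritical circle maps, invoke the Avila--Lyubich estimates, then glue the two halves along $\Hq$) is the same as the paper's announced strategy, but your write-up leaves a genuine gap exactly at the step the paper's proof is built to handle, and you yourself flag it as ``the hard part'' without supplying the mechanism. The issue is the definite modulus \ref{cond:1} (and, relatedly, the covering structure \ref{bow:6}) for the \emph{glued} configuration. Your argument for \ref{cond:1} --- a definite one-sided modulus bound $\modu(W^\bullet\setminus\overline{D^\bullet})\succ 1$ on each side, transported by the $L$-quasiconformal maps $\phi_\bullet$ --- does not combine across the axis: the annulus $\ran_n\setminus\overline{\dom_n}$ must cross $\Hq$, it is not the quasiconformal image of either one-sided annulus, and two independently chosen one-sided configurations can perfectly well glue to a domain whose closure touches the glued range near $\Hq$. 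The paper avoids this by a two-scale construction: the range is the glued disk $D_{n-m}$ at the coarser level $n-m$, and the key estimates (\cite[(3.9), Lemma 3.6]{AL22} together with \cite[Proposition 3.2]{dFdM2} and the real bounds of Theorem \ref{cqc-real-bounds}) show that for $m$ large but uniform the level-$n$ lifts $A_{n,m}$, $B_{n,m}$ have diameter at most $\varepsilon\,\diam(D_{n-m})$. The $\varepsilon$-smallness of the domain pieces relative to the range is what produces a definite separating annulus after gluing; no matching of two independently bounded configurations is needed.

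The second difference concerns the critical piece $\domO_\times$. You build it by gluing $D^0_\times$ and $D^\infty_\times$ coming from the two different circle maps $g_0$ and $g_\infty$ (which have different critical behavior, $d_0$ versus $d_\infty$), and you would then still have to prove that $f_-\circ f_+$ on the glued piece is a proper branched covering of degree $d_0+d_\infty-1$ onto $\ran_n\setminus\Hq\cup f_-f_+(\mathring J_\times)$ --- i.e.\ that the two halves fit along $\Hq$ with no boundary mismatch and no missing or extra preimage components. The paper sidesteps this entirely: $\domO_\times$ is taken to be $C_{n,m}$, the connected component of $f^{-q_{n-1}}(A_{n,m})$ containing the critical point, constructed directly on the $f$-side after the one-sided pieces have already been glued, so the degree and branching are automatic, and the estimate $\diam(C_{n,m})\asymp\diam(A_{n,m}\cap C_{n,m})$ (from the local model $f\equiv\psi(z^{d_0+d_\infty-1})$ near $0$) keeps it within the same $\varepsilon$-smallness scheme. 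To repair your proposal you would either have to reproduce this two-scale argument or prove a genuine matching lemma for the two one-sided box configurations; as written, the decisive estimates are assumed rather than derived. (A minor point: the height of $\butterfly_n$ is $a_n+1\leq N+1$, not $\leq N$.)
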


Recall from Proposition \ref{welding} that $f$ is a welding of two quasicritical circle maps $g_0$ and $g_\infty$. By \cite[\S3.5]{AL22}, the pre-renormalizations of $g_0$ and $g_\infty$ have holomorphic extensions admitting a butterfly structure with a priori bounds. In the proof below, we will glue the half-butterflies of the two maps in order to obtain a butterfly for the pre-renormalization of $f$.

\begin{proof}
    Let $U$, $\phi_0$, $\phi_\infty$, $g_0 \in \Cir(d_0,N,L,\delta)$, and $g_\infty \in \Cir(d_\infty,N,L,\delta)$ be from Proposition \ref{welding}. We will outline the construction of butterflies for $g_0$ and $g_\infty$ and then glue them to the desired butterfly for $f$. Let us work in logarithmic coordinates, in which $\Hq \subset \C /\Z$ is a quasicircle passing through $0$ and the critical point of $f$ is at $0$. For all $j \in \Z$, we write $c_j := f^j(0)$. For $\bullet \in \{0,\infty\}$ and $j \in \Z$, let $c^\bullet_0=0$ denote the critical point of $g_\bullet$ and let $c^\bullet_j := g_\bullet^j(0)$. 
    
    Let $\{p_n/q_n\}_{n \in \N}$ denote the best rational approximations of the rotation number $\theta$ of $f$. For $\bullet \in \{0,\infty\}$ and $n \geq 2$, observe that the two critical points of $g^{q_n}_\bullet$ that are next to $c^\bullet_{q_{n-1}}$ are $c^\bullet_0$ and $c^\bullet_{q_{n-1}-q_n}$. From now on, we will fix $n \in \N$ larger than some constant $m \in \N$ that is to be determined. Let us recall the construction of butterflies extending the $n$\textsuperscript{th} pre-renormalization of $g_\bullet$. 
    
    For any $k \geq 1$, let $D^\bullet_k$ be the open round disk such that $\partial D^\bullet_k$ intersects $\T$ orthogonally and $D^\bullet_k \cap \T$ is the open interval $\left( c^\bullet_{q_{k+1}}, c^\bullet_{q_k-q_{k+1}}\right) \subset \T$. For $n \gg m$, there exists a $\T$-symmetric univalent lift $A^\bullet_{n,m}$ of $(D^\bullet_{n-m} \backslash \T) \cup \left( c^\bullet_{q_{n-1}},c^\bullet_{q_n} \right)$ under $g_\bullet^{q_{n}}$ intersecting $\T$ on the interval $\left( c^\bullet_{q_{n-1}-q_n} , c^\bullet_0 \right)$. Similarly, there also exists a $\T$-symmetric univalent lift $B^\bullet_{n,m}$ of $(D^\bullet_{n-m} \backslash \T) \cup \left( c^\bullet_{q_{n-1}},c^\bullet_{q_{n-2}} \right)$ under $g_\bullet^{q_{n-1}}$ intersecting $\T$ on the interval $\left( c^\bullet_0,c^\bullet_{q_{n-2}-q_{n-1}} \right)$.

    \begin{claim}
    For any $\varepsilon >0$, there are some constants $n_0,m \in \N$ depending only on $\varepsilon$ and $(d_\bullet, N,L,\delta)$ such that $n_0\geq m$ and for all $n \geq n_0$,
    \begin{equation}
    \label{eqn:key-estimate}
        \max\{\diam(A^\bullet_{n,m}), \diam(B^\bullet_{n,m})\} \leq \varepsilon \cdot  \diam(D^\bullet_{n-m}).
    \end{equation}
    \end{claim}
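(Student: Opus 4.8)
The plan is to fix a side and drop the superscript $\bullet$, splitting the estimate into a combinatorial shrinking bound for the traces on $\T$ and a bounded-shape bound for the univalent pullbacks. \emph{Shrinking of the traces.} By Theorem \ref{cqc-real-bounds} together with the standard cross-ratio distortion bounds for quasicritical circle maps (valid with constants depending only on $(d_\bullet,N,L,\delta)$; cf.\ \cite[\S3]{AL22}), the dynamical partitions of $g_\bullet$ have bounded geometry. Hence there is a sequence of scales $\ell_k>0$, depending on $g_\bullet$ and well defined up to a universal multiplicative constant, such that every combinatorial interval of $g_\bullet$ of level $k$ has Euclidean diameter $\asymp\ell_k$, and the bounded-type hypothesis (Proposition \ref{bounded-type}) gives a universal $\rho=\rho(N)\in(0,1)$ with $\ell_{k+j}\leq C\rho^j\ell_k$ for all $k\geq 1$ and $j\geq 0$. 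Since $D_{n-m}$ is a round disk meeting $\T$ orthogonally along a level-$(n-m)$ combinatorial interval, $\diam(D_{n-m})\asymp\ell_{n-m}$. On the other hand $A_{n,m}\cap\T=(c_{q_{n-1}-q_n},c_0)$ and $B_{n,m}\cap\T=(c_0,c_{q_{n-2}-q_{n-1}})$ have combinatorial lengths $\asymp l_{n-1}$ and $\asymp l_{n-2}$, hence Euclidean diameters $\asymp\ell_{n-1}$ and $\asymp\ell_{n-2}$. Therefore
\[
\max\{|A_{n,m}\cap\T|,\,|B_{n,m}\cap\T|\}\ \leq\ C'\rho^m\,\diam(D_{n-m}),
\]
with $C'$ depending only on $(d_\bullet,N,L,\delta)$.

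\emph{Bounded shape of the pullbacks.} It remains to show $\diam(A_{n,m})\asymp|A_{n,m}\cap\T|$ and $\diam(B_{n,m})\asymp|B_{n,m}\cap\T|$ with universal constants. For this I would run the shrinking-of-neighbourhoods argument of de Faria--de Melo \cite{dFdM2} in the quasicritical setting of Avila--Lyubich \cite[\S3.5]{AL22}. Enlarge $D_{n-m}$ to a $\T$-symmetric disk $\widehat D_{n-m}\supset D_{n-m}$, still orthogonal to $\T$, over a level-$(n-m)$ combinatorial interval extended on each side by a bounded number of such intervals, so that $\modu(\widehat D_{n-m}\setminus\overline{D_{n-m}})\geq\mu_0$ for a universal $\mu_0>0$; this again uses the bounded geometry of the partitions. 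For $n$ large relative to $m$, the branch of $g_\bullet^{q_n}$ (resp.\ of $g_\bullet^{q_{n-1}}$) that produces $A_{n,m}$ (resp.\ $B_{n,m}$) extends to a lift $\widehat A_{n,m}\supset A_{n,m}$ (resp.\ $\widehat B_{n,m}$) of $\widehat D_{n-m}$; the existence of these lifts is the only essential use of the assumption $n\geq n_0\geq m$, and follows from the combinatorial structure of the dynamical partition of $g_\bullet$. Applying the Schwarz lemma to the inclusion of $\widehat A_{n,m}$ into the complement of the critical orbit $\{c_0,c_{-1},\dots,c_{-(q_n-1)}\}$ of $g_\bullet^{q_n}$, and the Gr\"otzsch inequality to absorb the uniformly bounded quasiconformal dilatation that $g_\bullet$ carries near $c_0$ (it is holomorphic away from there), one gets that the pullback $A_{n,m}$ of $D_{n-m}$ lies in $\widehat A_{n,m}$ with a collar of definite modulus. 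Since $D_{n-m}$ has bounded shape about $D_{n-m}\cap\T$ and the position of the interval $A_{n,m}\cap\T$ inside $\widehat A_{n,m}$ is controlled by real bounds, it follows that $A_{n,m}$ is a quasidisk of bounded dilatation with $\diam(A_{n,m})\asymp|A_{n,m}\cap\T|$, and similarly for $B_{n,m}$.

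\emph{Conclusion.} Combining the two steps, $\diam(A_{n,m})+\diam(B_{n,m})\leq C''\rho^m\,\diam(D_{n-m})$ with $C''$ depending only on $(d_\bullet,N,L,\delta)$. Given $\varepsilon>0$, first pick $m$ with $C''\rho^m<\varepsilon$, and then $n_0\geq m$ large enough that the lifts of the second step exist for all $n\geq n_0$; this yields (\ref{eqn:key-estimate}). The crux is the second step, namely keeping the distortion of the deep composition $g_\bullet^{q_n}$ bounded \emph{independently of} $n$ on the pullback domains: this is exactly the complex a priori bounds for quasicritical circle maps, and it is the reason the cut-off depth $m$ must be introduced --- one only ever pulls back a disk that already sits inside a definite collar, so the iteration has uniform room to contract into, while the bounded-type hypothesis keeps the number of near-critical passages per renormalization level bounded and prevents the accumulated distortion from blowing up.
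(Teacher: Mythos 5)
Your first step (shrinking of the traces along $\T$) is fine, but your second step carries the entire content of the claim and it does not go through. You assert that $\diam(A^\bullet_{n,m})\asymp|A^\bullet_{n,m}\cap\T|$ with constants independent of $m$, and justify it by enlarging $D^\bullet_{n-m}$ to $\widehat D_{n-m}$ and pulling back the definite-modulus annulus $\widehat D_{n-m}\setminus\overline{D^\bullet_{n-m}}$. That argument is circular: Schwarz/Gr\"otzsch only shows that $A^\bullet_{n,m}$ sits well inside $\widehat A_{n,m}$, i.e.\ $\diam(A^\bullet_{n,m})\prec\diam(\widehat A_{n,m})$, but $\widehat A_{n,m}$ is itself the pullback of a disk of the same huge relative size, and nothing in your sketch bounds \emph{its} diameter in terms of the trace. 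Worse, the uniform bounded-shape statement you are aiming for should not be expected: $D^\bullet_{n-m}$ has size of order $t_m\,|I^\bullet_{n-1}|$ with $t_m\asymp K^m$, and the inverse branch of $g_\bullet^{q_n}$ (controlled linearly away from the critical point, and by a root at the final critical passage) produces a lift whose extent transverse to $\T$ is of order $t_m^{1/\sigma}|I^\bullet_{n-1}|$, which grows with $m$ relative to the trace. The mechanism that actually proves the claim is quantitative: the de Faria--de Melo/Avila--Lyubich key estimate (\ref{ineq:key-estimate}) gives a \emph{linear} bound for $\dist\left(g_\bullet(z),g_\bullet(I^\bullet_{n-1})\right)/|g_\bullet(I^\bullet_{n-1})|$ in terms of $\dist\left(g_\bullet^{q_n}(z),I^\bullet_{n-1}\right)/|I^\bullet_{n-1}|$, and \cite[Lemma 3.6]{AL22} converts this through the critical point into the power inequality (\ref{ineq:key-contraction}) with exponent $\sigma>1$; together with the real bounds (\ref{ineq:key-real-bounds}) this yields $\diam(A^\bullet_{n,m})/\diam(D^\bullet_{n-m})\prec t_m^{1/\sigma-1}\to 0$, which is exactly where the cut-off depth $m$ becomes effective. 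Your sketch never produces this power saving, and the ``complex a priori bounds'' you invoke at the end concern pullbacks of disks commensurable with the renormalization interval, not of disks exponentially larger than it, so they are not a substitute.

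Even if you repaired the second step by allowing an $m$-dependent shape constant $C(m)\prec t_m^{1/\sigma}$ (which is what the key estimates actually give), your two-step decomposition would still be delicate: bounding the trace ratio by $\rho^m$ and the shape constant by its worst case separately requires a relation of the form $K_2^{1/\sigma}<K_1$ between the two real-bounds rates, which is not guaranteed for bounded type rotation numbers. The paper's proof avoids this mismatch by expressing both the contraction and the real bounds in terms of the single quantity $t_m=\diam(D^\bullet_{n-m})/|I^\bullet_{n-1}|$, so that the ratio $\diam(A^\bullet_{n,m})/\diam(D^\bullet_{n-m})$ is dominated by a function of $t_m$ alone that tends to $0$ as $t_m\to\infty$. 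If you rework your argument, keep every estimate in terms of $t_m$ and use (\ref{ineq:key-estimate})--(\ref{ineq:key-real-bounds}) rather than a bounded-shape claim.
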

    
    \begin{proof}
    Let $I_n^\bullet$ denote the closed interval in $\T$ between $c_0^\bullet$ and $c_{q_n}^\bullet$. Based on the key estimates in \cite[(3.9)]{AL22} and \cite[Proposition 3.2]{dFdM2}, there are constants $b_1, b_2>0$ such that for all $z \in A_{n,m}^\bullet$,
    \begin{equation}
    \label{ineq:key-estimate}
        \frac{\dist\left( g_\bullet(z), g_\bullet(I_{n-1}^\bullet) \right)}{|g_\bullet(I_{n-1}^\bullet)|} \leq b_1 \cdot \frac{\dist\left( g_\bullet^{q_{n}}(z), I_{n-1}^\bullet \right)}{|I_{n-1}^\bullet|} + b_2.
    \end{equation}
    \cite[Lemma 3.6]{AL22} guarantees that near the critical point, the inverse branch of $g_\bullet$ is highly contracting in big scales relative to $I_{n-1}^\bullet$, which yields
    \begin{equation}
    \label{ineq:key-contraction}
        C \cdot \left( \frac{\diam(A_{n,m}^\bullet)}{|I_{n-1}^\bullet|} \right)^\sigma \leq b_1 \cdot \frac{\diam(D_{n-m}^\bullet)}{|I_{n-1}^\bullet|} + b_2
    \end{equation}
    for some constants $C>0$ and $\sigma>1$. Applying Theorem \ref{cqc-real-bounds} to $g_\bullet$, there are also constants $K_2>K_1>1$ such that for all sufficiently large $n$,
    \begin{equation}
    \label{ineq:key-real-bounds}
        K_1^m \leq \frac{\diam(D_{n-m}^\bullet)}{|I_{n-1}^\bullet|} \leq K_2^m.
    \end{equation}
    Note that all the intermediate constants above depend only on $(d_\bullet, N,L,\delta)$. Let us pick $\varepsilon>0$. By combining (\ref{ineq:key-contraction}) and (\ref{ineq:key-real-bounds}), for sufficiently large $m$, we have
    \[
        \diam(A_{n,m}^\bullet) \leq \varepsilon \cdot \diam(D_{n-m}^\bullet).
    \]
    We can repeat the same analysis for $B_{n,m}^\bullet$. 
    \end{proof}
    
    Let $D_{n-m}$, $A_{n,m}$ and $B_{n,m}$ be the interior of the closure of $\bigcup_\bullet \phi_\bullet^{-1}(D_{n-m}^\bullet \cap Y^\bullet_\T)$, $\bigcup_\bullet \phi_\bullet^{-1}(A_{n,m}^\bullet \cap Y^\bullet_\T)$, and $\bigcup_\bullet \phi_\bullet^{-1}(B_{n,m}^\bullet \cap Y^\bullet_\T)$ respectively. Since $\phi_0$ and $\phi_\infty$ are $L$-quasiconformal, the claim implies that there are some constants $n_0,m \in \N$ depending only on $(\varepsilon, d_0, d_\infty,N,K,\mu)$ such that $n_0\geq m$ and for $n\geq n_0$, 
    \begin{equation}
    \label{eqn:key-estimate-02}
        \max\{\diam(A_{n,m}), \diam(B_{n,m})\} \leq \varepsilon \cdot  \diam(D_{n-m}).
    \end{equation}
    
    Let $C_{n,m}$ denote the connected component of $f^{-q_{n-1}}(A_{n,m})$ containing the critical point $0$. Since $C_{n,m} \cap \Hq = (c_{-q_{n}}, c_{-q_{n-1}})$, the map $f^{q_{n-1}}: C_{n,m} \to A_{n,m}$ is a degree $d_0+d_\infty-1$ covering map branched exactly at $0$, and $f^{q_{n-1}}$ maps ${A_{n,m} \cap C_{n,m}}$ univalently onto $(A_{n,m} \backslash \Hq) \cup \left(c_{q_{n-1}-q_n}, 0 \right)$. By making $n_0$ higher if necessary, $A_{n,m} \cup B_{n,m}$ is contained in a neighborhood of the critical point $0$ in which $f\equiv \psi(z^{d_0+d_\infty-1})$ for some univalent map $\psi$ with universally bounded distortion, which implies that $\diam(C_{n,m}) \asymp \diam(A_{n,m} \cap C_{n,m})$. Therefore, (\ref{eqn:key-estimate-02}) can be upgraded to
    \begin{equation}
    \label{eqn:key-estimate-03}
        \max\{\diam(A_{n,m}), \diam(B_{n,m}), \diam(C_{n,m})\} \leq \varepsilon \cdot  \diam(D_{n-m}).
    \end{equation}
    
    By construction, the pointed disk $(D_{n-m},0)$ has bounded shape. We can select an appropriate $\varepsilon$ such that (\ref{eqn:key-estimate-03}) implies that the union $\dom_n := A_{n,m} \cup B_{n,m} \cup C_{n,m}$ is compactly contained in $\ran_n := D_{n-m}$ and $\ran_n \backslash \overline{\dom_n}$ is an annulus with modulus greater than some universal constant. Therefore, the pair $(f^{q_n}, f^{q_{n-1}})$ and the bowtie $(V_n, A_{n,m}, B_{n,m}, C_{n,m})$ form a butterfly $\butterfly_n$ with axis $\Hq$ that extends the $n$\textsuperscript{th} pre-renormalization of $f$ and clearly satisfies \ref{cond:1}.
    
    It is also clear from the construction that the components of $\RS\backslash \Hq$ and $\RS \backslash (\ran_n \cup \Hq)$ are $K'$-quasidisks. Every component of $V_n \backslash \left( \overline{U_n} \cup \Hq \right)$ is also a $K'$-quasidisk since its boundary is a union of quasiarcs meeting at definite angles. Hence, \ref{cond:2} holds. Condition \ref{cond:3} follows from Lemmas \ref{lem:qc-control} (2) and \ref{regularity}, and \ref{cond:4} follows from the construction of $\ran_n$ and Koebe distortion theorem.
\end{proof}

In the proof above, the butterfly extending the $n$\textsuperscript{th} pre-renormalization has height equal to $a_{n}+1$, where $a_{n}$ is the $n$\textsuperscript{th} term of the continued fraction expansion of the rotation number. If we apply the construction in the proof to a rational map in Proposition \ref{prop:prototype-example}, the corresponding limit set is contained in the Julia set of the rational map, which is nowhere dense.

\begin{corollary}
\label{cor:rational-case}
    For any $m \geq 2$ and any $\theta \in \Theta_N$, there exists a $(d_0,d_\infty)$-critical $K$-butterfly having rotation number $\theta$, height $m$, and a nowhere dense limit set, where $K$ depends only on $(d_0,d_\infty,m,N)$. 
\end{corollary}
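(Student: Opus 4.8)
The plan is to realize the desired butterfly as a pre-renormalization butterfly of one of the model rational maps $F_{d_0,d_\infty,\tilde\theta}$ from Example \ref{prototype-example}, where the auxiliary rotation number $\tilde\theta$ is chosen so that the prescribed height and rotation number are read off directly from the continued fraction of $\tilde\theta$. Fix $m\geq 2$, write $\theta=[0;b_1,b_2,b_3,\ldots]\in\Theta_N$, and set $N':=\max\{N,m-1\}$. By Example \ref{prototype-example}, for every $\tilde\theta\in\Theta_{N'}$ the map $F_{d_0,d_\infty,\tilde\theta}$ lies in $\mathcal{HQ}(d_0,d_\infty,N',K_0,\mu)$ with $K_0,\mu$ depending only on $d_0,d_\infty,N'$, hence only on $d_0,d_\infty,m,N$, and uniformly in $\tilde\theta$. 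Feeding this into Theorem \ref{thm:complex-bounds} produces constants $n_0\in\N$ and $K'>1$, again depending only on $d_0,d_\infty,m,N$, such that for every $\tilde\theta\in\Theta_{N'}$ and every $n\geq n_0$ the $n$th pre-renormalization of $F_{d_0,d_\infty,\tilde\theta}$ extends to a $K'$-butterfly $\butterfly_n$. Since $p\renorm$ interchanges inner and outer criticality (Definition \ref{def:crit-comm-pair}), $\butterfly_n$ is a $(d_0,d_\infty)$-critical butterfly precisely when $n$ is even.

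First I would choose an even integer $n\geq n_0$ and set
\[
\tilde\theta:=[0;\underbrace{1,\ldots,1}_{n-1},\,m-1,\,b_1,b_2,b_3,\ldots].
\]
Because $m-1\leq N'$ and each $b_i\leq N\leq N'$, we have $\tilde\theta\in\Theta_{N'}$; moreover the $n$th partial quotient of $\tilde\theta$ equals $m-1$ (here $m\geq 2$ is exactly what makes this a legitimate partial quotient) and $G^n(\tilde\theta)=[0;b_1,b_2,\ldots]=\theta$. Put $f:=F_{d_0,d_\infty,\tilde\theta}$ and let $\butterfly$ be the $K'$-butterfly extending the $n$th pre-renormalization of $f$. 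Being built on an even pre-renormalization, $\butterfly$ is $(d_0,d_\infty)$-critical; its rotation number is that of its commuting pair, which is a conjugate of $\renorm^n\zeta_f$, so by Lemma \ref{rotation-number} it equals $G^n(\rot(\zeta_f))=G^n(\tilde\theta)=\theta$; and by the observation recorded immediately after the proof of Theorem \ref{thm:complex-bounds}, its height equals the $n$th partial quotient of $\tilde\theta$ plus one, namely $m$. Finally, as recorded in that same paragraph, applying the construction of Theorem \ref{thm:complex-bounds} to a rational map produces a butterfly whose limit set $\Lambda_{\butterfly}=\bigcap_{k\geq 0}F^{-k}(\dom)$ is contained in the Julia set of the rational map; since $f=F_{d_0,d_\infty,\tilde\theta}$ possesses the superattracting fixed points $0$ and $\infty$, its Fatou set is non-empty, so $J(f)$ is nowhere dense and hence so is $\Lambda_{\butterfly}$. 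Taking $K:=K'$ completes the proof.

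The only delicate point — and the step I would be most careful about — is the uniformity of the constants: $n_0$ and $K'$ must be pinned down \emph{before} $n$ and $\tilde\theta$ are selected, which is legitimate precisely because Example \ref{prototype-example} and Theorem \ref{thm:complex-bounds} hold uniformly over all of $\Theta_{N'}$; this is the reason for enlarging $N$ to $N'=\max\{N,m-1\}$ at the outset. Everything else is routine bookkeeping: the action of the Gauss map on partial quotients, and the fact that $p\renorm$ toggles the $(d_0,d_\infty)$/$(d_\infty,d_0)$ type with the parity of $n$.
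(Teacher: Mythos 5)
Your proposal is correct and is essentially the argument the paper intends: the corollary is stated as an immediate consequence of the remark following the proof of Theorem \ref{thm:complex-bounds} (height $=a_n+1$, limit set inside the nowhere dense Julia set of a map from Example \ref{prototype-example}), and you have simply made the implicit bookkeeping explicit — choosing $\tilde\theta$ with $n$th partial quotient $m-1$ and tail $\theta$, enlarging $N$ to $N'=\max\{N,m-1\}$ so that the constants of Theorem \ref{thm:complex-bounds} are fixed before $n$ and $\tilde\theta$, and taking $n$ even to get the $(d_0,d_\infty)$ type.
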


This corollary will be applied in Lemma \ref{promote-qs-conjugacy}. By Proposition \ref{gluing}, commuting pairs also admit complex bounds.

\begin{theorem}
\label{thm:complex-bounds-butterflies}
    Given $\zeta$ in $\mathcal{CP}(d_0,d_\infty,N,K,\mu)$, there exist some $n_0 \in \N$ and $K'>1$ depending only on $(d_0,d_\infty,N,K,\mu)$ such that for all $n \geq n_0$, the $n^{\text{th}}$ pre-renormalization of $\zeta$ extends to a $K'$-butterfly $\butterfly_n: \dom_n \to \ran_n$.
\end{theorem}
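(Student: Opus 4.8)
The plan is to deduce the statement directly from Theorem \ref{thm:complex-bounds} by passing through the gluing correspondence of Proposition \ref{gluing}.

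\emph{Reduction to a critical quasicircle map.} Given $\zeta \in \mathcal{CP}(d_0,d_\infty,N,K,\mu)$, let $G_\zeta$ be the gluing map of Proposition \ref{gluing} and let $f_\zeta : \Hq_\zeta \to \Hq_\zeta$ be the resulting $(d_0,d_\infty)$-critical quasicircle map. By the very definition of the space $\mathcal{CP}(d_0,d_\infty,N,K,\mu)$, the map $f_\zeta$ lies in $\mathcal{HQ}(d_0,d_\infty,N,K,\mu)$. Theorem \ref{thm:complex-bounds} then provides constants $n_0 \in \N$ and $K' > 1$, depending only on $(d_0,d_\infty,N,K,\mu)$, such that for every $n \geq n_0$ the $n$th pre-renormalization of $f_\zeta$ extends to a $K'$-butterfly $\butterfly_n : \dom_n \to \ran_n$ whose axis is the lift of $\Hq_\zeta$.

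\emph{Identifying the two pre-renormalizations.} It remains to observe that for $n$ large the $n$th pre-renormalization of $\zeta$ agrees with that of $f_\zeta$. Recall that $\renorm^n f_\zeta$ is by definition $\renorm^n \zeta_{f_\zeta}$, where $\zeta_{f_\zeta}$ is the commuting pair unfolded from $f_\zeta$, and that re-gluing $\zeta_{f_\zeta}$ via Proposition \ref{gluing} recovers $f_\zeta$ up to conformal conjugacy. The gluing map $G_\zeta$ is holomorphic and alters $\zeta$ only on a neighborhood of the endpoint $f_-(0)$ of its base $\I$. On the other hand, the $n$th pre-renormalization of a critical commuting pair is assembled from branches of iterates of $f_-$ and $f_+$ acting on combinatorial intervals that, by the real a priori bounds built into membership in $\mathcal{CP}(d_0,d_\infty,N,K,\mu)$ (equivalently, by the uniform quasisymmetry of Corollary \ref{qs-conjugacy}), are contained $K^{-1}$-deep inside $\I$ around $0$ once $n$ exceeds a threshold $n_1$ depending only on $(d_0,d_\infty,N,K,\mu)$. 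Hence for $n \geq n_1$ the pre-renormalization $p\renorm^n \zeta$ coincides --- as a pair of holomorphic maps, together with any bowtie extension --- with $p\renorm^n \zeta_{f_\zeta} = p\renorm^n f_\zeta$, the comparison being made through the conformal identification $G_\zeta$ on the relevant region. Replacing $n_0$ by $\max\{n_0, n_1\}$, the $K'$-butterfly $\butterfly_n$ produced in the first step is therefore also a butterfly extension of the $n$th pre-renormalization of $\zeta$, which is the assertion.

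The only step that is not a verbatim application of Theorem \ref{thm:complex-bounds} is the identification of $p\renorm^n\zeta$ with $p\renorm^n f_\zeta$, so that is the part to be carried out with care. It amounts to checking that renormalization commutes with the gluing and unfolding operations of Proposition \ref{gluing} once the renormalization window has retreated into the interior of the base; this is a bookkeeping argument controlled entirely by the real bounds of Corollary \ref{qs-conjugacy} and carries no genuine analytic difficulty.
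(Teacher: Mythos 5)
Your proposal is correct and takes essentially the same route as the paper: the paper's own proof of Theorem \ref{thm:complex-bounds-butterflies} consists precisely of the remark that, by Proposition \ref{gluing}, the commuting pair $\zeta$ glues to a critical quasicircle map $f_\zeta$ in $\mathcal{HQ}(d_0,d_\infty,N,K,\mu)$, to which Theorem \ref{thm:complex-bounds} applies. Your additional step identifying $p\renorm^n \zeta$ with $p\renorm^n f_\zeta$ through the gluing/unfolding correspondence once the renormalization windows have retreated into the base (with bounded conformal distortion near the critical point ensuring the $K'$-butterfly conditions transfer with constants depending only on the data) is exactly the routine bookkeeping the paper leaves implicit.
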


\subsection{Quasiconformal rigidity}
\label{ss:qc-conj}

Let us fix a bounded type irrational number $\theta \in \Theta_N$ for some $N \geq 1$. Recall from Corollary \ref{qs-conjugacy} that two butterflies of the same criticality and bounded type rotation number must be quasisymmetrically conjugate on their bases.

\begin{lemma}
\label{promote-qs-conjugacy}
    Suppose two $(d_0,d_\infty)$-critical $K$-butterflies $\butterfly_1: \dom_1 \to \ran_1$ and $\butterfly_2:\dom_2 \to \ran_2$ have the same height $m$ and rotation number $\theta$. The unique quasisymmetric conjugacy between $\butterfly_1$ and $\butterfly_2$ on their bases extend to a quasiconformal conjugacy $h: \ran_1 \to \ran_2$ between $\butterfly_1$ and $\butterfly_2$ with dilatation depending only on $(d_0,d_\infty,N,K)$.
\end{lemma}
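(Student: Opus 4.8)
The plan is to build the conjugacy $h$ by a pullback argument, starting from the quasisymmetric conjugacy on the bases and promoting it to a quasiconformal conjugacy on the whole range. Let $\varphi: \mathbf{I}_1 \to \mathbf{I}_2$ be the unique quasisymmetric conjugacy between the commuting pairs $(f_{-,1}|_{I_{-,1}}, f_{+,1}|_{I_{+,1}})$ and $(f_{-,2}|_{I_{-,2}}, f_{+,2}|_{I_{+,2}})$ guaranteed by Corollary \ref{qs-conjugacy}, with dilatation bounded in terms of $(d_0,d_\infty,N,K)$ only (note the height is an additional combinatorial datum that, together with $\theta$, determines the full combinatorics of the butterfly; since $\butterfly_1$ and $\butterfly_2$ share both, their commuting-pair renormalization combinatorics agree). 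Because $\butterfly_1$ and $\butterfly_2$ are $K$-butterflies, their axes $\Hq_1, \Hq_2$ are $K$-quasicircles, so $\varphi$ extends (e.g. by Beurling--Ahlfors, or more conveniently along the axis quasiarc) to a $K'$-quasiconformal homeomorphism $\Hq_1 \to \Hq_2$; the extended base $\mathbf{J}_i$ and the five marked points of \ref{cond:3} are carried appropriately.

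**Key steps.** First, using condition \ref{cond:4} and the fact that the components of $\C \setminus \partial V_i$, $\C \setminus (\partial V_i \cup \Hq_i)$, and $V_i \setminus (\overline{U_i} \cup \Hq_i)$ are all $K$-quasidisks (condition \ref{cond:2}), I would construct an initial quasiconformal map $h_0: \overline{V_1} \to \overline{V_2}$ that agrees with the axis extension of $\varphi$ on $\Hq_1 \cap \overline{V_1}$, maps $\partial V_1$ to $\partial V_2$, maps the three domains $U_{-,1}, U_{+,1}, U_{\times,1}$ to their counterparts, and sends each of the boundary quasiarcs of the bowtie to the corresponding one — this is a finite quasiconformal interpolation on a controlled number of quasidisks, with dilatation depending only on $(d_0,d_\infty,N,K)$. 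Moreover $h_0$ should be chosen to intertwine the boundary correspondence of the shadows $F_1, F_2$: since $F_i|_{\partial U_{\pm,i}}$ and $F_i|_{\partial U_{\times,i}}$ are covering maps onto $\partial(V_i \setminus (\Hq_i \cup F_i(\cdots)))$, and $h_0$ already conjugates them on the axis, one can first define $h_0$ on $\partial V_1$ compatibly with the $F_i$-images and then lift to the boundaries of the three subdomains, keeping uniform dilatation by the quasicircle property. Second, I would run the standard pullback: set $h_{k+1}$ to be the lift of $h_k$ through the shadows, i.e. $h_{k+1}$ is defined on $F_1^{-1}(V_1)=U_1$ by $F_2 \circ h_{k+1} = h_k \circ F_1$, using that $h_k$ already agrees with the conjugacy on $\Hq_1$ so the lift is unambiguous and continuous across $\Hq_1$; since $F_1, F_2$ are holomorphic, $h_{k+1}$ has the same dilatation as $h_k$, hence all $h_k$ are uniformly $K'$-quasiconformal. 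Third, extract a limit $h_\infty: \dom_1 \to \dom_2$ by normal families; $h_\infty$ conjugates $F_1$ and $F_2$ on $\dom_1$, agrees with $\varphi$ on $\mathbf{I}_1$, and restricts to the axis conjugacy on $\Hq_1 \cap \dom_1$. Finally, patch $h_\infty$ together with $h_0$ on the annulus $\ran_1 \setminus \overline{\dom_1}$ (of modulus $\geq K^{-1}$ by \ref{cond:1}): since both maps already agree with $\varphi$ on the axis and with the boundary correspondence on $\partial U_1$ and $\partial V_1$, a last quasiconformal interpolation on the two quasidisk halves of this annulus produces the desired $h: \ran_1 \to \ran_2$ conjugating $\butterfly_1$ and $\butterfly_2$, with dilatation depending only on $(d_0,d_\infty,N,K)$.

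**Main obstacle.** The delicate point is ensuring the pullback lifts are globally well-defined and continuous across the axis $\Hq_1$ at every stage: the shadow $F_1$ is only piecewise holomorphic (three pieces meeting along $\partial U_{\pm,1}$ and at $0$), and $F_1 \circ F_1$ is a branched cover near $0$, so one must check that the inverse branches chosen on the two sides of each domain boundary and on the two sides of $\Hq_1$ glue to a single homeomorphism — this is where one uses crucially that $h_k$ already realizes the commuting-pair conjugacy on $\mathbf{I}_1$ together with the boundary-marking data \ref{bow:9}, so the monodromy matches. A secondary technical nuisance is that the limit set $\Lambda_{\butterfly_1}$ might have positive area, so one cannot immediately conclude $h_\infty$ is conformal there (indeed we do not claim it is — only quasiconformal rigidity is asserted here); hence, unlike in the rational-map setting of Section \ref{sec:rat-map}, no appeal to absence of line fields is needed, and the uniform-dilatation conclusion follows purely from the holomorphicity of the shadows and compactness of normalized $K'$-quasiconformal maps.
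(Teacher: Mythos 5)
Your setup (quasisymmetric conjugacy on the bases from Corollary \ref{qs-conjugacy}, extension to the extended bases using equal heights, an equivariant quasiconformal interpolation $h_0$, and then pullback through the shadows with constant dilatation) matches the paper's argument. But there is a genuine gap at the step ``extract a limit $h_\infty$ by normal families; $h_\infty$ conjugates $F_1$ and $F_2$ on $\dom_1$.'' The functional equation you iterate is $F_2\circ h_{k+1}=h_k\circ F_1$, with \emph{different} indices on the two sides; passing to a subsequential limit does not yield $F_2\circ h_\infty=h_\infty\circ F_1$ unless consecutive maps converge to the same limit. The standard remedy is that $h_k$ stabilizes pointwise off the limit set $\Lambda_{\butterfly_1}$, where it is a conjugacy; to conclude that the limit is unique and conjugates on all of $\dom_1$ one then needs $\Lambda_{\butterfly_1}$ to be \emph{nowhere dense}, so that the stabilized values determine $h_\infty$ and the conjugacy relation extends by continuity. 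You dismiss the limit set as a ``secondary technical nuisance'' about conformality, but conformality is not the issue (none is claimed); what is at stake is whether $h_\infty$ is a conjugacy at all, and your proposal offers no argument for that when $\Lambda_{\butterfly_1}$ could a priori have interior.

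This is exactly the extra ingredient in the paper's proof: it first produces a reference $(d_0,d_\infty)$-critical butterfly $\butterfly$ of the same rotation number and height whose limit set is nowhere dense, by extracting it from the rational maps of Example \ref{prototype-example} (Corollary \ref{cor:rational-case}), since there the limit set sits inside a nowhere dense Julia set. Running the same pullback between $\butterfly$ and $\butterfly_1$ gives a quasiconformal map conjugating them off $\Lambda_{\butterfly}$; nowhere-density of $\Lambda_{\butterfly}$ lets this extend to a full conjugacy, and hence $\Lambda_{\butterfly_1}$, being its quasiconformal image, is nowhere dense as well. Only then does the pullback limit between $\butterfly_1$ and $\butterfly_2$ converge (not just subsequentially) to a genuine quasiconformal conjugacy. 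Your proof needs this comparison with a model butterfly, or some substitute argument controlling $\Lambda_{\butterfly_1}$, to close the gap; the remaining points you raise (well-definedness of lifts across the axis via equivariance and \ref{bow:9}, uniform dilatation, final interpolation on the fundamental annulus) are fine and agree with the paper up to cosmetic reorganization.
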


The proof is an application of the pullback argument, and it is similar to \cite[Theorem 3.1]{dF99}.

\begin{proof}
    By Corollary \ref{qs-conjugacy}, there exists a unique quasisymmetric conjugacy $h: \textbf{I}_1 \to \textbf{I}_2$ between $\butterfly_1$ and $\butterfly_2$ on their bases. Since $\butterfly_1$ and $\butterfly_2$ have the same height, we can extend $h$ to a quasisymmetric conjugacy on the extended bases $\J_1$ and $\J_2$ by setting $h(z) := f_{-,2}^{-1} \circ h \circ f_{-,1}(z)$ for $z \in J_{-,1} \backslash \mathbf{I}_1$ and $h(z) := f_{+,2}^{-k} \circ h \circ f_{+,1}^k(z)$ for $z \in J_{+,1} \backslash \mathbf{I}_1$ where $k \in \N$ is the first time $f_{+,1}^k(z)$ lies on $\mathbf{I}_1$.
    
    By \ref{cond:1} and \ref{cond:2}, we can perform quasiconformal interpolation and extend $h$ to a global $L$-quasiconformal map $h_0$ that is equivariant on the boundaries of the butterflies of $\butterfly_1$ and $\butterfly_2$. Note that the dilatation $L$ depends only on $(d_0, d_\infty,N,K)$.
    
    Next, we apply the pullback argument to obtain a sequence of $L$-quasiconformal maps $h_n$ as follows. Outside of $\dom_1$, we take $h_n \equiv h_{n-1}$; within $\dom_1$, we set $h_n$ to be the lift of $h_{n-1}: \ran_1 \to \ran_2$ via $\butterfly_1$ and $\butterfly_2$. By equivariance, $h_n$ is well-defined, and it gives a conjugacy between $\butterfly_1|_{\ran_1 \backslash F_1^{-n}(\dom_1)}$ and $\butterfly_2|_{\ran_2 \backslash F_2^{-n}(\dom_2)}$.
    
    By the compactness of the space of normalized $L$-quasiconformal maps, $h_n$ converges to a subsequential limit $h_\infty:\RS \to \RS$. Note that $h_n$ stabilizes pointwise outside of $\Lambda_{\butterfly_1}$. We will claim that $\Lambda_{\butterfly_1}$ is nowhere dense, which ultimately implies that the limit $h_\infty$ is unique in the sense that $h_n \to h_\infty$ as $n\to \infty$, and $h_\infty$ conjugates $\butterfly_1$ and $\butterfly_2$.

    Corollary \ref{cor:rational-case} guarantees the existence of a $(d_0,d_\infty)$-critical butterfly $\butterfly: \dom \to \ran$ that has the same rotation number $\theta$ and height $m$ and that its limit set $\Lambda_{\butterfly}$ is nowhere dense. By applying the same pullback argument above, we obtain a quasiconformal map $g: \ran \to \ran_1$ that restricts to a conjugacy between $\butterfly$ and $\butterfly_1$ on $\ran\backslash \Lambda_{\butterfly}$. Since $\Lambda_{\butterfly}$ is nowhere dense, then $g$ extends to a full conjugacy between $\butterfly$ and $\butterfly_1$, which then implies that $\Lambda_{\butterfly_1}$ is indeed nowhere dense.
\end{proof}

Next, we can spread around the quasiconformal conjugacy between butterflies of sufficiently deep renormalizations throughout the entire Herman curves. Compare with \cite[Theorem 3.19]{AL22}.

\begin{theorem}[Quasiconformal rigidity]
\label{qc-conjugacy}
    Given any two maps $f_1$ and $f_2$ in $\mathcal{HQ}(d_0,d_\infty,N,K,\mu)$ of the same rotation number, there is an $L$-quasiconformal map $h$ on $\RS$ that restricts to a conjugacy between $f_1$ and $f_2$ in some $\delta$-collar neighborhoods of their Herman quasicircles. The constants $L$ and $\delta$ depend only on $(d_0,d_\infty,N,K,\mu)$.
\end{theorem}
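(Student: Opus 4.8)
The statement is the analog of Theorem \ref{qc-conjugacy} but phrased for the critical quasicircle maps $f_1, f_2$ themselves rather than for butterflies, so the plan is to deduce it from Theorem \ref{thm:complex-bounds} (complex bounds producing $K'$-butterflies for deep pre-renormalizations) together with Lemma \ref{promote-qs-conjugacy} (quasiconformal rigidity of individual butterflies) and a spreading-around argument. First I would fix $f_1, f_2 \in \mathcal{HQ}(d_0,d_\infty,N,K,\mu)$ with the same bounded type rotation number $\theta \in \Theta_N$ and, using Theorem \ref{thm:complex-bounds}, pick $n_0 = n_0(d_0,d_\infty,N,K,\mu)$ so that for every $n \geq n_0$ the $n$\textsuperscript{th} pre-renormalizations of $f_1$ and $f_2$ extend to $K'$-butterflies $\butterfly_n^{(1)}$ and $\butterfly_n^{(2)}$. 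As noted right after the proof of Theorem \ref{thm:complex-bounds}, the butterfly extending the $n$\textsuperscript{th} pre-renormalization has height $a_n + 1$ where $a_n$ is the $n$\textsuperscript{th} continued fraction digit of $\theta$; since $f_1$ and $f_2$ share the rotation number, $\butterfly_n^{(1)}$ and $\butterfly_n^{(2)}$ have the \emph{same} height, and by Lemma \ref{rotation-number} (applied to the commuting pairs $\zeta_{f_i}$) they also have the same rotation number $G^n(\theta)$, which is again in $\Theta_N$. Hence Lemma \ref{promote-qs-conjugacy} applies and furnishes a quasiconformal conjugacy $H_n : \ran_n^{(1)} \to \ran_n^{(2)}$ between $\butterfly_n^{(1)}$ and $\butterfly_n^{(2)}$ whose dilatation is bounded in terms of $(d_0,d_\infty,N,K')$, hence ultimately in terms of $(d_0,d_\infty,N,K,\mu)$.

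\textbf{Spreading the conjugacy around.} Fix $n = n_0$ and let $H := H_{n_0}$ be the conjugacy between the $n_0$\textsuperscript{th}-renormalization butterflies. The base of $\butterfly_{n_0}^{(i)}$ corresponds, under the gluing/logarithmic-coordinate dictionary from \S\ref{ss:commuting-pairs}, to a combinatorial interval $I_{q_{n_0}}^{(i)} \cup I_{q_{n_0 - 1}}^{(i)}$ in $\Hq_i$ around the critical point, and $H$ already conjugates $f_1$ and $f_2$ on (a collar of) this interval. The recurrence $q_{n} = a_n q_{n-1} + q_{n-2}$ shows that the forward orbit of the critical point under $f_i$, of length $q_{n_0 + 1}$, tiles the whole quasicircle by the renormalization tiling of Proposition \ref{renormalization-tiling}; equivalently $\Hq_i = \bigcup_{j=0}^{q_{n_0+1}-1} f_i^j(\text{base tile})$. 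So I would transport $H$ around $\Hq_i$ by the standard procedure: on the tile $f_i^j(\text{base})$ one defines the conjugacy as $f_2^j \circ H \circ f_i^{-j}$, using that $f_i$ restricted to the relevant piece (away from the critical point, or on the branch through it) is conformal with bounded distortion and that the tiles have bounded geometry by Lemma \ref{lem:qc-control} together with Lemma \ref{regularity}. Since $\butterfly_{n_0}^{(i)}$ comes with a definite collar ($\modu(\ran_{n_0}\setminus\overline{\dom_{n_0}}) \geq (K')^{-1}$), these push-forwards overlap on definite annuli, and after a finite (bounded in terms of $N$) number $q_{n_0+1}$ of steps one obtains a quasiconformal map defined on a definite $\delta$-collar of $\Hq_1$, with uniformly bounded dilatation, conjugating $f_1$ and $f_2$ there. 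One then extends/glues to a global $L$-quasiconformal map; the relevant quasidisk conditions \ref{cond:2} and Proposition \ref{compactness} give the uniform dilatation control.

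\textbf{Main obstacle.} The delicate point is ensuring that the local pieces $f_2^j \circ H \circ f_i^{-j}$ patch together into a \emph{single} well-defined quasiconformal map with dilatation independent of the (uniformly bounded) number of gluings, rather than merely a collection of partial conjugacies. This requires that $H$ itself be compatible with the butterfly dynamics on overlaps of consecutive tiles — which is exactly the equivariance built into Lemma \ref{promote-qs-conjugacy} (the conjugacy is equivariant on the boundaries of the butterflies), combined with the commuting-pair relations \ref{P-3}. So the argument is: (i) the base conjugacy $H$ extends equivariantly to the extended base, exactly as in the proof of Lemma \ref{promote-qs-conjugacy}; (ii) equivariance propagates the overlap-compatibility to all $q_{n_0+1}$ tiles; (iii) bounded geometry of the tiling (Lemma \ref{lem:qc-control}) keeps the interpolation dilatation and the number of gluings uniform. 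A secondary technical wrinkle is controlling the \emph{two-sided} collar: one must check that pulling back through the critical branch of $f_i$ (which is a $(d_0+d_\infty-1)$-to-one cover near $c$) still yields a definite collar on both $Y^0$ and $Y^\infty$ sides — this follows from property \ref{R03}--\ref{R04} of $f$-relevant neighborhoods and the form $f \equiv \psi(z^{d_0+d_\infty-1})$ near the critical point, as already used in the proof of Theorem \ref{thm:complex-bounds}. Once these are in place, the final $L$ and $\delta$ depend only on $(d_0,d_\infty,N,K,\mu)$ as claimed.
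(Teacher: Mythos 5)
Your first half follows the paper's route exactly: complex bounds (Theorem \ref{thm:complex-bounds}) give $K'$-butterflies for deep pre-renormalizations, equal rotation number forces equal height, and Lemma \ref{promote-qs-conjugacy} yields a quasiconformal conjugacy $H$ between the two butterflies with dilatation controlled by $(d_0,d_\infty,N,K,\mu)$. The genuine gap is in the spreading-around step. Defining the map on the $j$\textsuperscript{th} tile as $f_2^{j}\circ H\circ f_1^{-j}$ produces a family of partial maps that you must glue, and the compatibility you invoke does not come for free: $H$ is a conjugacy only on the butterfly's dynamical domain $U_1$ (where the shadow map is defined), while on $V_1\setminus\overline{U_1}$ it is merely a quasiconformal interpolation with equivariant boundary values. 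On a two-dimensional overlap of two tile neighborhoods the two push-forwards agree only if $H$ conjugates the relevant transition iterate on the pulled-back overlap, and that overlap typically lies outside $U_1$; equivariance on $\partial U_1$ together with the commuting relations does not force agreement there. If instead you take the tiles with disjoint interiors, you need continuity of the glued map across the lateral tile boundaries (and then removability of those quasiarcs), and for that the lateral boundaries must be canonical curves respected both by the dynamics of $f_1,f_2$ and by $H$ --- your construction supplies no such curves.

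This is exactly the device the paper introduces and your sketch is missing. The paper first normalizes $f_1$, without loss of generality by transitivity of quasiconformal conjugacy, to the rational model $F_{d_0,d_\infty,\theta}$ of Example \ref{prototype-example}; then the basins of $0$ and $\infty$ have locally connected boundaries (by \cite{WYZZ} and Douady--Ghys surgery), so every point of $\Hq_1$ off the backward critical orbit is the landing point of a pair of external rays $\gamma_x$. The rays through the critical value orbit are permuted by the butterfly maps, so together with $\partial V_1$ they bound rectangles $\Pi_\pm$ containing the base intervals; pulling these back by $f_1$ until the first return to $V_1$ gives a dynamically coherent tiling of a collar of $\Hq_1$, the images $h_\infty(\Pi_\pm)$ generate the corresponding tiling for $f_2$, and the conjugacy is obtained by lifting $h_\infty$ through these tilings, consistently by construction. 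Without some substitute for these transverse, dynamically natural boundary curves (and without the normalization to the rational model that makes them available, which a general map in $\mathcal{HQ}(d_0,d_\infty,N,K,\mu)$, defined only near its quasicircle, does not possess), your gluing step does not close, so as written the argument has a gap precisely where the theorem's real content lies.
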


\begin{proof}
     Let $f_1: \Hq_1 \to \Hq_1$ and $f_2: \Hq_2 \to \Hq_2$ be two $(d_0,d_\infty)$-critical quasicircle maps of rotation number $\theta$. Without loss of generality, assume that $f_1$ is induced from the rational map $F_{d_0,d_\infty,\theta}$ from Proposition \ref{prop:prototype-example}.
    
    By Lemma \ref{regularity}, there is a global quasiconformal map $h$ that sends $\Hq_1$ to $\Hq_2$ and restricts to a conjugacy between $f_1|_{\Hq_1}$ and $f_2|_{\Hq_2}$. For $i\in\{1,2\}$ and sufficiently large $m \in \N$, Theorem \ref{thm:complex-bounds} states that the $m^{\text{th}}$ pre-renormalization of $f_i$ extends to a $K'$-butterfly $\butterfly_i = (f_{-,i},f_{+,i}): \dom_i \to \ran_i$. Clearly, $h$ induces a quasisymmetric conjugacy between $\butterfly_1$ and $\butterfly_2$ on their bases. By Lemma \ref{promote-qs-conjugacy}, $h$ can be modified to a quasiconformal map $h_\infty: V_1 \to V_2$ that conjugates $\butterfly_1: \dom_1 \to \ran_1$ and $\butterfly_2: \dom_1 \to \ran_1$.

    It remains to spread the conjugacy around the Herman curve. 
    To do this, we will apply Lemma \ref{lem:fatou-set-structure}, which asserts that given any point $x \in \Hq_1$ that is not an iterated preimage of the critical point $c$ of $f_1$, there is a unique pair of external rays from $0$ and $\infty$ landing at $x$; we denote by $\gamma_x$ the union of these rays.
    The rays $\gamma_{f_{+,1}(c)}$ and $\gamma_{f_{-,1}(c)}$ are disjoint away from $0$ and $\infty$, and they are mapped to $\gamma_{f_{-,1}f_{+,1}(c)}$ under $f_{-,1}$ and $f_{+,1}$ respectively.
    Denote by $\Pi_\pm$ the connected component of $\ran_1 \backslash \left( \gamma_{f_{-,1}f_{+,1}(c)} \cup \gamma_{f_{\pm,1}(c)} \right)$ containing the quasiarc $[f_{-,1}f_{+,1}(c),f_{\pm,1}(c)] \subset \Hq_1$. 
    Observe that $\Pi_\pm$ is simply connected, and the intersection of its boundary and a small neighborhood of $\Hq_1$ is the union of a segment of $\gamma_{f_{-,1}f_{+,1}(c)}$ and a segment of $\gamma_{f_{\pm,1}(c)}$. 
    
    Let us lift both $\Pi_+$ and $\Pi_-$ by iterates of $f_1$ until the first return to $\Pi_+ \cup \Pi_-$ and obtain a tiling of a neighborhood of $\Hq_1$.
    Let us also use $h_\infty\left(\Pi_\pm\right)$ to construct a similar dynamical tiling for $f_2$ around $\Hq_2$. 
    Lastly, lift the quasiconformal conjugacy $h_\infty: \Pi_+ \cup \Pi_- \to h_\infty\left( \Pi_+ \cup \Pi_-\right)$ to these dynamical tilings. The equivariant properties of the external rays ensures that this lift is a continuous extension of $h$ itself and moreover, it is a quasiconformal conjugacy between $f_1$ and $f_2$ on the neighborhood of their Herman quasicircles.
\end{proof}

\subsection{$C^{1+\alpha}$ rigidity}
\label{ss:coneplusalpha}
    
Consider a pair of $(d_0,d_\infty)$-critical quasicircle maps $f_1: \Hq_1 \to \Hq_1$ and $f_2: \Hq_2 \to \Hq_2$ of rotation number $\theta$. By Theorem \ref{qc-conjugacy}, there exist collars $A_1$ and $A_2$ for $f_1|_{\Hq_1}$ and $f_2|_{\Hq_2}$ respectively, and a global quasiconformal map $\phi: \RS \to \RS$ conjugating $f_1|_{A_1}$ and $f_2|_{A_2}$. Our goal is to improve the regularity of the quasiconformal conjugacy $\phi$ and prove Theorem \ref{thm:c1plusalpharigidity}. We will do so via McMullen's Dynamic Inflexibility Theorem.

Let $\textit{Hol}$ denote the set of all holomorphic maps $g: U \to \RS$ where $U$ is any open subset of $\RS$. Endow $\textit{Hol}$ with the topology where $g_n : W_n \to \RS$ converges to $g: W \to \RS$ if for every compact subset $X \subset W$, $W_n$ contains $X$ for all sufficiently large $n$ and $g_n \to g$ uniformly in $X$.

For each $i \in \{1,2\}$, define
\[
    \mathcal{F}(f_i) := \left\{ [g : U \to \RS] \in \textit{Hol} \: : \: f_i^i = g \circ f_i^j \text{ for some } j \in \N \text{ on some open } U \subset A_i \right\}.
\]
Let us pick a skinnier collar $A_1' \Subset A_1$ for $\Hq_1$ and let $A_2' = \phi(A_1')$. Consider the local filled Julia set $K^{\textnormal{loc}}_{A'_i}(f_i)$ of $f_i$ rel $A'_i$, which is a forward invariant compact subset of $A_i$. (See Definition \ref{def:local-filled-julia-set}.) The pair $\left(\mathcal{F}(f_i), K^{\textnormal{loc}}_{A'_i}(f_i)\right)$ forms a holomorphic dynamical system in the sense of McMullen \cite[\S9]{McM96}.

\begin{theorem}[{\cite[Theorem 9.15]{McM96}}]
\label{McM-criterion-01}
    Suppose there is a $K$-quasiconformal conjugacy $\phi$ between two holomorphic dynamical systems $(\mathcal{F}_1,K_1)$ and $(\mathcal{F}_2,K_2)$. If $(\mathcal{F}_1,K_1)$ is uniformly twisting and $K_1$ has a uniformly deep subset $S$, then $\phi$ is uniformly $C^{1+\alpha}$-conformal on $S$.
\end{theorem}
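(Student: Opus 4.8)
\emph{Strategy.} The plan is to reduce the assertion to a quantitative polynomial-decay estimate for the Beltrami coefficient of $\phi$ at points of $S$, and then to extract that decay from a rescaling/compactness argument driven by uniform twisting together with uniform deepness. Write $\mu_\phi := \overline{\partial}\phi / \partial\phi$ for the complex dilatation of the $K$-quasiconformal conjugacy, so that $\|\mu_\phi\|_\infty \le k := (K-1)/(K+1) < 1$, and for $x \in S$ and small $r>0$ put
\[
a_x(r) := \frac{1}{m\left(\D(x,r)\right)} \int_{\D(x,r)} |\mu_\phi|\, dm,
\]
where $m$ denotes Lebesgue measure. The main step is to produce constants $C, \alpha > 0$, uniform over $x \in S$, with $a_x(r) \le C r^\alpha$ for all small $r$. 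Granting this, the standard regularity theory for the Beltrami equation (see \cite[\S2]{McM96}; a Mori/Astala-type estimate) shows that $\phi$ is differentiable at $x$ with $\phi'(x) \neq 0$ and $\left|\phi(x+t)-\phi(x)-\phi'(x)t\right| = O\left(|t|^{1+\alpha'}\right)$ for some $\alpha' > 0$ depending only on $(k,\alpha)$, with implied constant controlled by $C$ and $k$ alone. Thus $\phi$ is uniformly $C^{1+\alpha'}$-conformal on $S$, as desired.

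\emph{Renormalization at deep points.} Fix $x \in S$ and a scale $r$. Uniform deepness of $S$ in $J_1$, combined with uniform twisting of $(\mathcal{F}_1, J_1)$, yields a \emph{renormalization}: an element $g = g_{x,r} \in \mathcal{F}_1$ (a composition of maps allowed by the dynamics), univalent with $C$-bounded distortion on a topological disk $U_{x,r}$ with $\dist(x, U_{x,r}) = O(r)$ and $\diam(U_{x,r}) \asymp r$, whose image has definite size $\asymp 1$, and — this is the content of twisting — such that the affinely normalized map $\widehat{g}_{x,r}$ converges, as $r \to 0$ along subsequences, to a holomorphic map $\widehat{g}_\infty : \C \to \RS$ whose nonlinearity $\|\widehat{g}_\infty''/\widehat{g}_\infty'\|$ is bounded below on a fixed round ball. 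Because $x$ is uniformly deep, the rescalings of $J_1$ about $x$ at scales $r \to 0$ converge in the Hausdorff topology to all of $\C$. The conjugacy relation $g_{x,r}^{(2)} \circ \phi = \phi \circ g_{x,r}^{(1)}$ (where $g^{(i)}$ is the corresponding element of $\mathcal{F}_i$) is preserved under rescaling.

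\emph{Compactness and the dichotomy.} Suppose the decay fails: there exist $x_n \in S$ and $r_n \to 0$ with $a_{x_n}(r_n) \ge c > 0$. Let $\psi_n$ be $\phi$ rescaled about $x_n$ at scale $r_n$ (pre- and post-composed with the affine maps normalizing $U_{x_n,r_n}$ and its image); the $\psi_n$ are $K$-quasiconformal on larger and larger balls, so by compactness $\psi_n \to \psi_\infty$ (a $K$-quasiconformal map of $\C$) along a subsequence. Along the same subsequence the rescaled systems converge to uniformly twisting holomorphic dynamical systems $(\mathcal{F}_1^\infty, \C)$ and $(\mathcal{F}_2^\infty, \C)$, with invariant set all of $\C$ by uniform deepness, and $\psi_\infty$ conjugates them. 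Then $\mu_{\psi_\infty}$ is a measurable line field on $\C$ which, since $\mathcal{F}_2^\infty$ is holomorphic, is invariant under every $g \in \mathcal{F}_1^\infty$, in particular under a non-affine limiting renormalization $\widehat{g}_\infty$ furnished by twisting. Since a non-affine holomorphic map of bounded distortion spreads directions, the usual argument for the absence of invariant line fields forces $\mu_{\psi_\infty} = 0$ a.e., i.e. $\psi_\infty$ is affine. On the other hand, the renormalization is a change of variables of bounded distortion, so it transports $\mu_\phi|_{\D(x_n,r_n)}$ faithfully onto $\mu_\phi$ on a ball of definite size; tracking a lower-semicontinuous substitute for $a_{x_n}(r_n)$ — e.g. McMullen's normalized nonlinearity functional, or $\left|\int_B \mu_{\psi_n}\, dm\right|$ after an auxiliary rotating frame that realizes most of the $L^1$-mass as a net integral — shows $\psi_\infty$ cannot be affine. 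This contradiction establishes the one-step gain, with constants uniform over $x \in S$ because all objects lie in fixed compact families: the $K$-quasiconformal maps, the $C$-bounded-distortion renormalizations, and the normalized twisting systems. Iterating the gain over dyadic scales upgrades it to the geometric rate $a_x(r) \le C r^\alpha$ required above.

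\emph{Main obstacle.} The delicate point is the dichotomy in the last step: weak-$*$ limits of Beltrami coefficients can shed $L^1$-mass through cancellation, so ``non-decay of $a_x(r)$'' does not by itself produce a non-conformal limit $\psi_\infty$. One must either run the entire argument with a genuinely lower-semicontinuous quantity in place of $a_x(r)$ (a normalized nonlinearity, or the net integral of $\mu_\phi$ in a well-chosen rotating frame), or else argue contrapositively — showing directly that the renormalizations force the $L^1$-average of $\mu_\phi$ on definite balls to be small, since any non-negligible coherent part would survive to a nontrivial invariant line field of the twisting limit. The remaining technical heart is the bookkeeping needed to make every constant truly uniform over $x \in S$, which is precisely where uniform twisting and uniform deepness — rather than their pointwise counterparts — are indispensable.
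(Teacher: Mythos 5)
You should first be aware that the paper does not prove this statement at all: it is quoted from McMullen \cite[Theorem 9.15]{McM96} (the Dynamic Inflexibility Theorem), so the only meaningful comparison is with McMullen's argument. Measured against that, your proposal has a genuine gap, and it sits exactly where your ``Main obstacle'' paragraph says it does. Your reduction target --- polynomial decay of the disk averages $a_x(r)$ of $|\mu_\phi|$ at points of $S$ --- is not a consequence of the hypotheses and is the wrong quantity. Nothing in the hypotheses makes $\mu_\phi$ small on $J_1$ itself: $\phi$ is an arbitrary $K$-quasiconformal conjugacy, $J_1$ may have positive area, and no absence-of-invariant-line-fields statement is assumed. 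Moreover, uniform deepness is a distance condition, not a measure condition: it bounds $\dist(z,J_1)$ by $C\,\dist(z,S)^{1+\delta}$, but it does not bound the area of $B(x,r)\setminus J_1$ by $O(r^{2+\delta})$ --- the complement could consist of balls of radius comparable to $r^{1+\delta}$ packed with definite density in $B(x,r)$. (McMullen distinguishes deep points from \emph{measurable} deep points for precisely this reason; only the latter yields the $L^1$ decay you want, and it is not among the hypotheses here.) So even if $\mu_\phi$ vanished a.e.\ on $J_1$, $a_x(r)$ need not decay, and in the applications in this paper it has no reason to. Conformal differentiability at $x$ comes from cancellation of $\mu_\phi$ as seen from $x$, not from $L^1$ smallness; this is why McMullen's proof runs, in essence, through the visual extension of $\mu_\phi$ to hyperbolic $3$-space and quantitative inflexibility estimates, in which uniform twisting forces the coherent part of the dilatation to decay over $J_1$ and deepness of $x$ transports that decay along the geodesic ending at $x$. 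Your proposed substitutes (a rotating-frame net integral, a nonlinearity functional) are exactly this unbuilt machinery; without them the contradiction step collapses, as you yourself note, and with them carried out you would essentially be reconstructing McMullen's proof.

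Two further points. The line-field step in your dichotomy is too weak as stated: invariance of a measurable line field under a single non-affine holomorphic map does not force it to vanish (the line field tangent to circles about the origin is invariant under $z\mapsto z^d$), so producing one limiting map with a critical point does not give $\mu_{\psi_\infty}=0$; one needs the full limiting family furnished by uniform twisting, and the argument must be run quantitatively, scale by scale, rather than through a soft subsequential limit. Relatedly, the uniformity in $x\in S$ and in the scale that the theorem asserts does not follow from a contradiction argument along subsequences together with the remark that everything lies in compact families; it is the per-scale quantitative estimate that produces a uniform exponent and constants. The overall skeleton you set up --- rescale, use twisting to see definite nonlinearity, use deepness to see blow-ups of $J_1$ filling the plane --- is the right one, but the analytic heart of the theorem, namely controlling the coherent part of $\mu_\phi$ rather than $|\mu_\phi|$, is missing.
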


Roughly speaking, a holomorphic dynamical system $\left(\mathcal{F}, K\right)$ is \emph{uniformly twisting} if $\mathcal{F}$ has robust nonlinearity at every point in $K$ at every scale. A more precise definition of nonlinearity and uniform twisting can be found in \cite[\S9.3]{McM96}. In our discussion, we will only require the following criterion for uniform twisting.

\begin{proposition}[{\cite[Proposition 4.7]{McM98}}]
\label{McM-criterion-02}
    Consider a subset $\mathcal{F}$ of $\textit{Hol}$ and a compact subset $K$ of $\RS$. Suppose for any sequence of affine maps $\textnormal{af}_n(z) = \alpha_n (z- \beta_n)$ with $\alpha_n \to \infty$ and $\beta_n \in K$, there is a sequence of maps $g_n$ in $\mathcal{F}$ such that the rescaling $\textnormal{af}_n \circ g_n \circ \textnormal{af}_n^{-1}$ converges in subsequence to a non-constant holomorphic map in $\textit{Hol}$ with a critical point. Then, the dynamical system $(\mathcal{F},K)$ is uniformly twisting.
\end{proposition}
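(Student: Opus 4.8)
The plan is to deduce the statement directly from McMullen's definition of uniform twisting \cite[\S9.3]{McM96} by a compactness argument. Recall that $(\mathcal{F},J)$ is uniformly twisting when there is a compact family $\mathcal{N}\subset\textit{Hol}$ of \emph{nonlinear} maps --- maps bounded away, in the topology of $\textit{Hol}$, from restrictions of M\"obius transformations --- and a constant $m>0$ such that for every $x\in J$ and every scale $r\in(0,\diam J)$ one can find an element $g$ of the full dynamics generated by $\mathcal{F}$ and a round disk $B\ni x$ of diameter $\asymp r$ inside the domain of $g$ for which a suitable M\"obius rescaling of $g$, normalized by the affine map carrying $B$ to the unit disk, lies in $\mathcal{N}$. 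Equivalently (and this is the form actually used in \S\ref{ss:coneplusalpha}), along any blow-up sequence $\mathrm{af}_n(z)=\alpha_n(z-\beta_n)$ with $\beta_n\in J$ and $\alpha_n\to\infty$, the rescaled dynamics cannot degenerate entirely into M\"obius maps. The two elementary facts that power the argument are: (i) a non-constant holomorphic map possessing a critical point is never a restriction of a M\"obius transformation; and (ii) carrying a critical point in a fixed compact set is a closed condition in the topology of $\textit{Hol}$. Together they show that any limit of rescaled maps which has a critical point is a genuine, and \emph{robust}, witness of nonlinearity, with a lower bound on nonlinearity that is uniform over the relevant (compact) family of normalized limits.

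The argument itself I would run by contradiction. Suppose $(\mathcal{F},J)$ fails to be uniformly twisting. Then, after fixing a normalizing convention, there are base points $\beta_n\in J$ and scales $r_n\to 0$ such that, with $\mathrm{af}_n(z)=r_n^{-1}(z-\beta_n)$, no element of the full dynamics rescaled by $\mathrm{af}_n$ has nonlinearity bounded below by $1/n$. Apply the hypothesis of the proposition to this blow-up sequence: there are $g_n\in\mathcal{F}$, hence in the full dynamics, with $\mathrm{af}_n\circ g_n\circ\mathrm{af}_n^{-1}$ converging along a subsequence to a non-constant $G\in\textit{Hol}$ with a critical point $c$. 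Using the freedom to pre- and post-compose with M\"obius maps (which lie in the groupoid) and to shift the base disk $B$ by a bounded amount while keeping $\diam B\asymp r_n$, one normalizes so that, for all large $n$ along the subsequence, $\mathrm{af}_n\circ g_n\circ\mathrm{af}_n^{-1}$ is defined on a fixed disk and carries a critical point in a fixed compact set; by (i)--(ii) this forces its nonlinearity to be $\geq m_0$ for a uniform $m_0>0$, contradicting the choice of $(\beta_n,r_n)$. Reading the same chain of implications positively rather than by contradiction produces the compact nonlinear family $\mathcal{N}$ required by the definition.

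The hard part is not any single estimate but the interface with McMullen's formalism: one must verify that ``the rescaled generators subconverge to a map with a critical point'' really matches ``definite nonlinearity in the sense of \cite[\S9.3]{McM96}'' \emph{with a bound uniform over $x\in J$ and all small scales}, which is precisely where the closedness/M\"obius-exclusion facts (i)--(ii), the compactness of the space of normalized holomorphic maps, and the normalization freedom (base disk of comparable size, post-composition by M\"obius maps to place the limiting critical point in a controlled position) must all be combined. Granting that packaging, the proposition is essentially a reformulation of uniform twisting, which is why in \S\ref{ss:coneplusalpha} it suffices to check only the blow-up condition of the hypothesis --- established there through the approximate-rotation mechanism and the nearby-critical-visit theorem (Theorem \ref{critical}) --- rather than McMullen's definition verbatim.
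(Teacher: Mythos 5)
There is no in-paper proof to compare against: the paper imports this statement verbatim from \cite[Proposition 4.7]{McM98}, built on the uniform-twisting formalism of \cite[\S 9.3]{McM96}, and uses it in \S\ref{ss:coneplusalpha} as a black box. Your sketch does follow the lines of McMullen's own argument: assume uniform twisting fails, extract a blow-up sequence $\textnormal{af}_n$ based at points of $J$ along which the rescaled full dynamics has nonlinearity tending to $0$, invoke the hypothesis to get $g_n\in\mathcal{F}$ whose rescalings subconverge in \textit{Hol} to a non-constant map with a critical point, note that such a limit is definitely non-M\"obius, and conclude by semicontinuity of nonlinearity under this convergence that the rescaled $g_n$ had definite nonlinearity for large $n$, a contradiction. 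So the strategy is the correct one.

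As a proof, however, the text defers exactly the step that does the work, and in two places the deferral hides a real issue. First, the uniform bound $m_0$ is claimed to follow from facts (i)--(ii) ``over the relevant (compact) family of normalized limits,'' but no compact family is exhibited, and compactness is not automatic from the hypothesis, which only yields a subsequential limit for each blow-up sequence separately; the correct (and simpler) route is to argue along the single contradictory sequence: by Hurwitz the rescaled maps have critical points converging to the critical point $c$ of the limit, so their nonlinearity on a fixed neighborhood of $c$ is bounded below (indeed blows up), contradicting the assumed bound $1/n$ once $n$ is large. Second, the normalization manoeuvre ``shift the base disk by a bounded amount / post-compose with M\"obius maps to place the limiting critical point in a controlled position'' is not innocuous: in McMullen's definition the blow-ups must be based at points of $J$ and nonlinearity is measured relative to the base ball, while the limiting critical point sits at some fixed but possibly large distance from the origin and need not lie near $J$, so one cannot simply recenter there; one has to check that a nonlinearity witness at bounded distance from the origin in the rescaled frame is what the definition requires. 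Neither point is fatal --- both are resolved in \cite{McM98} --- but as written your argument is a plan for matching the hypothesis to McMullen's definition rather than a verification of it, so the decisive quantifier-matching step remains a gap.
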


In our context, we will consider the pair of dynamical systems 
\[
\left( \mathcal{F}(f_1), K^{\textnormal{loc}}_{A_1'}(f_1) \right) \quad \text{and} \quad \left( \mathcal{F}(f_2), K^{\textnormal{loc}}_{A_2'}(f_2) \right)
\]
which are quasiconformally equivalent via $\phi$.
Let us assume without loss of generality that $f_1$ is a rational map of the form $F_{d_0,d_\infty,\theta}$ from Proposition \ref{prop:prototype-example}. In the proof of Lemma \ref{NILF-local}, we have shown that the map $f_1$ and the set $K^{\textnormal{loc}}_{A_1'}(f_1)$ satisfy properties \ref{S1}--\ref{S5} in the hypothesis of Proposition \ref{shen-criterion}. These properties immediately imply the hypothesis of Proposition \ref{McM-criterion-02}, and so $\left(\mathcal{F}(f_1), K^{\textnormal{loc}}_{A_1'}(f_1) \right)$ is indeed uniformly twisting. Recall from Theorem \ref{deep-point-theorem} that $\Hq_1$ is a a uniformly deep subset of the local filled Julia set $K^{\textnormal{loc}}_{A_1'}(f_1)$ with constants depending only on $(d_0,d_\infty,N)$. By Theorem \ref{McM-criterion-01}, the quasiconformal conjugacy $\phi$ between $f_1$ and $f_2$ is uniformly $C^{1+\alpha}$-conformal on $\Hq_1$. This completes the proof of Theorem \ref{thm:c1plusalpharigidity}.

\section{Consequences of \texorpdfstring{$C^{1+\alpha}$}{C} rigidity}
\label{sec:exp-convergence}

We end this paper with a discussion on a number of applications of $C^{1+\alpha}$ rigidity of critical quasicircle maps. We will again fix a triplet of integers $d_0\geq 2$, $d_\infty \geq 2$, $N \geq 1$, and a bounded type rotation number $\theta \in \Theta_N$. 

\subsection{Smoothness}
\label{ss:smoothness}

Consider a $(d_0,d_\infty)$-critical quasicircle map $f: \Hq \to \Hq$ of rotation number $\theta$.

\begin{corollary}
\label{cor:c1-smooth}
     The quasicircle $\Hq$ is $C^1$ smooth if and only if $d_0 = d_\infty$.
\end{corollary}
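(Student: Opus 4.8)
The plan is to prove the two directions separately, with the "if" direction being essentially a corollary of known surgery results and the "only if" direction being the substantive one, for which I would deploy Theorem~\ref{thm:c1plusalpharigidity} together with Peter Jones' beta numbers (as hinted in the outline, Proposition~\ref{prop:HD>1}). First, the \emph{easy direction}: if $d_0 = d_\infty$, then by Proposition~\ref{trichotomy} (or directly from the definition of symmetric combinatorics and Proposition~\ref{blaschke-characterization} in the rational setting) one can conjugate $f$ quasiconformally to a critical circle map on the round circle $\T$; but $C^1$-smoothness is not quasiconformally invariant, so this alone is not enough. Instead I would argue as follows: when $d_0=d_\infty$, Douady--Ghys surgery converts $f$ into a critical circle map, and by de Faria--de Melo's $C^{1+\alpha}$ rigidity for critical circle maps of bounded type, $f$ is $C^{1+\alpha}$-conjugate to the rational critical circle model $F_{d,d,\theta}$ (equivalently the Blaschke product $B_{d,\alpha}$) from Example~\ref{prototype-example}, whose invariant quasicircle is literally $\T$. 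A $C^1$ conjugacy carries $\T$ to a $C^1$ curve, so $\Hq$ is $C^1$ smooth. (Alternatively, and more in the spirit of this paper, Theorem~\ref{thm:c1plusalpharigidity} already gives a conjugacy that is uniformly $C^{1+\alpha}$-conformal on $\Hq_1$; applying it with $f_1 = F_{d,d,\theta}$ and $f_2 = f$ shows $\Hq = \Hq_2$ is the image of $\T$ under a map that is $C^{1+\alpha}$ along $\T$, hence $C^1$.)

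For the \emph{hard direction} — assume $\Hq$ is $C^1$ smooth and conclude $d_0 = d_\infty$ — I would argue by contrapositive, taking $d_0 \neq d_\infty$ and showing $\Hq$ cannot be $C^1$. The key local obstruction is at the critical point $c$. Work in the logarithmic/commuting-pair picture of Section~\ref{ss:commuting-pairs}, so $c=0$ and $f$ is locally $\psi(z^{d_0+d_\infty-1})$ with $\psi$ univalent of bounded distortion. Deep renormalization (Theorems~\ref{thm:complex-bounds} and \ref{thm:c1plusalpharigidity}) shows the small-scale geometry of $\Hq$ near $0$ is universal, governed by the renormalization fixed point / periodic orbit for the combinatorial type $(d_0,d_\infty,\theta)$; by Theorem~\ref{thm:self-similarity} the curve is asymptotically self-similar there. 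The two arcs of $\Hq$ emanating from $c$ lie in $Y^0$ and $Y^\infty$ respectively, and the scaling structure on the $Y^0$-side is controlled by $d_0$ while on the $Y^\infty$-side it is controlled by $d_\infty$; when $d_0 \neq d_\infty$ these are genuinely different, so the blow-ups of $\Hq$ about $c$ do not converge to a straight line — equivalently, Jones' beta numbers $\beta_\Hq(c,r)$ do not tend to $0$ as $r\to 0$. By Proposition~\ref{prop:HD>1} (and the classical fact that a $C^1$ curve has $\beta$-numbers tending to $0$ at every point), $\Hq$ fails to be $C^1$ at $c$.

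The concrete mechanism I would use to see $\beta_\Hq(c,r) \not\to 0$: since the renormalizations $\renorm^n f$ converge (exponentially, Theorem~\ref{horseshoe}) to the renormalization attractor $\mathcal{A}_N$, and each limiting commuting pair is a genuine $(d_0,d_\infty)$-critical pair with \emph{imbalanced} inner/outer criticality, one shows the limiting object has a nondegenerate "corner" at $0$: the ratio of inner to outer scaling factors is a universal constant $\neq 1$, forcing the tangent cone at $0$ to be a genuine wedge rather than a line. This can be made quantitative by comparing $|f^{q_{n}}(c) - c|$ measured through $Y^0$ versus the corresponding outer quantities, using Lemma~\ref{lem:qc-control} and the a priori bounds; the asymmetry $d_0 \neq d_\infty$ propagates to a definite lower bound on $\beta_\Hq(c, l_n)$ uniformly in $n$. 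I would also note this immediately yields the Hausdorff-dimension statement of Corollary~\ref{cor:smoothness}: $\beta$-numbers bounded below at $c$ (indeed at a self-similar Cantor set of preimages of $c$, by spreading via the dynamics) forces $\dim_H \Hq > 1$ through a standard Jones/traveling-salesman-type estimate (Proposition~\ref{prop:HD>1} again), and conversely in the balanced case $\Hq$ being $C^1$ gives $\dim_H \Hq = 1$.

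The step I expect to be the main obstacle is making rigorous the claim that imbalanced criticality \emph{forces} the beta numbers to stay bounded away from $0$ — i.e. ruling out the \emph{a priori} possibility that, despite $d_0 \neq d_\infty$, the universal renormalization limit happens to produce a curve that is nonetheless $C^1$ (a "conspiracy" in the scaling data). The clean way to exclude this is to exhibit a single explicit feature of the renormalization limit that is visibly non-$C^1$ at $0$: I would show that the angle subtended at $0$ by the two ends of the base interval $\mathbf{I} = I_- \cup I_+$ of the limiting butterfly, measured in the intrinsic geometry, is universal and strictly between $0$ and $\pi$ (not $\pi$, which is what $C^1$ would require) precisely because the covering map $f_- \circ f_+$ has $d_0$ preimage-arcs on one side and $d_\infty$ on the other. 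Combined with self-similarity (Theorem~\ref{thm:self-similarity}), a fixed wedge at all dyadic-renormalization scales gives the desired uniform lower bound on $\beta_\Hq(c,r)$, completing the contrapositive.
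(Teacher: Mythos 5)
Your ``if'' half, in the alternative formulation you give, is exactly the paper's proof: take $f_1=F_{d,d,\theta}$ from Example \ref{prototype-example}, whose invariant curve is literally $\T$, apply Theorem \ref{thm:c1plusalpharigidity} to get a conjugacy that is uniformly $C^{1+\alpha}$-conformal on $\T$, and use Lemma \ref{lem:holder} to conclude it is a continuously differentiable parametrization of $\Hq$. Your first route for this direction (Douady--Ghys surgery plus de Faria--de Melo) does not work as stated: in case \ref{case-H} there is no adjacent rotation domain to surger, and $f|_\Hq$ is a priori only quasisymmetrically, not analytically, conjugate to a circle map, so that detour should simply be dropped in favor of the alternative.

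The ``only if'' half has a genuine gap, and it is precisely the step you flag: nothing in the sketch establishes that imbalanced criticality forces $\beta_\Hq(c,r)\not\to 0$, and the proposed repair is not well-posed, because for a curve not yet known to have any regularity there is no ``angle subtended at $0$'' to speak of, so the claim that the renormalization limit has a universal wedge angle strictly between $0$ and $\pi$ cannot even be formulated without an a priori smoothness assumption --- which is the very thing under discussion. In addition, your route invokes Theorem \ref{thm:self-similarity}, which requires $\theta$ to be a quadratic irrational, whereas the corollary is asserted for every bounded type $\theta$; for general $\theta\in\Theta_N$ the renormalizations shadow an orbit in the horseshoe and there is no single self-similarity factor, so ``a fixed wedge at all scales'' is unavailable. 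The paper's actual argument is a short local computation that needs none of this machinery: assume for contradiction that $\Hq$ is $C^1$; smoothness near the critical \emph{value}, pulled back through the local normal form of $f$ at $c$ (a univalent map composed with $z^{d_0+d_\infty-1}$, with $d_0$ preimage sectors on the inner side and $d_\infty$ on the outer, cf.\ Lemma \ref{lem:preim}), forces the two branches of $\Hq$ meeting at $c$ to subtend the angle $\frac{\pi(2d_0-1)}{d_0+d_\infty-1}$ on the inner side, which differs from $\pi$ exactly when $d_0\neq d_\infty$, contradicting the existence of a tangent at $c$. So the correct move is to use the $C^1$ hypothesis itself to manufacture the angle, rather than to extract a wedge from the renormalization attractor; the beta-number material of Proposition \ref{prop:HD>1} is needed only for the Hausdorff-dimension statement in Corollary \ref{cor:smoothness}, not for the $C^1$ dichotomy.
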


See Figure \ref{fig:smoothness} for an example of a $C^1$ smooth Herman curve that is not a Euclidean circle.

\begin{figure}
    \centering
    \includegraphics[width=\linewidth]{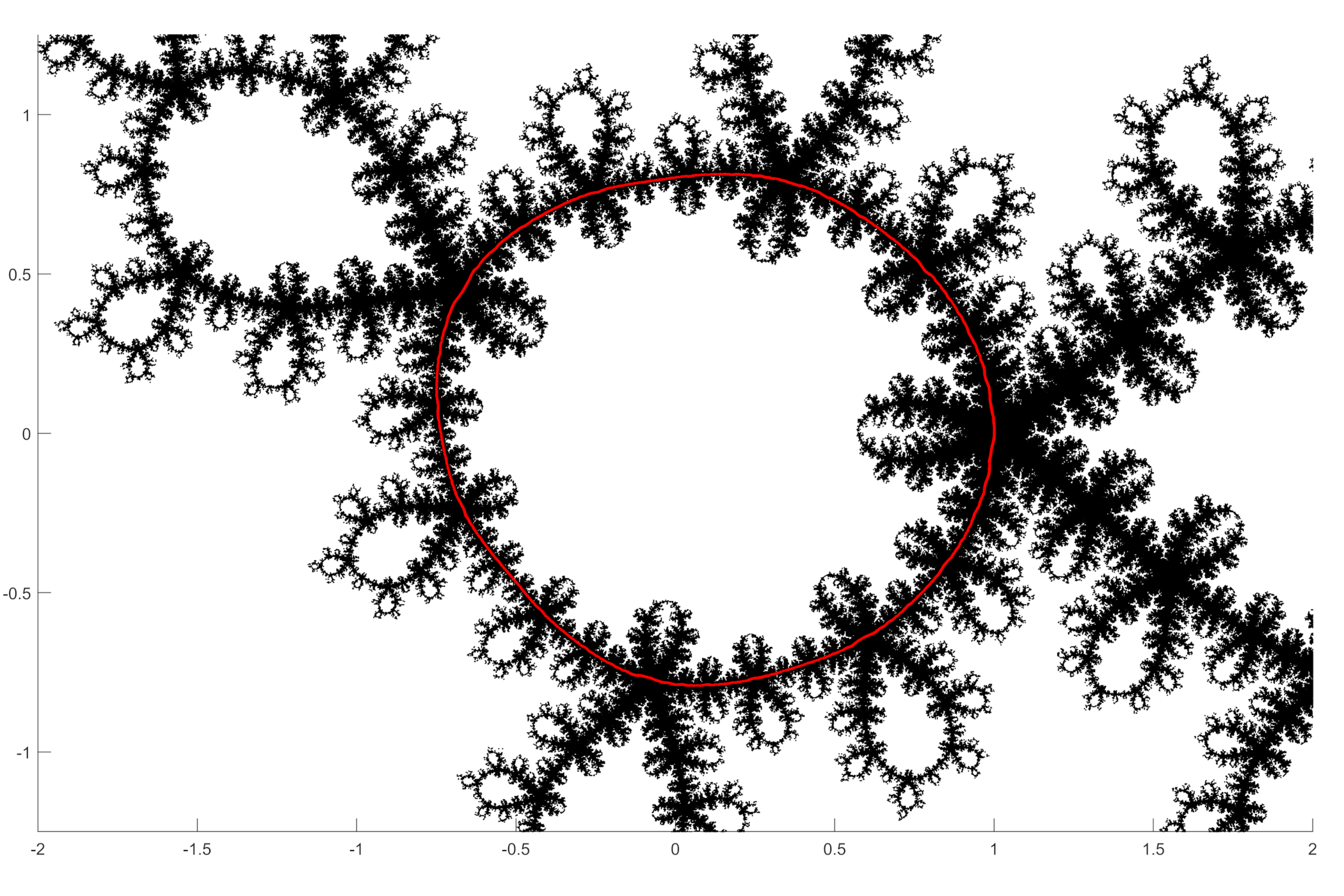}
    \caption{A $C^1$ smooth $(2,2)$-critical Herman curve $\Hq$ within the Julia set of the cubic rational map $f(z)=cz\frac{z^2-3z+\frac{\lambda}{c}}{1+\left(\frac{\lambda}{c}-3\right)z}$. The map $f$ is a perturbation of a Blaschke product of the form $cz^2\frac{z-3}{1-3z}$. It is characterized by fixed points at $0$ and $\infty$ with multipliers $\lambda=0.9 e^{2\pi i \theta}$ and $0$ respectively, as well as a critical point at $1$ with local degree $3$. The critical value $c=f(1)\approx -0.507844-0.457336i$ is picked such that $\Hq$ exists with golden mean rotation number $\theta$.}
    \label{fig:smoothness}
\end{figure}

\begin{proof}
    Suppose $d_0\neq d_\infty$. If $\Hq$ were $C^1$ smooth near the critical value, then the angle of $\Hq$ at the critical point is equal to $\frac{\pi(2d_0-1)}{d_0+d_\infty-1}$, which is not equal to $\pi$, and so it cannot have a tangent.
    
    Now, suppose $d_0=d_\infty$. Proposition \ref{prop:prototype-example} gives us a $(d_0,d_\infty)$-critical circle map $g: \T \to \T$ with rotation number $\theta$. By $C^{1+\alpha}$ rigidity, there exists a uniformly $C^{1+\alpha}$-conformal conjugacy $\phi: \T \to \Hq$ between $g$ and $f$. Lemma \ref{lem:holder} below implies that $\phi$ is indeed a continuously differentiable parametrization of $\Hq$.
\end{proof}

\begin{lemma}
\label{lem:holder}
    The complex derivative of a uniformly $C^{1+\alpha}$-conformal map is H\"older continuous with exponent $\alpha$.
\end{lemma}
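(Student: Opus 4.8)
The statement is local and standard, so the plan is to unwind the definition of uniform $C^{1+\alpha}$-conformality and extract Hölder continuity of $z \mapsto \phi'(z)$ by a three-point comparison argument. Recall that $\phi$ uniformly $C^{1+\alpha}$-conformal on $S$ means there are constants $C,\varepsilon>0$ so that for every $z\in S$ and $|t|<\varepsilon$,
\[
\left| \frac{\phi(z+t)-\phi(z)}{t} - \phi'(z) \right| \leq C|t|^\alpha.
\]
First I would fix two points $z,w\in S$ with $0<|z-w|=:\rho<\varepsilon/2$, set $t:=w-z$, and apply the defining inequality twice: once at the base point $z$ with increment $t$ (giving $\left|\frac{\phi(w)-\phi(z)}{t}-\phi'(z)\right|\le C\rho^\alpha$), and once at the base point $w$ with increment $-t$ (giving $\left|\frac{\phi(z)-\phi(w)}{-t}-\phi'(w)\right|\le C\rho^\alpha$). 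Since the two difference quotients $\frac{\phi(w)-\phi(z)}{t}$ and $\frac{\phi(z)-\phi(w)}{-t}$ are literally equal, the triangle inequality yields
\[
|\phi'(z)-\phi'(w)| \leq 2C\,|z-w|^\alpha,
\]
which is exactly Hölder continuity with exponent $\alpha$ and constant $2C$.

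The only subtlety worth a sentence is the range restriction $|z-w|<\varepsilon$: for pairs of points farther apart than $\varepsilon/2$, one uses instead that $\phi'$ is bounded on $S$ — this follows immediately from the same inequality with any fixed $|t|<\varepsilon$, since $|\phi'(z)|\le \left|\frac{\phi(z+t)-\phi(z)}{t}\right| + C|t|^\alpha$ and $\phi$ is continuous (indeed quasiconformal) hence locally bounded — so on the complement the Hölder bound holds with a possibly larger constant obtained by rescaling: $|\phi'(z)-\phi'(w)| \le 2\sup_S|\phi'| \le (2\sup_S|\phi'|)(\varepsilon/2)^{-\alpha}|z-w|^\alpha$. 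Combining the two regimes gives a single Hölder constant valid for all $z,w\in S$.

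There is essentially no obstacle here; the argument is the familiar "a $C^{1+\alpha}$ function has $\alpha$-Hölder derivative" lemma, adapted verbatim to the complex-derivative setting where the increment $t$ is complex. The one point requiring (trivial) care is that $\phi$ need not be defined or differentiable off $S$, so the comparison must be carried out using only increments that land back near $S$; but since both base points $z$ and $w$ lie in $S$ and the increment connecting them is admissible whenever $|z-w|<\varepsilon$, this causes no difficulty. I would therefore present the proof in two short paragraphs: the three-point inequality for nearby points, and the boundedness remark handling distant points.
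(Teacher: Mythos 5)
Your argument is correct and is essentially identical to the paper's proof: the paper also applies the defining inequality at the two base points $z$ and $w$ with increments $\pm(w-z)$, observes the difference quotients coincide, and concludes $|\phi'(z)-\phi'(w)| \leq 2C\,|z-w|^\alpha$ by the triangle inequality. Your extra remark about well-separated points is harmless but not needed for the paper's (local) statement.
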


\begin{proof}
    Suppose $\phi: U \to V$ is uniformly $C^{1+\alpha}$-conformal, that is, for all $z \in U$, the complex derivative $\phi'(z)$ exists and the function
    \[
        \eta_z(t) := \frac{\phi(z+t)-\phi(z)}{t} - \phi'(z), 
    \]
    satisfies $|\eta_z(t)| \leq C \,|t|^\alpha$ for some uniform constant $C>0$ when $|t|$ is sufficiently small. Whenever two points $z,w$ on $U$ are sufficiently close,
    \[
        |\phi'(z)-\phi'(w)| = |\eta_w(z-w) - \eta_z(w-z)| \leq 2C \, |z-w|^\alpha.
    \]
    This proves the lemma.
\end{proof}

Let us complete the proof of Corollary \ref{cor:smoothness} with the following proposition.

\begin{proposition}
\label{prop:HD>1}
    If $d_0 \neq d_\infty$, the Hausdorff dimension of $\Hq$ is greater than some constant $D>1$ which depends only on $d_0$, $d_\infty$, $N$, and the dilatation of $\Hq$.
\end{proposition}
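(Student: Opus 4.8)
The plan is to prove that $\Hq$ is \emph{uniformly wiggly} in the sense of Bishop and Jones and then to invoke their theorem that a uniformly wiggly compact connected set has Hausdorff dimension bounded below by an explicit increasing function of the wiggliness constant. Recall that for a compact $E \subset \C$, a point $x \in E$, and a scale $t>0$, the Jones $\beta$-number is $\beta_E(x,t) := \inf_L \, t^{-1}\sup\{\dist(y,L) : y \in E \cap \D(x,t)\}$, the infimum being over affine lines $L$; call $E$ \emph{$\beta_0$-wiggly} if $\beta_E(x,t)\geq\beta_0$ for every $x\in E$ and $0<t<\diam(E)$. By Bishop--Jones, a $\beta_0$-wiggly compact connected set has Hausdorff dimension at least $1+c\beta_0^2$ for a universal $c>0$. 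Since $\Hq$ is a quasicircle, hence compact and connected, it suffices to exhibit $\beta_0 = \beta_0(d_0,d_\infty,N,K)>0$, with $K$ the dilatation of $\Hq$, making $\Hq$ $\beta_0$-wiggly.

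The heart of the matter is a uniform wiggliness bound at the critical point. I claim there are $\beta_1 = \beta_1(d_0,d_\infty,N)>0$ and a definite interval of scales such that for \emph{every} normalized $(d_0,d_\infty)$-critical $K'$-butterfly $\butterfly$ with rotation number in $\Theta_N$ --- where $K'$ is the universal butterfly constant produced by the complex bounds of Theorem \ref{thm:complex-bounds} --- the axis $\Hq_\butterfly$ satisfies $\beta_{\Hq_\butterfly}(0,t)\geq\beta_1$ throughout that interval. If not, pick such butterflies and scales along which the $\beta$-number at $0$ tends to $0$; by compactness of the space of normalized $(d_0,d_\infty)$-critical $K'$-butterflies with rotation number in $\Theta_N$ (Proposition \ref{compactness}) pass to a limiting butterfly $\butterfly_\infty$ of the same type whose axis $\Hq_\infty$ is \emph{flat} at $0$, i.e.\ coincides near $0$ with a straight segment through $0$. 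Now exploit the local normal form of $\butterfly_\infty$: its shadow is $\Psi\circ(z\mapsto z^{d})$ near $0$, with $d := d_0+d_\infty-1$, $\Psi$ conformal, and critical value $v := f_-f_+(0)$, which is an interior point of the Jordan curve $\Hq_\infty$; this map carries the two local branches of $\Hq_\infty$ at $0$ onto $\Hq_\infty$ near $v$. If $d$ is even, the straight segment at $0$ maps under $z\mapsto z^d$ to a single ray, so $\Hq_\infty$ near $v$ would be a single arc --- impossible for a Jordan curve. If $d$ is odd, it maps to a line, so $\Hq_\infty$ is $C^1$ near $v$; but then the winding data built into $(d_0,d_\infty)$-criticality forces the two branches of $\Hq_\infty$ at $0$ to meet at angle $\frac{(2d_0-1)\pi}{d}$, exactly as in the proof of Corollary \ref{cor:c1-smooth}, and this is $\neq\pi$ precisely because $d_0\neq d_\infty$, contradicting flatness. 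This proves the claim.

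With the critical-point estimate in hand, I would spread wiggliness around $\Hq$ for the given $(d_0,d_\infty)$-critical quasicircle map $f:\Hq\to\Hq$. By Theorem \ref{thm:complex-bounds}, for all large $n$ the $n^{\text{th}}$ pre-renormalization of $f$ extends to a $K'$-butterfly whose axis is $\Hq$ rescaled to unit size by a factor comparable to $|f^{q_n}(c)-c|^{-1}$; feeding this butterfly into the claim gives $\beta_\Hq(c,t)\succ\beta_1$ for $t$ comparable to $|f^{q_{n-1}}(c)-c|$. Since $\rot(f)\in\Theta_N$, consecutive scales $|f^{q_n}(c)-c|$ are comparable with a constant depending only on $N$ (Proposition \ref{bounded-type}, Lemma \ref{lem:qc-control}), so these ranges, together with the elementary bound at scales of order $\diam(\Hq)$, exhaust $(0,\diam\Hq)$; hence $\beta_\Hq(c,t)\succ\beta_1$ for all such $t$. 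Finally, given arbitrary $z\in\Hq$ and a small $r>0$, Lemma \ref{approx-rot-02} furnishes an approximate rotation $f^i:(U,y)\to(V,c)$ with $f^i$ univalent of bounded distortion, $(U,y)$ of bounded shape and diameter $\asymp r$, and $[y,z]\subset U$ well contained. A bounded-distortion univalent map distorts $\beta$-numbers by a bounded factor, and $\diam(V)\lesssim\diam(\Hq)$, so the bound at $c$ transfers to $\beta_\Hq(y,\asymp r)\succ\beta_1$; connectedness of $\Hq$ and $|y-z|=O(r)$ inside $U$ upgrade this to $\beta_\Hq(z,\asymp r)\succ\beta_1$. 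Thus $\Hq$ is $\beta_0$-wiggly with $\beta_0=\beta_0(d_0,d_\infty,N,K)$, and Bishop--Jones gives $\dim_H\Hq\geq 1+c\beta_0^2 =: D>1$.

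The step I expect to be the main obstacle is the uniform lower bound on $\beta_{\Hq_\butterfly}(0,\cdot)$ over the whole compact family of butterflies --- that is, the rigidity-of-shape statement that no renormalization limit can have a flat axis at the critical point, which must be extracted from the branched-cover normal form together with the combinatorial constraints of $(d_0,d_\infty)$-criticality. Everything afterwards (the transfer through approximate rotations, already available from Section \ref{ss:approx-rot}, and the appeal to the Bishop--Jones criterion) is routine; some care is needed only in checking that the relevant geometric constants for deep renormalizations depend on $(d_0,d_\infty,N)$ and the dilatation of $\Hq$ alone, which follows from the eventual uniformity of the complex and real bounds.
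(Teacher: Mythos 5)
Your overall skeleton is the same as the paper's: a lower bound on the Jones $\beta$-numbers at the critical point, spread to every point and scale via the approximate rotations of Lemma \ref{approx-rot-02} and bounded distortion, followed by the Bishop--Jones criterion. Where you genuinely diverge is the critical-point estimate: the paper proves it directly and quantitatively for the map $f$ itself, using the Taylor approximation $f(z)\approx f(c)+a(z-c)^{d}$ with explicit error and the bounded turning property of $\Hq$ (first forcing $d$ odd, then forcing $d_0=d_\infty$ by counting preimage components of the two sides of a thin strip), whereas you argue softly, via compactness of the space of normalized critical butterflies and an exact normal-form analysis of a limiting butterfly with a flat axis. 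That route is viable in principle, but it contains a genuine gap: the shadow near $0$ is \emph{not} of the form $\Psi\circ(z\mapsto z^{d})$ with $\Psi$ conformal; the correct factorization is $F(z)=v+h(z)^{d}$ with $h$ conformal on the \emph{domain} side (equivalently $\Psi\circ(z\mapsto z^{d})\circ\psi_1$). Consequently, when $d=d_0+d_\infty-1$ is even, the flat segment through $0$ is first bent by $h$, and its image consists of \emph{two distinct} arcs at $v$ tangent to one common ray, not a single ray. Two branches meeting tangentially at a point is perfectly possible for a Jordan curve (a cardioid-type cusp), so your stated contradiction ``single arc, impossible for a Jordan curve'' evaporates, and the even case --- which includes concrete examples such as $(d_0,d_\infty)=(2,3)$ --- is not handled as written.

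The gap is fixable inside your own framework: the axis of the limiting butterfly is a quasicircle with dilatation controlled by the butterfly constant, and a quasicircle cannot have a cusp. Indeed, if the two branches at $v$ were tangent to the same direction, points $a,b$ on the two branches at distance $t$ from $v$ satisfy $|a-b|=o(t)$ while the subarc of the axis joining them through $v$ has diameter at least comparable to $t$, violating bounded turning; this is exactly the role bounded turning plays in the paper's even-parity step. Your odd case does survive the corrected normal form (the image of the flat germ is still $C^1$ at $v$, so the sector count forces the inner angle $(2d_0-1)\pi/(d_0+d_\infty-1)\neq\pi$, contradicting flatness). Two smaller cautions: the transfer step ``a bounded-distortion univalent map distorts $\beta$-numbers by a bounded factor'' is false at the top scale of $U$ (a conformal map can bend a line by a definite relative amount); as in the paper, you must apply the critical-point bound at a definite \emph{sub}-scale $\varepsilon\,\mathrm{diam}(V)$ where the inverse branch is nearly affine and then convert the resulting three-point wiggle into a bound for $\beta_\Hq(z,r)$. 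Also, routing through Theorem \ref{thm:complex-bounds} and Proposition \ref{compactness} makes your constants depend in addition on the collar data $\mu$ of the holomorphic extension, not only on $(d_0,d_\infty,N)$ and the dilatation of $\Hq$; the ``eventual uniformity'' you invoke is not what Theorem \ref{thm:complex-bounds} states, so this dependence should either be accepted or argued separately.
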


\begin{proof}
    Suppose $d_0 \neq d_\infty$. Given a point $x \in \Hq$ and a small scale $r>0$, we define
    \[
        \beta_\Hq(x,r) := \frac{1}{r} \inf_{L \in \mathcal{L}(x,r)} \sup_{z \in \D(x,r) \cap \Hq} \dist(z,L),
    \]
    where $\mathcal{L}(x,r)$ denotes the set of lines in $\C$ intersecting the disk $\D(x,r)$, as well as
    \[
        \beta_\Hq(x) := \liminf_{r \to 0} \beta_\Hq(x,r).
    \]
    The quantity $\beta_\Hq(x,r)$ measures how far $\Hq$ is from being a line segment near $x$ at scale $r$. Due to a result by Bishop and Jones \cite{BJ97}, it is sufficient for us to show that $\beta_\Hq(x)$ is uniformly bounded below by some positive constant depending on $d_0$, $d_\infty$, $N$, and the dilatation $K$ of $\Hq$. 
    
    In the proof below, we will first show that the beta number at the critical point $c$ is positive. We then transfer this property around $c$ to every non-critical point via Koebe distortion. To do this, we will use the \emph{bounded turning} characterization of quasicircles, that is, for any two distinct points $a$ and $b$ on $\Hq$, the ratio of $|a-b|$ to the diameter of the interval $[a,b] \subset \Hq$ connecting $a$ and $b$ of the smallest diameter is bounded below by some positive constant depending on $K$.

    \begin{claim}
        There is some $\beta_0=\beta_0(d_0,d_\infty,K)>0$ such that $\beta_\Hq(c)\geq \beta_0$.
    \end{claim}

    \begin{proof}
        Suppose instead that $\beta_\Hq(c) < \beta_0$ where $\beta_0$ is a small constant that is to be determined. Then, there exist sequences of positive real numbers $\{r_n\}$ and $\{s_n\}$ such that $r_n \to 0$ and $s_n \to \beta_0$ as $n \to \infty$, and the intersection of $\Hq$ and the disk $D_n :=\D(c,r_n)$ is contained in the $r_n s_n$-neighborhood $S_n$ of a straight line. See Figure \ref{fig:thin-strip}.

\begin{figure}
    \centering
    \begin{tikzpicture}[scale=0.78]
        \draw[white,fill=yellow!50!white,opacity=0.4] (-6,-0.7) -- (6,-0.7) -- (6,0.4) -- (-6,0.4) -- (-6,-0.7);

        \draw[yellow!60!white] (-6,0.4) -- (6,0.4);        
        \draw[yellow!60!white] (-6,-0.7) -- (6,-0.7);
        \filldraw[green,fill=green!50!white,opacity=0.2] (0,0) circle (3); 
        
        \draw[ultra thick] (-6,1.167) -- (-1.6,-0.667) -- (0,0) -- (1.2,-0.5) -- (3.6,0.5) -- (5,-0.25) -- (6,-1.25);

        % labels
        \node [black, font=\bfseries] at (-5.7,1.35) {$\Hq$};
        \node [red, font=\bfseries] at (0,0) {$\bullet$};
        \node [red, font=\bfseries] at (0,0.32) {$c$};
        \node [red, font=\bfseries] at (2.99,0.23) {$\bullet$};
        \node [red, font=\bfseries] at (-3,-0.1) {$\bullet$};
        \node [red, font=\bfseries] at (3.1,-0.05) {$z_2$};
        \node [red, font=\bfseries] at (-3.05,-0.4) {$z_1$};
        \node [green!80!black, font=\bfseries] at (0,2) {$D_n$};
        \node [yellow!60!black, font=\bfseries] at (-5,-0.2) {$S_n$};
        
        \draw[gray, line width=0.5pt,latex'-latex'] (0,-0.05) -- (0,-3);
        \draw[gray,line width=0.5pt,latex'-latex'] (-6.6,0.4) -- (-6.6,-0.7);
        \node [gray,font=\bfseries] at (-0.3,-1.5) {$r_n$};
        \node [gray,font=\bfseries] at (-7.3,-0.1) {$2r_n s_n$};
        \node [white,font=\bfseries] at (7.7,-0.4) {\footnotesize$.$};
\end{tikzpicture}

    \caption{$\Hq \cap D_n$ is contained in the strip $S_n$.}
    \label{fig:thin-strip}
\end{figure}
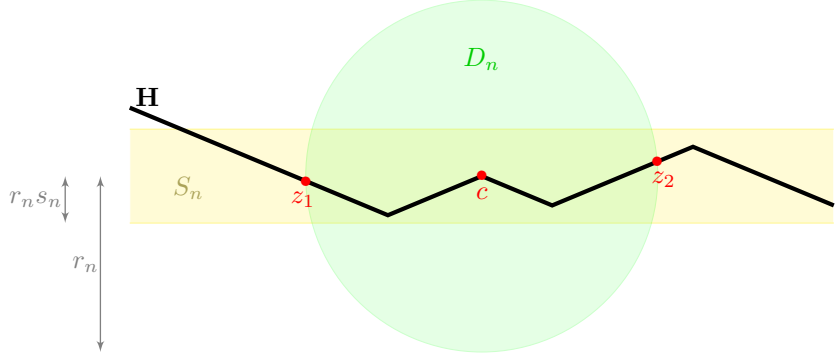

        Let us label the two connected components of $\partial D_n \cap S_n$ by $X_1$ and $X_2$. For $i \in \{1,2\}$, the diameter of $X_i$ is between $2 r_n s_n$ and $2\sqrt{2} \, r_n s_n$ by elementary trigonometry. Let $\Gamma=[z_1,z_2]$ denote the closed interval that is the connected component of $\overline{D_n} \cap \Hq$ containing $c$. We claim that $z_1 \in X_1$ and $z_2 \in X_2$. Indeed, if otherwise,
        \[
        \frac{|z_1-z_2|}{\diam(\Gamma)} \leq \frac{\max_i\diam(X_i)}{\diam([c,z_1])} = O\left(s_n\right),
        \]
        and as $n \to \infty$, the right hand side becomes small depending on $\beta_0$. When $\beta_0$ is sufficiently small depending on $K$, this estimate would contradict the bounded turning property.
        
        Let $d:= d_0 + d_\infty -1$, the local degree of $f$ at $c$. Suppose for a contradiction that $d$ is even. Near $c$, the map $f$ is close to the power map 
        \[
        g(z) := f(c) + \frac{f^{(d)}(c)}{d!}(z-c)^d,
        \]
        that is, $|f(z)-g(z)| = O(|z-c|^{d+1})$. Since $d$ is even, $g$ will send both $z_1$ and $z_2$ to points that are very close to one another and
        \begin{align*}
            |f(z_1)-f(z_2)| &\leq |g(z_1)-g(z_2)| + O(|z_1-c|^{d+1}) + O(|z_2-c|^{d+1}) \\
            &= O\left(r_n^d (s_n^d + r_n)\right).
        \end{align*}
        However, since the interval $f(\Gamma)=[f(z_1),f(z_2)] \subset \Hq$ has diameter at least $\asymp r_n^d$,
        \[
            \frac{|f(z_1)-f(z_2)|}{\diam(f(\Gamma))} = O(s_n^d + r_n).
        \]
        Again, as $n \to \infty,$ this estimate would contradict the bounded turning property provided that $\beta_0$ is sufficiently small depending on $d_0$, $d_\infty$, and $K$. Therefore, $d$ must be odd.

        Since $d$ is odd, the image $S'_n := f(S_n \cap D_n)$ is close to being a straight strip of width $\asymp r_n^d s_n$ inside of $f(D_n)$, which is close to a round disk of radius $\asymp r_n^d$. 
        Again, denote by $Y^0$ and $Y^\infty$ the inner and outer components of $\RS \backslash \Hq$. 
        We claim that the two connected components of $f(D_n) \backslash S'_n$ belong to different components of $\RS \backslash \Hq$, which we will denote by $B^0 \subset Y^0$ and $B^\infty \subset Y^\infty$. 
        Indeed, suppose instead that both are contained in $Y^\infty$ without loss of generality. There is some $i \in \{1,2\}$ such that $f(X_i) \cup Y^\infty$ contains a Jordan curve enclosing $f(X_j)$ where $j \in \{1,2\}\backslash\{i\}$. However, this would imply the existence of a closed interval in $\Hq$ having endpoints in $f(X_i)$ and diameter $\succ r_n^d$, which would again contradict the bounded turning property.

        For $\bullet \in \{0, \infty\}$, the number of components of $f^{-1}(B^\bullet)$ contained in $Y^\bullet$ is $d_\bullet$. Since $f$ is close to the $d$\textsuperscript{th} power map $g$ and $S_n'$ is close to being a thin straight strip, then $d_0 = d_\infty$. This yields a contradiction.
        \end{proof}

        Let us pick any point $x$ on $\Hq$ and any sufficiently small scale $r>0$. By Lemma \ref{approx-rot-02}, there is an approximate rotation $f^i : (U,y) \to (V,c)$ such that $y$ lies on $\Hq$, $c$ is a critical point of $f$, and $(U,y)$ is a pointed disk that well contains the interval $[x,y] \subset \Hq$ and has bounded shape and inner radius $\rin(U,y) \asymp r$. (Note that, from this moment on, implicit constants may depend on $N$.) 

        Let us denote by $\delta$ the inner radius of $(V,c)$. Consider a small disk $B_\varepsilon = \D(y, \varepsilon r)$ where $0<\varepsilon<1$. Since $f^i$ has bounded distortion on $U$, the image $f^i(B_\varepsilon)$ will have bounded shape and diameter $\asymp \varepsilon \delta$. 
        
        The claim implies that there is some constant $C=C(d_0,d_\infty,K)>0$ such that for sufficiently small $r$ (and thus $\delta$), we can find an interval $[a',b'] \subset \Hq$ contained in $f^i(B_\varepsilon)$ and a point $w'$ on $\Hq$ such that the distance between $w'$ and the unique straight line $L_{a',b'}$ passing through $a'$ and $b'$ is at least $C \varepsilon \delta$. Denote by $a$, $b$, and $w$ the lift of $a'$, $b'$, and $w'$ under $f^i|_U$.
        
        Since $f^i$ has bounded distortion on $U$, the Euclidean triangle with vertices $a', b', w'$ should be almost similar to that with vertices $a,b,w$. More precisely, there is some constant $M_\varepsilon > 0$ which shrinks to $0$ as $\varepsilon \to 0$ such that
        \[
            \left| \frac{a-w}{b-w} \Bigg/ \frac{a'-w'}{b'-w'} -1 \right| \leq M_\varepsilon.
        \]
        Therefore, we can pick $\varepsilon$ depending on $C$ such that the distance between $w$ and the unique straight line $L_{a,b}$ passing through $a$ and $b$ satisfies $ \dist(w, L_{a,b}) \succ C \varepsilon r$. Together with the bounded turning property, this implies that $\beta_\Hq(x,r) \geq \beta$ for some $\beta=\beta(d_0,d_\infty,K,N)>0$.
\end{proof}

\subsection{Universality}
\label{ss:universality}

Let $f_1: \Hq_1 \to \Hq_1$ and $f_2: \Hq_2 \to \Hq_2$ be two $(d_0,d_\infty)$-critical quasicircle maps of the same rotation number $\theta$. By $C^{1+\alpha}$ rigidity, there exists a uniformly $C^{1+\alpha}$-conformal conjugacy $\phi: \Hq_1 \to \Hq_2$ between $f_1$ and $f_2$. Corollaries \ref{cor:dimension} and \ref{cor:universality} will follow from below.

\begin{corollary}
    Quasicircles $\Hq_1$ and $\Hq_2$ have the same Hausdorff dimension, lower box dimension, and upper box dimension.
\end{corollary}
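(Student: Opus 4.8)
The plan is to show that the uniformly $C^{1+\alpha}$-conformal conjugacy $\phi$ produced by Theorem~\ref{thm:c1plusalpharigidity} is \emph{Lipschitz near $\Hq_1$}, that by the symmetry of the hypothesis in $f_1$ and $f_2$ the reverse conjugacy is likewise Lipschitz near $\Hq_2$, and finally that a map which is Lipschitz in a neighborhood of a set does not raise any of the three dimensions of that set. Recall that by Theorem~\ref{thm:c1plusalpharigidity} the conjugacy $\phi$ is in fact defined and quasiconformal on a neighborhood $V$ of $\Hq_1$ and is uniformly $C^{1+\alpha}$-conformal on $\Hq_1$; moreover $\phi(\Hq_1)=\Hq_2$, since a conjugacy must carry the invariant quasicircle of $f_1$ onto that of $f_2$, and $\Hq_1,\Hq_2$ are compact subsets of $\C$ because each separates $0$ from $\infty$ with $\infty$ in a complementary component.

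First I would record the elementary fact that any map $g$ which is uniformly $C^{1+\alpha}$-conformal on a compact set $S$ contained in its domain of continuity is Lipschitz near $S$. Writing $\eta_z(t):=\bigl(g(z+t)-g(z)\bigr)/t-g'(z)$, the defining bound $|\eta_z(t)|\le C|t|^\alpha$ for $|t|<\varepsilon$ forces $|g'(z)|\le \tfrac{2}{\varepsilon}\,\omega(\varepsilon/2)+C(\varepsilon/2)^\alpha$ for every $z\in S$, where $\omega$ is a modulus of continuity of $g$ on a compact neighborhood of $S$; hence $M:=\sup_S|g'|<\infty$ and therefore $|g(z+t)-g(z)|\le |t|\,(M+C\varepsilon^\alpha)$ for all $z\in S$ and $|t|<\varepsilon$. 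Applying this with $g=\phi$ and $S=\Hq_1$ yields constants $L,\varepsilon_0>0$ with $|\phi(z)-\phi(w)|\le L|z-w|$ whenever $z\in\Hq_1$ and $|z-w|<\varepsilon_0$.

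Next I would feed this Lipschitz estimate into the dimension comparison. Any cover of $\Hq_1$ by sets of diameter $\delta<\varepsilon_0$ is carried by $\phi$ to a cover of $\phi(\Hq_1)=\Hq_2$ by sets of diameter $\le L\delta$, so the covering numbers satisfy $N_{L\delta}(\Hq_2)\le N_\delta(\Hq_1)$, whence $\overline{\dim}_B\Hq_2\le\overline{\dim}_B\Hq_1$ and $\underline{\dim}_B\Hq_2\le\underline{\dim}_B\Hq_1$; the same covers give $\mathcal{H}^s(\Hq_2)\le L^s\,\mathcal{H}^s(\Hq_1)$ for every $s$, hence $\dim_H\Hq_2\le\dim_H\Hq_1$. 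Now I would invoke Theorem~\ref{thm:c1plusalpharigidity} a second time with the roles of $f_1$ and $f_2$ interchanged, obtaining a conjugacy $\psi$ of $f_2$ to $f_1$, quasiconformal near $\Hq_2$ and uniformly $C^{1+\alpha}$-conformal on $\Hq_2$, with $\psi(\Hq_2)=\Hq_1$; re-running the previous argument with $\psi$ in place of $\phi$ gives the reverse inequalities, and combining the two yields the equality of all three dimensions.

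There is essentially no hard step once Theorem~\ref{thm:c1plusalpharigidity} is available; the one point that needs care is that one should \emph{not} attempt to differentiate $\phi^{-1}$ directly — the inverse of a $C^{1+\alpha}$-conformal map need not be $C^{1+\alpha}$-conformal in the absence of a lower bound on $|\phi'|$, which we have not established — and should instead obtain the reverse Lipschitz bound from the symmetric application of the theorem. A minor bookkeeping remark is that, were the quasicircles not already compact subsets of $\C$, one would treat the spherical dimensions via finitely many charts in which the spherical and Euclidean metrics are comparable, using the finite stability of each dimension; here this is not needed.
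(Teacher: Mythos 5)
Your proof is correct. It rests on the same basic mechanism as the paper's argument --- Lipschitz-type control of the conjugacy coming from Theorem \ref{thm:c1plusalpharigidity}, combined with the fact that Lipschitz maps cannot raise Hausdorff or box dimensions --- but the route through the details is genuinely different. The paper invokes Lemma \ref{lem:holder} to get continuity of $\phi'$, then uses compactness of the quasicircle to assert that $\phi$ is \emph{bi-}Lipschitz and concludes by bi-Lipschitz invariance of dimension; implicitly this needs $|\phi'|$ bounded away from zero on $\Hq_1$, a point the paper leaves unremarked. You sidestep that issue entirely: you extract only a one-sided Lipschitz bound directly from the defining $C^{1+\alpha}$ estimate (uniform boundedness of $\phi'$ on the compact set $\Hq_1$ plus the error bound), observe via a covering argument that such a map can only decrease each of the three dimensions, and obtain the reverse inequalities by applying Theorem \ref{thm:c1plusalpharigidity} a second time with the roles of $f_1$ and $f_2$ interchanged --- using only that the resulting conjugacy $\psi$ carries $\Hq_2$ onto $\Hq_1$, never that it coincides with $\phi^{-1}$ or is differentiable. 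Your version is a bit longer but slightly more robust, since it requires no lower bound on $|\phi'|$ and no regularity of the inverse; the paper's version is shorter and reuses Lemma \ref{lem:holder}, which it needs anyway for Corollary \ref{cor:smoothness}. Both are complete proofs of the stated corollary.
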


\begin{proof}
    By Lemma \ref{lem:holder}, the complex derivative $\phi'$ is continuous. Since quasicircles are compact, the map $\phi$ must be bi-Lipschitz. Since dimension is bi-Lipschitz invariant, the claim follows.
\end{proof}

We say that a sequence $\{a_n\}_{n\in\N}$ of complex numbers \emph{converges exponentially fast} to $a$ if there are constants $C>0$, $n_0 \in \N$, and $\lambda \in (0,1)$ such that for all $n\geq n_0$, $|a_n-a| \leq C \lambda^n$. Given some data $\heartsuit$, we also say that $\{a_n\}$ converges $\heartsuit$-exponentially fast to $a$ if the constants $C,n_0,\lambda$ depend only on $\heartsuit$.

Recall the notion of scaling ratios in (\ref{eqn:scaling-ratio}).

\begin{corollary}
\label{cor:universality-scaling-ratio}
    Asymptotically, $f_1$ and $f_2$ have the same scaling ratios:
    \[
        \frac{s_n(f_2)}{s_n(f_1)} \xrightarrow{} 1 \quad \text{exponentially fast as } n \to \infty.
    \]
\end{corollary}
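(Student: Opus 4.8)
The plan is to transport everything through the uniformly $C^{1+\alpha}$-conformal conjugacy $\phi\colon\Hq_1\to\Hq_2$ furnished by Theorem~\ref{thm:c1plusalpharigidity}. Since $\phi\circ f_1=f_2\circ\phi$ near $\Hq_1$, the map $\phi$ carries the critical point $c_1$ of $f_1$ to the critical point $c_2:=\phi(c_1)$ of $f_2$ and satisfies $\phi\bigl(f_1^{k}(c_1)\bigr)=f_2^{k}(c_2)$ for every $k$. Abbreviating $t_n:=f_1^{q_n}(c_1)-c_1$, so that $s_n(f_1)=t_{n+1}/t_n$, we obtain
\[
s_n(f_2)=\frac{f_2^{q_{n+1}}(c_2)-c_2}{f_2^{q_n}(c_2)-c_2}
=\frac{\phi(c_1+t_{n+1})-\phi(c_1)}{\phi(c_1+t_n)-\phi(c_1)},
\]
so the corollary reduces to comparing this expression with $t_{n+1}/t_n$ via the defining estimate of $C^{1+\alpha}$-conformality at $c_1$.

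I would first collect three structural inputs, all already available. By Petersen's Theorem~\ref{petersen} and Lemma~\ref{regularity} the conjugacy between $f_1|_{\Hq_1}$ and $R_\theta$ is quasisymmetric, so Lemma~\ref{lem:qc-control} applies to $\Hq_1$: part~(2) gives $|t_n|\le C\varepsilon_2^{\,n}$ for some $\varepsilon_2\in(0,1)$, i.e.\ $|t_n|\to 0$ exponentially fast; and applying part~(1) to the tiling $\mathcal{P}_n(c_1)$, whose two tiles meeting $c_1$ are precisely $I_n$ and $I_{n+1}$, together with the bounded-turning characterization of the quasicircle $\Hq_1$, gives $|t_n|\asymp|t_{n+1}|$ and hence $|s_n(f_1)|\asymp 1$ (equivalently, these are the real a priori bounds / bounded geometry of renormalization behind Theorem~\ref{thm:complex-bounds}). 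Finally, $\phi'(c_1)\ne 0$: applying Theorem~\ref{thm:c1plusalpharigidity} with $f_1$ and $f_2$ interchanged shows $\phi^{-1}$ is uniformly $C^{1+\alpha'}$-conformal on $\Hq_2$, and substituting $s:=\phi(c_1+t)-\phi(c_1)=\phi'(c_1)t+O(|t|^{1+\alpha})$ into $\phi^{-1}(c_2+s)-\phi^{-1}(c_2)=(\phi^{-1})'(c_2)\,s+O(|s|^{1+\alpha'})$ and using that the left side equals $t$ forces $(\phi^{-1})'(c_2)\,\phi'(c_1)=1$; alternatively $\phi$ is bi-Lipschitz, as in the proof of Corollary~\ref{cor:dimension}, so $|\phi'(c_1)|\asymp 1$.

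With these in hand the computation is immediate: expanding $\phi$ at $c_1$ and dividing numerator and denominator by the fixed nonzero constant $\phi'(c_1)\,t_n$,
\[
s_n(f_2)=\frac{\phi'(c_1)\,t_{n+1}+O(|t_{n+1}|^{1+\alpha})}{\phi'(c_1)\,t_n+O(|t_n|^{1+\alpha})}
=\frac{s_n(f_1)+O(|t_n|^{\alpha})}{1+O(|t_n|^{\alpha})},
\]
where I used $|t_{n+1}|^{1+\alpha}/|t_n|\asymp|t_{n+1}|^{\alpha}\asymp|t_n|^{\alpha}$. Since $|s_n(f_1)|\asymp 1$, rearranging gives $s_n(f_2)/s_n(f_1)=1+O(|t_n|^{\alpha})$, and $|t_n|^{\alpha}\le C^{\alpha}(\varepsilon_2^{\alpha})^{n}$ with $\varepsilon_2^{\alpha}\in(0,1)$ yields the asserted exponential convergence. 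The only non-formal ingredients are the three structural inputs above; since each follows from results already proved in the paper, I expect no genuine obstacle beyond organizing the citations and bookkeeping the error terms.
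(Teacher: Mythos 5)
Your proposal is correct and follows essentially the same route as the paper: conjugate by the uniformly $C^{1+\alpha}$-conformal map $\phi$ at the critical point, expand, and use Lemma \ref{lem:qc-control}~(2) to get exponential decay of $|f^{q_n}(c)-c|$, hence an $O(\varepsilon_2^{\alpha n})$ error. The only differences are cosmetic: the paper normalizes both critical points to $0$ and writes $\phi(z)=z(\lambda+\eta(z))$, so the ratio $s_n(f_2)/s_n(f_1)$ factors directly and neither the bounded-geometry input $|t_{n+1}|\asymp|t_n|$ nor the lower bound on $|s_n(f_1)|$ is needed, while it simply asserts $\lambda\in\C^*$ where you (more carefully) justify $\phi'(c_1)\neq 0$ via the symmetric application of Theorem \ref{thm:c1plusalpharigidity}.
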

    
\begin{proof}
    Assume without loss of generality that $0$ is the critical point of both $f_1$ and $f_2$, and let $c_n:= f^n_1(0)$. By Lemma \ref{lem:qc-control} (2), the bounded type assumption implies that $|c_{q_n}| \asymp \delta^n$ for some $\delta \in (0,1)$.
    
    Near $0$, we can write $\phi(z) = z \left(\lambda + \eta(z)\right)$ where $\lambda \in \C^*$ and $\eta(z) = O(|z|^{\alpha})$. Then,
    \begin{align*}
        \frac{s_n(f_2)}{s_n(f_1)} -1 = \frac{\phi(c_{q_{n+1}})}{c_{q_{n+1}}} \cdot \frac{c_{q_{n}}}{\phi(c_{q_{n}})} -1 = \cfrac{ \eta(c_{q_{n+1}}) - \eta(c_{q_{n}}) }{ \lambda+\eta(c_{q_{n}}) } = O(\delta^{\alpha n}).
    \end{align*}
    Therefore, the ratio $s_n(f_2)/s_n(f_1)$ tends to $1$ exponentially fast.
\end{proof}

\subsection{Exponential convergence of renormalizations}
\label{ss:exp-convergence}

Let us fix $K >1$ and $\mu >0$. For brevity, we will denote by $\clubsuit$ the data $(d_0,d_\infty,N,K,\mu)$.
    
\begin{theorem}[Exponential convergence]
    \label{thm:exponential-convergence}
    Given any two commuting pairs $\zeta$ and $\hat{\zeta}$ in $\mathcal{CP}(\clubsuit)$ with the same rotation number, their renormalizations converge together exponentially fast in the following sense. Let us denote the $n$\textsuperscript{th} renormalization 
    of $\zeta$ and $\hat{\zeta}$ by $\renorm^n \zeta = (f_{n,-}|_{I_{n,-}}, f_{n,+}|_{I_{n,+}})$ and $\renorm^n \hat{\zeta} = \left(\hat{f}_{n,-}|_{\hat{I}_{n,-}}, \hat{f}_{n,+}|_{\hat{I}_{n,+}}\right)$ respectively. Then,
    \begin{enumerate}[label=\textnormal{(\arabic*)}]
        \item the Hausdorff distance between $I_{n,\pm}$ and $\hat{I}_{n,\pm}$ tends to $0$ $\clubsuit$-exponentially fast;
        \item for sufficiently large $n$ depending on $\clubsuit$, both $f_{n,\pm}$ and $\hat{f}_{n,\pm}$ extend holomorphically to the $\varepsilon(\clubsuit)$-neighborhood of $I_{n,\pm} \cup \hat{I}_{n,\pm}$ on which the sup norm of $f_{n,\pm}-\hat{f}_{n,\pm}$ converges $\clubsuit$-exponentially fast to $0$.
    \end{enumerate}
\end{theorem}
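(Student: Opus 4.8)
The plan is to reduce the statement to the behaviour of a single $C^{1+\alpha}$‑conformal conjugacy under rescaling. First I would glue $\zeta$ and $\hat\zeta$ via Proposition \ref{gluing} to obtain $(d_0,d_\infty)$‑critical quasicircle maps $f=f_\zeta$ and $\hat f=f_{\hat\zeta}$ in $\mathcal{HQ}(\clubsuit)$ with the common rotation number $\theta\in\Theta_N$, and pass to logarithmic coordinates so that both commute with $T_1$ and have a critical point at $0$. Applying Theorem \ref{thm:c1plusalpharigidity} gives a quasiconformal conjugacy $\phi$ between $f$ and $\hat f$ on a collar of $\Hq$ that is uniformly $C^{1+\alpha}$‑conformal on $\Hq$. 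The only nontrivial preliminary point is to record that every constant involved depends on $\clubsuit$ alone: the quasiconformal dilatation by Theorem \ref{qc-conjugacy}, the uniform twisting constant of the model map because it comes from the uniform verification of \ref{S1}--\ref{S5} in Propositions \ref{shen-criterion} and \ref{McM-criterion-02}, and the depth constants of $\Hq$ in its local Julia set by Theorem \ref{deep-point-theorem}. Reading the $C^{1+\alpha}$ condition at the base point $0\in\Hq$ then yields constants $\lambda_0\in\C^*$, $C_0,\varepsilon_0>0$ and $\alpha\in(0,1)$, all depending only on $\clubsuit$, with $|\lambda_0|+|\lambda_0|^{-1}\le C_0$ and
\[
\phi(z)=z\,(\lambda_0+\eta(z)),\qquad |\eta(z)|\le C_0\,|z|^{\alpha}\quad\text{for }|z|<\varepsilon_0,
\]
valid for $z$ in a full neighbourhood of $0$. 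Since $\phi$ commutes with $T_1$ it conjugates $p\renorm^{n}\zeta$ to $p\renorm^{n}\hat\zeta$ for every $n$, and in particular sends the critical orbit of $f$ to that of $\hat f$, so $\hat c_j=\phi(c_j)$ with $c_j:=f^j(0)$, $\hat c_j:=\hat f^j(0)$.

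The heart of the argument is the next step: the rescaled conjugacies converge to the identity exponentially. Writing $A_n$ and $\hat A_n$ for the normalizing (anti‑)affine maps that turn $p\renorm^{n}\zeta$ and $p\renorm^{n}\hat\zeta$ into $\renorm^{n}\zeta$ and $\renorm^{n}\hat\zeta$ — each fixing $0$, with linear part of modulus $\asymp|c_{q_{n-1}}|^{-1}$, resp. $\asymp|\hat c_{q_{n-1}}|^{-1}$ — I would set $\psi_n:=\hat A_n\circ\phi\circ A_n^{-1}$, which then conjugates $\renorm^{n}\zeta$ to $\renorm^{n}\hat\zeta$ on the rescaled collar and satisfies $\psi_n(0)=0$ and $\psi_n(I_{n,\pm})=\hat I_{n,\pm}$. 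By Lemma \ref{lem:qc-control}(2) there is $\delta=\delta(\clubsuit)\in(0,1)$ with $|c_{q_{n-1}}|\asymp|\hat c_{q_{n-1}}|=O(\delta^{n})$. Substituting the expansion of $\phi$ and the identity $\hat c_{q_{n-1}}=c_{q_{n-1}}(\lambda_0+\eta(c_{q_{n-1}}))$, a short computation collapses $\psi_n$ to
\[
\psi_n(w)=\frac{\lambda_0+\eta\bigl(A_n^{-1}(w)\bigr)}{\lambda_0+\eta(c_{q_{n-1}})}\,w,
\]
and since $|A_n^{-1}(w)|\asymp|c_{q_{n-1}}|\,|w|=O(\delta^{n})$ on bounded sets, this gives $\sup_{|w|\le R}|\psi_n(w)-w|=O_R(\delta^{\alpha n})$, and likewise for $\psi_n^{-1}$. (For odd $n$ the maps $A_n,\hat A_n$ are anti‑affine and a harmless complex conjugation appears, but the cancellation is the same.)

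With $\psi_n\to\mathrm{id}$ in hand, the conclusions follow from complex bounds and compactness. By Theorem \ref{thm:complex-bounds-butterflies} and Proposition \ref{compactness}, there are $n_0,R,\varepsilon>0$ depending on $\clubsuit$ such that for $n\ge n_0$ the components $f_{n,\pm}$, $\hat f_{n,\pm}$ extend holomorphically to the $\varepsilon$‑neighbourhood of $I_{n,\pm}$, resp. $\hat I_{n,\pm}$, with these domains and the extensions contained in $\D(0,R)$ and with derivatives bounded in terms of $\clubsuit$. For such $n$ the map $\psi_n$ is defined on $\D(0,R)$ and $\|\psi_n-\mathrm{id}\|_{\D(0,R)}=O(\delta^{\alpha n})$; statement (1) is then immediate from $\hat I_{n,\pm}=\psi_n(I_{n,\pm})$. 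For statement (2) I would work on the common domain $W$, the $\tfrac{\varepsilon}{2}$‑neighbourhood of $I_{n,\pm}\cup\hat I_{n,\pm}$, which lies inside both extension domains once $n$ is large (since $I_{n,\pm}$ and $\hat I_{n,\pm}$ are $O(\delta^{\alpha n})$‑close by (1)), and use $\hat f_{n,\pm}=\psi_n\circ f_{n,\pm}\circ\psi_n^{-1}$ to write
\[
\hat f_{n,\pm}-f_{n,\pm}=\bigl(\psi_n-\mathrm{id}\bigr)\circ f_{n,\pm}\circ\psi_n^{-1}+\bigl(f_{n,\pm}\circ\psi_n^{-1}-f_{n,\pm}\bigr),
\]
bounding the first term by $\|\psi_n-\mathrm{id}\|$ and the second by $\|f_{n,\pm}'\|_{\infty}\,\|\psi_n^{-1}-\mathrm{id}\|$; both are $O(\delta^{\alpha n})$. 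Setting $\lambda:=\delta^{\alpha}$ then proves the theorem.

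The step I expect to be the main obstacle is the one flagged in the first paragraph: establishing that the $C^{1+\alpha}$‑conformality constants delivered by Theorem \ref{thm:c1plusalpharigidity} are genuinely uniform over $\mathcal{HQ}(\clubsuit)$ and over $\theta\in\Theta_N$, which requires tracing the constants through the Dynamic Inflexibility Theorem, the uniform twisting criterion, and the deep‑point estimate. Once that is in place, the rest is the familiar renormalization bookkeeping together with the elementary fact that a $C^{1+\alpha}$‑conformal map is $O(\delta^{\alpha n})$‑close to its linearization at scale $\delta^{n}$.
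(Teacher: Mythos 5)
Your proposal is correct and follows essentially the same route as the paper: take the quasiconformal conjugacy from $C^{1+\alpha}$ rigidity, expand it at the critical point, show the renormalization-rescaled conjugacies $\psi_n$ converge to the identity $\clubsuit$-exponentially fast, and conclude (1) and (2) via the complex bounds; your explicit triangle-inequality step for (2) merely spells out what the paper leaves as "follows immediately."
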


\begin{proof}
    Let $\psi$ be a quasiconformal conjugacy between $\zeta$ and $\hat{\zeta}$. Recall that the renormalization $\renorm^n \zeta$ is obtained by conjugating $p \renorm^n \zeta$ with the map
        \[
            \tau_{n}(z) := \begin{cases}
                -c_n z, & \text{ if } n \text{ is odd,}\\
                -c_n \bar{z}, & \text{ if } n \text{ is even,}
            \end{cases}
        \]
    sending $-1$ to an endpoint $c_n$ of the base of $p\renorm^n\zeta$. Similarly, denote by $\hat{\tau}_n(z)$ the corresponding rescaling map for $p\renorm^n \hat{\zeta}$ with scaling factor $-\hat{c}_n$ where $\hat{c}_n := \psi(c_n)$.
    
    By Theorem \ref{thm:complex-bounds-butterflies}, there are constants $n_1 = n_1(\clubsuit) \in \N$ and $L=L(\clubsuit) >1$ such that for all $n \geq n_1$, the pre-renormalization $p\renorm^n \zeta$ extends to an $L$-butterfly with range $V_n$. Denote the range of the corresponding butterfly for $\renorm^n \zeta$ by $\ranc_n = \tau_n^{-1}(V_n)$. By \ref{cond:4}, 
    \begin{equation}
    \label{eqn:diameter}
        \diam(\ranc_n) \leq L \quad \text{and} \quad \diam(V_n) \leq |c_n| L.
    \end{equation}

    Since $\psi$ is $C^{1+\alpha}$-conformal at the critical point $0$, there exist positive constants $\alpha, C, \lambda, r$ depending only on $\clubsuit$ such that for $|z|<r$,
        \begin{equation}
            \label{eqn:c1plusalpha}
            \psi(z)=\lambda z+\eta(z), \quad \text{ where } \quad |\eta(z)|\leq C |z|^{1+\alpha}.
        \end{equation}        
    The sequence $\{c_n\}_{n \in \N}$ converges $\clubsuit$-exponentially fast to $0$ due to Lemma \ref{lem:qc-control} (1). By (\ref{eqn:diameter}), there is some $n_2 = n_2(\clubsuit) \in \N$ such that for $n \geq n_2$, the disk $V_n$ has diameter at most $r$.
    By (\ref{eqn:c1plusalpha}), for $n \geq n_2$, 
    \begin{equation}
    \label{eqn:rescaled-error-term}
        \sup_{z \in \ranc_n} \left| \frac{\eta(-c_n z)}{\hat{c}_n}\right| \leq C  \frac{|c_n|^{1+\alpha}}{|\hat{c}_n|} \sup_{z \in \ranc_n}|z|^{1+\alpha} \leq C L^{1+\alpha}  \left|\frac{c_n}{\hat{c}_n}\right| |c_n|^\alpha.
    \end{equation}
    Since $\psi(c_n) = \hat{c}_n$, then by (\ref{eqn:c1plusalpha}) again, the sequence of ratios $\{\hat{c}_n/c_n\}_{n\in\N}$ converges $\clubsuit$-exponentially fast to the derivative $\lambda$ of $\psi$ at $0$. Therefore, the sequence 
        \[
        \sup_{z \in \ranc_n} \left|\frac{\eta(-c_n z)}{\hat{c}_n} \right|
        \]
    also converges $\clubsuit$-exponentially fast to $0$.
        
    The map $\psi_n:=\hat{\tau}_n^{-1}\circ \psi \circ \tau_n$ conjugates $\renorm^n \zeta_1$ and $\renorm^n \zeta_2$. For all even $n \geq n_2$ and $z \in \ranc_n$,
    \begin{align*}
            |\psi_n(z) - z| &\leq \left| \lambda \frac{c_n}{\hat{c}_n} - 1 \right| |z| + \left| \frac{\eta(-c_n z)}{\hat{c}_n}\right|,
    \end{align*}
    and a similar estimate holds for odd $n$. This implies that the sup norm of $\psi_n - \text{Id}$ on $\ranc_n$ converges $\clubsuit$-exponentially fast to $0$, and items (1) and (2) follow immediately.
\end{proof}

\subsection{A horseshoe attractor}
\label{ss:horseshoe}

Let $\mathcal{CP}(d_0,d_\infty,N)$ be the space of all normalized $(d_0,d_\infty)$-critical commuting pairs $\zeta$ with rotation number in $\Theta_N$. Denote by $\sigma$ the shift map acting on the bi-infinite shift space $\Sigma_N = \{1,\ldots,N\}^\Z$ of $N$ symbols, equipped with the infinite product topology. Consider the continuous surjection 
\[
    \xi: \Sigma_N \to \Theta_N, \quad (\ldots,a_{-2},a_{-1},a_0;a_1,a_2,\ldots) \mapsto [0;a_1,a_2,\ldots].
\]
We now prove a more precise formulation of Theorem \ref{horseshoe}.

\begin{theorem}[Renormalization horseshoe]
\label{thm:horseshoe}
    There is a unique renormalization-invariant compact subset $\mathcal{A}=\mathcal{A}(d_0,d_\infty,N)$ of $\mathcal{CP}(d_0,d_\infty,N)$ satisfying the following properties.
    \begin{enumerate}[label=\textnormal{(\arabic*)}]
        \item There is a topological conjugacy $\Phi: \Sigma_N \to \mathcal{A}$ between the renormalization operator $\renorm: \mathcal{A} \to \mathcal{A}$ and the shift map $\sigma: \Sigma_N \to \Sigma_N$ such that $\rot \circ \Phi = \xi$.
        \item For any $\zeta \in \mathcal{A}$ and $\zeta' \in \mathcal{CP}(d_0,d_\infty,N)$, renormalizations $\renorm^n \zeta$ and $\renorm^n \zeta'$ converge together exponentially fast if and only if $\renorm^m \zeta$ has the same rotation number as $\renorm^m \zeta'$ for some $m \in \N$.
    \end{enumerate}
\end{theorem}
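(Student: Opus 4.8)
# Proof proposal for Theorem \ref{thm:horseshoe}

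The plan is to build $\mathcal{A}$ as a limit of renormalization images, using the complex bounds of Theorem \ref{thm:complex-bounds-butterflies} for precompactness and the exponential convergence of Theorem \ref{thm:exponential-convergence} for the dynamical structure. First I would fix, via Theorem \ref{thm:complex-bounds-butterflies}, a universal triple $(K_0,\mu_0, n_0)$ depending only on $(d_0,d_\infty,N)$ such that for every $\zeta\in\mathcal{CP}(d_0,d_\infty,N)$ and every $n\ge n_0$, the pre-renormalization $p\renorm^n\zeta$ extends to a $K_0$-butterfly, so that $\renorm^n\zeta\in\mathcal{CP}(d_0,d_\infty,N,K_0,\mu_0)$; in particular the space $\mathcal{CP}(d_0,d_\infty,N,K_0,\mu_0)$ is precompact (it sits inside the space of normalized $K_0$-butterflies restricted to their bases, which is compact by Proposition \ref{compactness}). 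Define the renormalization attractor candidate by $\mathcal{A} := \bigcap_{k\ge 0}\overline{\renorm^k\big(\mathcal{CP}(d_0,d_\infty,N,K_0,\mu_0)\big)}$. Compactness, nonemptiness, and $\renorm$-invariance of $\mathcal{A}$ are then formal, using that $\renorm$ is continuous on $\mathcal{CP}(d_0,d_\infty,N,K_0,\mu_0)$ (continuity is the content of the Carathéodory-topology setup in \S\ref{ss:complex-bounds}, combined with real a priori bounds). Property (2) in the exponential-convergence theorem — distance from $\mathcal{A}_N=\mathcal{A}$ decaying exponentially — follows by applying Theorem \ref{thm:exponential-convergence} to $\renorm^{n_0}\zeta$ together with any element of $\mathcal{A}$ with matching rotation number and using that elements of $\mathcal{A}$ at a given rotation number attract; I will spell this out after constructing $\Phi$.

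Next I would construct the conjugacy $\Phi:\Sigma_N\to\mathcal{A}$. For a bi-infinite sequence $\underline a=(\dots,a_{-1},a_0;a_1,a_2,\dots)\in\Sigma_N$, consider the rotation numbers $\theta_k := [0;a_{k+1},a_{k+2},\dots]$ for $k\in\Z$; note $G(\theta_k)=\theta_{k+1}$ where $G$ is the Gauss map, consistent with Lemma \ref{rotation-number}. For each $N$-tuple-truncation I would pick a commuting pair realizing the finite combinatorics $[0;a_1,\dots,a_M, *]$ — using Example \ref{prototype-example} (the maps $F_{d_0,d_\infty,\theta}$, whose pre-renormalizations supply such commuting pairs) to produce, for each $m\le 0$, a pair $\zeta_m$ with $\rot(\zeta_m)=\theta_m$ and whose $|m|$-th renormalization has rotation number $\theta_0$; then $\renorm^{|m|}\zeta_m$ is a candidate for $\Phi(\underline a)$, and by Theorem \ref{thm:exponential-convergence} the sequence $\{\renorm^{|m|}\zeta_m\}_{m\le 0}$ is Cauchy (any two such have matching rotation number and matching $|m|$-deep renormalizations, so they converge exponentially together). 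Define $\Phi(\underline a)$ as this limit. One checks $\Phi(\underline a)\in\mathcal{A}$ (it is a limit of deep renormalizations, hence in every $\overline{\renorm^k(\cdots)}$), that $\rot\circ\Phi=\xi$, that $\renorm\circ\Phi=\Phi\circ\sigma$ (immediate from the construction, since shifting $\underline a$ shifts the realizing sequences), and that $\Phi$ is continuous (uniform exponential convergence makes the limit depend continuously on finite data). Injectivity of $\Phi$ follows from rigidity: if $\Phi(\underline a)=\Phi(\underline b)$ then all rotation numbers $\theta_k$ agree, forcing $\underline a=\underline b$ since the continued fraction digits are determined. Surjectivity onto $\mathcal{A}$: any $\zeta\in\mathcal{A}$ is a limit $\lim_j \renorm^{k_j}\eta_j$ with $k_j\to\infty$; extracting the symbol sequence $a_i$ from the rotation numbers $\rot(\renorm^{i}\zeta)$ for $i\ge 0$ and, using that $\zeta$ lies in arbitrarily deep renormalization images, recovering negative-index digits by a diagonal argument, one produces $\underline a\in\Sigma_N$ with $\Phi(\underline a)=\zeta$ — here one invokes Theorem \ref{thm:c1plusalpharigidity}/\ref{qc-conjugacy} to see that two elements of $\mathcal{A}$ with the same two-sided itinerary coincide.

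For property (2): the "if" direction is exactly Theorem \ref{thm:exponential-convergence} applied once $\renorm^m\zeta$ and $\renorm^m\zeta'$ have equal rotation number (both lie in $\mathcal{CP}(\clubsuit)$ for the universal constants, so their subsequent renormalizations converge $\clubsuit$-exponentially). For the "only if" direction, suppose $\renorm^n\zeta$ and $\renorm^n\zeta'$ converge together exponentially but $\rot(\renorm^m\zeta)\ne\rot(\renorm^m\zeta')$ for every $m$; then for each $m$ the $(m{+}1)$-st digits of the two rotation numbers differ, hence the geometry of the first-return combinatorics at scale $m$ differs by a definite amount (the endpoints of the bases sit in combinatorially distinct positions, and by the a priori bounds of Lemma \ref{lem:qc-control} and Corollary \ref{qs-conjugacy} this is a geometric gap bounded below uniformly), contradicting convergence. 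Finally, uniqueness of $\mathcal{A}$: any renormalization-invariant compact subset $\mathcal{A}'$ of $\mathcal{CP}(d_0,d_\infty,N)$ must, by complex bounds, lie in some $\mathcal{CP}(\clubsuit)$ after applying $\renorm^{n_0}$; invariance forces $\renorm^{n_0}\mathcal{A}'\subset\mathcal{A}$, and invariance of $\mathcal{A}'$ together with the attracting property (2) pins $\mathcal{A}'=\mathcal{A}$.

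The main obstacle I anticipate is the surjectivity and injectivity of $\Phi$ onto $\mathcal{A}$ — specifically, showing that a point of $\mathcal{A}$ is determined by its two-sided sequence of rotation numbers. This is where the full strength of the rigidity results (Theorem \ref{thm:c1plusalpharigidity}, and for the realization side, Theorem \ref{thm:combinatorial-rigidity} via the model maps $F_{d_0,d_\infty,\theta}$) must be combined with a careful bookkeeping of which finite-depth renormalization data suffice to reconstruct an element of $\mathcal{A}$; the exponential-convergence theorem is the engine that makes the limits exist, but verifying that the resulting map is a bijection — rather than merely a continuous surjection — requires ruling out coincidences among pairs with different combinatorics, which is precisely the "only if" half of property (2).
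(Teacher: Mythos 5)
Your construction follows essentially the same route as the paper: realize prescribed backward itineraries by the model maps $F_{d_0,d_\infty,\theta_{m}}$ of Example \ref{prototype-example}, push them through deep renormalizations, use complex bounds (Theorem \ref{thm:complex-bounds-butterflies}, Proposition \ref{compactness}) to control the limits, and use Theorem \ref{thm:exponential-convergence} as the rigidity engine for well-definedness, injectivity, continuity, and property (2). The two genuine differences are organizational. First, you define $\mathcal{A}$ as the nested intersection $\bigcap_k\overline{\renorm^k(\mathcal{CP}(\clubsuit_0))}$, whereas the paper defines $\mathcal{A}$ as the image of $\Phi$, i.e.\ the set of zeroth entries of bi-infinite renormalization towers; your choice buys a cheap proof of compactness and invariance but creates the extra obligation of showing the intersection is no larger than $\mathrm{Im}\,\Phi$ (your surjectivity step), which the paper avoids by definition. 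Second, you obtain $\Phi(\underline a)$ as a Cauchy limit (exponential convergence makes $\{\renorm^{|m|}\zeta_m\}$ Cauchy), while the paper extracts a subsequential limit by compactness of $K$-butterflies and a diagonal argument and then proves the tower is unique; these are equivalent in strength, and your version is arguably slightly cleaner since it gives convergence outright.

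Two soft spots need repair. (i) Your opening claim that Theorem \ref{thm:complex-bounds-butterflies} supplies a single triple $(K_0,\mu_0,n_0)$ valid for \emph{every} $\zeta\in\mathcal{CP}(d_0,d_\infty,N)$ overstates it: the theorem is stated for $\zeta\in\mathcal{CP}(d_0,d_\infty,N,K,\mu)$ and its $n_0$ and $K'$ depend on $(K,\mu)$, so there is no uniform $n_0$ over the unrestricted space (a pair with a very thin domain of analyticity needs many renormalizations before the beau bounds kick in). This does not hurt the construction of $\mathcal{A}$ and $\Phi$ — run everything inside the fixed class containing the model maps — but in property (2) and in your uniqueness argument you should first fix the constants $\clubsuit'$ of the given $\zeta'$ (or $\mathcal{A}'$) and apply the theorems with those constants. (ii) The step ``two elements of $\mathcal{A}$ with the same two-sided itinerary coincide'' is not a consequence of Theorem \ref{thm:c1plusalpharigidity} or Theorem \ref{qc-conjugacy} as you cite them: those produce a ($C^{1+\alpha}$-)conjugacy between two pairs of equal rotation number, not equality of normalized pairs. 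The correct argument is the tower-rigidity one the paper uses: extract (by compactness, continuity of $\renorm$, and a diagonal argument) backward orbits $\{\zeta_{-k}\}$, $\{\zeta'_{-k}\}$ in the attractor with matching rotation numbers at every level, and apply Theorem \ref{thm:exponential-convergence} to the pair $(\zeta_{-k},\zeta'_{-k})$ to get $d(\zeta_0,\zeta'_0)=d(\renorm^k\zeta_{-k},\renorm^k\zeta'_{-k})=O(\lambda^k)$ for every $k$, hence $\zeta_0=\zeta'_0$. Since you already invoke Theorem \ref{thm:exponential-convergence} elsewhere, this is a misattribution rather than a missing idea, but as written the surjectivity/injectivity paragraph does not close; with these two corrections the proposal matches the paper's proof.
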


In the proof below, we obtain the horseshoe by constructing limits of renormalization towers, and we deduce the rigidity of towers by applying the exponential convergence of renormalizations.

\begin{proof}
    Consider a bi-infinite sequence $\mathbf{a}= (\ldots, a_{-2},a_{-1},a_0;a_1,a_2,\ldots)$ in $\Sigma_N$. For any $k \in \Z$, set $\theta_k := [0;a_{k+1},a_{k+2},a_{k+3},\ldots]$ and let $f_{k}$ be the rational map $F_{d_0,d_\infty,\theta_{k}}$ from Proposition \ref{prop:prototype-example}. By Lemma \ref{rotation-number}, whenever $k+ l \geq 1$, the $(k+l)$\textsuperscript{th} renormalization $\zeta_{k,l}:=\renorm^{k+l} f_{-k}$ has rotation number $\theta_{l}$.

    According to Theorem \ref{thm:complex-bounds-butterflies}, if $k+l$ is sufficiently high, then $\zeta_{k,l}$ always extends to a $K$-butterfly for some $K=K(d_0,d_\infty,N) >1$. By Proposition \ref{compactness}, for any $l \in \Z$, there is a subsequence $\{\zeta_{k(i),l}\}_{i\in \N}$ such that as $i \to \infty$, then $k(i)\to \infty$ and $\zeta_{k(i),l}$ converges to some $(d_0,d_\infty)$-critical commuting pair $\zeta_l$ of rotation number $\theta_{l}$. By a diagonal procedure, we can ensure that $\renorm \zeta_l = \zeta_{l+1}$, giving us a bi-infinite renormalization tower
    \[
        \mathcal{T}_{\mathbf{a}} := (\ldots, \zeta_{-2}, \zeta_{-1}, \zeta_0; \: \zeta_1, \zeta_2, \ldots)
    \]
    of commuting pairs in $\mathcal{CP}(d_0,d_\infty,N)$ where each entry $\zeta_n$ extends to a $K$-butterfly.
    
    Suppose this procedure yields another renormalization tower
    \[
        \mathcal{T}'_{\mathbf{a}} := (\ldots, \zeta'_{-2}, \zeta'_{-1}, \zeta'_0; \: \zeta'_1, \zeta'_2, \ldots).
    \]
    By Theorem \ref{thm:exponential-convergence}, as $n \to \infty$, renormalizations $\renorm^k \zeta_n$ and $\renorm^k \zeta'_n$ converge together exponentially fast at a uniform rate independent of $k$. Since $\renorm^k\zeta_n = \zeta_{n+k}$ and $\renorm^k\zeta'_n = \zeta'_{n+k}$, this implies that $\zeta'_n = \zeta_n$ for all $n \in \Z$. Therefore, the tower $\mathcal{T}_{\mathbf{a}}$ is uniquely defined. 
            
    Set $\Phi(\mathbf{a})$ to be the zeroth entry $\zeta_0$ of $\mathcal{T}_{\mathbf{a}}$ and set 
    \[
        \mathcal{A}:= \{ \Phi(\mathbf{a}) \in \mathcal{CP}(d_0,d_\infty,N) \: : \: \mathbf{a} \in \Sigma_N \}.
        \]
    We have a surjective mapping $\Phi: \Sigma_N \to \mathcal{A}$ satisfying $\rot \circ \Phi = \xi$. Since $\Sigma_N$ is compact and Hausdorff, in order to prove (1), it remains to show that $\Phi$ is injective and continuous.

    Suppose $\Phi(\mathbf{a}_1) = \Phi(\mathbf{a}_2)$ for some $\mathbf{a}_1, \mathbf{a}_2 \in \Sigma_N$. By the identity theorem, the associated towers are equal, namely $\mathcal{T}_{\mathbf{a}_1} = \mathcal{T}_{\mathbf{a}_2}$. For each $i \in \{1,2\}$, we can recover back the bi-infinite sequence $\mathbf{a}_i$ by evaluating the rotation number of each entry of $\mathcal{T}_{\mathbf{a}_i}$, so then $\mathbf{a}_1=\mathbf{a}_2$. Therefore, $\Phi$ is injective.

    Let us show that $\Phi$ is continuous. Suppose a sequence of elements $\mathbf{a}^{(n)}$ in $\Sigma_N$ converges to $\mathbf{a}$. For each $n \in \N$, denote the associated tower by 
    \[
        \mathcal{T}_{\mathbf{a}^{(n)}} = (\ldots, \zeta_{-2}^{(n)}, \zeta_{-1}^{(n)}, \zeta_{0}^{(n)}; \:\zeta_{1}^{(n)}, \zeta_{2}^{(n)}, \ldots).
    \]
    By passing to a subsequence, each $\zeta_k^{(n)}$ converges to some commuting pair $\zeta_k$ as $n \to \infty$, forming a limiting renormalization tower
    \[
        \mathcal{T} = (\ldots, \zeta_{-2}, \zeta_{-1}, \zeta_0; \:\zeta_1, \zeta_2, \ldots).
    \]
    For every $k \in \Z$ and $n \in \N$, Corollary \ref{qs-conjugacy} states that there is a unique quasisymmetric conjugacy $h_k^{(n)}$ between the pair of translations $\mathbf{T}_{\theta_k^{(n)}}$ and $\zeta_k^{(n)}$. Clearly, as $\mathbf{a}^{(n)} \to \mathbf{a}$, the pair $\mathbf{T}_{\theta_k^{(n)}}$ converges to $\mathbf{T}_{\theta_k}$. Since the dilatation of $h_k^{(n)}$ is uniform, the sequence $h_k^{(n)}$ subsequently converges to a quasisymmetric map $h_k$, which conjugates $\mathbf{T}_{\theta_k}$ with $\zeta_{k}$. In particular, $\rot(\zeta_{k}) = \theta_k$. By the uniqueness of renormalization towers, $\mathcal{T}$ coincides with $\mathcal{T}_{\mathbf{a}}$. Therefore, $\Phi\left(\mathbf{a}^{(n)}\right) \to \Phi(\mathbf{a})$. 

    Let us now prove property (2). Pick $\zeta \in \mathcal{A}$ and $\zeta' \in \mathcal{CP}(d_0,d_\infty,N)$. If $\renorm^m \zeta$ and $\renorm^m \zeta'$ have the same rotation number for some $m \in \N$, then again by Theorem \ref{thm:exponential-convergence}, $\renorm^n \zeta$ and $\renorm^n \zeta'$ converge together exponentially fast. Otherwise, there is an infinite sequence $\{ n_k \}_{k \in \N}$ such that the continued fraction expansions of the rotation numbers of $\renorm^{n_k} \zeta$ and $\renorm^{n_k} \zeta'$ have different first term, which clearly implies that $\renorm^{n_k} \zeta$ and $\renorm^{n_k} \zeta'$ cannot converge together.
    \end{proof}

\subsection{Self-similarity}
\label{ss:self-similarity}

Consider a bi-infinite sequence 
\[
    \mathbf{a} = (\ldots, a_{-2}, a_{-1}, a_0; a_1, a_2, \ldots \ldots) \in \Sigma_N
\]
such that $\mathbf{a}$ is $s$-periodic for some $s\geq 1$, i.e. $\mathbf{a} = \sigma^s \mathbf{a}$. To lighten our notation, we assume that $s$ is even. (Else, replace $s$ by $2s$.) Let 
\[
\theta_0 := [0; a_1,a_2,a_3,\ldots].
\]
Then, $\theta_0$ is also $s$-periodic under the Gauss map $G(x) = \left\{\frac{1}{x}\right\}$.

We say that two subsets $P$ and $Q$ of $\C$ are \emph{linearly equivalent} if there is a linear map $g$ such that $g(P)=Q$. Below is a more precise version of Theorem \ref{thm:self-similarity}.

\begin{theorem}[Self-similarity]
    \label{thm:A-invariant-quasicircle}
    There exists a complex number $\mu_{\mathbf{a}} \in \D^*$ such that the following holds. Let $A_{\mathbf{a}}(z) := \mu_{\mathbf{a}} z$.
    \begin{enumerate}[label=\textnormal{(\arabic*)}]
        \item Consider the conjugacy $\Phi : \Sigma_N \to \mathcal{A}$ described in Theorem \ref{thm:horseshoe}. The base of $\Phi(\mathbf{a})$ extends to a unique $A_{\mathbf{a}}$-invariant quasicircle $\Hq_\mathbf{a}$.
        \item $\mu_{\sigma^2\mathbf{a}} = \mu_{\mathbf{a}}$, and $\Hq_{\sigma^2\mathbf{a}}$ is linearly equivalent to $\Hq_{\mathbf{a}}$.
        \item Suppose $f:\Hq \to \Hq$ is a $(d_0,d_\infty)$-critical quasicircle map of rotation number $\theta$ where $G^k(\theta)=\theta_0$ for some even integer $k \geq 0$. Assume $0$ is the critical point of $f$.
        \begin{enumerate}[label=\textnormal{(\alph*)}]
            \item $A_{\mathbf{a}}^{-n}(\Hq)$ converges in the Hausdorff metric to an $A_{\mathbf{a}}$-invariant quasicircle linearly equivalent to $\Hq_{\mathbf{a}}$.
            \item Let $c_{l} := f^l(0)$ for all $l \in \N$. Then, 
                \[
                \frac{c_{q_{n+s}}}{c_{q_n}} = \prod_{i=1}^s s_{n+i}(f) \xrightarrow{} \mu_{\mathbf{a}} \quad \text{ exponentially fast as } n \to \infty.
                \]
        \end{enumerate}
    \end{enumerate}
\end{theorem}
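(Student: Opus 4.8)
The plan is to exploit the fact that $\zeta_* := \Phi(\mathbf{a})$ is a \emph{renormalization periodic point}: since $\Phi$ conjugates $\renorm$ to the shift and $\sigma^s\mathbf{a}=\mathbf{a}$, we have $\renorm^s\zeta_*=\zeta_*$. Unwinding the definition of $\renorm$ as ``normalize the $s$-fold pre-renormalization'', the passage from $p\renorm^s\zeta_*$ to $\renorm^s\zeta_*$ is conjugation by a linear map (linear rather than antilinear because $s$ is even), say $z\mapsto z/\mu_{\mathbf{a}}$, where $\mu_{\mathbf{a}}$ is the product of the $s$ consecutive scaling constants along the periodic orbit, with the appropriate alternating complex conjugations; equivalently $\mu_{\mathbf{a}}$ is, up to sign, the value $c_{q_s}$ read off from $\zeta_*$ itself. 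The real a priori bounds (Lemma \ref{lem:qc-control}(2), transported to commuting pairs via Proposition \ref{gluing} and Theorem \ref{cqc-real-bounds}) force consecutive renormalization scales to decay geometrically, so $0<|\mu_{\mathbf{a}}|<1$, i.e.\ $\mu_{\mathbf{a}}\in\D^*$. Setting $A_{\mathbf{a}}(z)=\mu_{\mathbf{a}}z$, the fixed point equation reads, near the critical point $0$,
\[
p\renorm^s\zeta_* \;=\; A_{\mathbf{a}}\circ\zeta_*\circ A_{\mathbf{a}}^{-1},
\]
so $\zeta_*$ restricted near $0$ is an affinely rescaled copy of itself; this self-reproduction drives everything below.

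For (1), I would pass to the glued critical quasicircle map $f_{\zeta_*}\colon\Hq_{\zeta_*}\to\Hq_{\zeta_*}$ of Proposition \ref{gluing}; by complex bounds (Theorem \ref{thm:complex-bounds-butterflies}) and the welding description, $\Hq_{\zeta_*}$ is a $K$-quasicircle near $0$ with $K=K(d_0,d_\infty,N)$. The displayed identity shows $A_{\mathbf{a}}$ carries a definite sub-arc $Q\subset\Hq_{\zeta_*}$ through $0$ (containing the base of $\zeta_*$) \emph{into} a smaller such arc, $\overline{A_{\mathbf{a}}(Q)}\Subset\mathring Q$, and conjugates the first-return dynamics governing $\Hq_{\zeta_*}$ on $A_{\mathbf{a}}(Q)$ to that on $Q$. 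Hence $\Hq_{\mathbf{a}} := \overline{\bigcup_{n\in\Z}A_{\mathbf{a}}^{-n}(Q)}$ is an increasing union of affine copies of one fixed $K$-quasiarc, matching consistently along overlaps and with uniformly bounded turning at every scale; its closure is a quasicircle through the fixed points $0,\infty$ of $A_{\mathbf{a}}$, it extends the base, and it is $A_{\mathbf{a}}$-invariant by construction. Uniqueness is immediate: any $A_{\mathbf{a}}$-invariant quasicircle containing the base contains the dense set $\bigcup_{n\in\Z}A_{\mathbf{a}}^{-n}(\mathbf I)$, hence equals $\Hq_{\mathbf{a}}$. For (2), $\Phi(\sigma^2\mathbf{a})=\renorm^2\zeta_*=T\circ\zeta_*\circ T^{-1}$ with $T$ linear (two antilinear rescalings compose to a linear one); since $\renorm$, $p\renorm$ and the scaling-constant functional are equivariant under linear conjugation, and since the product defining $\mu_{\,\cdot\,}$ is cyclic with commutative factors, shifting $\mathbf{a}$ by an even amount preserves the conjugation pattern and the value, giving $\mu_{\sigma^2\mathbf{a}}=\mu_{\mathbf{a}}$ and $\Hq_{\sigma^2\mathbf{a}}=T(\Hq_{\mathbf{a}})$. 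The same argument yields $\mu_{\sigma^{2j}\mathbf{a}}=\mu_{\mathbf{a}}$ and $\mu_{\sigma^{2j+1}\mathbf{a}}=\overline{\mu_{\mathbf{a}}}$ for all $j$, which I record for part (3).

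For (3), the hypothesis $G^k(\theta)=\theta_0$ with $\theta_0$ $s$-periodic makes the continued fraction of $\theta$ eventually $s$-periodic, so $\rot(\renorm^n\zeta_f)=G^n(\theta)=G^{n-k}(\theta_0)$ for $n\ge k$. Choosing $\tilde\zeta\in\mathcal{A}$ with $\rot(\tilde\zeta)=\theta$ (possible since $\xi$ is onto) and noting that, for large $n$ in a fixed residue class mod $s$, one has $\renorm^n\tilde\zeta=\renorm^{(n-k)\bmod s}\zeta_*=:\zeta_*^{(j_n)}$ exactly, Theorems \ref{thm:exponential-convergence} and \ref{thm:horseshoe}(2) yield that $\renorm^n\zeta_f$ converges exponentially fast to $\zeta_*^{(j_n)}$, and by Theorem \ref{thm:exponential-convergence}(2) the defining maps converge uniformly exponentially on a definite neighbourhood, so the relevant critical-orbit points converge exponentially as well. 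Then $c_{q_{n+s}}/c_{q_n}$ is the telescoping product $\prod s_m(f)$ of $s$ consecutive scaling ratios, and, reading scales in $f$'s coordinates, it equals — up to a complex conjugation when $n$ is odd — the $\renorm^s$-scaling constant evaluated at $\renorm^n\zeta_f$, which converges exponentially to that at $\zeta_*^{(j_n)}$, namely $\mu_{\sigma^{j_n}\mathbf{a}}$. Because $k$ and $s$ are even, the parity of $j_n$ equals that of $n$, so the conjugation introduced for odd $n$ exactly cancels the one in $\mu_{\sigma^{j_n}\mathbf{a}}=\overline{\mu_{\mathbf{a}}}$; in all cases $c_{q_{n+s}}/c_{q_n}\to\mu_{\mathbf{a}}$ exponentially, which is (3)(b). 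For (3)(a), $A_{\mathbf{a}}^{-n}$ magnifies $\Hq$ about its critical point by $\mu_{\mathbf{a}}^{-n}$; telescoping (3)(b) gives $\mu_{\mathbf{a}}^{-n}=\rho_n\,c_{q_{ns}}^{-1}$ with $\rho_n\to\rho\in\C^*$ exponentially, so $A_{\mathbf{a}}^{-n}(\Hq)$ is a near-similarity image of the depth-$ns$ blow-up $c_{q_{ns}}^{-1}(\Hq)$, which by the exponential convergence $\renorm^{ns}\zeta_f\to\zeta_*^{((-k)\bmod s)}$ (a \emph{fixed} pair) converges in the Hausdorff metric to the axis of that pair's glued map — a fixed quasicircle, linearly equivalent to $\Hq_{\mathbf{a}}$ by (2); $A_{\mathbf{a}}$-invariance of the limit $L$ is automatic from $A_{\mathbf{a}}^{-1}(L)=\lim_n A_{\mathbf{a}}^{-(n+1)}(\Hq)=L$.

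The main obstacle is the rigorous implementation of (1): verifying that the base extends to an \emph{honest} quasicircle, i.e.\ that the local axis of $\zeta_*$ is reproduced, with the correct uniformity, by the renormalized dynamics at every scale, so that $\bigcup_n A_{\mathbf{a}}^{-n}(Q)$ has bounded turning; and the closely related point in (3)(a) that the quasicircle $\Hq$ at renormalization depth $m$, rescaled to unit size, is determined by $\renorm^m\zeta_f$ up to an exponentially small Hausdorff error, uniformly in $m$. The remaining bookkeeping of signs and complex conjugations across even and odd levels in (2) and (3) is routine but delicate, as it is precisely what makes the hypotheses ``$s$ even'' and ``$k$ even'' do their job.
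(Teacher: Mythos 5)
Your parts (1) and (2) follow the same line as the paper (the fixed--point relation $p\renorm^s\zeta_*=A_{\mathbf{a}}\circ\zeta_*\circ A_{\mathbf{a}}^{-1}$ with $0<|\mu_{\mathbf{a}}|<1$, then normalization of $p\renorm^{s+2}\zeta_*$ for (2)), and your extra detail on assembling $\Hq_{\mathbf{a}}$ from rescaled copies of a base arc is a reasonable expansion of what the paper leaves implicit. The genuine gap is in part (3), where you never invoke the tool the paper's proof is built on, namely Theorem \ref{thm:c1plusalpharigidity}. The paper takes the conjugacy $\psi$ between $\renorm^k f$ and $\zeta_*=\Phi(\mathbf{a})$, which by $C^{1+\alpha}$ rigidity satisfies, after normalizing $\phi(z)=\psi(z)/\psi'(0)$, the expansion $\phi(z)=z\left(1+\eta(z)\right)$ with $\eta(z)=O(|z|^{\alpha})$ at the critical point; combining this with the exact self-similarity of $\zeta_*$ (so that $\phi(c_{q_{n+s}})=A_{\mathbf{a}}\phi(c_{q_n})$, and the deep bases of $f$ are carried by $\phi$ into subintervals of a rescaled $\Hq_{\mathbf{a}}$) turns both (3)(a) and (3)(b) into two-line estimates of the form $|\mu_{\mathbf{a}}|^{-n}\cdot O\!\left(\diam(\mathbf{I})^{1+\alpha}\right)=O(|\mu_{\mathbf{a}}|^{\alpha n})$ and $O(\delta^{\alpha n})$. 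Your substitute --- exponential convergence of renormalizations plus continuity of a scaling functional --- leaves unproven exactly the step you yourself flag as ``the main obstacle'': that the full curve $\Hq$, rescaled by $A_{\mathbf{a}}^{-n}$ (equivalently by $c_{q_{ns}}^{-1}$), is determined by the commuting pair $\renorm^{ns}\zeta_f$ up to an exponentially small Hausdorff error, uniformly in $n$. A commuting pair only records return maps on its base, and Theorem \ref{thm:exponential-convergence} only controls those maps; passing from this to exponential Hausdorff convergence of the ambient quasicircle at unit scale is precisely what the $C^{1+\alpha}$ conjugacy provides, so as written (3)(a) is not established.

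There is also a concrete error in your reduction for (3)(b): choosing $\tilde\zeta\in\mathcal{A}$ with $\rot(\tilde\zeta)=\theta$, it is not true that $\renorm^n\tilde\zeta=\renorm^{(n-k)\bmod s}\zeta_*$ exactly for large $n$. Writing $\tilde\zeta=\Phi(\mathbf{b})$, the forward part of $\sigma^n\mathbf{b}$ is eventually periodic, but its backward entries retain the pre-periodic digits $t_1,\ldots,t_k$ of $\theta$ (and whatever backward tail you chose), so $\sigma^n\mathbf{b}$ is never exactly on the periodic orbit unless those digits happen to match $\mathbf{a}$; it only converges in the product topology, and the homeomorphism $\Phi$ gives no rate. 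The correct comparison is between $\renorm^k\zeta_f$ and $\zeta_*$ themselves, which share the rotation number $\theta_0$, via Theorem \ref{thm:exponential-convergence}; with that fix, plus the parity and conjugation bookkeeping you only sketch (including the unproved identification of the limiting product of normalized endpoints over one period with $\mu_{\mathbf{a}}$), your route to (3)(b) can likely be salvaged, but (3)(a) still needs the $C^{1+\alpha}$ input --- or an equivalent geometric control relating renormalization data to the curve itself --- which is absent from your argument.
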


\begin{proof}
    Since $\zeta := \Phi(\mathbf{a})$ satisfies the equation $\renorm^s \zeta = \zeta$, there is a linear map $A_{\mathbf{a}}(z) := \mu_{\mathbf{a}} z$ such that $0<|\mu_{\mathbf{a}}|<1$ and $p\renorm^s \zeta = A_{\mathbf{a}} \circ \zeta \circ A_{\mathbf{a}}^{-1}$. This immediately implies (1). Also, we have $p\renorm^{s+2} \zeta = A_{\mathbf{a}} \circ p\renorm^2 \zeta \circ A_{\mathbf{a}}^{-1}$, which after normalization yields the equation $p\renorm^s (\renorm^2 \zeta) = A_{\mathbf{a}} \circ \renorm^2 \zeta \circ A_{\mathbf{a}}^{-1}$. This implies (2).

    Suppose $f: \Hq \to \Hq$ satisfies the hypothesis in (3). By Theorem \ref{thm:c1plusalpharigidity}, the quasisymmetric conjugacy $\psi$ between $\renorm^k f$ and $\zeta$ extends to a quasiconformal conjugacy that is $C^{1+\alpha}$ near the critical point. Let $\phi(z) := \psi(z)/\psi'(0)$, which conjugates $\renorm^k f$ with a rescaling of $\zeta$, and satisfies
    \[
        \phi(z) = z\left(1 + \eta(z)\right) \quad \text{where} \quad \eta(z) = O( |z|^{\alpha}).
    \]
    Denote by $\hat{\Hq}_{\mathbf{a}}$ the rescaling of $\Hq_{\mathbf{a}}$ by $\psi'(0)^{-1}$.
    
    For any sufficiently large $n \in \N$, there is some $l \geq k$ such that the base $\I \subset \Hq$ of $p\renorm^l f$ has diameter $\asymp |\mu_{\mathbf{a}}|^n$. Denote by $d_H(\cdot, \cdot)$ the Hausdorff distance. Then,
    \begin{align*}
        d_H\left( A_{\mathbf{a}}^{-n}(\I), A_{\mathbf{a}}^{-n} \phi(\I) \right) \leq |\mu_{\mathbf{a}}|^{-n} d_H\left(\I, \phi(\I)\right) \prec |\mu_{\mathbf{a}}|^{-n} \diam(\I)^{1+\alpha} \prec |\mu_{\mathbf{a}}|^{\alpha n}.
    \end{align*}
    As $n \to \infty$, the Hausdorff distance between $A_{\mathbf{a}}^{-n}(\I)$ and $A_{\mathbf{a}}^{-n} \phi(\I)$, which is a subinterval of $\hat{\Hq}_{\mathbf{a}}$, shrinks exponentially fast.

    The proof of (b) is similar to Corollary \ref{cor:universality-scaling-ratio}. Let us write $\phi(z) = z\left(1 + \eta(z)\right)$ where $\eta(z) = O( |z|^{\alpha})$. By Lemma \ref{lem:qc-control} (2), there is some $\delta \in (0,1)$ such that $|c_{q_n}| \asymp \delta^n$. For $n \geq k$, we have the equation $\phi(c_{q_{n+s}}) = A_{\mathbf{a}} \phi(c_{q_n})$, which implies that as $n \to \infty$,
    \[
        \mu_{\mathbf{a}} - \frac{c_{q_{n+s}}}{c_{q_n}} = \frac{\phi(c_{q_{n+s}})}{\phi(c_{q_n})} - \frac{c_{q_{n+s}}}{c_{q_n}} = \frac{c_{q_{n+s}}}{c_{q_n}} \cdot \cfrac{ \eta(c_{q_{n+s}}) - \eta(c_{q_{n}}) }{ 1+\eta(c_{q_n}) }= O(\delta^{\alpha n}).
    \]
    Therefore, the ratio $c_{q_{n+s}}/c_{q_n}$ tends to $\mu_{\mathbf{a}}$ exponentially fast.
\end{proof}

\bibliographystyle{alpha}
 
\bibliography{bibliography}

\end{document}